\documentclass[12pt]{article}
\usepackage{amssymb}
\usepackage[mathscr]{eucal}
\usepackage{amsthm,amsmath,anysize}
\def\a{\alpha}

\def\l{\lambda}
\def\g{\gamma}
\def\0{\bar{0}}
\def\1{\bar{1}}
\def\e{\epsilon}
\def\d{\delta}
\def\g{\mathfrak{g}}

\def\t{\tilde}

\def\h{{\mathfrak h}}
\def\E{\mathfrak e}
\def\F{\mathfrak f}

\def\g{{\mathfrak g}}

\def\h{{\mathfrak h}}
\newtheorem{lemma}{Lemma}[section]
\newtheorem{theorem}[lemma]{Theorem}

\newtheorem{proposition}[lemma]{Proposition}
\newtheorem{definition}[lemma]{Definition}
\newtheorem{corollary}[lemma]{Corollary}

\hoffset \voffset \oddsidemargin=60pt \evensidemargin=45pt
\topmargin=16pt \headheight=12pt \headsep=25pt \voffset-3em
\hoffset-4em \textwidth=150mm \textheight=230mm
\parindent=2em
\parskip=0.5em

\title{\bf On the quantum superalgebra $U_q(gl(m,n))$  and its representations   at roots of $1$}

\author{Chaowen Zhang}
\date{ }

\begin{document}

\maketitle

{\it  Mathematics Subject Classification (2000)}: 17B37; 17B50.

\newpage

\tableofcontents \newpage

\hskip 2truein {\bf Introduction}\par
 Let $\g=\g_{\0}\oplus \g_{\1}$ be the general linear Lie superalgebra $\mathfrak {gl}(m,n)$ over a field $k$ (\cite{k3}), and let $U(\g)$ be its universal enveloping superalgebra. In this paper we study
 the representation theory of the quantum deformation of $U(\g)$ (defined in \cite{zh}) at roots of unity,  and  the connection of which with the representation theory of the general linear supergroup $GL(m,n)$. \par
 Unlike in \cite{pw}, where the quantum algebra is defined as a group functor, the quantum superalgebra in the present
 paper is defined as a quantum deformation of $U(\g)$.
 In the literature there are several such deformations (\cite{dt1, dt2, flv, kt, psv, ya}) that are different from the one used here. For instance, compared with the one in the present paper, the quantum deformation of $U(\g)$ in \cite{bkk,ya} has an additional generator. Whereas the references
\cite{dt1} and \cite{dt2} are concerned with  two parameter deformations. \par
The paper is arranged as follows.
 In Chapter 1, we define a bilinear form on  the maximal torus of $\g$ via the pairing between the character group and the group of 1-psg's of a maximal torus of $GL(m,n)$. Using this bilinear form we define $p$-typical weights.\par  In Chapter 2,  we describe  $U(\g)$ in terms of generators and relations.   We establish the description as follows. First,  we define a superalgebra $\tilde U(\g)$ by generators and relations. Then we introduce automorphisms on $\tilde U(\g)$, using which we show that the superalgebra $\tilde U(\g)$ is isomorphic to $U(\g)$.\par

 In Chapter 3, we prove that the quantum algebra $U_q(\g_{\0})$ contains no zero divisors. In Chapter 4, we introduce a superalgebra $\tilde U_q(\g)$ which has $U_q(\g)$ as its quotient. By  constructing some $\tilde U_q(\g)$-modules we are able to obtain  a super version of triangular decomposition for $U_q(\g)$ in Chapter 5. Also in Chapter 5, we show that the quantum algebra $U_q(\g_{\0})$ is isomorphic to its canonical image in $U_q(\g)$. This  fact is used later in Chapter 8 to define  Kac modules  and to prove the PBW theorem.
  \par
 In Chapter 6, we first construct simple highest weight modules for the superalgebra $\tilde U_q(\g)$,  using which we construct simple modules for $U_q(\g)$. Then we classify all simple
 highest weight $U_q(\g)$-modules.\par
  In Chapter 7,  we define the $\mathcal A$-form $U_{\mathcal A}$ for $U_q(\g)$ and study relations between the generators in $U_{\mathcal A}$; applying these relations, we prove the PBW theorem in Chapter 8.

 Chapter 9 describes  $U_{\mathcal A}$ by generators and relations. In Chapter 10, we study simple modules for the
 general linear supergroup $GL(m,n)$;  we prove that the induce $GL(m,n)$-module $\text{Ind}^G_P\l$ is simple if $\l$ is $p$-typical.\par In Chapter 11, under the assumption  that  $q$ is a primitive $l$th root of unity, we define  the finite dimensional Hopf superalgebra $'\tilde{\mathfrak u}$, and determine its simple modules. We
 propose  a conjecture concerned with simple $'\tilde{\mathfrak u}$-modules, and prove that  it follows from   Lusztig's conjecture provided that the highest weight is  $p$-typical.\par  Finally,  we prove in Chapter 12  Lusztig's tensor product theorem for $U_q(\g)$. For the reader's convenience, we give in the appendix the proof of a couple of results in \cite{z} used in Chapter 10.\par

\newpage
\hskip 2truein {\bf Notation}\par
Throughout  the paper,  subalgebras and submodules will always be $\mathbb Z_2$-graded.\par

$U(L)$ \quad The universal enveloping algebra of a Lie superalgebra $L$.\par
$h(V)$ \quad The set of all  homogeneous elements in  a $\mathbb Z_2$-graded vector space $V$.\par
$\bar x$ \quad The parity of a homogeneous element in a $\mathbb Z_2$-graded vector space $V$.\par
$\g$ \quad The general linear Lie superalgebra $\mathfrak{gl}(m,n)$ over a field $k$.\par
$\mathfrak H$ \quad The  maximal torus $\langle e_{11}, e_{22}, \dots, e_{m+n,m+n}\rangle$ of $\g$.\par
$\mathcal I_0$\quad The set $\{(i,j)|1\leq i<j\leq m\quad \text{or}\quad m+1\leq i<j\leq m+n\}$.\par
$\mathcal I_1$ \quad The set $\{(i,j)|1\leq i\leq m<j\leq m+n\}$.\par
$\mathbb N^{\mathcal I_0}$\quad  The set of all tuples \ $\psi=:(\psi_{ij})_{(i,j)\in\mathcal I_0}, \quad \psi_{ij}\in \mathbb N.$\par
$\mathbb N_l^{\mathcal I_0}$\quad The set of tuples \ $\psi=(\psi_{ij})_{(i,j)\in\mathcal I_0}, \quad 0\leq \psi_{ij}<l.$\par
$\mathbb Z_2^{\mathcal I_1}$\quad The set of all tuples
$(d_{ij})_{(i,j)\in\mathcal I_1}$ such that $d_{ij}=0,1$ for all $(i, j)\in\mathcal I_1$.\par
$[1, m+n)\setminus m$ \quad The set $\{1,\dots,m-1,m+1,\dots, m+n-1\}$.\par
$[1, m+n]\setminus m$ \quad The set $\{1,\dots,m-1,m+1,\dots, m+n\}$.\par
$\mathcal A $\qquad $\mathbb Z[q,q^{-1}]$ where $q$ is an indeterminate.\par
$\mathcal A'$\quad  The quotient field of the ring $\mathcal A$.\par
 $\text{GL}(m,n)$\quad  The functor from the category of commutative superalgebras to the\par
  \hspace{40pt}  category of groups such that,
    for each commutative  superalgebra  $A$,\par \hspace{40pt} $\text{GL}(m,n)(A)$  is the group of all invertible $(m+n)\times (m+n)$ matrices\par  \hspace{40pt} of the form
 $$g=\left(\begin{matrix}W,&X\\Y,&Z\end{matrix}\right),$$ \hspace{62pt} where $W$ is an $m\times m$ matrix with entries in $A_{\0}$, $X$ is an $m\times n$ matrix\par \hspace{40pt} with entries in $A_{\1}$, $Y$ is an $n\times m$ with entries in $A_{\1}$, and $Z$ is an $n\times n$\par \hspace{40pt} matrix with entries in $A_{\0}$.\par

\newpage
\section{Preliminaries}

In this chapter we define on $\mathfrak H$  a symmetric bilinear form that is independent of the characteristic of $k$, using which  we define  $p$-typical weights.\par
According to \cite{k3}, $\g=\g_{-1}+\g_{\0}+\g_1$  has  a basis $$\{e_{ij}| 1\leq i,j\leq m+n\}.$$

 For $i<j$, we denote $e_{ji}$  by $f_{ij}$. Then we have $$\g_{\0}=\langle e_{ij}, f_{ij}|(i,j)\in\mathcal I_0\rangle +\mathfrak H,$$ $$\g_{-1}=\langle f_{ij}|(i,j)\in\mathcal I_1\rangle,\quad
  \g_{1}=\langle e_{ij}|(i,j)\in\mathcal I_1\rangle,\quad \g_{\bar 1}=\g_{-1}\oplus\g_{1}.$$

  In addition, we have $\g_{\0}=\mathfrak{gl}_m\oplus \mathfrak{gl}_n$, where $$\mathfrak{gl}_m=\langle e_{ij}, f_{ij}|1\leq i<j\leq m\rangle +\langle e_{11},\dots, e_{mm}\rangle, $$$$ \mathfrak{gl}_n=\langle e_{ij}, f_{ij}|m+1\leq i<j\leq m+n\rangle +\langle e_{m+1,m+1},\dots, e_{m+n, m+n}\rangle.$$  Denote by $\g^+$ the Lie subalgebra $\g_{\0}+\g_1$ of $\g$.\par  For $i\in [1,m+n)\setminus m$,  put $h_{\a_i}=e_{ii}-e_{i+1,i+1}$.  Set $$\mathfrak{sl}_m=\langle e_{ij}, f_{ij}|1\leq i<j\leq m\rangle +\langle h_{\a_1},\dots, h_{\a_{m-1}}\rangle,$$ $$\mathfrak{sl}_n=\langle e_{ij}, f_{ij}|m+1\leq i<j\leq m+n\rangle +\langle h_{\a_{m+1}},\dots, h_{\a_{m+n-1}}\rangle.$$

   Let $T$ be the linear algebraic group consisting of $(m+n)\times (m+n)$ invertible diagonal matrices. Then we have $\text{Lie} (T)=\mathfrak H$. The set of positive roots of
   $\g$ relative to $T$ is $\Phi^+=\Phi^+_{0}\cup\Phi^+_{1}$,  where $$\Phi^+_{0}=\{\e_i-\e_j|(i,j)\in \mathcal I_0\}, \quad \Phi^+_{1}=\{\e_i-\e_j|(i,j)\in \mathcal I_1\}.$$ For $1\leq i<m+n-1$, let $\a_i=\e_i-\e_{i+1}$. Then $\a_1,\dots, \a_{m+n-1}$ is a set of simple roots in $\Phi^+$, among which $\a_m$ is the only odd root.\par
    Set  $$\Lambda =:X(T)=\mathbb Z\e_1+\mathbb Z\e_2+\cdots +\mathbb Z\e_{m+n}.$$ For $1\leq i\leq m+n$, let $\check{\e}_i$ be the 1-psg: $G_m \longrightarrow T$ sending each $t\in G_m$ to a diagonal matrix with all entries equal to 1 but the $i$th equal to $t$ if $i\leq m$, and $t^{-1}$ if $i>m$. Then the 1-psg's $\check{\e}_i$ form a $\mathbb Z$-basis of $Y(T)$. The nondegenerate paring (see \cite[16.1]{hu}) $$X(T)\times Y(T)\longrightarrow \mathbb Z, \quad (\l, \mu)\mapsto \langle\l, \mu\rangle $$ defines a symmetric bilinear form  on $\Lambda$  by $$(\e_i,\e_j)=\langle \e_i, \check{\e}_j\rangle=\begin{cases} \d_{ij}, &i\leq m\\-\d_{ij}, &i>m.\end{cases}$$

 Let  $$\rho_0=1/2\sum_{\a\in\Phi^+_0}\a, \quad \rho_1=1/2\sum_{\a\in\Phi^+_{1}}\a,$$  and set $\rho=:\rho_0-\rho_1\in\Lambda$.
   Set $$P(\l)=\Pi_{\a\in \Phi^+_{1}}(\l+\rho,\a), \ \l\in \Lambda.$$  Then we have $P(\l)\in \mathbb Z$ for each  $\l\in\Lambda$. An element $\l\in  \Lambda$ is called {\it typical} (resp. {\it $p$-typical}) if $P(\l)\neq 0$ $(\text{resp.}\ P(\l)\notin p\mathbb Z).$\par
   Set $$c(i,j)=i+j-2m-1, \quad (i,j)\in\mathcal I_1.$$
 A straightforward computation shows that \ $\l_1\e_1+\l_2\e_2+\cdots +\l_{m+n}\e_{m+n}\in\Lambda$ \ is typical if and only if $$\l_i+\l_j\neq c(i,j)\quad \text{for\ all}\quad (i,j)\in\mathcal I_1.$$\par

  \begin{lemma}For any $\mu \in\mathbb Z^+$,  there exists a typical weight $\l=\sum \l_i\e_i\in\Lambda $ such that $\l_i-\l_{i+1}= \mu$ for all \ $i\in [1,m+n)\setminus m$.\end{lemma} \begin{proof} First, let $\l_j=(n-j)\mu \quad \text{for}\quad j=m+1,\dots, m+n.$ Fix $\nu\in \mathbb Z^+$ such that $$\nu \neq -[(m-i)+(n-j)]\mu  +c(i,j)$$ for \ $i=1,\dots, m$, $j=m+1,\dots,m+n$. Then let $\l_i=\nu+(m-i)\mu$ for $i=1,\dots,m$. Clearly we have $$\l_i+\l_j\neq c(i,j)\quad\mathbin{\mathrm{for}}\quad (i, j)\in \mathcal I_1.$$ It follows that  $\l=\sum^{m+n}_{i=1}\l_i\e_i$ is typical and $\l_i-\l_{i+1}=\mu$ for $i\in [1, m+n)\setminus m$.\end{proof}

 For the maximal torus $\mathfrak H$ of $\g$ given above, we identify $\mathfrak H^*$ with $\Lambda \otimes _{\mathbb Z}k$. Then the  bilinear form  on $\Lambda$ is extended naturally to $\mathfrak H^*$. In the case $k=\mathbb C$,  $\Lambda$ can be viewed as a subset of $\mathfrak H^*$. \par
Let $M$ be a $U(\g_{\0})$-module. For $\mu\in \mathfrak H^*$, define the $\mu$-weight space of $M$ by $$M_{\mu}=\{m\in M|hm=\mu (h)m\quad\text{for all}\quad h\in \mathfrak H\}.$$ A nonzero vector $v^+\in M_{\mu}$ is called {\it maximal} of weight $\mu$ if $e_{ij}v^+=0$ for all $(i,j)\in \mathcal I_0$. \par
Let $M_0(\l)$ be a simple $U(\g_{\0})$-module generated by a maximal vector of weight $\l\in \mathfrak H^*$. Regard $M_0(\l)$ as a $U(\g^+)$-module by letting $\g_1$ act trivially on it, and define the induced $U(\g)$-module  $${\mathscr K}(\l)=U(\g)\otimes_{U(\g^+)}M_0(\l).$$ We refer to this module as a Kac module.  In the case that $\g$ is defined over $\mathbb C$, \cite[Proposition 2.9]{k1} says that ${\mathscr K}(\l)$ is simple  if and only if $\l$ is typical.\newpage
 \section{Generators and relations of $U(\g)$}
 Let $\g=\mathfrak{gl}(m,n)$ be defined over $\mathbb C$. In this chapter we describe its universal enveloping algebra  $U(\g)$  in terms of generators and relations. \par
By \cite[p.344]{fss},
the {\it Distinguished Cartan matrix} of  $\g$ is  $A=(b_{ij})_{1\leq i,j\leq m+n-1}$ with $$b_{ij}=\begin{cases} 2,&\text{if $i=j\neq m$}\\1,&\text{if $(i,j)=(m,m+1)$}\\-1,&\text{if $|i-j|=1$ and $(i,j)\neq (m, m+1)$}\\0,&\text{otherwise.}\end{cases}$$
 Let $\bar A=(a_{ij})$ denote the ${(m+n)\times (m+n-1)}$ matrix
 obtained by replacing the $m$th row of $A$ $$(0,\dots, \underset{m-1}{-1}, \underset {m}{0}, \underset{m+1}{1},\dots, 0)$$ with $$(0,\dots, \underset{m-1}{-1}, \underset {m}{1},\underset{m+1}{0},\dots, 0)$$ and then adjoining the row $(0,\dots, 0,-1)$ to the bottom of the resulted matrix.\par  Recall from Chapter 1 the notation $h_{\a_i}=e_{ii}-e_{i+1,i+1}$, $i\in [1, m+n)\setminus m$. For brevity, we denote $e_{m+n,m+n}$ by $h_{\a_{m+n}}$.  $$ \text{Set}\quad e_{\a_i}=e_{i,i+1},\ f_{\a_i}=e_{i+1,i}\quad\mathbin{\mathrm{for}}\quad 1\leq i<m+n.$$
\begin{theorem} The enveloping superalgebra $ U(\g)$ is generated by the elements $$e_{\a_i}, \ f_{\a_i}, \ h_{\a_j},\ e_{mm}, \quad 1\leq i<m+n, \ j\neq m$$ and  relations \par
$$\begin{aligned} & (a1)\quad h_{\a_i}h_{\a_j}=h_{\a_j}h_{\a_i}, \ h_{\a_i}e_{mm}=e_{mm}h_{\a_i},\\ &(a2)\quad h_{\a_i}e_{\a_j}-e_{\a_j}h_{\a_i}=a_{ij}e_{\a_j}, e_{mm}e_{\a_j}-e_{\a_j}e_{mm}=a_{mj}e_{\a_j},\\ &(a3)\quad h_{\a_i}f_{\a_j}-f_{\a_j}h_{\a_i}=-a_{ij}f_{\a_j},\ e_{mm}f_{\a_j}-f_{\a_j}e_{mm}=-a_{mj}f_{\a_j},\\
&(a4)\quad e_{\a_i}f_{\a_j}-f_{\a_j}e_{\a_i}=\d_{ij}h_{\a_i}, \ i\neq m\ \mathbin{\mathrm{or}}\ j\neq m,\\ &\quad \quad e_{\a_m}f_{\a_m}+f_{\a_m}e_{\a_m}=h_{\a_m},\\ &(a5)\quad e_{\a_i}e_{\a_j}=e_{\a_j}e_{\a_i}, \ f_{\a_i}f_{\a_j}=f_{\a_j}f_{\a_i},\quad \text{if}\quad |i-j|>1,\\&(a6)\quad e_{\a_i}^2e_{\a_j}-2e_{\a_i}e_{\a_j}e_{\a_i}+e_{\a_j}e_{\a_i}^2=0,\quad \text{if}\quad |i-j|=1, \ i\neq m,\\&(a7)\quad  f_{\a_i}^2f_{\a_j}-2f_{\a_i}f_{\a_j}f_{\a_i}+f_{\a_j}f_{\a_i}^2=0,\quad\text{if}\quad |i-j|=1, \ i\neq m,\\&(a8)\quad e_{\a_m}^2=f_{\a_m}^2=0,\\ &(a9)\quad [e_{\a_m},[e_{\a_{m-1}},[e_{\a_m},e_{\a_{m+1}}]]]=0, \\ &(a10)\quad [f_{\a_m},[f_{\a_{m-1}},[f_{\a_m},f_{\a_{m+1}}]]]=0,\end{aligned}$$\end{theorem}
where in $(a4)$, $h_{\a_m}$ denotes the element $$e_{mm}+\sum^{m+n}_{i=m+1}h_{\a_i}=e_{mm}+e_{m+1, m+1}.$$
The remainder of the chapter is devoted to proving  the  theorem.

\subsection{Definition and properties of  $\tilde U(\g)$}

Let $\mathfrak A=\mathfrak A_{\0}\oplus \mathfrak A_{\1}$ be an associative superalgebra. For $x\in h(\mathfrak A)$, define the linear mapping $[x,-]$ on $\mathfrak A$ by $$[x,y]=xy-(-1)^{\bar x\bar y}yx\quad\mathbin{\mathrm{for}}\quad y\in h(\mathfrak A).$$
 It is well known that $[x,-]$  is a {\it derivation} of $\mathfrak A$, which means
  $$ [x,yz]=[x,y]z+(-1)^{\bar x\bar y}y[x,z]
  \quad\mathbin{\mathrm{for}}\quad y,z\in h(\mathfrak A).$$ Similarly, we define the linear mapping  $[-,x]$ by $$[y,x]=yx-(-1)^{\bar x\bar y}xy\quad
  \mathbin{\mathrm{for\ all}}\quad y\in h(\mathfrak A).$$ Let us note that $[-,x]$ is not a derivation of $\mathfrak A$, but a {\it right derivation} instead, which means
  $$ [yz, x]=y[z,x]+(-1)^{\bar z\bar x}[y,x]z\quad
  \mathbin{\mathrm{for}}\quad  y,z\in h(\mathfrak A).$$ By induction on $n$, we obtain
  $$[x, y_1y_2\cdots y_n]=\sum ^n_{i=1}(-1)^{\bar x\sum_{s=1}^{i-1}\bar y_s}y_1\cdots [x,y_i]\cdots y_n$$
  for  $y_1, y_2,\dots, y_n\in h(\mathfrak A$).\par
 Let $\tilde U(\g)$ be the $\mathbb C$-superalgebra generated by the elements $$\E_{\a_i},\ \F_{\a_i},\ \mathfrak h_{\a_j},\ \E_{mm},\quad  1\leq i< m+n, \ j\neq m$$
and relations $(a1)-(a10)$ with $e_{\a_i}\ (\text{resp.}\quad f_{\a_i}, \ h_{\a_j},\ e_{mm})$ substituted by $$\E_{\a_i}\ (\text{resp.}\quad \F_{\a_i}, \ \mathfrak h_{\a_j}, \ \E_{mm});$$
the parity of the generators  is defined by
 $$\bar{\mathfrak h}_{\a_j}=\bar\E_{mm}=\bar 0, \quad \bar \E_{\a_i}=\bar \F_{\a_i}=\begin{cases} \bar 0, &\text{if $i\neq m$}\\\bar 1,&\text{if $i=m$.}\end{cases}$$
Remark: Throughout this chapter we denote $\E_{\a_i}$ and $\F_{\a_i}$ also by
$\E_{i,i+1}$ and $\F_{i,i+1}$ respectively.

\begin{definition} A bijective (even) linear transformation $f$ on an $\mathbb F$-superalgebra $\mathfrak A$  is called an anti-automorphism (resp. {\it $\mathbb Z_2$-graded anti-automorphism}) if $$ f(xy)=f(y)f(x)\quad \mathbin{\mathrm{for}}\quad  x,y\in h(\mathfrak A)$$ $$(\mathbin{\mathrm{resp.}}\  f(xy)=(-1)^{\bar x\bar y}f(y)f(x)\quad \mathbin{\mathrm{for}}\quad  x,y\in h(\mathfrak A)).$$
  \end{definition}
From the defining relations it is easy to see  that $\tilde U(\g)$ admits an anti-automorphism $\Omega$ defined
by $$ \Omega (\E_{\a_i})=\F_{\a_i},\quad  \Omega (\F_{\a_i})=\E_{\a_i}, \quad  \Omega (\mathfrak h_{\a_i})=\mathfrak h_{\a_i},\quad \Omega (\E_{mm})=\E_{mm}.$$

Inductively we define the following elements in $\tilde U(\g)$:   $$\E_{ij}=\E_{i,i+1}\E_{i+1,j}-\E_{i+1,j}\E_{i,i+1}$$$$\F_{ij}
=-\F_{i,i+1}\F_{i+1,j}+\F_{i+1,j}\F_{i,i+1}$$ for $i<i+1<j$. It is easy to see that $$\Omega (\E_{ij})=\F_{ij},$$
and that the elements $\E_{ij}$ and $\F_{ij}$ are even  if $(i,j)\in \mathcal I_0$, and  odd if $(i,j)\in\mathcal I_1$.\par

 As an immediate consequence of the relation $(a5)$, we obtain  $$(a5)'\quad  \E_{ij}\E_{st}=\E_{st}\E_{ij},\quad \F_{ij}\F_{st}=\F_{st}\F_{ij}\quad \text{for} \ i<j<s<t.$$ \begin{lemma}For any $s$ with $i<s<j$, we have $$\E_{ij}=\E_{is}\E_{sj}-\E_{sj}\E_{is}$$$$\F_{ij}=-\F_{is}\F_{sj}+\F_{sj}\F_{is}.$$
\end{lemma}\begin{proof}
Thanks to the anti-automorphism $\Omega$,  it suffices to prove the first formula.\par
We proceed with induction on $j-i$.  If $j-i=2$, or $s=i+1$, then the formula is given by the definition. Assume the formula for the case $j-i-1$ and assume $s>i+1$. By the definition of $\E_{ij}$ and the induction hypothesis, we have $$ \E_{ij}= \E_{i,i+1}(\E_{i+1,s}\E_{sj}-\E_{sj}\E_{i+1,s})-(\E_{i+1,s}\E_{sj}-\E_{sj}\E_{i+1,s})\E_{i,i+1}.$$
Then applying the formula $(a5)'$ and the induction hypothesis, we have $$ \begin{aligned} \E_{ij}&=
\E_{i,i+1}\E_{i+1,s}\E_{sj}-\E_{i,i+1}\E_{sj}\E_{i+1,s}-\E_{i+1,s}\E_{i,i+1}\E_{sj}+\E_{sj}\E_{i+1,s}\E_{i,i+1}
\\&=\E_{i,i+1}\E_{i+1,s}\E_{sj}-\E_{i+1,s}\E_{i,i+1}\E_{sj}-\E_{sj}\E_{i,i+1}\E_{i+1,s}+\E_{sj}\E_{i+1,s}\E_{i,i+1}
\\&=\E_{is}\E_{sj}-\E_{sj}\E_{is}.\end{aligned}$$

\end{proof}

\begin{lemma}\label{sy} For $j\in \{m-1, m+1\}$, we have
$$ \E_{\a_m}\E_{\a_j}\E_{\a_m}\E_{\a_j}=\E_{\a_j}\E_{\a_m}\E_{\a_j}\E_{\a_m}.$$
\end{lemma}
\begin{proof}
  Let $j=m+1$. By the relation (a6), we have $$\E_{\a_{m+1}}^2\E_{\a_m}-2\E_{\a_{m+1}}\E_{\a_m}\E_{\a_{m+1}}+\E_{\a_m}\E_{\a_{m+1}}^2=0.$$
   Multiplying the identity  from left and from right by $\E_{\a_m}$ respectively, we  get
  $$\begin{aligned}  \E_{\a_m}\E_{\a_{m+1}}^2\E_{\a_m}-2\E_{\a_m}\E_{\a_{m+1}}\E_{\a_m}\E_{\a_{m+1}}&=0\\
  -2\E_{\a_{m+1}}\E_{\a_m}\E_{\a_{m+1}}\E_{\a_m}+\E_{\a_m}\E_{\a_{m+1}}^2\E_{\a_m}&=0,\end{aligned}$$
  implying that $$\E_{\a_m}\E_{\a_{m+1}}\E_{\a_m}\E_{\a_{m+1}}
  =\E_{\a_{m+1}}\E_{\a_m}\E_{\a_{m+1}}\E_{\a_m}.$$ The formula for $j=m-1$ can be proved similarly.

\end{proof}
\subsection{Automorphisms of $\tilde U(\g)$}
\begin{proposition} For each $i\in [1,m+n)\setminus m$, there is  an automorphism  $T_i$ of $\tilde U(\g)$ defined as follows:
$$T_i(\E_{\a_j})=\begin{cases} -\F_{\a_i}, &\text{if $i=j$}\\ \E_{\a_j}, &\text{if $|i-j|>1$}\\\E_{\a_i}\E_{\a_j}-\E_{\a_j}\E_{\a_i},&\text{if $|i-j|=1$.}\end{cases}$$

$$T_i(\F_{\a_j})=\begin{cases} -\E_{\a_i}, &\text{if $i=j$}\\\F_{\a_j}, &\text{if $|i-j|>1$}\\-\F_{\a_i}\F_{\a_j}+\F_{\a_j}\F_{\a_i},&\text{if $|i-j|=1$.}\end{cases}$$

$$T_i(\h_{\a_j})=\begin{cases} -\h_{\a_i}, &\text{if $i=j$}\\\h_{\a_j}, &\text{if $|i-j|>1$}\\\h_{\a_i}+\h_{\a_j},&\text{if $|i-j|=1$.}\end{cases}$$

$$T_i(\E_{mm})=\begin{cases} \E_{mm}, &\text{if $i\neq m-1$}\\\h_{\a_i}+\E_{mm}, &\text{if $i=m-1$.}\end{cases}$$
\end{proposition}
\begin{proof} It is easy to check that $T_i^2=1$. Then it suffices to show that $T_i$ is a homomorphism of $\tilde U(\g)$. To prove this, we need show that $T_i$ preserves all the relations (a1)-(a10). Since $$\Omega T_i =T_i\Omega$$ by a simple verification,  we need verify only the relations involving generators $\E_{\a_i}$.\par In the following we  prove only that  $T_{m-1}$ preserves the relations $(a8)$ and $(a9)$;  that $T_{m-1}$ preserves the remaining relations can be proved similarly, and is therefore omitted.
\par
By definition, we have $$ \begin{aligned} T_{m- 1}(\E_{\a_m})^2&=(\E_{\a_{m- 1}}\E_{\a_m}-\E_{\a_m}\E_{\a_{m- 1}})^2\\&=\E_{\a_{m- 1}}\E_{\a_m}\E_{\a_{m- 1}}\E_{\a_m}-\E_{\a_m}\E^2_{\a_{m-1}}\E_{\a_m}+\E_{\a_m}\E_{\a_{m-1}}\E_{\a_m}\E_{\a_{m- 1}}\\&=(\E_{\a_{m-1}}\E_{\a_m}\E_{\a_{m-1}}-\E_{\a_m}\E^2_{\a_{m-1}})\E_{\a_m}+
\E_{\a_m}\E_{\a_{m-1}}\E_{\a_m}\E_{\a_{m-1}}\\(\text{using the relation (a6) })&=(\E_{\a_{m-1}}^2\E_{\a_m}-\E_{\a_{m- 1}}\E_{\a_m}\E_{\a_{m- 1}})\E_{\a_m}+\E_{\a_m}\E_{\a_{m-1}}\E_{\a_m}\E_{\a_{m-1}}\\&=-\E_{\a_{m- 1}}\E_{\a_m}\E_{\a_{m-1}}\E_{\a_m}+\E_{\a_m}\E_{\a_{m-1}}\E_{\a_m}\E_{\a_{m-1}}\\(\text{using Lemma \ref{sy}})&=0,\end{aligned}$$ that is, $T_{m-1}$ preserves the relation (a8). \par
To prove that $T_{m-1}$ preserves the relation $(a9)$, let us observe
 that $$\begin{aligned} T_i(\E_{\a_{i+1}})&=[\E_{\a_i}, \E_{\a_{i+1}}]&=&\E_{i,i+2},\\T_i(\E_{\a_{i-1}})&=[\E_{\a_i}, \E_{\a_{i-1}}]&=&-\E_{i-1,i+1},\end{aligned}$$
which by induction gives $$\begin{aligned}T_iT_{i+1}\cdots T_{j-2}(\E_{\a_{j-1}})&=\E_{ij}\quad&&\mathbin{\mathrm{for}}\quad i,\dots, j-2\in [1,m+n)\setminus m\\\mathbin{\mathrm{and}}\quad T_{j-1}T_{j-2}\cdots T_i(\E_{\a_{i-1}})&=(-1)^{j-i}\E_{ij}\quad&&\mathbin{\mathrm{for}}\quad i,\dots, j-1\in [1,m+n)\setminus m.\end{aligned}$$
 Note that $(a9)$ can be rewritten as  $$(a9)'\quad [\E_{m-1,m+2}, \E_{m,m+1}]=0,$$ or equivalently,
$$\E_{m-1,m+2}\E_{m,m+1}+\E_{m,m+1}\E_{m-1,m+2}=0.$$ Using the formulas $$T_{m-1}(\E_{m,m+1})=\E_{m-1,m+1}, \ T_{m-1}\E_{m-1,m+2}=T_{m-1}^2(\E_{m,m+2})=\E_{m,m+2},$$ we have $$\begin{aligned}&T_{m-1}(\E_{m-1,m+2}\E_{m,m+1}+\E_{m,m+1}\E_{m-1,m+2})\\&=\E_{m,m+2}\E_{m-1,m+1}+\E_{m-1,m+1}\E_{m,m+2}\\&
=(\E_{m,m+1}\E_{m+1,m+2}-\E_{m+1,m+2}\E_{m,m+1})(\E_{m-1,m}\E_{m,m+1}-\E_{m,m+1}\E_{m-1,m})\\&+
(\E_{m-1,m}\E_{m,m+1}-\E_{m,m+1}\E_{m-1,m})(\E_{m,m+1}\E_{m+1,m+2}
-\E_{m+1,m+2}\E_{m,m+1})\\&=\E_{\a_m}\E_{\a_{m+1}}\E_{\a_{m-1}}\E_{\a_m}-\E_{\a_{m+1}}\E_{\a_m}\E_{\a_{m-1}}\E_{\a_m}\\&-\E_{\a_m}\E_{\a_{m+1}}\E_{\a_m}\E_{\a_{m-1}}-
\E_{\a_{m-1}}\E_{\a_m}\E_{\a_{m+1}}\E_{\a_m}\\&-\E_{\a_m}\E_{\a_{m-1}}\E_{\a_m}\E_{\a_{m+1}}
+\E_{\a_m}\E_{\a_{m-1}}\E_{\a_{m+1}}\E_{\a_m}\\&=-[\E_{\a_m}, [\E_{\a_{m-1}}, [\E_{\a_m}, \E_{\a_{m+1}}]]]\\&=0.\end{aligned}$$
 Thus, $T_{m-1}$ preserves (a9). \par Similarly we can verify that $T_{m+1}$ is an automorphism.\par For $i\notin \{m-1,m+1\}$, the proof of that $T_i$ is an automorphism is essentially  the same as that for Lie algebras (see \cite[18.3(7)]{h}).
\end{proof}
\subsection{Formulas in $\tilde U(\g)$}
In this section we introduce some formulas in $\tilde U(\g)$ to prove Theorem 2.1.

Applying automorphisms $T_k$ and the relation (a8),  we get, for all $(i,j)\in\mathcal I_1$, $$(b1)\quad \E_{ij}^2=0\quad \text{and hence}\quad (b2)\quad \F_{ij}^2=0.$$ \par
\begin{lemma}
$$\begin{aligned} &(b3)\quad [\E_{i,i+1}, \E_{ij}]=0,  \quad  j>i+1.\\ &(b4)\quad [\E_{i,i+1}, \E_{j,i+1}]=0, \quad  j<i.\end{aligned}$$\end{lemma}
\begin{proof} We prove only  (b3), whereas (b4) can be proved similarly.\par We split the proof into two cases.\par
Case 1. $i\neq m$.  Note that the relation (a6) may be written as $$\begin{aligned}&\E_{\a_i}(\E_{\a_i}\E_{\a_{i+1}}-\E_{\a_{i+1}}\E_{\a_i})-
(\E_{\a_i}\E_{\a_{i+1}}-\E_{\a_{i+1}}\E_{\a_i})\E_{\a_i}\\&=[\E_{i,i+1}, \E_{i,i+2}]\\&=0.\end{aligned}$$
This gives (b3) for $j=i+2$.
For $j>i+2$, recall that $$\E_{ij}=\E_{i,i+2}\E_{i+2,j}-\E_{i+2,j}\E_{i,i+2}.$$ Then, since $[\E_{i,i+1},\E_{i+2,j}]=0$ by the formula $(a5)'$ before Lemma 2.3, we obtain
$$[\E_{i,i+1}, \E_{ij}]=0.$$
 Case 2. $i=m$. Since $$\begin{aligned} \E_{m,m+1}\E_{m,m+2}&=\E_{m,m+1}(\E_{m,m+1}\E_{m+1,m+2}-\E_{m+1,m+2}\E_{m,m+1})\\
&=-\E_{m,m+1}\E_{m+1,m+2}\E_{m,m+1}\\&=-(\E_{m,m+1}\E_{m+1,m+2}-\E_{m+1,m+2}\E_{m,m+1})\E_{m,m+1}\\&=-\E_{m,m+2}\E_{m,m+1},
\end{aligned}$$
 we obtain \ $[\E_{m,m+1}, \E_{m,m+2}]=0.$ This proves (b3) for $j=m+2$.\par For $j>m+2$, since $$\begin{aligned} &T_{j-1}\cdots T_{m+2}(\E_{m,m+2})&
=&(-1)^{j-m+2}\E_{mj},\\&T_{j-1}\cdots T_{m+2}(\E_{m,m+1})&=&\E_{m,m+1},\end{aligned}$$ we have $[\E_{m,m+1}, \E_{mj}]=0$.
 \end{proof}

 As an application of the relation (b3), we  rewrite the formula $(a9)'$  in the last section  as $$\text{$(a9)''$}\quad [\E_{m-1,m+1}, \E_{m, m+2}]=0.$$  The verification is left  to the reader.
  \par

 \begin{lemma}  $$\E_{\a_{i-1}}\E_{\a_i}\E_{\a_{i+1}}\E_{\a_i}-\E_{\a_i}
 \E_{\a_{i-1}}\E_{\a_i}\E_{\a_{i+1}}=\E_{\a_{i+1}}\E_{\a_i}\E_{\a_{i-1}}\E_{\a_i}
 -\E_{\a_i}\E_{\a_{i+1}}\E_{\a_i}\E_{\a_{i-1}},\quad i\neq m.
 $$\end{lemma}
 \begin{proof}By the relation (a6) we have $$\E_{\a_i}^2\E_{\a_{i+1}}-2\E_{\a_i}\E_{\a_{i+1}}\E_{\a_i}+\E_{\a_{i+1}}\E_{\a_i}^2=0$$
 and $$\E_{\a_i}^2\E_{\a_{i-1}}-2\E_{\a_i}\E_{\a_{i-1}}\E_{\a_i}+\E_{\a_{i-1}}\E_{\a_i}^2=0.$$
 Multiplying the first equation by $\E_{\a_{i-1}}$ from the left and
  the  second one by  $\E_{\a_{i+1}}$  from the  right,
  then eliminating \ $\E_{\a_{i-1}}\E_{\a_i}^2\E_{\a_{i+1}}$ \ from the two equations, we obtain  $$(*)\quad \E_{\a_{i-1}}\E_{\a_{i+1}}\E_{\a_i}^2-\E_{\a_i}^2\E_{\a_{i-1}}\E_{\a_{i+1}}=2(\E_{\a_{i-1}}\E_{\a_i}\E_{\a_{i+1}}\E_{\a_i}
 -\E_{\a_i}\E_{\a_{i-1}}\E_{\a_i}\E_{\a_{i+1}});$$ alternatively,  multiplying the first equation by $\E_{\a_{i-1}}$ from the right and the  second one by $\E_{\a_{i+1}}$ from the left, then eliminating  \ $\E_{\a_{i+1}}\E_{\a_i}^2\E_{\a_{i-1}}$ \ from the two equations,
 we obtain $$\E_{\a_{i+1}}\E_{\a_{i-1}}\E_{\a_i}^2-\E_{\a_i}^2\E_{\a_{i+1}}\E_{\a_{i-1}}=2(\E_{\a_{i+1}}\E_{\a_i}\E_{\a_{i-1}}\E_{\a_i}
 -\E_{\a_i}\E_{\a_{i+1}}\E_{\a_i}\E_{\a_{i-1}}).$$  Since \ $\E_{\a_{i+1}}\E_{\a_{i-1}}=\E_{\a_{i-1}}\E_{\a_{i+1}}$ \ by the relation (a5),  the lemma follows.
 \end{proof}

\begin{lemma} $$[\E_{i,i+1},\E_{i-1,i+2}]=0,\quad i\neq m.$$
\end{lemma}\begin{proof}Applying Lemma 2.3 and Lemma 2.7, we have $$\begin{aligned}
&[\E_{i,i+1}, \E_{i-1,i+2}]\\
&=\E_{\a_i}(\E_{\a_{i-1}}\E_{i,i+2}-\E_{i,i+2}\E_{\a_{i-1}})-(\E_{\a_{i-1}}\E_{i,i+2}-\E_{i,i+2}\E_{\a_{i-1}})\E_{\a_i}\\& =\E_{\a_i}\E_{\a_{i-1}}\E_{\a_i}
\E_{\a_{i+1}}-\E_{\a_i}\E_{\a_{i-1}}\E_{\a_{i+1}}\E_{\a_i}-\E_{\a_i}^2\E_{\a_{i+1}}\E_{\a_{i-1}} +\E_{\a_i}\E_{\a_{i+1}}
\E_{\a_i}\E_{\a_{i-1}}\\&-\E_{\a_{i-1}}\E_{\a_i}\E_{\a_{i+1}}\E_{\a_i}+\E_{\a_{i-1}}\E_{\a_{i+1}}\E_{\a_i}^2
+\E_{\a_i}\E_{\a_{i+1}}\E_{\a_{i-1}}\E_{\a_i}-\E_{\a_{i+1}}\E_{\a_i}\E_{\a_{i-1}}\E_{\a_i}\\
&=\E_{\a_{i-1}}\E_{\a_{i+1}}\E_{\a_i}^2-\E_{\a_i}^2\E_{\a_{i+1}}\E_{\a_{i-1}}\\&
-[(\E_{\a_{i-1}}\E_{\a_i}\E_{\a_{i+1}}\E_{\a_i}-\E_{\a_i}\E_{\a_{i-1}}\E_{\a_i}
\E_{\a_{i+1}}) +(\E_{\a_{i+1}}\E_{\a_i}\E_{\a_{i-1}}\E_{\a_i}-\E_{\a_i}\E_{\a_{i+1}}
\E_{\a_i}\E_{\a_{i-1}})]\\&=\E_{\a_{i-1}}\E_{\a_{i+1}}\E_{\a_i}^2-\E_{\a_i}^2\E_{\a_{i+1}}\E_{\a_{i-1}}
-2(\E_{\a_{i-1}}\E_{\a_i}\E_{\a_{i+1}}\E_{\a_i}-\E_{\a_i}\E_{\a_{i-1}}\E_{\a_i}
\E_{\a_{i+1}})=0,
\end{aligned}$$
where the last equality follows from the formula $(*)$ in the proof of the preceding lemma.
\end{proof}
\begin{proposition}
 $$[\E_{ij}, \E_{st}]=0, \quad s<i<j<t.$$\end{proposition}
\begin{proof}
 Case 1. $i\neq m$. First, we show that $$ [\E_{i,i+1},\E_{st}]=0\quad \mathbin{\mathrm{for\ any}}\quad (s,t)\quad\mathbin{\mathrm{with}}\quad  s<i<i+1<t.$$
Note that the case $(s,t)=(i-1,i+2)$ is given by Lemma 2.8.\par  For $t>i+2$, using $[\E_{i,i+1}, \E_{i+2,t}]=0$ given by $(a5)'$ in Section 2.1,  we have
$$\begin{aligned}
\begin{split}
[\E_{i,i+1}, \E_{i-1,t}]  &=[\E_{i,i+1}, \E_{i-1,i+2}\E_{i+2,t}-\E_{i+2,t}\E_{i-1,i+2}]\\ &=[\E_{i,i+1}, \E_{i-1,i+2}]\E_{i+2,t}+\E_{i-1,i+2}[\E_{i,i+1}, \E_{i+2,t}]\\&-[\E_{i,i+1}, \E_{i+2,t}]\E_{i-1,i+2}-\E_{i+2,t}[\E_{i,i+1}, \E_{i-1,i+2}]\\&=0.\end{split}\\
\end{aligned}$$
 By a similar argument we obtain $[\E_{i,i+1}, \E_{st}]=0$ for $s<i-1$. This completes the proof of the formula in the case $j=i+1$.\par  For  $j>i+1$, the formula can be proved by using the identity $$\E_{ij}=\E_{i,i+1}\E_{i+1,j}-\E_{i+1,j}\E_{i,i+1},$$ together with the  fact that $[-, \E_{st}]$ is a right derivation.  \par
Case 2.  $i=m$. In this case the formula can be proved similarly by using the relation $(a9)'$ in Section 2.2. \end{proof}
The following corollary is immediate from Lemma 2.6 and Proposition 2.9.
\begin{corollary} $$[\E_{ij}, \E_{it}]=0, \ i<j<t,\quad [\E_{ij}, \E_{sj}]=0, \ s<i<j.$$
\end{corollary}
\begin{lemma} $$[\E_{i-1,i+1}, \E_{i,i+2}]=0, \quad 1<i<m+n-1.$$
\end{lemma}
\begin{proof} The case $i=m$ is given by the relation $(a9)''$ before Lemma 2.7. \par Assume $i\neq m$. Using the fact
$[\E_{i,i+1}, \E_{i,i+2}]=0$ from Lemma 2.6(b3), together with that \ $[-, \E_{i,i+2}]$ \ is a right derivation, we have
$$\begin{aligned}
\begin{split}
[\E_{i-1,i+1}, \E_{i,i+2}] & =[\E_{i-1,i}\E_{i,i+1}-\E_{i,i+1}\E_{i-1,i}, \E_{i,i+2}]\\
                           &=[\E_{i-1,i},\E_{i,i+2}]\E_{i,i+1}-\E_{i,i+1}[\E_{i-1,i},\E_{i,i+2}]\\
                           &=\E_{i-1,i+2}\E_{i,i+1}-\E_{i,i+1}\E_{i-1,i+2}\\
                           &=-[\E_{i,i+1}, \E_{i-1,i+2}]\\
                           &=0,
                           \end{split}\\
\end{aligned}$$ where the last equality is given by Proposition 2.8.
\end{proof}
\begin{proposition}\label{b6}   $$ [\E_{sj}, \E_{it}]=0,\quad  s<i<j<t.$$
\end{proposition}
\begin{proof}We split the proof into two cases.\par
Case 1. $j=i+1$. Note that the formula  is given
by Lemma 2.11 in the case $(s,t)=(i-1,i+2)$.\par
For $s<i-1$, since $[-, \E_{i, i+2}]$ is a right derivation, we have  $$\begin{aligned}
\begin{split} [\E_{s,i+1}, \E_{i,i+2}]&
=[\E_{s,i-1}\E_{i-1,i+1}-\E_{i-1,i+1}\E_{s,i-1}, \E_{i,i+2}]
\\&=\E_{s,i-1}[\E_{i-1,i+1}, \E_{i,i+2}]
\pm [\E_{s,i-1}, \E_{i,i+2}]\E_{i-1,i+1}\\&-
\E_{i-1,i+1}[\E_{s,i-1},\E_{i,i+2}]\pm[\E_{i-1,i+1},\E_{i,i+2}]\E_{s,i-1}\\&=0,\end{split}\\
\end{aligned}$$
where the last equality follows from Lemma 2.11 and the identity $[\E_{s,i-1},\E_{i,i+2}]=0$ given by the formula $(a5)'$ in Section 2.1.\par
By a similar argument we obtain $[\E_{s,i+1}, \E_{it}]=0$ for $t\geq i+2$. \par
Case 2.  $i+1<j<t$. In this case we write $$\E_{sj}=\E_{s,i+1}\E_{i+1,j}-\E_{i+1,j}\E_{s,i+1}.$$
Then the formula follows from the identity $[\E_{i+1,j}, \E_{it}]=0$ given by  Proposition 2.9, and the identity \ $[\E_{s,i+1}, \E_{it}]=0$ \ obtained from Case 1.
\end{proof}
\subsection{The proof of Theorem 2.1}
In this section we prove Theorem 2.1. \par
\begin{lemma}\label{pre} For any $(i,j)\in\mathcal I_0$ and $(s,t)\in\mathcal I_1$, we have $$\E_{st}\E_{ij}=c_1\E_{i_1,j_1}\E_{s_1,t_1}+c_2\E_{s_2,t_2},$$ where $(i_1, j_1)\in\mathcal I_0$, $(s_1,t_1), (s_2,t_2)\in \mathcal I_1$, $c_1,c_2\in \mathbb Z$.
\end{lemma}
\begin{proof} Thanks to the formula $(a5)'$, Proposition 2.9, Corollary 2.10,  and Lemma 2.11, we need only verify the statement
for the cases $t=i$ and $j=s$.\par For $t=i$, we have by Lemma 2.3 that $$\E_{st}\E_{ij}=\E_{ij}\E_{st}+\E_{sj},$$ as desired since $(s,j)\in\mathcal I_1$. The statement for $j=s$ can be proved similarly.
\end{proof}
\begin{proposition} $\tilde U(\g)$ is spanned by the elements $$\Pi_{(i,j)\in\mathcal I_1} \F_{ij}^{d_{ij}}\Pi_{(i,j)\in\mathcal I_0}\F_{ij}^{r_{ij}}(\Pi_{i=1, i\neq m}^{m+n}\mathfrak h_{\a_i}^{k_i})\E_{mm}^{k_m}\Pi_{(i,j)\in\mathcal I_0}\E_{ij}^{r'_{ij}}\Pi_{(i,j)\in\mathcal I_1}\E_{ij}^{d'_{ij}},$$ $d_{ij}, d_{ij}'\in \{0,1\}, r_{ij}, r'_{ij}, k_i\in\mathbb N$, where the product is taken in any fixed order.
\end{proposition}
\begin{proof}Thanks to the relations (a2)-(a4)  and  the anti-automorphism $\Omega$,   it suffices  to show that each product $\E_{\a_{i_1}}\E_{\a_{i_2}}\cdots \E_{\a_{i_k}}\in\tilde U(\g)$ can be expressed as a linear combination of the elements  $$\Pi_{(i,j)\in\mathcal I_0}\E_{ij}^{r'_{ij}}\Pi_{(i,j)\in\mathcal I_1}\E_{ij}^{d'_{ij}},\quad \ r'_{ij}\in\mathbb N, \ d_{ij}'\in \{0,1\}.$$  \par
For any fixed product $\E_{\a_{i_1}}\E_{\a_{i_2}}\cdots \E_{\a_{i_k}}\in\tilde U(\g)$, we first express it,  by repeated applications of  Lemma \ref{pre}, as  a linear combination of elements  $$\E_{i_1,j_1}\cdots \E_{i_s,j_s}\E_{i_{s+1},j_{s+1}}\cdots \E_{i_k,j_k}$$ with $(i_1,j_1),\dots, (i_s,j_s)\in\mathcal I_0$ \ and \ $(i_{s+1}, j_{s+1}),\dots,(i_k,j_k)\in\mathcal I_1$.   For any such an element,  using  Proposition 2.9, Corollary 2.10, and Proposition \ref{b6},  we can express  $$\E_{i_1,j_1}\cdots \E_{i_s,j_s}$$  as a linear combination of products $\Pi_{(i,j)\in\mathcal I_0}\E_{ij}^{r_{ij}} \ (r_{ij}\in \mathbb N)$ in any fixed order,  and $$\E_{i_{s+1},j_{s+1}}\cdots \E_{i_k,j_k}$$ as a linear combination of products \ $ \Pi_{(i,j)\in\mathcal I_1}\E_{ij}^{d_{ij}} (d_{ij}\in \{0,1\})$ in any fixed order. \par
  Thus, the product  $\E_{\a_{i_1}}\E_{\a_{i_2}}\cdots \E_{\a_{i_k}}$ can be expressed in the desired form.
\end{proof}
We now prove Theorem 2.1.
 Define a linear mapping $\theta: \tilde U(\g)\mapsto U(\g)$  by  $$\theta (\E_{\a_i})=e_{\a_i}, \ \theta (\F_{\a_i})=f_{\a_i},\ \theta (\h_{\a_j})=h_{\a_j}, \ \theta (\E_{mm})=e_{mm}\quad \mathbin{\mathrm{for\ all}}\quad i,j.$$  Then $\theta$ is a superalgebra epimorphism, since the generators of $U(\g)$ satisfy the relations (a1)-(a10). Moreover, by the PBW theorem for $U(\g)$ (see \cite[Corollary 1, p.26]{shun}) and Proposition 2.14, $\theta$ sends the elements spanning $\tilde U(\g)$  to  a PBW basis of $U(\g)$; therefore, $\theta$ is an isomorphism.\newpage
\section{The quantum deformation of $U(\g)$}
In this chapter we study the quantum superalgebra  $U_q(\g)$. Throughout the chapter we assume that $\mathbb F$ is a field with char.$\mathbb F =0$, and  $q$ is an indeterminate  over $\mathbb F$.
\subsection{The definition}
 The quantum superalgebra $U_q(\g)$ (\cite[p.1237]{zh}) is defined to be the $\mathbb F(q)$-superalgebra with the generators \ $$  E_{i,i+1},\ F_{i,i+1},\ K_j,K^{-1}_j,\quad 1\leq i<m+n, \ 1\leq j\leq m+n$$ \ and relations
$$\begin{aligned} &(R1)\quad K_iK_j=K_jK_i, K_iK_i^{-1}=1,\\
&(R2)\quad K_iE_{j,j+ 1}K_i^{-1}=q_i^{(\delta_{ij}-\delta_{i,j+ 1})}E_{j,j+ 1}, \quad K_iF_{j,j+ 1}K_i^{-1}=q_i^{-(\delta_{ij}-\delta_{i,j+ 1})}F_{j,j+ 1},\\
&(R3)\quad [E_{i,i+1},F_{j,j+1}]=\delta_{ij}\frac{K_iK^{-1}_{i+1}-K^{-1}_iK_{i+1}}{q_i-q^{-1}_i},\\
&(R4)\quad E_{m,m+1}^2=F_{m,m+1}^2=0,\\&(R5)\quad \begin{aligned} E_{i,i+1}E_{j,j+1}&=E_{j,j+1}E_{i,i+1}\\ F_{i,i+1}F_{j,j+1}&=F_{j,j+1}F_{i,i+1},\end{aligned}\quad |i-j|>1,\\
&(R6)\quad E_{i,i+1}^2E_{j,j+1}-(q+q^{-1})E_{i,i+1}E_{j,j+1}E_{i, i+1}+E_{j,j+1}E_{i,i+1}^2=0\quad (|i-j|=1, i\neq m),\\
&(R7)\quad F_{i,i+1}^2F_{j,j+1}-(q+q^{-1})F_{i,i+1}F_{j,j+1}F_{i, i+1}+F_{j,j+1}F_{i,i+1}^2=0\quad (|i-j|=1, i\neq m),\\
&(R8)\quad [E_{m-1,m+2}, E_{m,m+1}]=[F_{m-1,m+2}, F_{m,m+1}]=0,\end{aligned}$$ where $$q_i=\begin{cases}q,&\text{ $i\leq m$}\\q^{-1},&\text{$i>m$.}\end{cases}  $$
We  also use, for $1\leq i<m+n$,  $E_{\a_i}\ (\text{resp.} \ F_{\a_i}, \ K_{\a_i})$ to denote $$E_{i,i+1}\ (\text{resp.}\ F_{i,i+1},\ K_iK_{i+1}^{-1}).$$  \par
Remark: (1) For $(i,j)\in\mathcal I_0\cup\mathcal I_1$, introduce the notation $$\begin{matrix}(a) \ E_{ij}=E_{ic}E_{cj}-q_c^{-1}E_{cj}E_{ic}, \\(b) \ F_{ij}=-q_cF_{ic}F_{cj}+F_{cj}F_{ic},\end{matrix}
\quad i<c<j, $$ which is easily shown to be independent of $c$. \par
 (2) The parity of the elements \ $$E_{ij},\ F_{ij},\ K_s^{\pm 1}, \quad (i,j)\in\mathcal I_0\cup\mathcal I_1, \ 1\leq s\leq m+n,$$ is defined by \ $\bar{E}_{ij}=\bar F_{ij}=\bar{e}_{ij}\in\mathbb Z_2,\quad \bar K_s^{\pm 1}=\0.$ \par (3) The bracket product  in  $U_q(\g)$ is defined by  $$[x,y]=xy-(-1)^{\bar x\bar y}yx, \quad x,y\in h(U_q(\g)).$$ Then relations (R8), referred to as {\it extra Serre relations} in \cite{flv, psv, kt}, can  be written as (see \cite{kwon, psv, flv,kt})$$\begin{matrix}E_{\a_{m-1}}E_{\a_m}E_{\a_{m+1}}E_{\a_m}+E_{\a_m}E_{\a_{m-1}}E_{\a_m}E_{\a_{m+1}}+E_{\a_{m+1}}E_{\a_m}E_{\a_{m-1}}E_{\a_m}
 \\+E_{\a_m}E_{\a_{m+1}}E_{\a_m}E_{\a_{m-1}}-(q+q^{-1})E_{\a_m}E_{\a_{m-1}}E_{\a_{m+1}}E_{\a_m}=0,\end{matrix}$$
 $$\begin{matrix}F_{\a_{m-1}}F_{\a_m}F_{\a_{m+1}}F_{\a_m}+F_{\a_m}F_{\a_{m-1}}F_{\a_m}F_{\a_{m+1}}+F_{\a_{m+1}}F_{\a_m}F_{\a_{m-1}}F_{\a_m}
 \\+F_{\a_m}F_{\a_{m+1}}F_{\a_m}F_{\a_{m-1}}-(q+q^{-1})F_{\a_m}F_{\a_{m-1}}F_{\a_{m+1}}F_{\a_m}=0.\end{matrix}$$
 \par
Recall the notation  $\Lambda=\mathbb Z\e_1+\mathbb Z\e_2+\cdots +\mathbb Z\e_{m+n}$ and the symmetric bilinear form on $\Lambda$ from Chapter 1.
Since $$(\e_i, \e_j-\e_{j+1})=\begin{cases} \delta_{ij}-\delta_{i,j+1}, &\text{if $i\leq m$}\\
-\delta_{ij}+\delta_{i,j+1}, &\text{if $i>m$,}\end{cases}$$ we obtain $q^{(\e_i, \a_j)}=q_i^{\delta_{ij}-\delta_{i,j+1}}$. Using this we write relations (R2)  as
$$ K_iE_{\a_j}K_i^{-1}=q^{(\e_i,\a_j)}E_{\a_j},\quad
                  K_iF_{\a_j}K_i^{-1}=q^{-(\e_i,\a_j)}F_{\a_j},
                  $$  which gives us  $$ K_{\l}E_{\a_j}K_{\l}^{-1}=q^{(\l,\a_j)}E_{\a_j},\quad
                  K_{\l}F_{\a_j}K_{\l}^{-1}=q^{-(\l,\a_j)}F_{\a_j},
                  $$ where $K_{\l}$ denotes $\Pi^{m+n}_{i=1}K_i^{l_i}$   for
                  $\l=\sum^{m+n}_{i=1}l_i\e_i\in\Lambda$.
   \par
 In what follows, we   abbreviate $U_q(\g)$ to $U_q$.\par
\subsection{The quantum algebra $U_q(\g_{\0})$}

    Let $U_q(\g_{\0})$ be the $\mathbb F(q)$-algebra defined by the even generators  \ $$E_{\a_i}, \ F_{\a_i}, \ K_j^{\pm 1}, \quad i\in [1, m+n)\setminus m, \ 1\leq j\leq m+n$$ \  and the relations (R1)-(R3),  (R5)-(R7). In  this section we show that $U_q(\g_{\0})$ contains no zero divisors.  \par For $t=m,n$, recall the Lie subalgebra $\mathfrak{gl}_t$ of $\g_{\0}$ defined in Chapter 1. First, we consider the quantum group $U_q(\mathfrak{gl}_t)$.
  Set $$\Lambda_m =\mathbb Z\e_1+\cdots +\mathbb Z\e_m, \ \Lambda_n =\mathbb Z\e_{m+1}+\cdots +\mathbb Z\e_{m+n}.$$
  Then $\Lambda_m$ and $\Lambda_n$ each admits a bilinear form as the restriction of that from $\Lambda$.\par
     Define $U_q(\mathfrak{gl}_m)$ (resp. $U_q(\mathfrak{gl}_n)$) (cf. \cite[4.5]{j}) to be  an $\mathbb F(q)$-algebra defined by the generators \ $$E_{\a_i}, F_{\a_i},  K_{\l},\quad \l\in\Lambda_m,\ i=1,\dots, m-1$$$$ (\mathbin{\mathrm{resp.}}\ E_{\a_i}, F_{\a_i},  K_{\l},\quad\l\in \Lambda_n,\ i=m+1, \dots, m+n-1)$$ \ and relations
  $$\begin{aligned}\label{cs} K_0&=1, &&(1)\\
      K_{\l}K_{\mu}&=K_{\mu}K_{\l},&&(2)\\
       K_{\l}E_{\a_i}K_{\l}^{-1}&=q^{(\l, \a_i)}E_{\a_i},&&(3)\\
      K_{\l}F_{\a_i}K_{\l}^{-1}&=q^{-(\l,\a_i)}F_{\a_i},&&(4)\\
     [E_{\a_i}, F_{\a_j}]&=\delta_{ij}\frac{K_{\a_i}-K_{\a_i}^{-1}}{q_i-q_i^{-1}},&&(5)\\
      E_{\a_i}E_{\a_j}-E_{\a_j}E_{\a_i}&=0,\quad\text{ $|i-j|>1$},&&(6)\\
      F_{\a_i}F_{\a_j}-F_{\a_j}F_{\a_i}&=0,\quad\text{ $|i-j|>1$},&&(7)\\
      E_{\a_i}^2E_{\a_j}-(q+q^{-1})E_{\a_i}E_{\a_j}E_{\a_i}+E_{\a_j}E_{\a_i}^2&=0,\quad\text{
      $|i-j|=1$},&&(8)\\
     F_{\a_i}^2F_{\a_j}-(q+q^{-1})F_{\a_i}F_{\a_j}F_{\a_i}+F_{\a_j}F_{\a_i}^2&=0,\quad\text{
      $|i-j|=1$}.&&(9)
  \end{aligned}
$$
Then $U_q(\mathfrak{gl}_m)$ and $U_q(\mathfrak{gl}_n)$ are subalgebras of $U_q(\g_{\0})$ such that $xy=yx$ for $x\in U_q(\mathfrak{gl}_m)$ and $y\in U_q(\mathfrak{gl}_n)$. In addition, we have $$U_q(\g_{\0})=U_q(\mathfrak{gl}_m)\otimes U_q(\mathfrak{gl}_n).$$
Set $$\Pi_m=\mathbb Z\a_1+\cdots +\mathbb Z\a_{m-1}, \ \Pi_n=\mathbb Z\a_{m+1}+\cdots +\mathbb Z\a_{m+n-1}.$$
Then the quantum algebra $U_q(\mathfrak{sl}_m)$ (resp. $U_q(\mathfrak{sl}_n)$) is defined by the generators $$E_{\a_i}, F_{\a_i},  K_{\l}, \quad i=1,\dots, m-1, \ \l\in\Pi_m$$ $$(\mathbin{\mathrm{resp.}}\ E_{\a_i}, F_{\a_i},  K_{\l}, \quad i=m+1,\dots, m+n-1, \ \l\in\Pi_n)$$ and same relations as those for $U_q(\mathfrak{gl}_m)$ (resp. $U_q(\mathfrak{gl}_n)$).\par
It is easy to see that $U_q(\mathfrak{gl}_t)$ admits an anti-automorphism $\omega$ defined by $$\omega E_{\a_i}=F_{\a_i}, \ \omega F_{\a_i}=E_{\a_i},
\ \omega K_{\l}=K_{\l}^{-1}, \ \omega q=q^{-1}.$$
 Let us note that the relation (5) is the same as the relation (R3) in Section 3.1. Also, since $$q_i=q^{(\a_i,\a_i)/2}\ \text{for}\  i=1,\cdots, m+n-1,$$ the relation (5) is also the relation (R4) in \cite[4.3]{j}. Therefore, the definition for $U_q(\mathfrak{sl}_t)$ above coincides with that in \cite[4.3]{j}.\par
For $t=m,n$, let $U^+_t, U^-_t$, and $U^0_t$ be the subalgebra of $U_q(\mathfrak{gl}_t)$ generated respectively by elements $E_{\a_i}$,  $F_{\a_i}$, and  $K_{\l}$. The following lemma can be proved by similar arguments as those for $U_q(\mathfrak{sl}_t)$ (cf. \cite[Theorem 4.21]{j}).
\begin{lemma}a) The multiplication map $$U^-_t\otimes U^0_t\otimes U^+_t\mapsto U_q(\mathfrak{gl}_t),\quad u_1\otimes u_2\otimes u_3\mapsto u_1u_2u_3$$ is an isomorphism of vector spaces.\par b) The algebra $U^+_t$ is isomorphic to the algebra with generators $E_{\a_i}, i=1,\dots, t-1$ and relations (6), (8).\par c) The algebra $U^-_t$ is isomorphic to the algebra with generators $F_{\a_i}, i=1,\dots, t-1$ and relations (7), (9).\par d) The $K_{\lambda}$ with $\l\in \Gamma$ are a basis of $U^0_t$.
\end{lemma}
By \cite[Theorem 4.21(a)]{j}  and Lemma 3.1(a),  $U_q(\mathfrak{sl}_t)$ can be viewed as a subalgebra of
$U_q(\mathfrak{gl}_t)$ which, together with $K_t^{\pm 1}$, generates $U_q(\mathfrak{gl}_t)$.\par
Let $(a_{ij})$ be the Cartan matrix of type $A_{t-1}$.
For $i=1,\dots, t-1$,
 Lusztig's  automorphism $T_{\a_i}$ of $U_q(\mathfrak{gl}_t)$   is defined by
$$ T_{\a_i}E_{\a_j}=\begin{cases}-F_{\a_i}K_{\a_i} &\text{if $i=j$}\\E_{\a_j} &\text{if $a_{ij}=0$}\\-E_{\a_i}E_{\a_j}+q_i^{-1}E_{\a_j}E_{\a_i} &\text{if $a_{ij}=-1$,}\end{cases}$$
$$T_{\a_i}F_{\a_j}=\begin{cases}-K_{\a_i}^{-1}E_{\a_i}&\text{if $i=j$}\\ F_{\a_j} &\text{if $a_{ij}=0$}\\ -F_{\a_j}F_{\a_i}+q_iF_{\a_i}F_{\a_j} &\text{if $a_{ij}=-1$,}\end{cases}$$$$T_{\a_i}K_j=\begin{cases}K_{i+1} &\text{if $j=i$}\\K_i &\text{if $j=i+1$}\\K_j  &\text{if $j\neq i,i+1$.}\end{cases}$$
Note that the restriction of $T_{\a_i}$
to $U_q(\mathfrak{sl}_t)$ is the automorphism defined by Lusztig (\cite[Theorem 3.1]{lu3}). Therefore, to prove that $T_{\a_i}$ is an automorphism of
$U_q(\mathfrak{gl}_t)$  it is sufficient to show that $T_{\a_i}$ preserves  relations involving $K_t^{\pm 1}$, namely, the relations (1)-(4). We leave the verification to the reader.\par

For $t=m,n$, let $W_t$ and $R^+_t$  denote respectively the Weyl group and the set of positive roots of the Lie algebra $\mathfrak{gl}_t$, and let $w_t$ be the longest element
 in $W_t$.   For a reduced expression of $w_m$: $w_m=r_{i_1}r_{i_2}\cdots r_{i_N}$,  we have an ordering
of the set $R^+_m$:$$ \beta_1=\a_{i_1},\ \beta_2=r_{i_1}\a_{i_2}, \dots, \beta_N=r_{i_1}\cdots r_{i_{N-1}}\a_{i_N}.$$
 Introduce in $U_q(\mathfrak{gl}_m)$ the root vectors (cf. \cite{k1, cp})
$$E_{\beta_s}=T_{i_1}\cdots T_{i_{s-1}}E_{i_s},\quad F_{\beta_s}=T_{i_1}\cdots T_{i_{s-1}}F_{i_s}(=\omega E_{\beta_s}),\quad s=1,\dots, N.$$
For $k=(k_1,\dots, k_N)\in \mathbb N^N$, \ set $$E^k=E_{\beta_1}^{k_1}\cdots E_{\beta_N}^{k_N},\quad F^k=\omega E^k.$$
For $k,r\in \mathbb N^N$ and $u\in U^0_m$, let $M_{k,r,u}$ denote the monomial $F^kuE^r$ in $U_q(\mathfrak{gl}_m)$.   Define the height  $$\text{ht}(M_{k,r,u})=\sum_i(k_i+r_i)\text{ht}\beta_i$$ and the degree $$d(M_{k,r,u})=(k_N,k_{N-1},\dots,
k_1,r_1,\dots, r_N, \text{ht}(M_{k,r,u}))\in \mathbb N^{2N+1}.$$ Then $\mathbb N^{2N+1}$ is  a totally ordered semigroup with the lexicographical order $``<"$ such that $u_1<u_2<\cdots <u_{2N+1}$, where $u_i=(\delta_{i1},\dots,
\delta_{i, 2N+1}).$\par By \cite{k1, cp}, we have, for $i<j$, $$(*)\quad E_{\beta_j}E_{\beta_i}-q^{(\beta_i,\beta_j)}E_{\beta_i}
E_{\beta_j}=\sum_{k\in \mathbb N^N}c_kE^k,$$ where $c_k\in Q[q,q^{-1}]$ and $c_k=0$ unless $d(E^k)<d(E_{\beta_i}E_{\beta_j})$.\par Apply a similar discussion to $U_{q}(\mathfrak{gl}_n)$:
Let  $w_n=r_{j_1}\cdots r_{j_M}$ be a reduced expression.
Then we have an ordering for $R^+_n$: $$\gamma_{j_1},\quad r_{j_1}\a_{j_2},\quad\dots,\quad r_{j_1}\cdots r_{j_{M-1}}\a_{j_M}.$$
Introduce in $U_{q}(\mathfrak{gl}_n)$ the  root vectors $E_{\gamma_s}=T_{j_1}\cdots T_{j_{s-1}}E_{j_s}$, $s=1,\dots M$, and denote
$$E'^k=E_{\gamma_1}^{k_1}\cdots E_{\gamma_M}^{k_m},\quad F'^k=\omega E'^k\quad\mathbin{\mathrm{for}}\quad k=(k_1,\dots, k_M)\in\mathbb N^M.$$
Define $$M'_{k,r,u}=F'^kuE'^r\quad\mathbin{\mathrm{for}}\quad k,r\in\mathbb N^M, u\in U^0_n,$$ for which we define the height and the order similarly. We also have the formula $(*)$  in $U_{q}(\mathfrak{gl}_n)$. \par
According to \cite[Proposition 1.7]{ck1} and Lemma 3.1 (a), (d), the elements $$E^k K_1^{l_1}\cdots K_m^{l_m}F^r,\quad k,r\in\mathbb N^N, \
(l_1,\dots, l_m)\in \mathbb N^m$$ form a basis of $U_q(\mathfrak{gl}_m)$, and the elements $$E'^kK_{m+1}^{l_{m+1}}\cdots K_{m+n}^{l_{m+n}}F'^r,\quad k,r\in\mathbb N^M, \ (l_{m+1},\dots, l_{m+n})\in \mathbb N^n$$ form a basis of $U_{q}(\mathfrak{gl}_n)$. It follows that $U_q(\g_{\0})=U_q(\mathfrak{gl}_m)\otimes U_{q}(\mathfrak{gl}_n)$
has a basis  $$E^kK_1^{l_1}\cdots K_m^{l_m}F^r E'^{k'}K_{m+1}^{l'_{m+1}}\cdots K_{m+n}^{l'_{m+n}}F'^{r'},$$ $$k,r\in
\mathbb N^N,\ k',r'\in\mathbb N^M,\ (l_1,\dots, l_m)\in \mathbb N^m,\ (l'_{m+1},\dots, l_{m+n}')\in \mathbb N^n;$$
in other words,  $U_q(\g_{\0})$ has as a basis  consisting of all the elements $M_{k,r,u}M_{k',r',u'}$.\par  Define
the degree of $M_{k,r,u}M_{k',r',u'}$ by $$d(M_{k,r,u}M_{k',r',u'})=(d(M_{k,r,u}), d(M_{k',r',u'}))\in\mathbb N^{2(N+M+1)}.$$ Define the lexicographical order $``<"$ on $\mathbb N^{2(N+M+1)}$ similar to that in $\mathbb N^{2N+1}$,  so that  $\mathbb N^{2(N+M+1)}$ becomes a totally ordered semigroup. \par
Given $s\in\mathbb N^{2(N+M+1)}$, denote by $U^{(s)}$ the linear span over $\mathbb F(q)$ of the
elements $M_{k,r,u}M_{k',r',u'}$ such that $d(M_{k,r,u}M_{k',r',u'})\leq s$. Then  $$U^{(s)},\ s\in\mathbb N^{2(N+M+1)}$$
form a filtration of $U_q(\g_{\0})$. Moreover,   from above discussion we see that  the associated graded  algebra $\text{Gr}U_q(\g_{\0})$ is  generated by the elements $$E_{\a},\ F_{\beta},\ K_{\l}, \quad \a, \beta\in R^+_m\cup R^+_n, \ \l\in \Lambda,$$  and relations
$$\begin{aligned} &K_0=1,\\ &K_{\l}K_{\mu}=K_{\mu}K_{\l},\\ &E_{\a}F_{\beta}=F_{\beta}E_{\a},\\
&K_{\l}E_{\a}=q^{(\l,\a)}E_{\a}K_{\l},\\ &K_{\l}F_{\beta}=q^{-(\l,\beta)}F_{\beta}K_{\l},\\
&E_{\a}E_{\beta}=q^{(\a,\beta)}E_{\beta}E_{\a}, \ \a>\beta,\\ &F_{\a}F_{\beta}=q^{(\a,\beta)}F_{\beta}F_{\a}, \ \a>\beta.\end{aligned}$$
 For $\a\in R^+_m$ and $\beta\in R^+_n$,
   we have  $$E_{\a}E_{\beta}=E_{\beta}E_{\a}\quad \text{and}\quad F_{\a}F_{\beta}=F_{\beta}F_{\a}$$ since $(\a,\beta)=0$.\par
According to the discussion in \cite[Section 1.8, p.480]{ck1}, the $\mathbb F(q)$-algebra $\text{Gr}U_q(\g_{\0})$ has
no zero divisors. Using \cite[Lemma 4.7(a)]{cp}  we have \begin{corollary} The quantum algebra $U_q(\g_{\0})$ has  no zero divisors.\end{corollary}\newpage
\section{The Hopf superalgebra $\tilde U_q$}
In this chapter we study the superalgebra $\tilde U_q$ which has $U_q$ as a quotient, and we also study $\tilde U_q$-modules. \par

    Examples: Let $\mathfrak A=\mathfrak A_{\0}\oplus \mathfrak A_{\1}$ be a  Hopf superalgebra. Then the antipode $S$ is a $\mathbb Z_2$-graded anti-automorphism.\par
   Let $\mathfrak A=\mathfrak A_{\0}\oplus \mathfrak A_{\1}$ be a superalgebra. Then the product  in  $\mathfrak A\otimes \mathfrak A$  is given by $$(a\otimes b)(c\otimes d)=(-1)^{\bar b\bar c}ac\otimes bd\quad\text{for}\quad a,b,c,d\in h(\mathfrak A).$$
  The parity of an element $a\otimes b\in \mathfrak A\otimes \mathfrak A$, $a,b\in h(\mathfrak A$), is given by $\bar a+\bar b\in \mathbb Z_2$. Thus,  $\mathfrak A\otimes \mathfrak A$ is also a superalgebra with  the
  bracket operation  defined by $$[a\otimes b, c\otimes d]=(a\otimes b)(c\otimes d)-(-1)^{(\bar a+\bar b)(\bar c+\bar d)}(c\otimes d)(a\otimes b)\quad \text{for}\ a,b,c,d\in h(\mathfrak A).$$
   \subsection{The definition}

  Let $\tilde{U}_q$ be the $\mathbb F(q)$-superalgebra  defined by  the generators $$E_{i,i+1},\ F_{i,i+1},\ K_j,\ K_j^{-1}, \quad 1\leq i<m+n, \quad 1\leq j\leq m+n$$ and  relations (R1)-(R3) in Section 3.1. The parity of the generators is defined similarly as that for the generators in $U_q$. Then $U_q$ is a quotient of $\tilde{U}_q$.\par Let $$\tilde U^+_q\ (\text{resp.}\quad \tilde U_q^-,\quad \tilde U_q^0)$$ be the subalgebra of $\tilde U_q$ generated by elements $$E_{i,i+1}\ (\text{resp.}\quad F_{i.i+1},\quad K_j^{\pm 1}), \quad 1\leq i<m+n, \ 1\leq j\leq m+n.$$  We use notation like $E_{ij}, F_{ij},$ etc, for the corresponding elements  in both $\tilde{U}_q$ and $U_q$; it will be clear from the context what is meant. \par
  \begin{lemma} There is on $\tilde{U}_q$ a unique structure ($\Delta, \e, S$) of a  Hopf superalgebra
 such that, for all \ $1\leq i<m+n, \ 1\leq j\leq m+n$,
$$ \Delta(E_{\a_i})=E_{\a_i}\otimes 1+K_{\a_i}\otimes E_{\a_i}, \ \Delta (F_{\a_i})=F_{\a_i}\otimes K_{\a_i}^{-1}+1\otimes F_{\a_i}, \
\Delta (K_j)=K_j\otimes K_j, $$$$\ S(E_{\a_i})=-K_{\a_i}^{-1}E_{\a_i},\ S(F_{\a_i})=-F_{\a_i}K_{\a_i},\ S(K_j)=K_j^{-1},$$ $$\epsilon(E_{\a_i})=\e(F_{\a_i})=0, \ \epsilon (K_j)=\epsilon (K^{-1}_j)=1.$$\end{lemma}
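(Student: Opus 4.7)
My plan is the standard verification for a Hopf (super)algebra presented by generators and relations, organised so that the super signs are isolated to a single step. Uniqueness is immediate: since $\tilde U_q$ is generated by $\{E_{\a_i},F_{\a_i},K_j^{\pm 1}\}$ and $\Delta,\e$ are required to be superalgebra homomorphisms while $S$ is a $\mathbb Z_2$-graded anti-automorphism, their prescribed values on the generators determine them.

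For existence, I would first define $\Delta,\e$ on the free $\mathbb F(q)$-superalgebra on the generators (with the declared parities) as superalgebra homomorphisms, and $S$ as a graded anti-automorphism, by the listed formulas. To descend these maps to $\tilde U_q$, I must check that the images of (R1)--(R3) are valid identities in $\tilde U_q\otimes\tilde U_q$, in $\mathbb F(q)$, and in $\tilde U_q$ (with opposite super-multiplication for $S$) respectively. Relation (R1) is trivial since $\Delta(K_j)=K_j\otimes K_j$ are group-like and commute pairwise, and $\e(K_j),S(K_j)$ are obviously multiplicative/anti-multiplicative. For (R2), expanding $\Delta(K_i)\Delta(E_{\a_j})=(K_i\otimes K_i)(E_{\a_j}\otimes K_{\a_j}+1\otimes E_{\a_j})$ and applying (R2) inside each factor reduces the verification to the original (R2); the same works for $F_{\a_j}$ and for the counit and antipode.

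The principal calculation is that $\Delta$ respects (R3). Using the super tensor product rule $(a\otimes b)(c\otimes d)=(-1)^{\bar b\bar c}ac\otimes bd$, I expand
\[
[\Delta(E_{\a_i}),\Delta(F_{\a_j})]
= \Delta(E_{\a_i})\Delta(F_{\a_j}) - (-1)^{\bar{E}_{\a_i}\bar{F}_{\a_j}}\Delta(F_{\a_j})\Delta(E_{\a_i}).
\]
The cross terms $(1\otimes E_{\a_i})(K_{\a_j}^{-1}\otimes F_{\a_j})$ and $(F_{\a_j}\otimes 1)(E_{\a_i}\otimes K_{\a_i})$ collapse via (R2), and the remaining diagonal terms produce $\delta_{ij}\Delta\!\bigl((K_iK^{-1}_{i+1}-K^{-1}_iK_{i+1})/(q_i-q_i^{-1})\bigr)$. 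The delicate case is $i=j=m$, where the super sign $(-1)^{\bar E\bar F}=-1$ turns on; keeping this sign straight is the main obstacle. For $S$, the analogous verification is done in the opposite superalgebra, using $S(xy)=(-1)^{\bar x\bar y}S(y)S(x)$ on each relation; the computation parallels the classical $U_q(sl_n)$ case with the extra sign only at the odd root.

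Once the three maps are defined on $\tilde U_q$, the Hopf axioms---coassociativity $(\Delta\otimes\mathrm{id})\Delta=(\mathrm{id}\otimes\Delta)\Delta$, the counit identity, and the antipode identity $m(S\otimes\mathrm{id})\Delta=\e\cdot 1=m(\mathrm{id}\otimes S)\Delta$---are all equalities of superalgebra (respectively graded anti-/morphism) maps, so it suffices to check them on the generators. A direct substitution verifies each identity, with the $F_{\a_m}$ case of the antipode again the one requiring the most care because of the odd parity. Bookkeeping of super signs, especially at the odd simple root $\a_m$, is the only real difficulty; everything else is a routine adaptation of the construction of the Hopf algebra structure on Drinfeld--Jimbo quantum groups.
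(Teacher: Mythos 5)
Your proposal takes essentially the same route as the paper: both reduce existence to checking that $\Delta$, $\e$, $S$ (defined on the free superalgebra on the generators) preserve the defining relations (R1)--(R3), with the $i=j=m$ case of (R3) singled out as the computation where the odd parity of $E_{\a_m},F_{\a_m}$ introduces the super sign, and the remaining verifications deferred to the classical argument in Jantzen. Your additional remark that the coassociativity, counit, and antipode axioms then need only be checked on generators is implicit in the paper's citation of \cite[4.8]{j} and is a correct completion of the sketch.
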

\begin{proof}To prove the lemma, we need show that relations (R1)-(R3) are preserved by the homomorphisms $\Delta, \e$, and $S$. We verify only the case $i=j=m$ in (R3). The relations (R1), (R2) and the remaining cases in (R3), which are analogous to those in non-super cases (see \cite[Lemma 4.8]{j}), can be verified similarly.
 \par We have $$
\begin{aligned}
\begin{split}
\Delta ([E_{\a_m}, F_{\a_m}])&=[E_{\a_m}\otimes 1+K_{\a_m}\otimes E_{\a_m}, F_{\a_m}\otimes K_{\a_m}^{-1}+1\otimes F_{\a_m}]\\
&=[E_{\a_m}\otimes 1, F_{\a_m}\otimes K_{\a_m}^{-1}]+[K_{\a_m}\otimes E_{\a_m}, F_{\a_m}\otimes K_{\a_m}^{-1}]\\&+[E_{\a_m}\otimes 1, 1\otimes F_{\a_m}]+[K_{\a_m}\otimes E_{\a_m}, 1\otimes F_{\a_m}]\\&=\frac{K_{\a_m}-K_{\a_m}^{-1}}{q_m-q_m^{-1}}\otimes K^{-1}_{\a_m}+(K_{\a_m}\otimes E_{\a_m})(F_{\a_m}\otimes K_{\a_m}^{-1})\\&+(F_{\a_m}\otimes K_{\a_m}^{-1})(K_{\a_m}\otimes E_{\a_m})+E_{\a_m}\otimes F_{\a_m}-E_{\a_m}\otimes F_{\a_m}\\&+K_{\a_m}\otimes \frac{K_{\a_m}-K_{\a_m}^{-1}}{q_m-q_m^{-1}}\\&=\frac{K_{\a_m}\otimes K_{\a_m}-K_{\a_m}^{-1}\otimes K_{\a_m}^{-1}}{q_m-q_m^{-1}}\\&=\Delta (\frac{K_{\a_m}-K_{\a_m}^{-1}}{q_m-q_m^{-1}}),\end{split}\end{aligned}$$
$$\begin{aligned}\begin{split} S([E_{\a_m}, F_{\a_m}])&=-(S(E_{\a_m}) S(F_{\a_m})+S(F_{\a_m})S(E_{\a_m}))\\ &=-(E_{\a_m}F_{\a_m}+F_{\a_m}E_{\a_m})
\\ &=-\frac{K_{\a_m}-K_{\a_m}^{-1}}{q_m-q_m^{-1}}\\ &=S(\frac{K_{\a_m}-K_{\a_m}^{-1}}{q_m-q_m^{-1}}),\end{split}\end{aligned}$$ $$\e([E_{\a_m},F_{\a_m}])=0=\e(\frac{K_{\a_m}-K_{\a_m}^{-1}}{q_m-q_m^{-1}}).$$
\end{proof}
 The following lemma can be easily verified.
\begin{lemma} There are $\mathbb Z_2$-graded  anti-automorphism $\Psi$ and anti-automorphism $\Omega$ of $\tilde{U}_q$  such that  $$\Psi (E_{\a_i})=E_{\a_i}, \ \Psi (F_{\a_i})=F_{\a_i}, \ \Psi (K_j)=K_j, \ \Psi (q)=q^{-1},$$$$\Omega (E_{\a_i})=F_{\a_i}, \ \Omega (F_{\a_i})=E_{\a_i}, \ \Omega (K_j)=K_j^{-1}, \ \Omega (q)=q^{-1}$$ for \ $i=1,\dots,m+n-1, \ j=1,\dots, m+n$.\end{lemma}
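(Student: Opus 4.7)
The plan is to construct $\Psi$ and $\Omega$ first on the free $\mathbb F$-superalgebra $\mathcal T$ generated by the symbols $E_{\alpha_i},F_{\alpha_i},K_j,K_j^{-1}$ (viewing $\mathbb F(q)$ as an $\mathbb F$-algebra on which $q\mapsto q^{-1}$), and then check that the images of relations (R1)--(R3) vanish in $\tilde U_q$, so that $\Psi$ and $\Omega$ descend. For $\Psi$, I would extend the prescription on generators to monomials by the graded anti-multiplicative rule $\Psi(x_1\cdots x_r)=(-1)^{\sum_{i<j}\bar x_i\bar x_j}\Psi(x_r)\cdots\Psi(x_1)$; for $\Omega$, by the plain anti-multiplicative rule $\Omega(x_1\cdots x_r)=\Omega(x_r)\cdots\Omega(x_1)$. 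In both cases well-definedness on $\mathcal T$ is automatic because $\mathcal T$ is free, and both maps have obvious two-sided inverses: checking on generators one sees $\Psi^2$ and $\Omega^2$ act as the identity on each $E_{\alpha_i},F_{\alpha_i},K_j^{\pm 1}$ and on $q$, so both are involutions and in particular bijective.

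The bulk of the work is verifying the relations. Relation (R1) is immediate since $\Psi$ and $\Omega$ send $K_j^{\pm 1}$ to $K_j^{\pm 1}$ or $K_j^{\mp 1}$, and the $K_j$'s mutually commute. For (R2) applied to $\Psi$, since the $K_i$'s are even, the graded sign in $\Psi(K_iE_{\alpha_j}K_i^{-1})$ is trivial and one simply needs $K_i^{-1}E_{\alpha_j}K_i=q_i^{-(\delta_{ij}-\delta_{i,j+1})}E_{\alpha_j}$, which is just (R2) rearranged, matching $\Psi$ applied to the right-hand scalar $q_i^{\delta_{ij}-\delta_{i,j+1}}$ (since $\Psi(q)=q^{-1}$). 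For $\Omega$, an analogous rearrangement together with $\Omega(K_i)=K_i^{-1}$ and $\Omega(q_i)=q_i^{-1}$ turns (R2) for $E$'s into (R2) for $F$'s.

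The delicate relation is (R3), and the only genuinely non-routine case is $i=j=m$, where $E_{\alpha_m},F_{\alpha_m}$ are odd and the bracket is an anticommutator. For $\Psi$ we compute
$$\Psi(E_{\alpha_m}F_{\alpha_m}+F_{\alpha_m}E_{\alpha_m})=-F_{\alpha_m}E_{\alpha_m}-E_{\alpha_m}F_{\alpha_m}=-[E_{\alpha_m},F_{\alpha_m}],$$
because the graded sign contributes $(-1)^{\bar E\bar F}=-1$ in each term. On the other side, $\Psi(K_{\alpha_m}-K_{\alpha_m}^{-1})=K_{\alpha_m}-K_{\alpha_m}^{-1}$ while $\Psi(q_m-q_m^{-1})=-(q_m-q_m^{-1})$, so the right-hand side of (R3) picks up exactly the matching sign. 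For $\Omega$, anti-multiplicativity gives $\Omega([E_{\alpha_i},F_{\alpha_j}])=E_{\alpha_j}F_{\alpha_i}-(-1)^{\bar E\bar F}F_{\alpha_i}E_{\alpha_j}=\delta_{ij}[E_{\alpha_i},F_{\alpha_i}]$, and for the scalar side $\Omega$ swaps the two $K$-monomials in the numerator while introducing an overall $-1$ from $\Omega(q_i-q_i^{-1})=-(q_i-q_i^{-1})$, and these two sign effects cancel. The non-diagonal cases ($i\neq j$) are automatic because both sides of (R3) are already zero.

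I expect the only real bookkeeping obstacle to be tracking the interaction of three sign sources simultaneously in the (R3) verification: the graded anti-multiplicative sign for $\Psi$, the order-reversal caused by anti-multiplicativity, and the sign flip in $q_i-q_i^{-1}$ coming from $q\mapsto q^{-1}$. The remaining relations and cases, as the statement ``It is easy to see'' suggests, are straightforward and follow the same pattern (cf. the analogous check for $\Delta,S,\varepsilon$ carried out explicitly in Lemma 3.1 for the case $i=j=m$).
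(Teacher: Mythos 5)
Your proof is correct and is exactly the verification that the paper's ``It is easy to see that'' invokes: define $\Psi$ and $\Omega$ as semilinear (graded) anti-homomorphisms of the free superalgebra, check that the images of the generating relations (R1)--(R3) again lie in the ideal of relations (the only non-routine case being $i=j=m$ in (R3), where the three sign sources you identify indeed cancel as claimed), and observe both are involutions hence bijective. This mirrors the style of the paper's proof of Lemma 3.1, where the same $i=j=m$ case of (R3) is worked out explicitly for $\Delta,S,\varepsilon$.
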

 Let $\bar \Omega$ be  the linear transformation on  $\tilde{U}_q\otimes \tilde{U}_q$  defined  by  $$\bar\Omega( x\otimes y)= \Omega (y)\otimes \Omega (x) \ \mathbin{\mathrm{for}} \ x, y\in \tilde{U}_q.$$  \begin{lemma} $$ \begin{aligned} &(a)&\quad \bar \Omega(u_1u_2)&=&&\bar \Omega(u_2)\bar \Omega(u_1),\quad  u_1,u_2\in h(\tilde{U}_q\otimes\tilde{U}_q),\\ &(b)&\quad \bar\Omega\Delta&=&&\Delta \Omega.\end{aligned}$$\end{lemma}
\begin{proof} (a) Let $u_1=x_1\otimes x_2,\  u_2=y_1\otimes y_2,$ \ $x_1,x_2,y_1,y_2\in h(\tilde{U}_q)$.  Since  $\Omega $ is an even mapping, i.e.,  $\overline{\Omega (x)}=\bar x$ \ for \ $ x\in h(\tilde U_q),$  we have $$\begin{aligned}\bar\Omega (u_1u_2)&=\bar\Omega ((-1)^{\bar x_2\bar y_1}x_1y_1\otimes x_2y_2)\\ &=(-1)^{\bar x_2\bar y_1}\Omega(x_2y_2)\otimes \Omega (x_1y_1)\\&=(-1)^{\bar x_2\bar y_1}
\Omega(y_2)\Omega (x_2)\otimes \Omega (y_1)\Omega (x_1)\\&=
(\Omega (y_2)\otimes \Omega (y_1))(\Omega (x_2)\otimes \Omega (x_1))\\&=\bar\Omega(y_1\otimes y_2)\bar\Omega (x_1\otimes x_2)\\&=\bar \Omega (u_2)\bar\Omega (u_1).\end{aligned}$$\par
(b) It suffices to show that each generator of $\tilde U_q$ has the same image under both mappings $\bar\Omega\Delta$ and $\Delta \Omega$.\par  For $i=1,\dots, m+n-1$,   we have $$\begin{aligned} \bar\Omega \Delta (E_{\a_i})&=\bar \Omega (E_{\a_i}\otimes 1+ K_{\a_i}\otimes E_{\a_i})\\&=1\otimes F_{\a_i}+F_{\a_i}\otimes K_{\a_i}^{-1}\\&=\Delta (F_{\a_i})\\&=\Delta \Omega (E_{\a_i}).\end{aligned}$$  Similarly we obtain  $$\bar \Omega \Delta (F_{\a_i})=\Delta\Omega (F_{\a_i})\quad\text{for \ $i=1,\dots, m+n-1$}$$ and $$\bar\Omega\Delta (K_j^{\pm 1})=\Delta\Omega (K^{\pm 1}_j) \quad \text{for \ $j=1,\dots, m+n$}.$$\end{proof}
Recall from Chapter 1 the notation $\Lambda$ and  $\Phi^+$.  Set in  $\tilde U_q$ $$K_{\mu}=: \Pi^{m+n}_{i=1}K_i^{l_i}\quad   \text{for  $\mu=l_1\e_1+\cdots +l_{m+n}\e_{m+n}\in\Lambda,$}$$  so that $$K_{\nu}=\Pi K_{\a_i}^{k_i}\quad \text{for $\nu=\sum k_i\a_i\in\mathbb Z\Phi^+(\subseteq \Lambda$).}$$ For each finite sequence $I=(\a_1,\dots, \a_r)$ of simple roots, we denote $$E_I=:E_{\a_1}\cdots E_{\a_r},\ F_I=:F_{\a_1}\cdots F_{\a_r}, \ \text{wt}I=:\a_1+\dots +\a_r.$$ In particular, we let \ $E_{\phi}=F_{\phi}=1.$  Clearly the parity of  $E_I$ (resp. $F_I$) is $$\bar E_I=\sum_{i=1}^r\bar E_{\a_i}\ (\text{resp.}\quad \bar F_I=\sum_{i=1}^r\bar F_{\a_i}).$$ \par

\begin{lemma} Let $I$ be a sequence as above. We can find elements $C^I_{A,B}\in \mathcal A$ indexed by finite sequences of simple roots $A$ and $B$ with $\text{wt} I=\text{wt}A+\text{wt} B$ such that in $\tilde{U}_q$ and in $U_q$ $$\Delta(E_I)=\sum_{A,B}C^I_{A,B}(q)E_A K_{\textrm{wt}B}\otimes E_B$$  $$\Delta (F_I)=\sum_{A,B}C^I_{A, B}(q^{-1})F_A\otimes K^{-1}_{wt A} F_B.$$ We have $c^I_{A,\phi}=\d_{A,I}$ and $c^I_{\phi, B}=\d_{B,I}$.
\end{lemma}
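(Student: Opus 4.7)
The plan is to prove both displayed identities simultaneously by induction on the length $r$ of the sequence $I$, constructing $C^I_{A,B}(q)\in\mathcal A$ recursively from the $E$-formula. Write $I=(\alpha_{i_1},\dots,\alpha_{i_r})$, $I'=(\alpha_{i_1},\dots,\alpha_{i_{r-1}})$, and $\beta=\alpha_{i_r}$, so that $E_I=E_{I'}E_\beta$ and $F_I=F_{I'}F_\beta$. The case $r=0$ is immediate from $\Delta(1)=1\otimes 1$ with $C^\phi_{\phi,\phi}=1$ and $C^\phi_{A,B}=0$ otherwise. For the inductive step of the $E$-formula, I apply $\Delta$ to $E_I=E_{I'}E_\beta$, substitute the inductive expression for $\Delta(E_{I'})$ together with $\Delta(E_\beta)=E_\beta\otimes K_\beta+1\otimes E_\beta$, and expand every product by the super-multiplication rule $(a\otimes b)(c\otimes d)=(-1)^{\bar b\bar c}ac\otimes bd$ and the commutation $K_\mu E_\gamma=q^{(\mu,\gamma)}E_\gamma K_\mu$ that follows from (R2) (the bilinear form $(\cdot,\cdot)$ on $\Lambda$ precisely records the $K$--$E$ adjoint action). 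A summand $C^{I'}_{A,B}(q)\,E_A\otimes K_{wt A}E_B$ then produces exactly two pieces,
$$(-1)^{\bar E_B\bar E_\beta}q^{-(\beta,wt B)}C^{I'}_{A,B}(q)\,E_{(A,\beta)}\otimes K_{wt(A,\beta)}E_B\quad\text{and}\quad C^{I'}_{A,B}(q)\,E_A\otimes K_{wt A}E_{(B,\beta)},$$
where $(A,\beta)$ denotes $A$ with $\beta$ appended. Collecting yields $C^I_{A,B}(q)\in\mathcal A$ and the first identity.

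For the second identity, I run the parallel induction using $\Delta(F_\beta)=F_\beta\otimes 1+K_\beta^{-1}\otimes F_\beta$ and $K_\mu^{-1}F_\gamma=q^{(\mu,\gamma)}F_\gamma K_\mu^{-1}$, obtaining coefficients $D^I_{A,B}(q)\in\mathcal A$ satisfying
\begin{align*}
D^I_{(A,\beta),B}(q)&=(-1)^{\bar F_B\bar F_\beta}q^{(wt B,\beta)}D^{I'}_{A,B}(q),\\
D^I_{A,(B,\beta)}(q)&=D^{I'}_{A,B}(q).
\end{align*}
Comparing with the $C$-recursion after the substitution $q\mapsto q^{-1}$, the identity $D^I_{A,B}(q)=C^I_{A,B}(q^{-1})$ follows by induction from the symmetry $(\beta,wt B)=(wt B,\beta)$ of the form on $\Lambda$ and the parity agreement $\bar E_\gamma=\bar F_\gamma$ for each simple root $\gamma$. (Equivalently, one could apply $\bar\Omega$ to the $E$-identity using Lemma 3.3(b) and $\Omega(E_I)=F_{I^{\mathrm{rev}}}$, but the direct induction makes the coefficient identification most transparent.)

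The boundary conditions $C^I_{A,\phi}=\delta_{A,I}$ and $C^I_{\phi,B}=\delta_{B,I}$ are read off from the recursion: to reach $B=\phi$ at the final step one must select the ``first piece'' $E_\beta\otimes K_\beta$ at every stage, which forces $A=I$ with scalar $1$ throughout (since $wt\,\phi=0$ and $\bar E_\phi=\bar 0$ remove all super-signs and $q$-factors); the argument for $A=\phi$ is symmetric. The main obstacle is not the induction itself but the bookkeeping: first, that the super-sign $(-1)^{\bar E_B\bar E_\beta}$ and the $q$-factor from commuting $E_B$ past $K_\beta$ combine into a coefficient genuinely in $\mathcal A=\mathbb Z[q,q^{-1}]$; and, more delicately, that the $E$- and $F$-recursions match exactly after $q\mapsto q^{-1}$ so that the same symbol $C^I_{A,B}$ can serve in both displays. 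This latter match rests on the two elementary facts just noted (symmetry of $(\cdot,\cdot)$ and $\bar E_\gamma=\bar F_\gamma$), which must be isolated explicitly for the coefficient identification to be rigorous.
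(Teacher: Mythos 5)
Your proof is correct and follows essentially the same route as the paper: induction on the length of $I$, peeling off the last simple root, and using the commutation $K_\mu E_\gamma=q^{(\mu,\gamma)}E_\gamma K_\mu$ (resp.\ the $F$-analogue) to collect terms, which is exactly the mechanism of Jantzen's Lemma 4.12 that the paper cites. The only difference is cosmetic: the paper disposes of the $F$-formula via $\bar\Omega\Delta=\Delta\Omega$ (Lemma 3.3(b)), while you run the parallel induction and match the $C$- and $D$-recursions under $q\mapsto q^{-1}$; your choice makes the coefficient identification explicit and sidesteps the sequence-reversal bookkeeping ($\Omega(E_I)=F_{I^{\mathrm{rev}}}$, so the $\bar\Omega$ route strictly speaking also needs the symmetry $C^{I}_{B^{\mathrm{rev}},A^{\mathrm{rev}}}=C^{I^{\mathrm{rev}}}_{A,B}$), but both variants rest on the same induction.
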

\begin{proof} Thanks to Lemma 4.3(b), it suffices to prove the first formula. By the relation (R2),  we have  in $\tilde{U}_q$ that $$K_{\a_i}E_{\a_j}=q^{(\a_i,\a_j)}E_{\a_j}K_{\a_i},$$ using which  the first formula can be proved
similarly as that in \cite[Lemma 4.12]{j}.\end{proof}
\subsection{$\tilde{U}_q$-modules}
Consider for each extension field $k\supset \mathbb F(q)$  a unitary  free associative $k$-superalgebra
$$M_k= (M_k)_{\0}\oplus (M_k)_{\1}$$ with the homogeneous generators $\xi_{i}$, $i=1,\dots, m+n-1$, for  which the parity  is defined by $\bar\xi_i=\bar{\d}_{im}\in\mathbb Z_2.$   We denote $k\setminus 0$ by $k^*$.  \begin{lemma}For each  $\bold c=(c_1,c_2,\dots,c_{m+n})\in (k^*)^{m+n}$,   there is on $M_k$ a structure as a  $\tilde{U}_q$-module such that, for $i=1,\dots, m+n-1$, $j=1,\dots, m+n$,  and $\xi_{i_1}\cdots \xi_{i_r}\in M_k$,  $$F_{\a_i}\xi_{i_1}\cdots \xi_{i_r}=\xi_i\xi_{i_1}\cdots \xi_{i_r}.$$ $$K_j\xi_{i_1}\cdots \xi_{i_r}=c_j q^{-(\e_j, \a_{i_1}+\cdots +\a_{i_r})}\xi_{i_1}\cdots \xi_{i_r}.$$
 $$E_{\a_i}\xi_{i_1}\cdots \xi_{i_r}=\sum_{1\leq s\leq r,i_s=i}(-1)^{\bar E_{\a_i}{\sum_{l=1}^{s-1}\bar\xi_{i_l}}}$$$$\cdot\frac{c_ic_{i+1}^{-1}q^{-(\a_i,  \a_{i_{s+1}}+\cdots +\a_{i_r})}-c^{-1}_ic_{i+1}q^{(\a_i, \a_{i_{s+1}}+\cdots +\a_{i_r})}}{q_i-q_i^{-1}}\xi_{i_1}\cdots \hat{\xi_{i_s}}\cdots \xi_{i_r}.$$\end{lemma}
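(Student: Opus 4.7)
The plan is to verify directly that the formulas defining the action of $F_{\alpha_i}$, $E_{\alpha_i}$, and $K_j^{\pm 1}$ on monomials $\xi_I=\xi_{i_1}\cdots\xi_{i_r}$ preserve the defining relations (R1)--(R3) of $\tilde U_q$. Since $\tilde U_q$ is presented on its generators by these relations alone, once (R1)--(R3) are checked on each monomial, the proposed formulas extend uniquely and linearly to a well-defined module structure on $M_k$.

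Relations (R1) are immediate: each $K_j$ acts by a scalar, the formula for $K_j^{-1}$ is the reciprocal of that for $K_j$, and scalars commute. For (R2), note that $F_{\alpha_j}$ acts by left multiplication by $\xi_j$, which prepends $\alpha_j$ to the weight tuple; hence the $K_i$-eigenvalue is multiplied by $q^{-(\epsilon_i,\alpha_j)}=q_i^{-(\delta_{ij}-\delta_{i,j+1})}$, giving the desired conjugation law. For $E_{\alpha_j}$, each surviving term in the sum removes one $\xi_j$ and so shifts the weight by $+\alpha_j$, and the same computation applies.

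The heart of the argument is (R3). Using $\bar E_{\alpha_i}=\bar\xi_i$ and $\bar F_{\alpha_j}=\bar\xi_j$ (both equal to $\bar\delta_{im}$ when $i=j$), the graded commutator form of (R3) reads
$$E_{\alpha_i}F_{\alpha_j}\xi_I\;-\;(-1)^{\bar\xi_i\bar\xi_j}F_{\alpha_j}E_{\alpha_i}\xi_I\;=\;\delta_{ij}\,\frac{K_{\alpha_i}-K_{\alpha_i}^{-1}}{q_i-q_i^{-1}}\,\xi_I.$$
Since $F_{\alpha_j}\xi_I=\xi_j\xi_I$, applying $E_{\alpha_i}$ afterwards sums over positions $s=0,1,\ldots,r$ in $\xi_j\xi_I$, with $s=0$ contributing only when $j=i$. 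One recognises $c_ic_{i+1}^{-1}q^{-(\alpha_i,wt\,I)}$ and $c_i^{-1}c_{i+1}q^{(\alpha_i,wt\,I)}$ as the scalars by which $K_iK_{i+1}^{-1}$ and $K_i^{-1}K_{i+1}$ act on $\xi_I$, so the $s=0$ contribution reproduces the right-hand side of (R3) exactly (and vanishes when $j\neq i$). The $s\geq 1$ terms in $E_{\alpha_i}F_{\alpha_j}\xi_I$ match the corresponding summands of $F_{\alpha_j}E_{\alpha_i}\xi_I$ except for the Koszul sign incurred by moving $\xi_j$ past the derivation-like action of $E_{\alpha_i}$, which is precisely $(-1)^{\bar\xi_i\bar\xi_j}$; thus those terms cancel in the super-commutator.

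The one point demanding care is aligning the parity prefactor $(-1)^{\bar E_{\alpha_i}\sum_{l<s}\bar\xi_{i_l}}$ as the position index $s$ shifts by one between the two orderings. The resulting sign discrepancy works out to exactly $\bar E_{\alpha_i}\bar\xi_j=\bar\xi_i\bar\xi_j$, matching the super-bracket sign in (R3); this is the only place where the $\mathbb Z_2$-grading enters in an essential way, and is the main (though routine) obstacle. Once this matching is established, (R1)--(R3) hold term-by-term on every monomial and the lemma follows.
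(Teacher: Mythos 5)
Your proposal follows the same route as the paper: verify relations (R1)--(R3) directly on the monomial basis of $M_k$, with (R2) handled by the weight-shift computation and (R3) being the substantive check, where the new leading factor $\xi_j$ contributes the $K_{\alpha_i}$-term exactly when $j=i$ and the remaining summands cancel in the super-commutator after tracking the Koszul sign $(-1)^{\bar E_{\alpha_i}\bar\xi_j}$. The paper's proof makes this precise by splitting into the cases $i\neq j$ (where the sign is trivial since at most one of $\bar\xi_i,\bar\xi_j$ is odd) and $i=j$ (where the sign $(-1)^{(\bar\xi_i)^2}$ recovers the graded bracket), which is exactly the alignment you identify; the two arguments coincide.
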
\begin{proof} The above formulas define endomorphisms $f_{\a_i}, k_j$, and $e_{\a_i}$ of $M_k$. It is clear that $k_j^{-1}$ is defined by $$k_j^{-1}\xi_{i_1}\cdots \xi_{i_r}=c_j^{-1}q^{(\e_j, \sum_{s=1}^r \a_{i_s})}\xi_{i_1}\cdots \xi_{i_r}.$$ To prove the lemma, we need show that these endomorphisms satisfy  relations (R1)-(R3).  By  straightforward computations we obtain that  (R1) and (R2) are satisfied by these endomorphisms. So we prove only that (R3) is also satisfied by them.\par  In case $i\neq j$, we have $$\begin{aligned} \begin{split} e_{\a_i}f_{\a_j}\xi_{i_1}\cdots \xi_{i_r}&=e_{\a_i}\xi_j\xi_{i_1}\cdots \xi_{i_r}\\ &=\sum_{1\leq s\leq r, i_s=i}(-1)^{\bar E_{\a_i}(\sum_{l=1}^{s-1}\bar\xi_{i_l}+\bar\xi_j)}\\ &\cdot \frac{c_ic_{i+1}^{-1}q^{-(\a_i,\a_{i_{s+1}}+\cdots+\a_{i_r})}-c_i^{-1}c_{i+1}q^{(\a_i,\a_{i_{s+1}}+\cdots +\a_{i_r})}}{q_i-q_i^{-1}}\xi_j\xi_{i_1}\cdots \hat{\xi}_{i_s}\cdots \xi_{i_r}\end{split}\end{aligned}$$ and $$\begin{aligned}\begin{split}f_{\a_j}e_{\a_i}\xi_{i_1}\cdots \xi_{i_r}&=\sum_{1\leq s\leq r, i_s=i}(-1)^{\bar E_{\a_i}(\sum_{l=1}^{s-1}\bar \xi_{i_l})}\\ &\cdot\frac{c_ic_{i+1}^{-1}q^{-(\a_i,\a_{i_{s+1}}+\cdots+\a_{i_r})}-c_i^{-1}c_{i+1}q^{(\a_i,\a_{i_{s+1}}+\cdots +\a_{i_r})}}{q_i-q_i^{-1}}\xi_j\xi_{i_1}\cdots \hat{\xi}_{i_s}\cdots \xi_{i_r}.\end{split}\end{aligned}$$ Since $i\neq j$, so that $\bar\xi_i=\bar E_{\a_i}=0$ or $\bar \xi_j=0,$ it follows
 that $e_{\a_i}f_{\a_j}=f_{\a_j}e_{\a_i}.$\par
In case $i=j$, since $$\begin{aligned} e_{\a_i}f_{\a_i}\xi_{i_1}\cdots\xi_{i_r}&=e_{\a_i}\xi_i\xi_{i_1}\cdots\xi_{i_r}\\ &=\frac{c_ic_{i+1}^{-1}q^{-(\a_i,\sum_{l=1}^r\a_{i_l})}
-c_i^{-1}c_{i+1}q^{(\a_i,\sum_{l=1}^r\a_{i_l})}}{
q_i-q_i^{-1}}\xi_{i_1}\cdots\xi_{i_r}\\&+(-1)^{(\bar \xi_i)^2}f_{\a_i}e_{\a_i}\xi_{i_1}\cdots\xi_{i_r}\\ &=[(k_{\a_i}-k_{\a_i}^{-1})/(q_i-q_i^{-1})+(-1)^{(\bar \xi_i)^2}f_{\a_i}e_{\a_i}]\xi_{i_1}\cdots \xi_{i_r},\end{aligned}$$ we have $$e_{\a_i}f_{\a_i}=(k_{\a_i}-k_{\a_i}^{-1})/(q_i-q_i^{-1})+(-1)^{(\bar \xi_i)^2}f_{\a_i}e_{\a_i}.$$ This completes the proof.
 \end{proof}
  We denote this module  by $ M_k(\bold c)$. By a similar argument we can show that,
 for each  $\bold c\in  (k^*)^{ m+n}$,  there is  a unique $\tilde{U}_q$-module structure  on $M_k$ such that for all
  $i,j$ and all  products $\xi_{i_1}\cdots \xi_{i_r}\in M_k$
   $$E_{\a_i}\xi_{i_1}\cdots \xi_{i_r}=\xi_i\xi_{i_1}\cdots \xi_{i_r},$$$$K_j\xi_{i_1}\cdots \xi_{i_r}=c_j q^{(\e_j, \a_{i_1}+\cdots +\a_{i_r})}\xi_{i_1}\cdots \xi_{i_r},$$ $$F_{\a_i}\xi_{i_1}\cdots \xi_{i_r}$$$$=\sum_{1\leq s\leq r, i_s=i}(-1)^{\overline F_{\a_i}{\sum_{l=1}^s\bar\xi_{i_l}} }\frac{c_i^{-1}c_{i+1}q^{-(\a_i, \a_{i_{s+1}}+\cdots +\a_{i_r})}-c_ic_{i+1}^{-1}q^{(\a_i,\a_{i_{s+1}}+\cdots +\a_{i_r})}}{q_i-q_i^{-1}}$$$$\cdot\xi_{i_1}\cdots \hat{\xi_{i_s}}\cdots \xi_{i_r}.$$
We denote this $\tilde{U}_q$-module by $M_k'(\bold c)$. \par Using Lemma 4.5 and the $\tilde U_q$-modules $M_k(\bold c)$, $M'_k(\bold c)$, Jantzen's argument (\cite[Proposition 4.16]{j}) can be applied almost verbatim to obtain the following proposition. \begin{proposition}
The elements $F_IK_{\mu}E_J$ with $\mu\in \Lambda$ and $I,J$ finite sequences of simple roots are a basis of $\tilde{U}_q$.\end{proposition}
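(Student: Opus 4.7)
The plan is to follow Jantzen's argument \cite[Prop. 4.16]{j}, adapted to the super setting via the $\tilde U_q$-modules $M_k(\bold c)$ and $M'_k(\bold c)$ furnished by Lemma 3.4.

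Spanning is the easy part. The defining relations (R1)--(R3) already suffice to straighten any monomial in the generators into the desired form: (R2) moves any $K_j^{\pm 1}$ past an $E$ or $F$ at the cost of a scalar, (R1) then collapses all $K$'s into a single $K_\mu$, and (R3) rewrites $E_{\a_i}F_{\a_j}$ as $\pm F_{\a_j}E_{\a_i}$ modulo $\tilde U_q^0$, strictly decreasing the number of ``bad'' inversions. Crucially, $\tilde U_q$ carries \emph{no} Serre-type relations among the $E$'s alone or the $F$'s alone (those are only imposed upon passing to $U_q$), so the subalgebras $\tilde U_q^+$ and $\tilde U_q^-$ are free associative superalgebras spanned by $\{E_J\}$ and $\{F_I\}$ respectively, while $\tilde U_q^0 \cong \mathbb F(q)[K_1^{\pm 1},\dots,K_{m+n}^{\pm 1}]$ has $\{K_\mu\}_{\mu\in\Lambda}$ as a basis. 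Hence the products $F_I K_\mu E_J$ span $\tilde U_q$.

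For linear independence, suppose $\sum a_{I,\mu,J}F_I K_\mu E_J=0$. The weight grading on $\tilde U_q$ (where $E_\a$, $F_\a$, $K_\mu$ have weights $\a$, $-\a$, $0$) lets us restrict to triples sharing a common value of $\mathrm{wt}(J)-\mathrm{wt}(I)$. Fix an algebraically closed extension $k\supset\mathbb F(q)$ and act on the vacuum vector $1\in M_k(\bold c)$. Since $E_{\a_i}\cdot 1=0$ by Lemma 3.4, only the $J=\phi$ terms contribute and we obtain
$$\sum_{I,\mu} a_{I,\mu,\phi}\Bigl(\prod_j c_j^{\mu_j}\Bigr)\xi_I = 0 \quad \text{in } M_k.$$
The $\xi_I$ are linearly independent in the free associative superalgebra $M_k$, and the Laurent monomials $\prod_j c_j^{\mu_j}$ are linearly independent functions of $\bold c\in(k^*)^{m+n}$, so $a_{I,\mu,\phi}=0$ for all $I,\mu$.

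The main obstacle is the case $J\neq\phi$, which I would address by induction on the length of $J_0$, acting the relation on the monomial $\xi_{J_0}\in M_k(\bold c)$ rather than on $1$. By the explicit formula in Lemma 3.4, $E_J\cdot\xi_{J_0}=0$ whenever $|J|>|J_0|$, equals a nonzero scalar multiple of $1$ (a specific rational function of $\bold c$ and $q$) when $J=J_0$, and otherwise lands in $M_k$ of strictly smaller degree; after the diagonal $K_\mu$-action and the left multiplication by $F_I$, one reads off $a_{I,\mu,J_0}$ (for the longest $J_0$ appearing) by comparing coefficients of monomials in $M_k$ and again invoking Vandermonde in $\bold c$ to separate the $K_\mu$'s, with the inductive hypothesis absorbing the shorter contributions. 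A parallel argument in $M'_k(\bold c)$, where the roles of $E$ and $F$ are interchanged, serves as a check for the symmetric part. The combinatorial bookkeeping that matches $J$ to $J_0$ is the only nontrivial technical step, and proceeds exactly as in \cite[Prop. 4.16]{j}, with signs controlled by the parities $\bar E_{\a_i}=\bar F_{\a_i}=\bar\d_{im}$ appearing in the action formulas.
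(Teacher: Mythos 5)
Your spanning argument is correct, and the $J=\phi$ step of the independence argument is also correct. But your plan for $J\neq\phi$ deviates from the paper's (i.e., Jantzen's) route and has a genuine gap.

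The paper's proof uses Lemma~3.4 and \emph{both} modules $M_k(\mathbf c)$, $M'_k(\mathbf c')$ in an essential way, via the coproduct. One applies $\Delta(u)$ for $u=\sum a_{I,\mu,J}F_IK_\mu E_J$ to $1\otimes 1'\in M_k(\mathbf c)\otimes M'_k(\mathbf c')$. By Lemma~3.4, $\Delta(F_I)=\sum_{A,B}C^I_{A,B}(q^{-1})F_AK^{-1}_{wtB}\otimes F_B$ with $c^I_{A,\phi}=\delta_{A,I}$ and $\Delta(E_J)=\sum_{A,B}C^J_{A,B}(q)E_A\otimes K_{wtA}E_B$ with $c^J_{\phi,B}=\delta_{B,J}$. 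Since $E$'s kill $1\in M_k(\mathbf c)$ and $F$'s kill $1'\in M'_k(\mathbf c')$, the only surviving term is the one with $A=\phi$ (from $E_J$) and $B=\phi$ (from $F_I$), and one gets a nonzero scalar multiple of $\xi_I\otimes\xi_J$, manifestly separating every triple $(I,\mu,J)$ at once once one varies $(\mathbf c,\mathbf c')$. Your write-up never uses Lemma~3.4, and relegates $M'_k$ to a ``check,'' which misses the point.

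Your alternative — act on $\xi_{J_0}\in M_k(\mathbf c)$ and induct on $|J_0|$ — stalls at the following point: after restricting to a fixed weight $\nu=\mathrm{wt}(J)$, \emph{all} the sequences $J$ with $\mathrm{wt}(J)=\nu$ have the same length. For any such $J$ (not only $J=J_0$), $E_J\xi_{J_0}$ is a scalar multiple of $1$, typically nonzero. So the ``coefficient of $\xi_I$'' you read off after applying $F_IK_\mu$ is not $a_{I,\mu,J_0}$ but a $\mathbf c$-dependent linear combination $\sum_{J:\,\mathrm{wt}(J)=\nu}(E_J\xi_{J_0})a_{I,\mu,J}$. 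Inducting on length does not help, since these $J$'s all have the same length; you would need the square matrix $\big(E_J\xi_{J'}\big)_{\mathrm{wt}(J)=\mathrm{wt}(J')=\nu}$ to be invertible (a Shapovalov-type nondegeneracy on the free superalgebra), which you neither state nor prove. The ``Vandermonde in $\mathbf c$'' you invoke is already spent separating the $K_\mu$ factors and cannot simultaneously diagonalize the $J$-dependence, because the $\mathbf c$-dependence of $E_J\xi_{J_0}$ is entangled with that of $K_\mu$. The coproduct/tensor trick is precisely what sidesteps this: the $\delta$-conditions of Lemma~3.4 give triangularity for free.

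Replace your $J\neq\phi$ paragraph with the tensor-module computation using $\Delta$ and Lemma~3.4 as above, and drop the induction on $|J_0|$; then the argument is complete.
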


  Define the $\mathbb F(q)$-linear mapping $$\theta: \tilde{U}_q^-\otimes \tilde{U}_q^0\otimes\tilde{U}_q^+\longrightarrow \tilde{U}_q$$ by $\theta ( u^-\otimes u^0\otimes u^+)= u^-u^0u^+ $
   for $u^-\in \tilde{U}_q^-, u^0\in \tilde{U}_q^0$, and $u^+\in \tilde{U}_q^+$. It then follows from Proposition 4.6 that
   $\theta$ is an isomorphism. \par
\subsection{The structure of $\tilde{U}_q$}
 In the superalgebra $\tilde{U}_q$, set  $$\begin{aligned}\label{bb}
 u_{ex}^+&=:E_{\a_{m-1}}E_{\a_m}E_{\a_{m+1}}E_{\a_m}+E_{\a_m}E_{\a_{m-1}}E_{\a_m}E_{\a_{m+1}}
 +E_{\a_{m+1}}E_{\a_m}E_{\a_{m-1}}E_{\a_m}
 \\&+E_{\a_m}E_{\a_{m+1}}E_{\a_m}E_{\a_{m-1}}-(q+q^{-1})E_{\a_m}E_{\a_{m-1}}E_{\a_{m+1}}E_{\a_m},\end{aligned}$$ and set, for $i\neq j$,   $$
 u^+_{ij}=\begin{cases} E^2_{\a_i}E_{\a_j}-(q+q^{-1})E_{\a_i}E_{\a_j}E_{\a_i}+E_{\a_j}E_{\a_i}^2, & \text{if $|i-j|=1$, $i\neq m$}\\E_{\a_i}E_{\a_j}-E_{\a_j}E_{\a_i},&\text{if $|i-j|>1$}.\end{cases}$$
 Let $u_{ex}^-=:\Omega(u_{ex}^+)$ and $u^-_{ij}=:\Omega (u^+_{ij}).$

\begin{lemma} The following identities hold in $\tilde{U}_q$. $$\begin{aligned} &(1)\quad [F_{\a_s}, u^+_{ij}]&=&0,\quad 1\leq s<m+n, \\ &(2)\quad [F_{\a_s}, E_{\a_m}^2]&=&0,\quad 1\leq s< m+n, \\
 &(3)\quad [F_{\a_s}, u^+_{ex}]&=&0,\quad 1\leq s<m+n, \ s\notin\{m-1, m, m+1\}.\end{aligned}$$\end{lemma}\begin{proof} (1)  The formula can be proved similarly as in non-super cases (see \cite[4.19]{j}) except for the cases where $s=m\in \{i,j\}$. We prove it only in the case where $s=m$ and $(i,j)=(m-1,m)$, and leave the proof in remaining cases to the interested reader. \par
  Since $[F_{\a_m}, -]$ is a derivation of $\tilde U_q$ and $[F_{\a_m}, E_{\a_{m-1}}]=0$,
 we have $$\begin{aligned}\begin{split}  [F_{\a_m}, u^+_{m-1,m}]&=[F_{\a_m}, E_{\a_{m-1}}^2E_{\a_m}-(q+q^{-1})E_{\a_{m-1}}E_{\a_m}E_{\a_{m-1}}+E_{\a_m}E_{\a_{m-1}}^2]\\&=
 E_{\a_{m-1}}^2\frac{K_{\a_m}-K_{\a_m}^{-1}}{q_m-q_m^{-1}}-(q+q^{-1})E_{\a_{m-1}}\frac{K_{\a_m}-K_{\a_m}^{-1}}{q_m-q_m^{-1}}
 E_{\a_{m-1}}\\&+\frac{K_{\a_m}-K_{\a_m}^{-1}}{q_m-q_m^{-1}}E_{\a_{m-1}}^2. \end{split}\end{aligned}$$
 By the relation (R2), we have $$\begin{aligned} K_{\a_m}E_{\a_{m-1}}&=q_m^{-1}E_{\a_{m-1}}K_{\a_m}\\ K_{\a_m}^{-1}E_{\a_{m-1}}&=q_mE_{\a_{m-1}}K_{\a_m}^{-1}.\end{aligned}$$ Then continuing our computation above,
  with $q+q^{-1}=q_m+q_m^{-1}$, we obtain
 $$[F_{\a_m}, u^+_{m-1,m}]=\frac{1}{q_m-q_m^{-1}}E_{\a_{m-1}}^2f(K),$$ where $$\begin{aligned} f(K)&=K_{\a_m}-K_{\a_m}^{-1}
 -(q_m+q_m^{-1})(q_m^{-1}K_{\a_m}-q_mK_{\a_m}^{-1})+q_m^{-2}K_{\a_m}-q_m^2K_{\a_m}^{-1}\\&=0,\end{aligned}$$
  so that $[F_{\a_m}, u^+_{m-1,m}]=0$, as desired.\par
  (2) By the relation (R3), the only nontrivial verification is the case  $s=m$, in which we have
 $$\begin{aligned}\label{a}
  [F_{\a_m}, E_{\a_m}^2]&=[F_{\a_m}, E_{\a_m}]E_{\a_m}-E_{\a_m}[F_{\a_m}, E_{\a_m}]\\
                       &=\frac{K_{\a_m}-K_{\a_m}^{-1}}{q_m-q_m^{-1}}E_{\a_m}
                       -E_{\a_m}\frac{K_{\a_m}-K_{\a_m}^{-1}}{q_m-q_m^{-1}}\\
                       &=0,
  \end{aligned}$$
  where the last equality follows from the fact that $K_{\a_m}E_{\a_m}=E_{\a_m}K_{\a_m}.$\par
(3) is an immediate consequence of (R3).

 \end{proof}
Let $\langle E^2_{\a_m}\rangle$ and $\langle u^+_{m-1,m+1}\rangle$ be the two-sided ideals of $\tilde{U}_q^+$ generated  respectively by  $E^2_{\a_m}$ and  $$ u^+_{m-1,m+1}=E_{\a_{m-1}}E_{\a_{m+1}}-E_{\a_{m+1}}E_{\a_{m-1}}.$$

 \begin{lemma} $$\begin{aligned} &(1) \quad [F_{\a_{m-1}}, u^+_{ex}] &&\in \langle E^2_{\a_m}\rangle\\
&(2)\quad [F_{\a_{m+1}}, u^+_{ex}] &&\in \langle E^2_{\a_m}\rangle\\&(3)\quad
[F_{\a_m}, u^+_{ex}]&&\in \langle u^+_{m-1,m+1}\rangle.\end{aligned}$$\end{lemma}
\begin{proof} (1)
By the relation (R2), we have $K_{\a_{m-1}}E_{\a_m}=q_m^{-1}E_{\a_m}K_{\a_{m-1}}$, applying which we have
 $$\begin{aligned}\label{asc}
[F_{\a_{m-1}}, E_{\a_{m-1}}E_{\a_m}E_{\a_{m+1}}E_{\a_m}]&=-E_{\a_m}E_{\a_{m+1}}E_{\a_m}\frac{q_m^{-2}K_{\a_{m-1}}-q_m^2
K_{\a_{m-1}}^{-1}}{q_{m-1}-q_{m-1}^{-1}}\\
[F_{\a_{m-1}}, E_{\a_m}E_{\a_{m-1}}E_{\a_{m+1}}E_{\a_m}]&=-E_{\a_m}E_{\a_{m+1}}E_{\a_m}\frac{q_m^{-1}K_{\a_{m-1}}-q_{m}
K_{\a_{m-1}}^{-1}}{q_{m-1}-q_{m-1}^{-1}}.\end{aligned}$$
Using these identities, together with  $$[F_{\a_{m-1}}, E_{\a_m}E_{\a_{m+1}}E_{\a_m}E_{\a_{m-1}}]
=-E_{\a_m}E_{\a_{m+1}}E_{\a_m}\frac{K_{\a_{m-1}}-K_{\a_{m-1}}^{-1}}{q_{m-1}-q_{m-1}^{-1}},$$
we obtain  $$[F_{\a_{m-1}}, E_{\a_{m-1}}E_{\a_m}E_{\a_{m+1}}E_{\a_m}+
E_{\a_m}E_{\a_{m+1}}E_{\a_m}E_{\a_{m-1}}-(q+q^{-1})E_{\a_m}E_{\a_{m-1}}E_{\a_{m+1}}E_{\a_m}]
=0.$$ On the other hand,  we have by the relation (R3)
$$\label{abc}[F_{\a_{m-1}}, E_{\a_m}E_{\a_{m-1}}E_{\a_m}E_{\a_{m+1}}]=-E_{\a_m}\frac{K_{\a_{m-1}}-K_{\a_{m-1}}^{-1}}{q_{m-1}-q_{m-1}^{-1}}
E_{\a_m}E_{\a_{m+1}}\in \langle E_{\a_m}^2\rangle$$ and similarly
$[F_{\a_{m-1}}, E_{\a_{m+1}}E_{\a_m}E_{\a_{m-1}}E_{\a_m}]\in \langle E_{\a_m}^2\rangle.$
Thus we get  $$[F_{\a_{m-1}}, u^+_{ex}]\in \langle E_{\a_m}^2\rangle.$$\par
(2) can be proved similarly.\par
The proof of (3) involves a long computation; we put  it in the last subsection.
\end{proof}

Let $ \mathscr I$ (resp. $\mathscr I^+$; \ $\mathscr I^-$) be the two-sided ideal in $\tilde{U}_q$ (resp. $\tilde{U}^+_q$; \ $\tilde{U}^-_q$) generated by the homogeneous elements $$u^{\pm}_{ij},\  E_{\a_m}^2,\ F_{\a_m}^2,\ u^{\pm}_{ex}\ (\text{resp.}\quad u^+_{ij}, \ E^2_{\a_m}, \ u^{+}_{ex};\quad u^-_{ij},\ F^2_{\a_m},\ u^{-}_{ex})$$  for
all $i,j$. Recall the mapping $\theta$ at the end of  Section 4.2.
 \begin{lemma} The two-sided ideal in $\tilde{U}_q$ generated by the elements
$$ E^2_{\a_m},\ u^{+}_{ex},\  u^+_{ij},\quad i,j=1,\dots, m+n-1$$  equals the image of $\tilde{U}_q^-\otimes \tilde{U}_q^0\otimes \mathscr I^+$ under $\theta$. \end{lemma} \begin{proof}
Set  $$V=:\theta(\tilde{U}_q^-\otimes \tilde{U}_q^0\otimes \mathscr I^+).$$ Then $V$ is contained in the two-sided ideal
 of $\tilde{U}_q$ generated by $E^2_{\a_m}, u^{+}_{ex}$ and all
$u^+_{ij}$. To prove the lemma, it suffices to show that $V$ itself is a  two-sided ideal in $\tilde U_q$. By the relations (R1)-(R3),  $V$ is invariant  under  left multiplication by each element $u\in \tilde U_q^-\cup\tilde U^0_q\cup\tilde U^+_q$, that is,  $V$ is a left ideal in $\tilde U_q$.\par
Recall from Section 4.1 the notation $E_I$.
As a vector space $V$ is spanned by the  elements $$uu_{ij}^+E_I \ (1\leq i,j< m+n),\quad uE_{\a_m}^2E_I,\quad uu^+_{ex}E_I$$ for $u\in\tilde U_q$.   Then  $V$ is invariant under the right multiplication by the  elements $u\in \tilde U_q^+\cup \tilde U_q^0$.  To complete the proof, we  show that $V$ is also invariant under  right multiplication by all $F_{\a_s}$.\par For any fixed $s=1, \dots, m+n-1$, since $[-, F_{\a_s}]$ is a right derivation (see Section 2.1),  we have  $$u u_{ij}^+E_IF_{\a_s}=(-1)^{(\bar E_I+\bar u^+_{ij})\bar F_{\a_s}}uF_{\a_s}u^+_{ij}E_I+uu^+_{ij}[E_I, F_{\a_s}]+(-1)^{\bar E_I\bar F_{\a_s}}u[u^+_{ij}, F_{\a_s}]E_I,$$ for which the first summand is in $V$; the second summand is also in $V$ since $$[E_I, F_{\a_s}]\in \tilde U^0_q\tilde U^+_q$$ by the relation $(R3)$;  the third summand equals 0 by Lemma 4.8(1).  Thus, we have $uu^+_{ij}E_IF_{\a_s}\in V$. By a similar argument, together with Lemma 4.8 and 4.9,  we obtain $$uE_{\a_m}^2E_IF_{\a_s}, \ uu_{ex}^+E_IF_{\a_s}\in V;$$  therefore, $V$ is invariant under right multiplication by $F_{\a_s}$.
 \end{proof}
Applying $\Omega$, we obtain \begin{lemma} The two-sided ideal of $\tilde{U}_q$ generated by the elements
$$u^-_{ij}\ (1\leq i,j<m+n),\quad  F^2_{\a_m},\ u^{-}_{ex}$$ is equal to the image of $\mathscr I^-\otimes \tilde{U}_0\otimes\tilde{U}_q^+ $ under  $\theta$. \end{lemma}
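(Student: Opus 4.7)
The plan is to deduce the statement directly from Lemma 3.9 by transporting both sides through the anti-automorphism $\Omega$ of Lemma 3.2. I first observe that $\Omega$ is an involution: checking on generators, $\Omega^2(E_{\a_i}) = \Omega(F_{\a_i}) = E_{\a_i}$, $\Omega^2(K_j) = \Omega(K_j^{-1}) = K_j$, and $\Omega^2(q) = q$. In particular $\Omega$ is a bijection. Being an anti-automorphism, it sends two-sided ideals to two-sided ideals, and restricts to a $\mathbb{F}$-linear bijection $\tilde U_q^+ \leftrightarrow \tilde U_q^-$ that preserves $\tilde U_q^0$ setwise.

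Next I track the generating elements and the triangular piece separately. On the generators of the ideal in Lemma 3.9: by the very definitions preceding Lemma 3.7, $\Omega(u^+_{ij}) = u^-_{ij}$ and $\Omega(u^+_{ex}) = u^-_{ex}$; moreover $\Omega(E^2_{m,m+1}) = \Omega(E_{m,m+1})^2 = F^2_{m,m+1}$ since $\Omega$ reverses products. Letting $J^\pm$ denote the two-sided ideals of $\tilde U_q$ appearing in Lemmas 3.9 and 3.10 respectively, this yields $\Omega(J^+) = J^-$. The same identification applied inside $\tilde U_q^+$ gives $\Omega(\mathscr{I}^+) = \mathscr{I}^-$, since $\mathscr{I}^\pm$ are generated in $\tilde U_q^\pm$ by the corresponding pairs of elements.

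Finally, for any $u_1 \in \tilde U_q^-$, $u_2 \in \tilde U_q^0$, $u_3 \in \mathscr{I}^+$, anti-multiplicativity of $\Omega$ gives
$$\Omega(u_1 u_2 u_3) = \Omega(u_3)\,\Omega(u_2)\,\Omega(u_1) \in \mathscr{I}^-\,\tilde U_q^0\,\tilde U_q^+,$$
so $\Omega\bigl(\theta(\tilde U_q^- \otimes \tilde U_q^0 \otimes \mathscr{I}^+)\bigr) \subseteq \theta(\mathscr{I}^- \otimes \tilde U_q^0 \otimes \tilde U_q^+)$; applying $\Omega$ to the analogous image in the other direction and using $\Omega^2 = \mathrm{id}$ yields the reverse inclusion, hence equality. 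Combining with Lemma 3.9's identity $J^+ = \theta(\tilde U_q^- \otimes \tilde U_q^0 \otimes \mathscr{I}^+)$ gives $J^- = \Omega(J^+) = \theta(\mathscr{I}^- \otimes \tilde U_q^0 \otimes \tilde U_q^+)$, as required. There is essentially no obstacle here: the argument is a formal symmetry, and the only bookkeeping is to verify that $\Omega$ simultaneously swaps the $+$ and $-$ subalgebras and reverses the order of multiplication, which is exactly what an anti-automorphism does.
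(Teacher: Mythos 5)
Your proof is correct and is precisely the paper's intended argument: the paper offers Lemma 3.10 with the single remark ``By applying $\Omega$, one gets,'' and your write-up simply fills in the routine bookkeeping (involutivity of $\Omega$, its effect on the generators and on the triangular factors, and the resulting transport of the equality of Lemma 3.9). Nothing is missing; in particular you are right that $\Omega$ is a plain (not $\mathbb{Z}_2$-graded) anti-automorphism, so no parity signs enter.
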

\subsection{The proof of Lemma 4.8(3)}
 By the relation (R2), we have $$\begin{aligned} \frac{K_{\a_m}-K_{\a_m}^{-1}}{q_m-q_m^{-1}}E_{\a_{m+1}}
&=E_{\a_{m+1}}\frac{q_mK_{\a_m}-q_m^{-1}K_{\a_m}^{-1}}{q_m-q_m^{-1}}\\\frac{K_{\a_m}-K_{\a_m}^{-1}}{q_m-q_m^{-1}}E_{\a_{m-1}}
&=E_{\a_{m-1}}\frac{q_m^{-1}K_{\a_m}-q_mK_{\a_m}^{-1}}{q_m-q_m^{-1}}.\end{aligned}$$
Applying these identities and the property of derivation in Section 2.1, we have $$\begin{aligned}\label{ss}[F_{\a_m},\ &E_{\a_{m-1}}E_{\a_m}E_{\a_{m+1}}E_{\a_m}+E_{\a_m}E_{\a_{m-1}}E_{\a_m}E_{\a_{m+1}}]\\
&=E_{\a_{m-1}}E_{\a_{m+1}}E_{\a_m}\frac{q_mK_{\a_m}-q_m^{-1}K_{\a_m}^{-1}}{q_m-q_m^{-1}}-E_{\a_{m-1}}E_{\a_m}E_{\a_{m+1}}
\frac{K_{\a_m}-K_{\a_m}^{-1}}{q_m-q_m^{-1}}\\ &+E_{\a_{m-1}}E_{\a_m}E_{\a_{m+1}}\frac{K_{\a_m}-K_{\a_m}^{-1}}{q_m-q_m^{-1}}
-E_{\a_m}E_{\a_{m-1}}E_{\a_{m+1}}\frac{q_mK_{\a_m}-q_m^{-1}K_{\a_m}^{-1}}{q_m-q_m^{-1}}\\
&=E_{\a_{m-1}}E_{\a_{m+1}}E_{\a_m}\frac{q_mK_{\a_m}-q_m^{-1}K_{\a_m}^{-1}}{q_m-q_m^{-1}}
-E_{\a_m}E_{\a_{m-1}}E_{\a_{m+1}}\frac{q_mK_{\a_m}-q_m^{-1}K_{\a_m}^{-1}}{q_m-q_m^{-1}}\end{aligned}$$
and similarly
$$\begin{aligned} &[F_{\a_m},\ E_{\a_{m+1}}E_{\a_m}E_{\a_{m-1}}E_{\a_m}+E_{\a_m}E_{\a_{m+1}}E_{\a_m}E_{\a_{m-1}}]\\
&=E_{\a_{m+1}}E_{\a_{m-1}}E_{\a_m}\frac{q_m^{-1}K_{\a_m}-q_mK_{\a_m}^{-1}}{q_m-q_m^{-1}}-E_{\a_m}E_{\a_{m+1}}E_{\a_{m-1}}
\frac{q_m^{-1}K_{\a_m}-q_mK_{\a_m}^{-1}}{q_m-q_m^{-1}}.\end{aligned}$$
We also have $$\begin{aligned} &[F_{\a_m}, \ E_{\a_m}E_{\a_{m-1}}E_{\a_{m+1}}E_{\a_m}]\\
&=\frac{K_{\a_m}-K_{\a_m}^{-1}}{q_m-q_m^{-1}}
E_{\a_{m-1}}E_{\a_{m+1}}E_{\a_m}-E_{\a_m}E_{\a_{m-1}}E_{\a_{m+1}}\frac{K_{\a_m}-K_{\a_m}^{-1}}{q_m-q_m^{-1}}\\
&=E_{\a_{m-1}}E_{\a_{m+1}}E_{\a_m}\frac{K_{\a_m}-K_{\a_m}^{-1}}{q_m-q_m^{-1}}
-E_{\a_m}E_{\a_{m-1}}E_{\a_{m+1}}\frac{K_{\a_m}-K_{\a_m}^{-1}}{q_m-q_m^{-1}}.\end{aligned}$$
Applying these formulas we have
$$\begin{aligned}\label{tt}
&[F_{\a_m}, u^+_{ex}]\\&=[F_{\a_m}, E_{\a_{m-1}}E_{\a_m}E_{\a_{m+1}}E_{\a_m}+E_{\a_m}E_{\a_{m-1}}E_{\a_m}E_{\a_{m+1}}\\
&+ E_{\a_{m+1}}E_{\a_m}E_{\a_{m-1}}E_{\a_m}+E_{\a_m}E_{\a_{m+1}}E_{\a_m}E_{\a_{m-1}}
\\&-(q_m+q_m^{-1})E_{\a_m}E_{\a_{m-1}}E_{\a_{m+1}}E_{\a_m}]\\
&=E_{\a_{m-1}}E_{\a_{m+1}}E_{\a_m}\frac{q_mK_{\a_m}-q_m^{-1}K_{\a_m}^{-1}}{q_m-q_m^{-1}}-E_{\a_m}E_{\a_{m-1}}E_{\a_{m+1}}
\frac{q_mK_{\a_m}-q_m^{-1}K_{\a_m}^{-1}}{q_m-q_m^{-1}}\\
&+E_{\a_{m+1}}E_{\a_{m-1}}E_{\a_m}\frac{q_m^{-1}K_{\a_m}-q_mK_{\a_m}^{-1}}{q_m-q_m^{-1}}-E_{\a_m}E_{\a_{m+1}}E_{\a_{m-1}}
\frac{q_m^{-1}K_{\a_m}-q_mK_{\a_m}^{-1}}{q_m-q_m^{-1}}\\
&-(q_m+q_m^{-1})E_{\a_{m-1}}E_{\a_{m+1}}E_{\a_m}\frac{K_{\a_m}-K_{\a_m}^{-1}}{q_m-q_m^{-1}}\\
&+(q_m+q_m^{-1})E_{\a_m}E_{\a_{m-1}}E_{\a_{m+1}}\frac{K_{\a_m}-K_{\a_m}^{-1}}{q_m-q_m^{-1}}\\
&=E_{\a_{m+1}}E_{\a_{m-1}}E_{\a_m}\frac{q_m^{-1}K_{\a_m}-q_mK_{\a_m}^{-1}}{q_m-q_m^{-1}}-E_{\a_m}E_{\a_{m+1}}E_{\a_{m-1}}
\frac{q_m^{-1}K_{\a_m}-q_mK_{\a_m}^{-1}}{q_m-q_m^{-1}}\\
&-E_{\a_{m-1}}E_{\a_{m+1}}E_{\a_m}\frac{q_m^{-1}K_{\a_m}-q_mK_{\a_m}^{-1}}{q_m-q_m^{-1}}
+E_{\a_m}E_{\a_{m-1}}E_{\a_{m+1}}\frac{q_m^{-1}K_{\a_m}-q_mK_{\a_m}^{-1}}{q_m-q_m^{-1}}\\
=&(E_{\a_m}u_{m-1,m+1}^+-u_{m-1,m+1}^+E_{\a_m})\frac{q_m^{-1}K_{\a_m}-q_mK_{\a_m}^{-1}}{q_m-q_m^{-1}}
\in \langle u^+_{m-1,m+1}\rangle.
\end{aligned}$$ This completes the proof.\newpage
\section{Triangular decompositions of $U_q$}
In this chapter  we first establish an ordinary  triangular decomposition of $U_q$. We then show that the quantum algebra $U_q(\g_{\0})$ is isomorphic to its canonical image in $U_q$, which we use to show that  $U_q$ also has a super version of triangular decomposition.
\subsection{An ordinary triangular decomposition}
 Recall from Section 4.3 the definition of the two-sided ideals \ $\mathscr I^+$ \ and \ $\mathscr I^-$ \ in \ $\tilde U_q^+$ \ and \ $\tilde U_q^-$ \ respectively. It follows from Lemma 4.9 and Lemma 4.10 that $$\mathscr I=\theta (\tilde{U}_q^-\otimes \tilde U_q^0\otimes \mathscr I^++\mathscr I^-\otimes \tilde U_q^0\otimes \tilde{U}_q^+).$$ This gives an induced vector space isomorphism (triangular decomposition) $$U_q=\tilde{U}_q/\mathscr I\cong \tilde{U}_q^-/\mathscr{ I}^-\otimes \tilde U_q^0\otimes \tilde{U}_q^+/\mathscr I^+$$ with $U_q^0\cong \tilde U_q^0$.
As a consequence, we have
\begin{corollary}
(1) The multiplication map $$\bar{\theta}: U_q^-\otimes U_q^0\otimes U_q^+\rightarrow U_q,\quad u_1\otimes u_2\otimes u_3\mapsto u_1u_2u_3$$ is an isomorphism of vector spaces.\par (2) $U_q^+$ is isomorphic to the superalgebra generated by the elements \ $E_{\a_i}, \ 1\leq i< m+n$ and relations $$u^+_{ij}=0, \ 1\leq i,j<m+n,\ E^2_{\a_m}=0, u^+_{ex}=0.$$ (3) $U_q^-$ is isomorphic to the superalgebra generated by the elements $F_{\a_i}, \ 1\leq i< m+n$ and relations $$u^-_{ij}=0, \ 1\leq i,j<m+n,\ F_{\a_m}^2=0, u^-_{ex}=0.$$ (4) The $K_{\mu}$ with $\mu\in \Lambda$ are a basis of $U_q^0$.
\end{corollary}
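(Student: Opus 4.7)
The plan is to derive all four parts formally from two prior results: Proposition 3.6, which gives $\{F_IK_\mu E_J\}$ as a basis of $\tilde U_q$, and Lemmas 3.9--3.10, which together identify
$$\mathscr I = \theta(\tilde U_q^-\otimes \tilde U_q^0\otimes \mathscr I^+ + \mathscr I^-\otimes \tilde U_q^0\otimes \tilde U_q^+).$$
The central bookkeeping step is to extract from these the three ``slice'' equalities
$$\mathscr I\cap \tilde U_q^+=\mathscr I^+,\qquad \mathscr I\cap \tilde U_q^-=\mathscr I^-,\qquad \mathscr I\cap \tilde U_q^0=0.$$
To do this I would fix bases of $\tilde U_q^+$ and $\tilde U_q^-$ extending bases of $\mathscr I^+$ and $\mathscr I^-$, and use $\theta$ to lift them to a basis of $\tilde U_q$ together with the $K_\mu$. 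The two summands defining $\mathscr I$ are then spanned by basis tensors whose third (resp.\ first) factor lies in $\mathscr I^+$ (resp.\ $\mathscr I^-$). For an element $x\in \mathscr I\cap\tilde U_q^+$, writing $\theta^{-1}(x)=1\otimes 1\otimes x$ and comparing with this adapted basis shows that any basis component of $x$ outside $\mathscr I^+$ would force $1\in\mathscr I^-$, which is absurd; hence $x\in\mathscr I^+$. The other two intersections are handled in the same way.

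From these three identities, the canonical maps $\tilde U_q^\pm/\mathscr I^\pm\to U_q^\pm$ and $\tilde U_q^0\to U_q^0$ are isomorphisms. Part (4) then follows at once, since Proposition 3.6 provides $\{K_\mu:\mu\in\Lambda\}$ as a basis of $\tilde U_q^0$. For (1), one combines these identifications with the vector space isomorphism $U_q\cong \tilde U_q^-/\mathscr I^-\otimes \tilde U_q^0\otimes \tilde U_q^+/\mathscr I^+$ noted just before the corollary to obtain a vector space isomorphism $U_q^-\otimes U_q^0\otimes U_q^+\cong U_q$; a direct inspection shows that the underlying map is induced by multiplication, which is precisely $\bar\theta$.

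For (2), I would invoke Proposition 3.6 once more: since the products $E_I$ (over all finite sequences of simple roots) are a basis of $\tilde U_q^+$, the algebra $\tilde U_q^+$ is the free unital associative $\mathbb F(q)$-superalgebra on the generators $E_{i,i+1}$. Hence if $A^+$ is the abstract superalgebra defined by those generators and the stated relations, the canonical surjection $\tilde U_q^+\twoheadrightarrow A^+$ has kernel exactly the two-sided ideal $\mathscr I^+$, and composing with the isomorphism $\tilde U_q^+/\mathscr I^+\cong U_q^+$ from the previous paragraph yields (2). Part (3) follows by applying the anti-automorphism $\Omega$ from Lemma 3.2, which interchanges the $E$- and $F$-generators and carries $\mathscr I^+$ to $\mathscr I^-$. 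The only genuine content lies in the slicing argument of the first paragraph; once those three intersections are in hand, everything else is formal manipulation.
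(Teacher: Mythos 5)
Your proposal is correct and follows essentially the same route as the paper: the paper obtains $\mathscr I=\theta(\tilde U_q^-\otimes\tilde U_q^0\otimes\mathscr I^+ + \mathscr I^-\otimes\tilde U_q^0\otimes\tilde U_q^+)$ from Lemmas 3.9--3.10, deduces the induced isomorphism $U_q\cong\tilde U_q^-/\mathscr I^-\otimes\tilde U_q^0\otimes\tilde U_q^+/\mathscr I^+$ with $U_q^0\cong\tilde U_q^0$, and declares the corollary immediate. Your adapted-basis ``slicing'' argument simply makes explicit the step the paper leaves implicit (that $\mathscr I\cap\tilde U_q^\pm=\mathscr I^\pm$ and $\mathscr I\cap\tilde U_q^0=0$), and your use of Proposition 3.6 to see that $\tilde U_q^\pm$ are free is the correct justification for parts (2)--(3).
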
 It is easy to see that $\Omega (\mathscr I^+)=\mathscr I^-$, and consequently $\Omega (\mathscr I)=\mathscr I$. Then  $\Omega$ induces an anti-automorphism of $U_q$ which we denote also by $\Omega$.\par

\subsection{The canonical image of $U_q(\g_{\0})$ in $U_q$}
  Let $$\dot{U}_q(\g_{\0})\ (\text{resp.}\quad \dot{U}_q(\g_{\0})^+;\  \dot{U}_q(\g_{\0})^-; \ \dot{U}_q(\g_{\0})^0)$$  be the subalgebra of  $U_q$  generated  by the elements $$E_{\a_i},\ F_{\a_i},\ K^{\pm 1}_j\ (\text{resp.}\ E_{\a_i}; \ F_{\a_i};\  K^{\pm 1}_j),\ i\in [1, m+n)\setminus m,\ j=1,\dots, m+n.$$ Then $\dot{U}_q(\g_{\0})$ is the canonical image of $U_q(\g_{\0})$ in $U_q$.
   In this section  we prove that $U_q(\g_{\0})$ is isomorphic to $\dot{U}_q(\g_{\0})$. To establish this, we introduce an $\mathbb F(q)$-algebra $\tilde U_q(\g_{\0})$. \par
Let $\tilde U_q(\g_{\0})$ be the algebra generated by the elements  $$E_{\a_i},\ F_{\a_i}, \ K^{\pm 1}_j,\quad i\in [1, m+n)\setminus m,\ j=1,\dots, m+n,$$  and relations (R1)-(R3) in Section 3.1. Then $U_q(\g_{\0})$ is a quotient of $\tilde U_q(\g_{\0})$.\par  Denote by \ $\tilde U_q(\g_{\0})^{+}\ (\text{resp.}\ \tilde U_q(\g_{\0})^-; \ \tilde U_q(\g_{\0})^0)$ the subalgebra of $\tilde U_q(\g_{\0})$ generated by all elements \ $E_{\a_i}\ (\text{resp.}\  F_{\a_i}; \ K_j^{\pm 1}).$\par

  For each finite sequence $I=(\beta_1,\beta_2,\dots,\beta_r)$ of simple even roots set $$E_I=E_{\beta_1}\cdots E_{\beta_r}\quad\mathbin{\mathrm{and}}\quad F_I=F_{\beta_1}\cdots F_{\beta_r}.$$ In particular, set $E_{\phi}=F_{\phi}=1$.\par
   By the defining relations (R1)-(R3), the elements \ $F_IK_{\mu}E_J$ \ with $\mu\in \Lambda$ and $I,J$ finite sequences of simple even roots  span  $\tilde U_q(\g_{\0})$.
  Also,  by the defining relations (R1)-(R3),  the subalgebra of $\tilde U_q$ generated by the {\it even} elements $$E_{\a_i},  F_{\a_i}, \ K_j^{\pm 1}, \quad i\in [1, m+n)\setminus m, \ j=1,\dots, m+n$$
      is spanned by all the elements $F_IK_{\mu}E_J$, with $I,J$ finite sequences of simple {\it even} roots and $\mu\in\Lambda$, which    are linearly independent  by Proposition 4.6. Since this subalgebra is a homomorphic image of $\tilde U_q(\g_{\0})$ with the images of a set of generators linearly independent, it is isomorphic
      to  $\tilde U_q(\g_{\0})$. Therefore,  we may identify $\tilde U_q(\g_{\0})$ with its canonical image in $\tilde U_q$. \par
  Let \ $\mathscr I^+_0$ (resp. $\mathscr I^-_0$) \ be
 the two-sided ideal of  \ $\tilde{U}_q(\g_{\0})^+$ (resp. $\tilde U_q(\g_{\0})^-$) \ generated by all (even ) elements \ $$u^+_{ij} (\mathbin{\mathrm{resp.}}\  u^-_{ij}),\quad m\notin \{i,j\}.$$
 \begin{lemma}$$\mathscr I^+_0=\tilde U_q(\g_{\0})^+\cap\mathscr I^+,\quad \mathscr I^-_0=\tilde U_q(\g_{\0})^-\cap \mathscr I^-.$$
 \end{lemma}
 \begin{proof} We prove only the first formula; the second one can be proved similarly.
 By the definition of   $\mathscr I^+$, we have \ $$\mathscr I^+_0\subseteq \tilde U_q(\g_{\0})^+\cap\mathscr I^+.$$ \
 To prove the  inclusion in the other direction, let $x\in  \mathscr I^+$. By
    definition we write $$\begin{aligned}\label{xx}x&=\sum_{m\notin\{i,j\}}c_{I,J}E_Iu^+_{ij}E_J
    +\sum_{m\in \{i,j\}}c_{I,J}E_Iu^+_{ij}E_J\\&+\sum c_{I,J}E_Iu^+_{ex}E_J+\sum c_{I,J}E_IE_{\a_m}^2E_J,
    \end{aligned}$$  for which all elements \ $E_Iu^+_{ij}E_J, \ E_Iu^+_{ex}E_J, \ E_IE_{\a_m}^2E_J$ \ are   basis vectors of $\tilde U_q$ by Proposition 4.6.\par
    If $x$ is also in $\tilde U_q(\g_{\0})^+$, then  we  write $x$ also as  $$x=\sum c_{I,\mu,J}E_IK_{\mu}F_J,$$ with $I,J$ the sequences of simple even roots and $\mu\in\Lambda$, which is also an linear combination of basis vectors by Proposition 4.6.  Comparing the two expressions of $x$, we obtain that \
    $$x=\sum_{m\notin\{i,j\}}c_{I,J}E_Iu^+_{ij}E_J\in \mathscr I^+_0.$$ \ This completes the proof. \par
 \end{proof}
 Under the canonical epimorphism from $\tilde{U}_q(\g_{\0})$ into $U_q(\g_{\0})$, let $U_q(\g_{\0})^-$, $U_q(\g_{\0})^0$,  and $U_q(\g_{\0})^+$ be the images of the subalgebras $\tilde{U}_q(\g_{\0})^-$, $\tilde{U}_q(\g_{\0})^0$, and $\tilde{U}_q(\g_{\0})^+$ respectively.
 \begin{lemma} a) The multiplication map $$U_q(\g_{\0})^-\otimes U_q(\g_{\0})^0\otimes U_q(\g_{\0})^+\longrightarrow U_q(\g_{\0}),\quad u_1\otimes u_2\otimes u_3\mapsto u_1u_2u_3$$ is an isomorphism of vector spaces.\par   b) The $K_{\mu}$ with $\mu\in\Lambda$ are a basis of $U_q(\g_{\0})^0$.
 \end{lemma}
 \begin{proof} Recall from Section 3.2 that \ $$U_q(\g_{\0})=U_q(\mathfrak{gl}_m)\otimes U_{q}(\mathfrak{gl}_n)$$  and the elements in $U_q(\mathfrak{gl}_m)$ commutes with the elements in $U_{q}(\mathfrak{gl}_n)$.  Then the lemma follows immediately from Lemma 3.1.
 \end{proof}
 \begin{lemma}$$\tilde U_q(\g_{\0})^+/\mathscr I^+_0 \cong U_q(\g_{\0})^+,\quad \tilde U_q(\g_{\0})^-/\mathscr I^-_0 \cong U_q(\g_{\0})^-.$$
 \end{lemma}
 \begin{proof}   By Proposition 4.6 there is an isomorphism of vector spaces  $$\nu:\ \tilde U_q(\g_{\0})^-\otimes \tilde U_q(\g_{\0})\otimes \tilde U_q(\g_{\0})^+\longrightarrow \tilde U_q(\g_{\0}).$$ By a proof similar to that of Lemma 4.9, we obtain that the two-sided ideal in $\tilde U_q(\g_{\0})$ generated by all $u_{ij}^+$ (resp. $u_{ij}^-$) with $m\notin \{i,j\}$ equals the image under  $\nu$ of $$\tilde U_q(\g_{\0})^-\otimes \tilde U_q(\g_{\0})^0\otimes \mathscr I^+_0\quad (\mathbin{\mathrm{resp.}} \quad \mathscr I^-_0\otimes \tilde U_q(\g_{\0})^0\otimes \tilde U_q(\g_{\0})^+).$$   Apply now Jantzen's argument in \cite[p. 66]{j}. Let $I$ be the kernel of the canonical map $ \tilde U_q(\g_{\0})\longrightarrow U_q(\g_{\0})$. Then $I$ is a two-sided ideal of $\tilde U_q(\g_{\0})$ generated by all $u^+_{ij}$ and  $u^-_{ij}$ with $m\notin \{i,j\}$, and hence equal to $$\nu(\tilde U_q(\g_{\0})^-\otimes \tilde U_q(\g_{\0})^0\otimes \mathscr I^+_0+ \mathscr I^-_0\otimes \tilde U_q(\g_{\0})^0\otimes \tilde U_q(\g_{\0})^+).$$ The intersection $I\cap\tilde U_q(\g_{\0})^+$ equals the image under $\nu$ of $$\begin{aligned}&(\tilde U_q(\g_{\0})^-\otimes \tilde U_q(\g_{\0})^0\otimes \mathscr I^+_0+ \mathscr I^-_0\otimes \tilde U_q(\g_{\0})^0\otimes \tilde U_q(\g_{\0})^+)\cap \mathbb F(q)\otimes \mathbb F(q)\otimes \tilde U_q(\g_{\0})^+\\&=\mathbb F(q)\otimes \mathbb F(q)\otimes \mathscr I_0^+,\end{aligned}$$ and which equals $\mathscr I_0^+$. Thus,  we obtain
  $$U_q(\g_{\0})^+\cong \tilde U_q(\g_{\0})^+/(I\cap \tilde U_q(\g_{\0})^+)\cong \tilde U_q(\g_{\0})^+/\mathscr I_0^+.$$
 The second formula can be proved similarly.
 \end{proof}
   \begin{theorem} $U_q(\g_{\0})\cong \dot{U}_q(\g_{\0})$.  \end{theorem}\begin{proof}
    Using Lemmas 5.2,  Lemma 5.4, and the formula before Corollary 5.1, we have
    $$\begin{aligned} \dot{U}_q(\g_{\0})^+&=(\tilde U_q(\g_{\0})^++\mathscr I^+)/ \mathscr I^+\\&\cong \tilde U_q(\g_{\0})^+/(\tilde U_q(\g_{\0})^+\cap \mathscr I^+)\\&=\tilde U_q(\g_{\0})^+/\mathscr I^+_0 \\&\cong U_q(\g_{\0})^+.\end{aligned}$$
    Let $\varrho^+$ denote this isomorphism from $U_q(\g_{\0})^+$ into $\dot{U}_q(\g_{\0})^+$.  It is clear that $\varrho^+$ sends $E_{\a_i}$ to $E_{\a_i}$ for all $i\neq m$;
 similarly we can prove  that there is an isomorphism $\varrho^-$ from $ U_q(\g_{\0})^-$ into $\dot{U}_q(\g_{\0})^-$ sending $F_{\a_i}$ to $F_{\a_i}$ for all $i\neq m$.\par
   By Lemma 5.3(b), $U_q(\g_{\0})^0$ has a basis $K_{\l}, \l\in\Lambda$,
   whereas Corollary 5.1(4) says that $\dot{U}_q(\g_{\0})^0$($=U_q^0$) has a basis $K_{\l}, \l\in\Lambda$. Thus, there is
   an isomorphism $\varrho^0$  from
   $U_q(\g_{\0})^0$ into $ \dot{U}_q(\g_{\0})^0$ sending $K_{\l}$ to $K_{\l}$ for all $\l$. \par
   Let $\varrho$ denote the canonical epimorphism from $U_q(\g_{\0})$ onto $\dot{U}_q(\g_{\0})$. It is then clear that the restrictions of $\varrho$ to  $U_q(\g_{\0})^-$, $U_q(\g_{\0})^0$, and $U_q(\g_{\0})^+$ are respectively $\varrho^-, \ \varrho^0$, and $\varrho^+$.\par
      Since $$U_q(\g_{\0})\cong U_q(\g_{\0})^-\otimes U_q(\g_{\0})^0\otimes U_q(\g_{\0})^+$$ by Lemma 5.3(a) and
     $$\dot{U}_q(\g_{\0})\cong \dot{U}_q(\g_{\0})^-\otimes \dot{U}_q(\g_{\0})^0\otimes
    \dot{U}_q(\g_{\0})^+ $$ by Corollary 5.1 (1),  it follows that $\varrho$, viewed as \ $\varrho^-\otimes \varrho^0\otimes \varrho^+$, is a vector space isomorphism. Therefore, $\rho$ is an algebra isomorphism.
\end{proof}
 By the theorem,  we may identify $U_q(\g_{\0})$ with $\dot{U}_q(\g_{\0}$).
\subsection{Some formulas in $U_q$}
In this section we present some formulas in $U_q$; most of them are  given in \cite{zh}. \par
For $i\in [1,m+n)\setminus m$, the  automorphism $T_{\a_i}$ of $U_q$ is defined by (see \cite[Appendix A]{zh})
$$ T_{\a_i}(E_{\a_j})=\begin{cases}-F_{\a_i}K_{\a_i}, &\text{if $i=j$,}\\E_{\a_j}, &\text{if $|i-j|>1$,}\\-E_{\a_i}E_{\a_j}+q_i^{-1}E_{\a_j}E_{\a_i}, &\text{if $|i-j|=1$,}\end{cases}$$
$$T_{\a_i}F_{\a_j}=\begin{cases}-K_{\a_i}^{-1}E_{\a_i},&\text{if $i=j$,}\\ F_{\a_j}, &\text{if $|i-j|>1$,}\\ -F_{\a_j}F_{\a_i}+q_iF_{\a_i}F_{\a_j}, &\text{if $|i-j|=1$,}\end{cases}$$$$T_{\a_i}K_j=\begin{cases}K_{i+1},&\text{if $j=i$,}\\K_i,&\text{if $j=i+1$,}\\K_j, &\text{if $j\neq i,i+1$.}\end{cases}$$
  A straightforward computation shows that $T_{\a_i}$ is an  even automorphism for $U_q$, that is, $$T_{\a_i}(uv)=T_{\a_i}(u)T_{\a_i}(v)\quad  \text{for all}\quad u,v\in h(U_q).$$  \par
Also, it is easy to check that  each $T_{\a_i}$ has  the inverse map $T_{\a_i}^{-1}$ (\cite[A3]{zh}):
 $$T_{\a_i}^{-1}E_{\a_j}=\begin{cases}-K_{\a_i}^{-1}F_{\a_i}, &\text{if $i=j$,}\\ E_{\a_j}, &\text{if $|i-j|>1$,}\\ -E_{\a_j}E_{\a_i}+q^{-1}_iE_{\a_i}E_{\a_j},& \text{if $|i-j|=1$,}\end{cases}$$
 $$T_{\a_i}^{-1}F_{\a_j}=\begin{cases}-E_{\a_i}K_{\a_i}, &\text{if $i=j$,}\\F_{\a_j},&\text{if $|i-j|>1$,}\\ -F_{\a_i}F_{\a_j}+q_iF_{\a_j}F_{\a_i},&\text{if $|i-j|=1$,}\end{cases}$$
$$T_{\a_i}^{-1}K_j=\begin{cases}K_{i+1},&\text{if $j=i$,}\\K_i,&\text{if $j=i+1$,}\\K_j, &\text{if $j\neq i,i+1$.}\end{cases}$$

There are $\mathbb Z_2$-graded algebra automorphism $\Psi$ and antiautomorphism $\Omega$ of $U_q$ induced from those of $\tilde{U}_q$ (see Lemma 4.2). It is easy to see that $$(*)\quad \Omega T_{\a_i}=T_{\a_i}\Omega.$$

Suppose $i<k<k+1<j$. The following  identities given in \cite{zh} can be verified by induction:
$$\begin{matrix}(b1) \quad E_{ij}=(-1)^{j-i-1}T_{\a_i}T_{\a_{i+1}}\cdots T_{\a_{k-1}}T^{-1}_{\a_{j-1}}T^{-1}_{\a_{j-2}}\cdots T^{-1}_{\a_{k+1}}E_{k,k+1},\\
(b2) \quad F_{ij}=(-1)^{j-i-1}T_{\a_i}T_{\a_{i+1}}\cdots T_{\a_{k-1}}T^{-1}_{\a_{j-1}}T^{-1}_{\a_{j-2}}\cdots T^{-1}_{\a_{k+1}}F_{k,k+1}.\end{matrix}$$

Applying the formula $(*)$ above we get $$\Omega (E_{ij})=F_{ij},\quad (i,j)\in\mathcal I_0\cup\mathcal I_1.$$
It then follows from the formulas (b1), (b2) that $$E_{ij}^2=F_{ij}^2=0, \quad (i,j)\in \mathcal I_1.$$\par

\begin{lemma}\cite[Lemma 1]{zh} $$\begin{aligned}\label{zh1} &(1)\quad [E_{ij}, E_{c,c+1}]=0,\ i<c<c+1<j, \\&(2)\quad [E_{ij},F_{c,c+1}]=\delta_{c+1,j}q^{-1}_cE_{ic}K_cK_{c+1}^{-1}-\delta_{i,c}(-1)^{\delta_{cm}}E_{c+1,j}K^{-1}_cK_{c+1},\\ &\ \ i\neq c\ \mathbin{\mathrm{or}}\ j\neq c+1,\\
&(3)\quad E_{si}E_{sj}\mskip5mu =(-1)^{\bar E_{si}}q_sE_{sj}E_{si},\quad  s<i<j,\\
&(4)\quad E_{js}E_{is}=(-1)^{\bar E_{js}}q_s^{-1}E_{is}E_{js},\quad i<j<s.\end{aligned}$$
\end{lemma}

\begin{lemma}  $$ [E_{ij}, E_{st}]=0, \quad i<s<t<j.$$
\end{lemma}\begin{proof}  We proceed by induction on $t-s$.  The formula in the case $t-s=1$ follows immediately from
 Lemma 5.6(1). Assume the formula  for $t-s< k$.\par Let $t-s=k$ and choose $c$ with $s<c<t$.  Since $[E_{ij},-]$ is a derivation of $U_q$, we have
  by induction hypothesis  that
 $$\begin{aligned}\label{abb}
 [E_{ij},E_{st}]&=[E_{ij}, E_{sc}E_{ct}-q_c^{-1}E_{ct}E_{sc}]\\
 &= [E_{ij}, E_{sc}]E_{ct}+(-1)^{\bar E_{ij}\bar E_{sc}}E_{sc}[E_{ij}, E_{ct}]\\
 &-q_c^{-1}([E_{ij}, E_{ct}]E_{sc}+(-1)^{\bar E_{ij}\bar E_{ct}}E_{ct}[E_{ij}, E_{sc}])\\&=0.\end{aligned}$$
\end{proof}
\begin{lemma}\label{for}$$\begin{aligned} &(1)\quad [E_{ij},F_{st}]&&=0, \quad  i<s<t<j,\\
&(2)\quad [F_{ij},E_{st}]&&=0, \quad i<s<t<j,\\
&(3)\quad [E_{ij}, E_{st}]&&=0,\quad i<j<s<t,\\
&(4)\quad [E_{ij},E_{st}]&&=(q_j-q_j^{-1})E_{it}E_{sj},\quad \ i<s<j<t.
\end{aligned}$$\end{lemma}
\begin{proof}
(1) By Lemma \ref{zh1} (2), we have \ $$[E_{ij}, F_{c,c+1}]=0,\quad  i<c<c+1<j,$$ using which we prove (1)  by induction on $t-s$.\par
(2) follows from the application of $\Omega$ to (1).\par
(3) can be proved by using (R5) and the formula (a) in Remark 3.1(1): $$E_{ab}=E_{ac}E_{cb}-q_c^{-1}E_{cb}E_{ac},\ a<c<b.$$\par
(4) is the formula \cite[Lemma 2]{zh}, which can be  verified by using  Lemma \ref{zh1}(3) and the above formula. \par

\end{proof}
\subsection{A super version of triangular  decomposition of $U_q$}

First, we introduce a set of elements  to construct the PBW basis of $U_q$.\par
  Set $$\begin{aligned} & S_0^+&=\{E_{ij}|(i,j)\in\mathcal I_0\},\quad S_1^+&&=\{E_{ij}|(i,j)\in\mathcal I_1\},\\
  & S_0^-&=\{F_{ij}|(i,j)\in\mathcal I_0\},\quad S_1^-&&=\{F_{ij}|(i,j)\in\mathcal I_1\},\end{aligned}$$
and let $$\mathcal {H}=\{K_i|1\leq i\leq m+n\},\  S^-=S^+_0\cup S^+_1,\quad S^-=S_0^-\cup S^-_1.$$ Then we have
 \ $S_0^-=\Omega (S_0^+), \ S_1^-=\Omega (S_1^+)$, and hence \ $S^-=\Omega (S^+)$.
Let us denote by $S$ the union \ $S^-\cup\mathcal{H}\cup S^+$. \par Define an order \ $\prec$ \  on $S$ as follows:
   For $x,y\in S,$ we write $x\prec y$ if one of the following conditions holds.\par  (1) $x\in S_1^-$, $y\in S_0^-\cup\mathcal{H}\cup S^+$,\par (2) $x\in S^-$, $y\in\mathcal{H}\cup S^+$,\par (3)  $x\in \mathcal{H}$, $y\in S^+$,\par (4)  $x\in S_0^+$ and $y\in S_1^+$,\par
 (5) both $x=E_{ij}$ and $y=E_{st}$ are in $S^+_0$ (or $S^+_1$) with $i<s$, or, $i=s$ but $j<t$,\par  (6) both $x$ and $y$ are in $S^-_0$ (or $S^-_1$) with $\Omega (y)\prec\Omega (x)$. \par It is easy to see that ($S, \prec$) is linearly ordered.  For $x,y\in S$, we write $x\preccurlyeq y$ if $x\prec y$ or $x=y$.  Extend the order \ $\prec$ \  to the set \ $$\overline{S}=:\{x^n|x\in S, n\in\mathbb Z^+\}$$
 by writing $x^n\prec y^m$ ($n,m\in\mathbb Z^+$) if $x\prec y$. Note that  ($\overline{S}, \prec$) is no longer linearly ordered, because
 two elements $x^m$ and $x^n$, $m\neq n$, are not comparable.\par
 \begin{lemma}
 For $x\in S^+_0$ and $y\in  S_1^+$, there ere elements $x_1\in S^+_0$ and $ y_1, y_2\in S^+_1$ with  $ x\prec x_1$ and $y_1, y_2\prec y$ such that  $$yx=c_1xy+c_2x_1y_1+c_3y_2, \quad  c_1,c_2, c_2\in\mathcal A.$$
\end{lemma}
\begin{proof} Suppose that $x=E_{ij}$ and $y=E_{st}$, so that $(i,j)\in \mathcal I_0$ and $(s,t)\in\mathcal I_1.$\par
 If $j<s$ or $t<i$, then the identity is immediate from Lemma \ref{for}(3).\par
If $i<s<j\leq m<t$,  so that  $E_{ij}\prec E_{sj}\prec E_{it}\prec E_{st},$ then we have
\ $$E_{it}E_{sj}=(-1)^{\bar E_{it}\bar E_{sj}}E_{sj}E_{it}$$ \ by Lemma 5.7; using this formula and Lemma \ref{for}(4) we get
  $$\begin{aligned} yx=E_{st}E_{ij}
    &=(-1)^{\bar E_{ij}\bar E_{st}}[E_{ij}E_{st}-(q_j-q_j^{-1})E_{it}E_{sj}]\\
    &=(-1)^{\bar E_{ij}\bar E_{st}}xy+(-1)^{\bar E_{ij}\bar E_{st}+\bar E_{it}\bar E_{sj}+1}(q_j-q_j^{-1})
    E_{sj}E_{it}.\end{aligned}$$ It is clear that $E_{sj}\in S^+_0$, ${E_{it}}\in S^+_1$, and $x\prec E_{sj}, E_{it}\prec y$.\par The identity in the case  $s<m<i<t<j$ can be proved similarly.\par
     If $i=s<j\leq m<t$ or $s\leq m<i<j=t$, the identity follows from Lemma 5.6(3), (4).\par If $s<i<j<t$, then the identity is immediate from Lemma 5.7.\par  If $i<j=s<t$ or $s<t=i<j$, the identity follows from  the formula (a) in Remark 3.1(1).
\end{proof}
\begin{lemma}
 For $x,y\in  S_1^+$ with $x\prec y$, there are $x_1, y_1\in S^+_1$ with \ $x\prec x_1\prec y_1\prec  y$ such that \ $$yx=c_1xy +c_2x_1y_1, \quad  c_1,c_2\in\mathcal A.$$
\end{lemma}
 \begin{proof} Assume $x=E_{st}$ and $ y=E_{ij}$.
If $s<i<j< t$,   we have \ $$yx=E_{ij}E_{st}=-E_{st}E_{ij}=-xy$$ by Lemma 5.7;
if $s<i<j=t$,  we have $$yx=E_{ij}E_{st}=-q_t^{-1}E_{st}E_{it}=-q_t^{-1}xy$$ by Lemma 5.6(4), and similarly,
if $s=i<t<j$, the identity follows from Lemma 5.6(3); if
 $s<i<t<j$, then we have by Lemma \ref{for}(4) that $$yx=E_{ij}E_{st}=-E_{st}E_{ij}+(q_t-q_t^{-1})E_{sj}E_{it},$$
 which is in the desired form, since $x=E_{st}\prec E_{sj}\prec E_{it}\prec E_{ij}=y$.
 \end{proof}

 \begin{lemma}
 For $x,y\in  S_0^+$ with $x\prec y$, there are $x_1, y_1, z\in S^+_0$ with  $x\prec x_1\prec y_1\prec y,\ x\prec z\prec  y$ such that $$yx=c_1xy +c_2x_1y_1+c_3z, \quad c_1, c_2, c_3\in\mathcal A.$$
\end{lemma}
\begin{proof}
 Assume $x=E_{ij}$ and $y=E_{st}.$  Thanks to
 Lemma 5.6(3), (4),  and Lemma 5.7, we need only verify the identity in the following cases.\par
    Case 1. $i<s<j<t$. By Lemma \ref{for}(4),  we get $$\begin{aligned}\label{ee} yx&=E_{st}E_{ij}\\
    &=E_{ij}E_{st}-(q_j-q_j^{-1})E_{it}E_{sj}\\&=xy-(q_j-q_j^{-1})E_{it}E_{sj},\end{aligned}$$
 for which we have $x=E_{ij}\prec E_{it}\prec E_{sj}\prec E_{st}=y$.   \par
 Case 2. $i<j=s<t$. By  the formula (a) in Remark 3.1(1), we have
 $$yx=E_{jt}E_{ij}=q_jE_{ij}E_{jt}-q_jE_{it}=q_jxy-q_jE_{it},$$ for which we have $x=E_{ij}\prec E_{it}\prec E_{jt}=y$.

\end{proof}

For each  \ $d=(d_{ij})_{(i,j)\in \mathcal I_1}\in \mathbb Z_2^{\mathcal I_1}$,   set $|d|=\sum d_{ij},$
 and let $E_1^{d}$ denote the product $\Pi_{(i,j)\in\mathcal I_1}E_{ij}^{d_{ij}}$ in the order $\prec$.
    We call  $E_1^d$ a {\it standard odd monomial}.\par
For $k\geq 0$, set $$\mathcal N_1^{(k)}=\langle E_1^{d}|\ |d|=k\rangle.$$   Since $E_{ij}^2=0$ for $(i,j)\in\mathcal I_1$,  we have $\mathcal N_1^{(k)}=0$ for $k> nm$($=|\mathcal I_1|$).\par  Set  $$\mathcal N_1=\sum_{k\geq 0} \mathcal N_1^{(k)}\quad \mathbin{\mathrm {and}} \quad \mathcal N^+_1=\sum_{k>0}\mathcal N_1^{(k)}.$$ Then  $\mathcal N_1=\mathbb F(q)\cdot 1+\mathcal N^+_1$.\par
 For  a sequence of elements $x_1, \dots, x_n\in S$, denote by \ $max (x_1,\dots, x_n)$ (resp. $min (x_1,\dots, x_n)$) \ a maximal (resp. minimal) element in the sequence with respect to the order $\prec$, which may not be unique, since repetitions are allowed in the sequence.
\begin{lemma}For $x_1,\dots, x_n\in S^+_1$,   the product $x_1\cdots x_n$
can be expressed as an $\mathcal A$-linear combination of standard odd monomials  $E_1^d=E_{i_1,j_1}\cdots E_{i_n,j_n}$ with  $$min(x_1,\dots, x_n)\preceq E_{i_k,j_k}\preceq max(x_1,\dots,x_n)\quad \mathbin{\mathrm{for}}\ k=1,\dots, n.$$
\end{lemma}
\begin{proof} The statement is trivial if $n=1$. So we assume $n>1$.
 We proceed with induction on the order of $max(x_1,\dots, x_n)$ in $S^+_1$. \par
If  $max(x_1,\dots, x_n)$ has
the minimal order; that is, $$max(x_1,\dots, x_n)=E_{1,m+1},$$ then we have $x_1=\cdots =x_n$, and hence the statement is  trivial  since  the product $x_1\cdots x_n$ is $0$.\par
Assume the statement for all products with  the maximal element less than  some $u\in S^+_1$ with $u\succ E_{1,m+1}$, and consider a product $x_1\cdots x_n$ with  $max(x_1,\dots, x_n)=u$.\par
We now take a second induction on $n$. The case $n=1$ is trivial.
Assume the statement for $n-1$.\par
 If $x_n=max(x_1,\dots,x_n)$, then we have from induction hypothesis that
$$x_1\dots x_{n-1}x_n=\sum c_dE^d_1x_n,\ c_d\in\mathcal A,$$ where  each  $E_1^d=E_{s_1,t_1}\cdots E_{s_{n-1},t_{n-1}}$ satisfies
$$min(x_1,\dots, x_{n-1})\preceq E_{s_k,t_k}\preceq max(x_1,\dots,x_{n-1})\quad \mathbin{\mathrm{for}} \ k=1,\dots,n-1.$$ It follows that $E^d_1x_n$ is a standard odd monomial, which
 equals $0$ if $E_{s_{n-1}, t_{n-1}}=x_n$.  \par
 Suppose $x_n\prec max(x_1, \dots, x_n)$.  Then there is $s<n$ such that $$x_s=max(x_1,\dots, x_n).$$ Applying Lemma 5.10 switching every such $x_s$ in succession with $x_{s+1}, \dots, x_n$, we can write $x_1\cdots x_n$ as an $\mathcal A$-linear combination of products $x_1'\cdots x_n'$ satisfying either $x'_n=max(x_1, \dots, x_n)$ and $x'_i\prec max(x_1, \dots, x_n)$ for all $i<n$, or  $x'_i\prec max(x_1, \dots, x_n)$ for all $i$. Then by the conclusion of the above case and induction hypothesis, $x_1\cdots x_n$ can be written in the desired form.\par By induction hypothesis, the lemma follows.
\end{proof}
Immediately from the lemma, we have the following corollary.
 \begin{corollary}$$\mathcal N_1^{(i)}\mathcal N_1^{(j)}\subseteq \mathcal N_1^{(i+j)},\quad i,j\in\mathbb N.$$
 \end{corollary}
 For each $\psi\in\mathbb N^{\mathcal I_0}$, denote by  $E_0^{\psi}$  the  product $\Pi_{(i,j)\in\mathcal I_0} E_{ij}^{\psi_{ij}}$ in the order $\prec$, and call it  a {\it
standard even monomial}.
\begin{lemma}For $x_1,\dots, x_n\in S^+_0$,   the product $x_1\cdots x_n$
can be expressed as an $\mathcal A$-linear combination of standard even monomials $E_0^{\psi}=E_{i_1,j_1}^{\psi_1}\cdots E_{i_k,j_k}^{\psi_k}$ with   $$min(x_1,\dots, x_n)\preceq E_{i_s,j_s}\preceq max(x_1,\dots,x_n), \quad s=1,\dots, k.$$
\end{lemma}\begin{proof} As in the proof of Lemma 5.12, we proceed with induction on the order of $max(x_1,\dots, x_n)$
in $S^+_0$.\par  If $max(x_1,\dots, x_n)$ is the minimal element $E_{12}$, then we have $$x_1=\cdots =x_n=E_{12},$$ so that $x_1\cdots x_n=E_{12}^n$, and  the statement follows.\par Assume the statement for all sequences of elements $y_1,\dots, y_m$ in $S^+_0$ with $$max(y_1,\dots, y_m)\prec u$$ for some $u\in S^+_0$ with $u\succ E_{12}$.  Let  $x_1, \dots, x_n$ be a sequence in $S^+_0$ with $$max(x_1,\dots, x_n)=u.$$
 We now take a second induction on $n$. The case $n=1$ is trivial. Assume the statement for $n-1$. \par
 Case 1. $x_n=max(x_1, \dots, x_n)$. By induction hypothesis, we have $$x_1\cdots x_{n-1}=\sum c_{\psi}E_0^{\psi},$$
 where each $E_0^{\psi}$ is a standard even monomial $E_{i_1, j_1}^{\psi_1}\cdots E_{i_k, j_k}^{\psi_k}$ with
$$ min(x_1,\dots, x_{n-1})\preceq E_{i_s,j_s}\preceq max(x_1,\dots, x_{n-1}), \ s=1,\dots, k.$$
Then clearly $E_0^{\psi}x_n$ is also a standard even monomial. Therefore, the product $x_1\cdots x_n$ can be written in the desired form.\par
Case 2. $x_n\prec max(x_1,\dots, x_n)$. In this case there is $s<n$ with $$x_s=max (x_1,\dots, x_n).$$ Apply Lemma 5.11
switch every such $s$ in succession with $x_{s+1}, \dots, x_n$, we obtain that $x_1\cdots x_n$ can be written as an $\mathcal A$-linear combination of products $x'_1\cdots x'_k$, $k\leq n$,  satisfying either $$x_s'\prec max(x_1,\dots, x_k)\   \mathbin{\mathrm{for\ all}}\ s$$ or $$x_s\preceq max(x_1, \dots, x_n), \ s=1,\dots, k-1\ \mathbin{\mathrm{and}}\ x_k=max(x_1,\dots, x_n).$$ Then by the conclusion of Case 1 and induction hypothesis, the product $x_1\cdots x_n$ can be written in the desired form.\par
This completes the proof.
\end{proof}
 \begin{lemma} $$\mathcal N_1^{(k)}U_q(\g_{\0})\subseteq U_q(\g_{\0})\mathcal N_1^{(k)},\quad k\in\mathbb N.$$\end{lemma}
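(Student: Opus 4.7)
The plan is to induct on $k$. The base $k=0$ is trivial since $\mathcal N_1^{(0)}=\mathbb F(q)$. For the inductive step, factor a standard monomial $E_1^d\in\mathcal N_1^{(k)}$ as $E_{ij}\cdot E_1^{d'}$ with $(i,j)\in\mathcal I_1$ and $E_1^{d'}\in\mathcal N_1^{(k-1)}$; given $g\in U_q(\g_{\0})$, the IH pushes $g$ across $E_1^{d'}$ producing a sum in $U_q(\g_{\0})\mathcal N_1^{(k-1)}$, after which I push each $E_{ij}$ across the resulting $U_q(\g_{\0})$ factor. For this last move I need the $k=1$ inclusion $E_{ij}\cdot U_q(\g_{\0})\subseteq U_q(\g_{\0})\mathcal N_1^{(1)}$; given it, the product rule $\mathcal N_1^{(1)}\mathcal N_1^{(k-1)}\subseteq\mathcal N_1^{(k)}$ (recorded just before the lemma) closes the induction.

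To establish the $k=1$ inclusion, fix $(i,j)\in\mathcal I_1$ and express any $h\in U_q(\g_{\0})$ as a word $g_1\cdots g_r$ in the generators $K_c^{\pm 1}$, $E_{c,c+1}$, $F_{c,c+1}$ (with $c\in[1,m+n)\setminus m$ in the last two). A secondary induction on $r$ reduces the problem to the single-generator identity $E_{ij}g_1\in U_q(\g_{\0})\mathcal N_1^{(1)}$: once this is known, writing $E_{ij}g_1=\sum_\b h_\b E_{s_\b t_\b}$ with $(s_\b,t_\b)\in\mathcal I_1$ and applying the IH to each pair $(E_{s_\b t_\b},g_2\cdots g_r)$ finishes the step. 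Everything thus comes down to three generator cases.

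For $g_1=K_c^{\pm 1}$, relations R1--R2 propagated through the iterated definition of $E_{ij}$ yield $E_{ij}K_c^{\pm 1}=q^{\mp(\e_c,\,\e_i-\e_j)}K_c^{\pm 1}E_{ij}$. For $g_1=F_{c,c+1}$ with $c\neq m$, Lemma 4.2(1) gives $E_{ij}F_{c,c+1}=F_{c,c+1}E_{ij}+[E_{ij},F_{c,c+1}]$; the bracket is either zero, or a scalar multiple of $E_{i,j-1}K_{j-1}K_j^{-1}$ (when $j=c+1$; here $j-1>m$ since $c\neq m$, forcing $(i,j-1)\in\mathcal I_1$), or of $E_{i+1,j}K_i^{-1}K_{i+1}$ (when $i=c<m$, forcing $(i+1,j)\in\mathcal I_1$). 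For $g_1=E_{c,c+1}$ with $c\neq m$, split on the relative positions of $\{c,c+1\}$ and $\{i,j\}$: the cases $i<c<c+1<j$, $c=i$, $c=j-1$ are dispatched by Lemma 4.1 and Lemma 4.2(2),(3) respectively; the boundary cases $c+1=i$ and $c=j$ are handled by the defining identities $E_{i-1,j}=E_{i-1,i}E_{ij}-q_i^{-1}E_{ij}E_{i-1,i}$ and $E_{i,j+1}=E_{ij}E_{j,j+1}-q_j^{-1}E_{j,j+1}E_{ij}$, each introducing exactly one new odd vector in $\mathcal I_1$; the ``far'' cases $c+1<i$ or $c>j$ follow from a tertiary induction on $j-i$, expanding $E_{ij}=E_{i,c'}E_{c',j}-q_{c'}^{-1}E_{c',j}E_{i,c'}$ and invoking R5 at the base $j-i=1$ (where $(i,j)=(m,m+1)$ and $|c-m|\geq 2$).

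The main obstacle is the bookkeeping in the last case: one must confirm in every boundary subcase that the new term is a single odd vector $E_{i',j'}$ with $(i',j')\in\mathcal I_1$ (so lands in $\mathcal N_1^{(1)}$) rather than a product of two odd factors that would overshoot the target and land in $\mathcal N_1^{(2)}$. Once this is verified the remaining manipulations are routine.
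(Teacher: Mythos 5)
Your proof takes essentially the same route as the paper: induct on $k$, reduce to the case $k=1$ (i.e., $E_{ij}\,U_q(\g_{\0})\subseteq U_q(\g_{\0})\mathcal N_1^{(1)}$ for $(i,j)\in\mathcal I_1$), reduce further to single generators of $U_q(\g_{\0})$, and dispatch the $K^{\pm 1}_c$ case by (R1)--(R2), the $F_{c,c+1}$ case by Lemma 4.2(1), and the $E_{c,c+1}$ case by Lemma 4.1, Lemma 4.2(2),(3), and the first identity of Remark 2.2(1). Your write-up is substantially more detailed than the paper's one-paragraph sketch --- in particular, you explicitly treat the ``far'' disjoint cases $c+1<i$ and $c>j$ by a tertiary induction grounded in (R5), which the paper does not mention at all, and you verify in each boundary subcase that the new index pair lies in $\mathcal I_1$ so that exactly one odd factor appears; these are genuine gaps in the paper's terse argument that you correctly close, but they do not change the method.
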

\begin{proof} By Corollary 5.13, it suffices to show that $$E_{ij}U_q(\g_{\0})\subseteq U_q(\g_{\0})\mathcal N_1^{(1)}\quad \mathbin{\mathrm{for\ all}}\  E_{ij}\in S^+_1.$$ Since $U_q(\g_{\0})$ is generated by the even elements $$E_{\a_s},\ F_{\a_s},\ K^{\pm 1}_t,\quad s\in [1,m+n)\setminus m, \ 1\leq t\leq m+n,$$ the proof is reduced to showing that $$E_{ij}x\in U_q(\g_{\0})\mathcal N_1^{(1)}$$ with $x$ being one of these generators. Using formulas in Remark 3.1(1)  and applying induction on $j-i$, we obtain
 $$ K_tE_{ij}K_t^{-1}=q_t^{\delta_{ti}-\delta_{tj}}E_{ij},\quad
 K_tF_{ij}K_t^{-1}=q_t^{-(\delta_{ti}-\delta_{tj})}F_{ij}.$$ It follows that  $E_{ij}x\in U_q(\g_{\0})\mathcal N_1^{(1)}$ for $x=K^{\pm 1}_t$. If $x=F_{\a_s}$,  then we have by Lemma 5.6(2) that   $$E_{ij} F_{\a_s}=\begin{cases} F_{\a_s}E_{ij}+
 K_sK_{s+1}^{-1}E_{is}, &\text{if $j=s+1$}\\
 F_{\a_s}E_{ij}-(-1)^{\delta_{sm}}q_{s+1}K_s^{-1}K_{s+1}E_{s+1,j},&\text{if $i=s$}\\F_{\a_s}E_{ij}, &\text{otherwise,}\end{cases}$$
 and hence $E_{ij}F_{\a_s}\in U_q(\g_{\0})\mathcal N_1^{(1)}$. \par
  If $x=E_{\a_s}$,   we have  $$E_{ij}x\in U_q(\g_{\0})\mathcal N_1^{(1)}$$  by Lemma 5.9.  This completes the proof.
\end{proof}
Set $\mathcal N_{-1}=:\Omega (\mathcal N_1)$. The following theorem is a super version of triangular decomposition of $U_q$, which will be made precise after we have established the PBW theorem (Corollary 8.11).\par
 \begin{theorem}$$U_q=\mathcal N_{-1}U_q(\g_{\0})\mathcal N_1.$$
 \end{theorem}
 \begin{proof} By Corollary 5.1(1), we have $U_q=U_q^-U_q^0U^+_q$. So it suffices to show that $$U^+_q\subseteq U(\g_{\0})\mathcal N_1\quad\mathbin{\mathrm{and}}\quad U^-_q\subseteq \mathcal N_{-1}U_q(\g_{\0}).$$
  Since $U^+_q$ is spanned by products
 $x_1\cdots x_n$, $x_i\in S^+$,   which are contained in $U_q(\g_{\0})\mathcal N_1$ by  Lemma 5.15,  we have $$U^+_q\subseteq U_q(\g_{\0})\mathcal N_1.$$ By applying $\Omega$, we obtain $$U^-_q\subseteq \mathcal N_{-1}U_q(\g_{\0}).$$
 \end{proof}
Let us look at an application of this theorem. By  Corollary 5.13 and Lemma 5.15,  $U_q(\g_{\0})\mathcal N^+_1$ is a nilpotent ideal of $U_q(\g_{\0})\mathcal N_1$.  Since $\mathcal N_1=\mathbb F(q)\cdot 1+\mathcal N_1^+$,   it follows  that  $$U_q(\g_{\0})\mathcal N_1=U_q(\g_{\0})+U_q(\g_{\0})\mathcal N_1^+.$$ We assert that the sum is direct. Indeed,  the fact that $U_q(\g_{\0})\mathcal N^+_1$ is  nilpotent implies that all elements in $U_q(\g_{\0})\cap U_q(\g_{\0})\mathcal N_1^+$ are nilpotent. Recall that $U_q(\g_{\0})$ has no zero divisors; therefore, \ $$U_q(\g_{\0})\cap U_q(\g_{\0})\mathcal N_1^+=0,$$\ and the assertion follows.
  \par

     Let $M=M_{\0}\oplus M_{\1}$ be a simple $U_q(\g_{\0})\mathcal N_1$-module. Then $U_q(\g_{\0})\mathcal N^+_1 M$ is a submodule of $M$. Since  $U_q(\g_{\0})\mathcal N^+_1$ is a nilpotent ideal in $U_q(\g_{\0})\mathcal N_1$, it  follows that  $U_q(\g_{\0})\mathcal N^+_1 M=0$.  Thus,  $M$ is  simple as a $U_q(\g_{\0})$-module, so that $$M=M_{\0}\quad \text{or}\quad M=M_{\1}.$$\newpage
\section{Highest weight modules for $U_q$}
 In this chapter we  construct  simple highest weight $U_q$-modules.\par
Recall the $\tilde U_q$-module $M_k(\bold c)$ in Section 4.2.  Let $k=\mathbb F(q)$ and denote $M_k(\bold c)$ simply by $M(\bold c)$. Set  $$ \phi_{ij}=:\begin{cases} \xi_i^2\xi_j-(q+q^{-1})\xi_i\xi_j\xi_i+\xi_j\xi_i^2, &\text{ $|i-j|=1,\ i\neq m$}\\\xi_i\xi_j-\xi_j\xi_i,&\text{ $|i-j|>1$,}\end{cases} $$$$\phi_m=:\xi^2_{m},$$$$ \phi_{ex}=:\xi_{m-1}\xi_m\xi_{m+1}\xi_m+\xi_m\xi_{m-1}\xi_m\xi_{m+1}+\xi_{m+1}\xi_m\xi_{m-1}\xi_m+\xi_m\xi_{m+1}\xi_m\xi_{m-1}
$$$$-(q+q^{-1})\xi_m\xi_{m-1}\xi_{m+1}\xi_m.$$ Clearly these elements are homogeneous. Let $N=N_{\0}\oplus N_{\1}$ be the two-sided ideal of ${M}({\bold c})$ generated by them. Recall from Lemma 4.5  the following endomorphisms on  ${M}({\bold c})$: $$E_{\a_i},\  F_{\a_i},\   K_j^{\pm 1},\quad 1\leq i< m+n, \ 1\leq j\leq m+n.$$
 \begin{lemma} $N$ is invariant under these endomorphisms. \end{lemma}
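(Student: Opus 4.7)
The goal is to verify, separately, that $N$ is stable under each of the three families of operators $F_{\alpha_i}$, $K_j^{\pm 1}$, $E_{\alpha_i}$. The first two are essentially immediate and the third is the real content.

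Stability under $F_{\alpha_j}$ is automatic: by Lemma 3.5, $F_{\alpha_j}$ acts on $M(\mathbf{c})$ as left multiplication by $\xi_j$, so it sends the two-sided ideal $N$ into itself. Stability under $K_j^{\pm 1}$ follows from the observation that each generator $\phi_{ij}$, $\phi_m$, $\phi_{ex}$ is homogeneous of fixed root-weight (namely $\alpha_i+\alpha_j$ or $2\alpha_i+\alpha_j$, $2\alpha_m$, and $\alpha_{m-1}+2\alpha_m+\alpha_{m+1}$ respectively), so every element $a\phi b$ with $a,b$ monomials is a $K_j$-weight vector; hence $K_j^{\pm 1}(a\phi b)$ is just a scalar multiple of $a\phi b$ and lies in $N$.

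The work is in showing $E_{\alpha_s}(N)\subseteq N$. The plan is first to establish a twisted Leibniz rule for $E_{\alpha_s}$. Writing $E_{\alpha_s}=(E^+_{\alpha_s}-E^-_{\alpha_s})/(q_s-q_s^{-1})$, where $E^\pm_{\alpha_s}$ pick out the two terms of the coefficient in Lemma 3.5, a direct inspection of the formula for the action on a monomial shows that each $E^{\pm}_{\alpha_s}$ is a graded skew derivation of the form
\[
E^{\pm}_{\alpha_s}(uv)=E^{\pm}_{\alpha_s}(u)\,\sigma^{\pm}(v)+(-1)^{\bar u\,\bar E_{\alpha_s}}\,u\,E^{\pm}_{\alpha_s}(v),
\]
where, on a weight vector $v$, the operators $\sigma^{\pm}$ act simply as multiplication by the appropriate $K_{\alpha_s}^{\mp 1}$-eigenvalue scalar. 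Applying this rule twice to an element $a\phi b$ (first splitting off $a$, then splitting $\phi b$), together with the fact that $\sigma^{\pm}$ preserves $N$ on weight vectors and that $N$ is a two-sided ideal of $M(\mathbf{c})$, reduces the problem to the finite check
\[
E_{\alpha_s}(\phi)\in N\qquad\text{for }\phi\in\{\phi_{ij},\phi_m,\phi_{ex}\}\text{ and each }s\in[1,m+n).
\]

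This last step is the main obstacle, and must be performed by direct calculation using the explicit formula of Lemma 3.5 together with $(\alpha_m,\alpha_m)=0$, $(\alpha_i,\alpha_i)=\pm 2$ for $i\ne m$, and the parity assignments $\bar\xi_i=\bar\delta_{im}$. For $\phi_m=\xi_m^2$ the two contributions from $s=1,2$ cancel because $q^{(\alpha_m,\alpha_m)}=1$ and the Koszul sign is $(-1)^{\bar 1\cdot\bar 1}=-1$, giving $E_{\alpha_s}(\phi_m)=0$. For the $\phi_{ij}$ the calculation is the quantum-Serre verification familiar from the purely even case of \cite{j,lu1}, essentially reproduced in our setting because for $i\ne m$ the parity signs are all trivial. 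The genuinely new case is $\phi_{ex}$: here one expands $E_{\alpha_s}(\phi_{ex})$ monomial by monomial, and the required cancellations are exactly the identity $q_m=q_{m+1}^{-1}$ together with the signs produced by $\bar\xi_m=\bar 1$; the check is lengthy but mechanical, and for $s\notin\{m-1,m,m+1\}$ is trivially zero. Once these base identities are established, the Leibniz reduction completes the proof.
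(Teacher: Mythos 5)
Your proposal is correct in its overall strategy and reaches the right reduction, but by a route that differs in its organization from the paper's.

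The paper handles the hard step ($E_{\alpha_t}(N)\subseteq N$) by staying in the $\tilde U_q$-module picture: it writes $u_1\phi u_2$ as $F_{\alpha_{i_1}}\cdots F_{\alpha_{i_k}}\,u^-\,u_2$ acting on $1\in M(\mathbf{c})$, expands $E_{\alpha_t}\cdot(\cdots)$ via the super-commutator, and then invokes the already-established Lemma~3.7 (via $\Omega$) to kill the commutator $[E_{\alpha_t},u^-_{ij}]$, and Lemmas~3.7--3.8 together for the $u^-_{ex}$ case. Your proposal does not pass through $\tilde U_q$; instead you directly observe that the two endomorphisms $E^{\pm}_{\alpha_s}$ are twisted skew derivations of the free superalgebra $M(\mathbf{c})$, so the problem reduces cleanly to the base check $E^{\pm}_{\alpha_s}(\phi)\in N$ for the finitely many generators $\phi$. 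This is a valid and in some ways more transparent reduction, since it makes the Leibniz structure explicit rather than leaving it implicit in the associativity of the module action. The trade-off is that you must redo by hand the computations that the paper has already factored out into Lemmas~3.7 and~3.8 (themselves left to the reader there).

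One point deserves more care in your writeup. For $\phi_{ex}$, the case $s=m+1$ does \emph{not} collapse to zero: $E_{\alpha_{m+1}}(\phi_{ex})$ is a nonzero element that lies in the two-sided ideal generated by $\phi_m=\xi_m^2$ (this is exactly the content of the paper's Lemma~3.8, in contrast to Lemma~3.7 which gives genuine vanishing for the other indices). Your phrasing about ``the required cancellations'' might be read as predicting $E_{\alpha_s}(\phi_{ex})=0$ for all $s$; what you actually need, and what is true, is only $E_{\alpha_{m+1}}(\phi_{ex})\in\langle\phi_m\rangle\subseteq N$. Since you correctly state the target as membership in $N$ rather than vanishing, this is a matter of emphasis rather than a gap, but it should be made explicit when the $s=m+1$ computation is carried out.
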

 \begin{proof}As a vector space, $N$ is spanned by homogeneous elements  $$u_1\phi_{ij}u_2,\quad u_1\phi_m u_2,\quad u_1\phi_{ex}u_2,\quad  u_1, u_2\in h(M(\bold c)).$$  By definition
  the endomorphisms $F_{\a_i}$ and  $ K^{\pm 1}_j$  stabilize $N$.  Next we show that the endomorphisms $E_{\a_t}$ also stabilize $N$;\ that is,
  $$E_{\a_t}u_1\phi_{ij}u_2,\quad  E_{\a_t}u_1\phi_mu_2,\quad E_{\a_t}u_1\phi_{ex}u_2\in N\quad \mathbin{\mathrm{for \ all}}\quad u_1, u_2\in h(M(\bold c)).$$
  Assume $u_1=\xi_{i_1}\cdots \xi_{i_k}.$ Recall the notation $u^-_{ij}$ and $u_{ex}^-$ in Section 4.3.
 Since $[E_{\a_t},-]$ is a derivation of $\tilde U_q$,  we have, by Lemma 4.5,  $$\begin{aligned} E_{\a_t}u_1\phi_{ij}u_2&=E_{\a_t}\xi_{i_1}\cdots\xi_{i_k}\phi_{ij}u_2\\ &=E_{\a_t}F_{\a_{i_1}}\cdots F_{\a_{i_k}}u^-_{ij}u_2\\ &=(-1)^{\bar E_{\a_t}(\sum_{s=1}^k\bar F_{\a_{i_s}}+\bar u^-_{ij})}F_{\a_{i_1}}\cdots F_{\a_{i_k}}u^-_{ij}E_{\a_t}u_2+[E_{\a_t}, F_{\a_{i_1}}\cdots F_{\a_{i_k}}]u^-_{ij}u_2\end{aligned}$$$$+(-1)^{\bar E_{\a_t}\sum_{s=1}^k\bar F_{\a_{i_s}}}F_{\a_{i_1}}\cdots F_{\a_{i_k}}[E_{\a_t}, u^-_{ij}]u_2.$$ The first summand is obviously  in $N$. The second summand  is also in $N$, since the commutator $$[E_{\a_t}, F_{\a_{i_1}}\cdots F_{\a_{i_k}}]$$ is, by the defining relations (R2) and (R3) for $\tilde U_q$ and by the fact that $[E_{\a_t},-]$ is a
 derivation of $\tilde U_q$,  contained in $\tilde U_q^-\tilde U_q^0$. Applying the anti-automorphism $\Omega$ (of $\tilde U_q$) to the identity in Lemma 4.7(1), we obtain $$[E_{\a_t}, u^-_{ij}]=0.$$ So  the third summand equals zero.
  Thus,   we have $E_{\a_t}u_1\phi_{ij}u_2\in N$, as desired.\par Similarly we can show that $$E_{\a_t}u_1\phi_m u_2,\ E_{\a_t}u_1\phi_{ex} u_2\in N\quad \mathbin{\mathrm{for}}\quad u_1, u_2\in h(M(\bold c)).$$ This completes the proof.
 \end{proof}
 By the lemma, $N$ is a $\tilde U_q$-submodule of $M(\bold c)$. Set $$\bar M(\bold c)=M(\bold c)/N.$$   Denote by the same notation the endomorphisms
 on $\bar M(\bold c)$ induced by $E_{\a_i},\ F_{\a_i},\ K_j^{\pm 1}.$
 \begin{lemma}The $k$-linear mappings $$E_{\a_i},\ F_{\a_i},\ K_j^{\pm 1},\quad 1\leq i<m+n, \ 1\leq j\leq m+n$$ on $\bar M(\bold c)$  satisfy the relations (R4)-(R8), hence define a $U_q$-module.
 \end{lemma}
 \begin{proof} By a similar proof as  in the non-super case (cf. \cite{j}), we can show that  these linear mappings satisfy the relations (R5)-(R7).   We shall prove only that they satisfy the relations (R4) and (R8) (which are unique in super case). \par
   To prove  (R4), $E_{\a_m}^2=F_{\a_m}^2=0$ on $\bar M(\bold c)$, it suffices to show that
   $$E^2_{\a_m}x,\ F_{\a_m}^2x\in N\quad \mbox{for all}\quad x\in M(\bold c).$$  With no loss of generality we assume $x=\xi_{i_1}\cdots \xi_{i_k}$.
   It is clear that $$F_{\a_m}^2x=\xi_m^2\xi_{i_1}\cdots \xi_{i_k}=\phi_mx\in N.$$ On the other hand, by Lemma 4.7(2) we have
   $$\begin{aligned}\label{sl} E_{\a_m}^2x
   &=E_{\a_m}^2f_{i_1}\cdots f_{i_k}\cdot 1\\
   &=\sum ^k_{s=1}F_{\a_{i_1}}\cdots F_{\a_{i_{s-1}}}[E_{\a_m}^2, F_{\a_{i_s}}]\cdots F_{\a_{i_k}}\cdot 1\\&=0.\end{aligned}$$

Recall the notation $u^{\pm}_{ex}$ from Section 4.3. To show that the  $k$-linear mappings $E_{\a_i}, F_{\a_i}$ satisfy the relations (R8),  it suffices to show that $u^{\pm}_{ex}$, viewed as endomorphisms on $M(\bold c)$, send every element $\xi_{i_1}\cdots \xi_{i_r}\in M(\bold c)$  into $N$.\par
     It is clear that
     \ $$u^-_{ex}\xi_{i_1}\cdots \xi_{i_r}=\phi_{ex}\xi_{i_1}\cdots \xi_{i_r}\in N.$$ \  Since $u^+_{ex}\in (\tilde U_q)_{\bar 0}$,
      we have $$\begin{aligned} u^+_{ex}\xi_{i_1}\cdots\xi_{i_r} &=u^+_{ex}f_{i_1}\cdots f_{i_r}\cdot 1\\ &=F_{\a_{i_1}}\cdots F_{\a_{i_r}}u^+_{ex}\cdot 1+[u^+_{ex}, F_{\a_{i_1}}\cdots F_{\a_{i_r}}]\cdot 1\\ &=\sum_{k=1}^rF_{\a_{i_1}}\cdots [u^+_{ex}, F_{\a_{i_k}}]\cdots F_{\a_{i_r}}\cdot 1. \end{aligned} $$  According to Lemma 4.7 and Lemma 4.8, the commutator  $[u^+_{ex}, F_{\a_{i_k}}]$, if nonzero,  equals $u_1E_{\a_m}^2 u_2$\quad or\quad $u_1u^+_{m-1,m+1}u_2$\quad for some\quad $u_1,u_2\in \tilde U_q^+$, it follows that $$u^+_{ex} \xi_{i_1}\cdots\xi_{i_r}\in N,$$ since $E_{\a_m}^2x, u^+_{m-1, m+1}x\in N$ for all $x\in M(\bold c)$.\par
      This completes the proof.
 \end{proof}
  Let $M=M_{\0}\oplus M_{\1}$ be a $U_q$-module. For every $$\bold c=(c_1,\dots, c_{m+n})\in (k^*)^{m+n},$$  set $$M_{\bold c}=\{x\in M|K_ix=c_ix\quad\mathbin{\mathrm{for}}\ i=1,\dots,m+n\}$$ and $$ (M_{\bold c})_{\bar j}=M_{\bold c}\cap  M_{\bar j} \quad \text{for}\quad j=0, 1.$$  We leave it to the reader  to verify that  $$M_{\bold c}=(M_{\bold c})_{\0}\oplus (M_{\bold c})_{\1}$$ and the sum $\sum_{\bold c}M_{\bold c}$ is  direct.
  \par
   Observe that $(k^*)^{m+n}$ carries a group structure with the product defined by $$\bold c\bold c'=(c_1c'_1,\dots, c_{m+n}c'_{m+n}),\quad \bold c=(c_1,\dots, c_{m+n}),\ \bold c'=(c'_1,\dots, c'_{m+n}).$$ For every $\l=\l_1\e_1+\l_2\e_2+\cdots +\l_{m+n}\e_{m+n}\in  \Lambda$, put $${q}^{\l}=:(q_1^{\l_1}, \dots, q^{\l_{m+n}}_{m+n})\in (k^*)^{m+n}.$$
  Then the relation (R2) implies that  $$E_{\a_i}M_{\bold c}\subseteq M_{\bold c q^{\a_i}}\quad\mathbin{\mathrm{and}}\quad F_{\a_i}M_{\bold c}\subseteq M_{\bold c q^{-\a_i}},\quad\ 1\leq i<m+n.$$
   For $\bold c, \bold c'\in (k^*)^{ m+n}$,  we write $\bold c\leq \bold c'$ if $$\bold c \bold c'^{-1}={q}^{{\sum l_i\a_i}}, \ l_i\in\mathbb N.$$  It is easy to see that $``\leq"$  is a well defined partial order. We write $\bold c< \bold c'$ if $\bold c\leq \bold c'$ and $\bold c\neq \bold c'$.\par
 A nonzero element $x\in h(M_{\bold c})$ is a {\it maximal vector} if \ $E_{\a_i}x=0$ for all $i$.  A $U_q$-module  $M$   a {\it highest weight module} if it is generated by a maximal vector $x\in M_{\bold c}$, and  $\bold c$ is referred to as the {\it highest weight} of $M$.\par
Let $M$ be a  $U_q$-module  of highest weight $\bold c$. Then each proper submodule of $M$ is contained in the $\mathbb Z_2$-graded subspace $$\sum_{\bold c'< \bold c}M_{\bold c'}.$$ Hence $M$ has a unique maximal submodule, and therefore a unique  simple  quotient of  highest weight $\bold c$.\par
 \begin{theorem} For every $\bold c=(c_1,\dots, c_{m+n})\in (k^*)^{ m+n}$,\quad  there exists a unique (up to isomorphism) simple  $U_q$-module   of highest weight $\bold c$, which contains a unique (up to scalar multiple)  maximal vector.\end{theorem}
 \begin{proof}  Let $I$ the left ideal of $U_q$ generated by the elements $$E_{\a_i}, \ K_j-c_j,\quad 1\leq i < m+n, \ 1\leq j\leq m+n.$$  It is clear that $U_q/I$ is a $U_q$-module of highest weight $\bold c$. So is its unique simple quotient $\overline{U_q/I}$  by the above discussion.\par
  Let $N$ be a simple $U_q$-module of highest weight $\bold c$. Then
 $N$ is a homomorphic image of $U_q/I$. Hence, by the uniqueness of the simple quotient of $U_q/I$,
   $N$ is isomorphic to $\overline{U_q/I}$. Therefore, the simple $U_q$-module of highest weight $\bold c$ is
  unique.\par
Next we  show that $N$ contains a unique maximal vector. Let $v\in N$ be
 a maximal vector of weight $\bold c$. Then the  simplicity of $N$ implies that  $$N=U_qv=U_q^-v,$$ so that  $N$ is spanned by  elements of the form $F_{\a_{i_1}}F_{\a_{i_2}}\cdots F_{\a_{i_k}}v$, implying that $$\text{dim}N_{\bold c}=1\quad \text{and}\quad N=N_{\bold c}\oplus \sum_{\bold c'< \bold c}N_{\bold c'}.$$ If $v'\in N$ is a maximal vector  such that
$v'\notin N_{\bold c}$, then  $v'\in N_{\bold c'}$ for some $\bold c'$ with $\bold c'<\bold c$, and hence,
$N$ has a proper submodule $U_qv'$, contrary to the assumption. Therefore, $N$ has a unique maximal vector.
 \end{proof}\newpage
\section{The superalgebra $U_{\mathcal A}$}
In this chapter we define the $\mathcal A$-form for the quantum superalgebra $U_q$.\par
 Recall the notation   $K_{\a_i}=:K_iK^{-1}_{i+1}$ for  $1\leq i<m+n$. We denote $K_{m+n}$ by $K_{\a_{m+n}}$ in the following.\par
   For \  $1\leq i\leq  m+n$, $c\in \mathbb Z$, and $t\in \mathbb N$,\quad set  $$\left[  \begin{array}{c}  K_{\a_i};c \\t \\\end{array}\right]=\Pi_{s=1}^t\frac{K_{\a_i} q_i^{c-s+1}-K_{\a_i}^{-1}q_i^{-c+s-1}}{q_i^s-q_i^{-s}}$$ and $$\left[  \begin{array}{c}  K_{m};c \\t \\\end{array}\right]=\Pi_{s=1}^t\frac{K_{m} q_m^{c-s+1}-K_{m}^{-1}q_m^{-c+s-1}}{q_m^s-q_m^{-s}}.$$ It is easy to see that $$\Omega (\left[  \begin{array}{c}  K_{\a_i};c \\t \\\end{array}\right])=\left[  \begin{array}{c}  K_{\a_i};c \\t \\\end{array}\right]\ \ \mathbin{\mathrm{and}} \ \Omega (\left[  \begin{array}{c}  K_{m};c \\t \\\end{array}\right])=\left[  \begin{array}{c}  K_{m};c \\t \\\end{array}\right].$$
Let  $U_{\mathcal{A}}$ be the $\mathcal{A}$-subalgebra (with 1) of $U_q$ generated by all the elements $$E_{\a_i}^{(s)}=:[s]!^{-1}E_{\a_i}^s,\ F_{\a_i}^{(s)}=:[s]!^{-1}F_{\a_i}^s,\ K_j^{\pm 1},\ \left[ \begin{array}{c}   K_m;c \\ t \\\end{array} \right],\  \left[ \begin{array}{c}   K_{\a_{m+n}};c \\ t \\\end{array} \right],$$   $ 1\leq i< m+n$, $1\leq j\leq m+n$.  \par By  \cite[4.3.1]{lu} we get $$\left[\begin{array}{c} K_{\a_i};c\\ t \\ \end{array}\right]\in U_{\mathcal{A}}\quad\mathbin{\mathrm{for}}\quad i\in [1,m+n)\setminus m,\  c\in \mathbb Z,\ t\in \mathbb N.$$ Also, by the defining relation (R3) of $U_q$, we have  $$\left[\begin{array}{c} K_{\a_m};0\\ 1 \\ \end{array}\right]=(K_{\a_m}-K_{\a_m}^{-1})/(q_m-q_m^{-1})\in U_{\mathcal{A}}.$$\par Set $$E_{ij}^{(s)}=[s]!^{-1}E_{ij}^s\quad \mbox{for all}\quad s\in\mathbb N, \ (i,j)\in\mathcal I_0\cup\mathcal I_1.$$
For  $(i,j)\in\mathcal I_1$, since $E_{ij}^2=0$, the notation $E_{ij}^{(s)}$
is used   only for $s=0,1$.\par
  Let $U_{\mathcal A}^+$ and $U_{\mathcal A}^-$ be $\mathcal A$-subalgebras of $U_{\mathcal A}$ generated respectively by all elements $E_{\a_i}^{(s)}$ and all elements $F_{\a_i}^{(s)}$; let $U_{\mathcal A}^0$ be the subalgebra of $U_{\mathcal A}$ generated by $$K_j^{\pm 1},\ \left[ \begin{array}{c}   K_{m};c \\ t \\ \end{array} \right], \ \left[ \begin{array}{c}   K_{\a_i};c \\ t \\ \end{array} \right],\quad 1\leq j\leq m+n,\ i\in [1, m+n]\setminus m.$$  It is easy to prove the following Lemma. \begin{lemma} $$\left[ \begin{array}{c}   K_{\a_i};0 \\ 1 \\ \end{array} \right]=K_{i+1}^{-1}\left[ \begin{array}{c}   K_{i};0 \\ 1 \\ \end{array} \right]-(-1)^{\d_{im}}K_i^{-1}\left[ \begin{array}{c}   K_{i+1};0 \\ 1 \\ \end{array} \right], \ 1\leq i<m+n.$$\end{lemma} Immediate from the lemma we have $$\left[\begin{array}{c}   K_{\a_m};0 \\ 1 \\ \end{array} \right] \in U^0_{\mathcal A}.$$
  Let $U_{\mathcal A}(\g_{\bar 0})\ (\text{resp.}\quad (\mathcal N_{-1})_{\mathcal A};\quad (\mathcal N_1)_{\mathcal A})$ be the $\mathcal A$-subalgebra of $U_{\mathcal A}$ generated by all the  elements $$E_{\a_i}^{(s)},\ F_{\a_i}^{(s)},\ K_{\a_j}^{\pm 1},\ \left[\begin{matrix}K_{m};c\\t \end{matrix}\right], \ \left[\begin{matrix}K_{\a_{m+n}};c\\t \end{matrix}\right], \quad i\in [1,m+n)\setminus m,\ 1\leq j\leq m+n$$$$(\text{resp.} \quad E_{ij}^{(s)},\ (i,j)\in\mathcal I_1;\quad F_{ij}^{(s)},\ (i,j)\in \mathcal I_1).$$ It is clear that $U^0_{\mathcal A}\subseteq U_{\mathcal A}(\g_{\0})$.\par
 Recall the standard monomial $E^d_1$ in Section 5.4. Denote $\Omega (E^d_1)$ by $F^d_1$. By Lemma 5.12, $(\mathcal N_1)_{\mathcal A}$ is spanned as an $\mathcal A$-module by standard monomials $E^d_1$, it follows that $(\mathcal N_{-1})_{\mathcal A}$ is spanned as an $\mathcal A$-module by the elements $F^d_1$.\par It is easy to see that $$\Omega (E^{(s)}_{\a_t})=F^{(s)}_{\a_t}, $$  so that $U_{\mathcal A}$ is invariant under $\Omega$. In particular, we have
  $$  \Omega (U_{\mathcal A}(\g_{\bar 0}))=U_{\mathcal A}(\g_{\bar 0}),\quad  \Omega (U^{\pm}_{\mathcal A})=U^{\mp}_{\mathcal A}.$$ We also have $$ \Omega ((\mathcal N_1)_{\mathcal A})=(\mathcal N_{-1})_{\mathcal A},$$ and hence, $$ \Omega( (\mathcal N_{-1})_{\mathcal A})=(\mathcal N_1)_{\mathcal A}.$$
Recall the $(m+n)\times (m+n-1)$ matrix $\bar A=(a_{ij})$ given at the beginning of Section 2.  By a short computation using formulas given at the end of Section 3.1,  we get
$$(h1)\quad\left[\begin{array}{c}K_{\a_j};c\\t\\ \end{array}\right]E^{(s)}_{\a_i}=E^{(s)}_{\a_i}\left[\begin{array}{c}K_{\a_j};c+sa_{ji}
\\t\\ \end{array}\right], $$$$(h2)\quad\left[\begin{array}{c}K_{\a_j};c\\t\\ \end{array}\right]F^{(s)}_{\a_i}=F^{(s)}_{\a_i}\left[\begin{array}{c}K_{\a_j};c-sa_{ji}\\t\\ \end{array}\right]$$
for\quad $1\leq i<m+n,\ j\in [1, m+n]\setminus m$, and $$(h3)\quad\left[\begin{array}{c}K_m;c\\t\\ \end{array}\right]E^{(s)}_{\a_i}=E^{(s)}_{\a_i}\left[\begin{array}{c}K_m;c+sa_{mi}
\\t\\ \end{array}\right], $$$$(h4)\quad\left[\begin{array}{c}K_{m};c\\t\\ \end{array}\right]F^{(s)}_{\a_i}=F^{(s)}_{\a_i}\left[\begin{array}{c}K_{m};c-sa_{mi}\\t\\ \end{array}\right].$$

\begin{lemma} For $N, M\in\mathbb N$, we have $$\begin{aligned} &(e1)\quad  E_{ij}^{(N)}E_{ij}^{(M)}&&=\left[\begin{matrix}M+N\\N \end{matrix}\right] E_{ij}^{(N+M)},\\ &(e2)\quad E_{ij}^{(N)}E_{st}^{(M)}&&=(-1)^{NM\bar E_{ij}\bar E_{st}}E_{st}^{(M)}E_{ij}^{(N)}, \quad i<s<t<j\ \mathbin{\mathrm{or}}\ s<t<i<j,\\
&(e3)\quad E_{ta}^{(N)}E_{tb}^{(M)}&&=[(-1)^{\bar E_{ta}}q_t]^{NM}E_{tb}^{(M)}E_{ta}^{(N)},\quad t<a<b,\\ &(e4)\quad E_{bt}^{(N)}E_{at}^{(M)}&&=[(-1)^{\bar E_{bt}}q_t^{-1}]^{NM}E_{at}^{(M)}E_{bt}^{(N)},\quad a<b<t,\\&(e5)\quad E_{ic}^{(N)}E_{cj}^{(M)}&&=\sum_{0\leq k\leq min\{N,M\}}q_c^{-(M-k)(N-k)}E_{cj}^{(M-k)}E_{ij}^{(k)}E_{ic}^{(N-k)},\quad i<c<j,\\ &(e6)\quad E_{\a_i}^{(M)}F_{\a_j}^{(N)}&&=F_{\a_j}^{(N)}E_{\a_i}^{(M)},\quad i\neq j, \\&(e7)\quad E_{\a_i}^{(N)}F_{\a_i}^{(M)}&&=\sum_{0\leq t\leq min\{M,N\}}F_{\a_i}^{(M-t)}\left[\begin{matrix}K_{\a_i};2t-N-M\\t \end{matrix}\right]E_{\a_i}^{(N-t)},\ i\neq m,\\&(e8)\quad E_{\a_m}F_{\a_m}&&=-F_{\a_m}E_{\a_m}+\left[\begin{matrix}K_{\a_m};0\\1 \end{matrix}\right].
\end{aligned}$$  \end{lemma}
\begin{proof}(e1) follows from  a straightforward computation.\par
 (e2) follows from Lemma 5.7 and Lemma 5.8(3).\par
 (e3) and (e4) follow respectively from  Lemma 5.6(3) and Lemma 5.6(4) with induction on $N+M$.\par
   (e5) follows from induction on $N+M$ using  the formula (a) in Remark 3.1(1): $$
  E_{ic}E_{cj}=E_{ij}+q_c^{-1}E_{cj}E_{ic},\quad i<c<j.$$\par (e6) is immediate from the relation (R3).\par
(e7)  is  given by \cite[4.3]{lu}.\par (e8) is given by the relation (R3).
\end{proof}
 Using the formulas (e6)-(e8), together with (h1)-(h4),  we get $$U_{\mathcal A}=U_{\mathcal A}^- U^0_{\mathcal A} U_{\mathcal A}^+.$$ Applying Lemma 5.11 we  obtain $$U_{\mathcal A}^+\subseteq U_{\mathcal A}(\g_{\bar 0}) (\mathcal N_1)_{\mathcal A},$$  to which the application of $\Omega$ gives
 $$U_{\mathcal A}^-\subseteq (\mathcal N_{-1})_{\mathcal A}U_{\mathcal A}(\g_{\bar 0}).$$
  Thus, we have $$U_{\mathcal A}=(\mathcal N_{-1})_{\mathcal A}U_{\mathcal A}(\g_{\bar 0}) (\mathcal N_1)_{\mathcal A}.$$
  Recall from Section 3.2 the subalgebras $U_q(\mathfrak{gl}_t)$ ($t=m,n$) of $U_q(\g_{\0})$. Define the $\mathcal A$-form $U_{\mathcal A}(\mathfrak{gl}_m)$ (resp. $U_{\mathcal A}(\mathfrak{gl}_n)$) to be the $\mathcal A$-subalgebra of $U_{\mathcal A}(\g_{\0})$ generated by all $$E_{\a_i}^{(s)}, \ F_{\a_i}^{(s)}, \  K_j^{\pm 1}, \ \left[\begin{array}{c}
  K_m;c\\t\\\end{array}\right], \quad 1\leq i<m, \ 1\leq j\leq m$$$$
  (\text{resp.}\ \ E_{\a_i}^{(s)},\ F_{\a_i}^{(s)},\ K_{\a_j}^{\pm 1},\ \left[\begin{array}{c}
  K_{\a_j};c\\t\\ \end{array}\right], \quad m+1\leq i<m+n,\ m+1\leq j\leq m+n).$$
  Then it is clear that $$U_{\mathcal A}(\g_{\0})=U_{\mathcal A}(\mathfrak{gl}_m)\otimes _{\mathcal A} U_{\mathcal A}(\mathfrak{gl}_n).$$
  \newpage
\section{The PBW theorem}
In this chapter we prove the PBW theorem for $U_q$.\par
\begin{definition}Let $\mathfrak A$ be an associative  algebra over a field $\mathbb F$ and let $M$ be a $\mathfrak A$-module. Assume $\mathfrak n\subset \mathfrak A$ be a subalgebra. A nonzero element $v^+\in M$ is called a maximal vector if $xv^+=0$ for all $x\in \mathfrak n$, and $v^+\in \mathfrak n v$ for any nonzero element $v\in M$. \end{definition}
Example: (1) Let $\mathfrak A$ be the universal enveloping algebra $U(L)$ of a reductive Lie algebra $L$ with  triangular decomposition $L=N^- +H+N^+$, let $\mathfrak n=U(N^+)N^+$, and let $M$ be a simple highest weight $U(L)$-module having a unique maximal vector $v$. Then $v$ satisfies the above definition.\par
(2) Let $\mathfrak A=U_q(L)$ be the quantum deformation of a reductive Lie algebra $L$, and $$U_q(L)=U_q^-U_q^0U_q^+$$ be the triangular decomposition (see \cite[Theorem 4.21(a)]{j}). Let $\varepsilon: U_q(L)\mapsto \mathbb F$ be the counit of $U_q(L)$ (see \cite[Proposition 4.11]{j}). Let $\mathfrak n=\text{ker}\varepsilon\cap U_q^+$ and let $M$ be a simple highest weight $U_q(L)$-module having a unique maximal vector $v$. Then $v$ satisfies the above definition.\par
Let $\mathfrak A$ be an algebra over a field $\mathbb F$ with two subalgebras $\mathfrak A_1$ and $\mathfrak A_2$ satisfying the following condition:\par
(1) $\mathfrak A\cong \mathfrak A_1\otimes_k \mathfrak A_2$.\par (2) $xy=yx$ for $x\in \mathfrak A_1, y\in\mathfrak A_2$.
\begin{lemma} (1) Let $M$ is a finite dimensional simple $\mathfrak A$-module. Then there are finite dimensional simple modules $M_1$ and $M_2$ for $\mathfrak A_1$ and $\mathfrak A_2$ respectively such that $M\cong M_1\otimes_{\mathbb F} M_2$.\par (2) Suppose that  $M_1$ and $M_2$ are simple modules for $\mathfrak A_1$ and $\mathfrak A_2$ respectively.
If $M_1$ or $M_2$ contains a unique maximal vector, then $M_1\otimes _{\mathbb F}M_2$ is a simple $\mathfrak A$-module.
\end{lemma}\begin{proof}  (1) Let $M_2\subseteq M$ be a  simple $\mathfrak A_2$-submodule, and let $v$ be a nonzero element in $M_2$. Then we have $\mathfrak A_2 v=M_2$ and $$\mathfrak A_1\mathfrak A_2 v=\mathfrak A_1 M_2=M.$$ Let $M_1\subseteq M$ be a simple $\mathfrak A_1$-submodule,
and let  $xv\in M_1$, $x\in \mathfrak A_1$, be a nonzero element. Then we have $\mathfrak A_1 xv=M_1$ since $M_1$ is  a simple $\mathfrak A_1$-module. There is a well-defined $\mathfrak A$-module homomorphism $$\varphi:  M_1\otimes _{\mathbb F}M_2\mapsto M$$ such that  $$\varphi (u_1xv\otimes u_2v)=u_1u_2xv,\quad u_1\in \mathfrak A_1,\ u_2\in\mathfrak A_2.$$  Then $\varphi$ is an epimorphism since $M$ is simple, and hence $\varphi$ is also an isomorphism since both $M_1\otimes _{\mathbb F}M_2$ and $M$ are finite dimensional.\par
(2) By assumption  $M_1\otimes _{\mathbb F} M_2$ is a $\mathfrak A_1\otimes _{\mathbb F}\mathfrak A_2$ module and hence a
$\mathfrak A$-module. For any nonzero element in $M_1\otimes_k M_2$ of the form $v_1\otimes v_2$, $v_1\in M_1, v_2\in M_2$,  we have $\mathfrak A_1 v_1=M_1$ and $\mathfrak A_2 v_2=M_2$,  and hence, $$\mathfrak A (v_1\otimes v_2)=M_1\otimes_{\mathbb F} M_2.$$ Assume that $M_1$ contains a unique maximal vector $v^+$. Let $N$ be a nonzero $\mathfrak A$-submodule of $M_1\otimes _{\mathbb F} M_2$, and let $$v=\sum^k_{i=1} v_i\otimes w_i,
\quad v_i\in M_1, \ w_i\in M_2$$ be a nonzero element in $N$. By applying appropriate elements in $\mathfrak n\subseteq \mathfrak A_1$ we may assume that at least one of the $v_i$ is the maximal vector. If every  $v_i$ is the maximal vector, then we have $k=1$, hence $$\mathfrak A v=M_1\otimes _{\mathbb F} M_2=N.$$  With no loss of generality  we assume that $v_1$ is the maximal vector and $v_2$ is not.  By definition there is $x\in \mathfrak n$ such that $xv_2$ is the maximal vector.  Since $xv_1=0$,  it follows that $$0\neq xv=\sum^k_{i=2} xv_i\otimes w_i\in N.$$ Repeating this process, ultimately we obtain a nonzero element $$v\otimes w\in N, \quad v\in M_1, \ w\in M_2,$$ implying that $N=M_1\otimes _{\mathbb F} M_2$ by the discussion above. Thus, $M_1\otimes _{\mathbb F} M_2$ is simple.
\end{proof}
\subsection{Simple $U_q(\g_{\0})$-modules}
In the remainder of this chapter, we assume that $char. \mathbb F=0$. Let $\g$ be the linear Lie superalgebra over $\mathbb F$.
By the definition in Chapter 1, we have $\g_{\0}=\mathfrak{gl}_m\oplus \mathfrak{gl}_n$, which implies that $$U(\g_{\0})=U(\mathfrak{gl}_m)\otimes_{\mathbb F} U(\mathfrak{gl}_n).$$
Recall the notation $h_{\a_i}$ in Chapter 2 and Lie algebras $\mathfrak{sl}_m$ and $\mathfrak{sl}_n$ in Chapter 3. Choose for $\mathfrak{sl}_m$ (resp. $\mathfrak{sl}_n$) a maximal torus $$\mathfrak H_{m-1}=:\langle h_{\a_1}, \dots, h_{\a_{m-1}}\rangle\quad (\mbox{resp.}\ \mathfrak H_{n-1}=:\langle h_{\a_{m+1}},\dots, h_{\a_{m+n-1}}\rangle).$$ For each $\mathfrak{sl_m}$-module (resp. $\mathfrak{sl}_n$-module), we define its weight spaces relative to this maximal torus.\par For Lie algebras $\mathfrak{gl}_m$ and $\mathfrak{gl}_n$, we choose respectively their maximal toruses $$\mathfrak H_m=:\langle h_{\a_1}, \dots, h_{\a_{m-1}}, e_{mm}\rangle\quad \mbox{and}\quad \mathfrak H_n=:\langle h_{\a_{m+1}},\dots, h_{\a_{m+n-1}}, h_{\a_{m+n}}\rangle.$$ The integral weight lattices for $\mathfrak{gl}_m$ and $\mathfrak{gl}_n$ are respectively $$\Lambda_m=\mathbb Z\e_1+\cdots +\mathbb Z\e_m\quad \mathbin{\mathrm{and}}\quad \Lambda_n=\mathbb Z\e_{m+1}+\cdots +\mathbb Z\e_{m+n}.$$ We identify $\Lambda_m$ naturally as a subset of $\mathfrak H_m^*$ by using  $\e_i(e_{jj})=\delta_{ij}$, $1\leq i, j\leq m$, and similarly $\Lambda_n$ is identified as a subset of $\mathfrak H_n^*$.\par
 Let $$C_m=\mathbb Z\omega _1+\cdots +\mathbb Z\omega_{m-1}\subseteq \mathfrak H_{m-1}^*\cap\Lambda _m$$ and $$C_n=\mathbb Z\omega _{m+1}+\cdots +\mathbb Z\omega_{m+n-1}\subseteq \mathfrak H_{n-1}^*\cap\Lambda_n$$ be respectively integral weight lattices of Lie algebras $\mathfrak{sl}_m$ and $\mathfrak{sl}_n$,  where $$\omega_1,\dots, \omega_{m-1} \ (\text{resp.}\ \omega_{m+1},\dots, \omega_{m+n-1})$$ is the set of fundamental weights for $\mathfrak{sl}_m$ (resp. $\mathfrak{sl}_n$) (see \cite[Table 1, p.69]{h}).  \par Remark: We need remind the reader that for $\mathfrak{sl}_n$, since $$(\a_i, \a_i)=-2, \quad  m+1\leq i<m+n,$$  the fundamental weights
    are defined by $$(\omega_i, \a_j)=-\delta_{ij}\quad \mathbin{\mathrm{for\ all}}\quad m+1\leq i, j<m+n,$$ but this has no effect on the following discussions. \par
Using the explicit expressions of the fundamental weights $\omega_i$ given in \cite[p.69]{h}, we obtain for $\mathfrak{sl}_m$ that $$\omega_i(h_{\a_j})=\d_{ij},\  1\leq i,j\leq m-1,$$  and for $\mathfrak{sl}_n$ that $$\omega_{m+i}(h_{\a_{m+j}})=\d_{ij},\ 1\leq i, j\leq n-1.$$ For $t=m,n$, an integral  weight $\l=\sum_i\l_i\omega_i\in C_t$ is called {\it dominant integral} if $\l_i\geq 0$ for all $i$. By \cite[21.1-21.2]{h}, a simple highest weight $U(\mathfrak{sl}_t)$-module is finite dimensional if and only if its highest weight is dominant integral.\par

  Recall the maximal torus $\mathfrak H$ (of $\g$), which is also a maximal torus of $\g_{\0}$. For a $U(\g_{\0})$-module $M$, we defined its weight space $$M_{\l}=\{v\in M| hv=\l (h)v, h\in \mathfrak H\}$$ for $\l\in \Lambda$. A nonzero vector $v\in M_{\l}$ is called maximal if $E_{\a_i}v=0$ for all $i\in [1, m+n)\setminus m$. Using Lemma 8.2 it is easy to show that if $M(\l)$ is a finite dimensional simple $U(\g_{\0})$-module of highest weight $\l=\sum^{m+n}_{i=1}\l_i\e_i$, then $$M(\l)=M_1(\l')\otimes_{\mathbb F} M_2(\l''),$$ where $M_1(\l')$ is a simple $U(\mathfrak{gl}_m)$-module of highest weight $$\l'=\sum^m_{i=1}\l_i\e_i,$$ and $M_2(\l'')$ is a simple $U(\mathfrak{gl}_n)$-module of highest weight $$\l''=\sum^{m+n}_{i=m+1}\l_i\e_i.$$ The restriction of $M_1(\l')$ to $U(\mathfrak{sl}_m)$ is also a simple module of  highest
 weight $\l'_{|\mathfrak H_{m-1}}$. By \cite[Theorem 21.1]{h}, we have $\l'(h_{\a_i})\geq 0$ for $i=1,\dots, m-1$, and similarly $\l''(h_{\a_i})\geq 0$ for $i=m+1, \dots, m+n-1$. \par

   For a finite dimensional simple $U_q(\mathfrak{sl}_m)$-module $M$,  if there is a group homomorphism $\sigma : \mathbb Z\Phi \longrightarrow \{\pm 1\}$ such that $$M=\sum_{\l\in C_m} M_{\l, \sigma},$$ where $$M_{\l, \sigma}=\{v\in M|
 K_{\a_i}v=\sigma (\a_i)q^{(\l,\a_i)}v\quad
\mathbin{\mathrm{for}}\quad i=1,\dots, m-1\}, $$ then we refer $M$ as a $U_q(\mathfrak{sl}_m)$-module of type $\sigma$.\par
Remark: By \cite[5.2]{j}, there is an equivalence of categories between the category of all finite dimensional $U_q(\mathfrak{sl}_m)$-modules of type $\mathbf 1$ and those of type $\sigma$. In the following we confine ourselves to just the modules of type $\mathbf 1$ for various  quantum algebras such as $U_q,\ U_q(\mathfrak{gl}_m),\ U_{\eta}$ etc.\par
 Let $t=m,n$. According to \cite[Theorem 5.10]{j},  if $\l\in C_t$ is dominant, a simple $U_q(\mathfrak{sl}_t)$-module of highest weight $\l$ is finite dimensional. Denote this module by $\mathcal L_0(\l)^t$. \par
 \begin{lemma}Let $v_{\l}$ be a maximal vector in $\mathcal L_0(\l)^m$. For any $\mu\in \mathbb Z$, there is on $\mathcal L_0(\l)^m$ a simple $U_q(\mathfrak{gl}_m)$-module structure such that $K_mv_{\l}=q_m^{\mu}v_{\l}$. \end{lemma}
\begin{proof} Let $M(\l)$ be the Verma module   for $U_q(\mathfrak{sl}_m)$ (see \cite[5.5]{j}). First, we show that $M(\l)$ admits a $U_q(\mathfrak{gl}_m)$-module structure.\par
Let $\tilde v_{\l}\in M(\l)$ be a maximal vector. By \cite[5.5]{j}, $M(\l)$ is spanned by elements $F_{\a_1}\cdots F_{\a_k}\tilde v_{\l}$ with all finite sequences $(\a_1,\dots, \a_k)$ of simple roots.  Let $$K_m\tilde v_{\l}=q^{\mu}\tilde v_{\l}.$$ Using the defining relation (3) for $U_q(\mathfrak{gl}_m)$ in Section 3.2,  we can extend the action of $K_m$  to $M(\l)$ uniquely, making  $M(\l)$ a $U_q(\mathfrak{gl}_m)$-module.\par Since $K_m$ stabilizes the  unique maximal submodule of $M(\l)$ (see \cite[5.5(1)]{j}), the simple quotient $\mathcal L_0(\l)^m$ is extended to a $U_q(\mathfrak{gl}_m)$-module. Since $v_{\l}$ is the image of $\tilde v_{\l}$ in $\mathcal L_0(\l)^m$, the lemma follows.\end{proof}
 Similarly we have the following lemma.
\begin{lemma}Let $v_{\l}$ be a maximal vector in $\mathcal L_0(\l)^n$. For any $\mu\in \mathbb Z$, there is on $\mathcal L_0(\l)^n$ a simple $U_q(\mathfrak{gl}_n)$-module structure such that $K_{\a_{m+n}}v_{\l}=q_{m+n}^{\mu}v_{\l}$. \end{lemma}
 In the following we rewrite weights for $U_q(\mathfrak{gl}_t)$-modules ($t=m,n$) as elements in $\mathbb Z^t$:\par  Let $M$ be a $U_q(\mathfrak{gl}_m)$-module.  For $z=(z_1,\dots, z_m)\in\mathbb Z^m$, set $$M_z=\{v\in M|K_{\a_i}v=q_i^{z_i}v\quad \mathbin{\mathrm{for}}\quad 1\leq i\leq m-1\quad\mathbin{\mathrm{and}}\quad K_mv=q_m^{z_m}v\}.$$
 It is easy to check that, for $$\l=\sum ^{m-1}_{i=1}\l_i\omega_i\in C_m,$$ the simple $U_q(\mathfrak{gl}_m)$-module $\mathcal L_0(\l)^m$, extended from the action $K_mv_{\l}=q_m^{\mu}v_{\l}$,  has highest weight $$z=(\l_1,\dots, \l_{m-1}, \mu).$$  We write this module as $\mathcal L_0(z)^m$.\par
Let $M$ be a $U_q(\mathfrak{gl}_n)$-module.  For $z=(z_{m+1},\dots, z_{m+n})\in\mathbb Z^n$, set $$M_z=\{v\in M|K_{\a_i}v=q_i^{z_{i}}v, \quad m+1\leq i\leq m+n\}.$$ It is easy to check that, for $$\l=\sum ^{m+n-1}_{i=m+1}\l_i\omega_i\in C_n,$$ the simple $U_q(\mathfrak{gl}_n)$-module  $\mathcal L_0(\l)^n$, extended from
 the action $K_{\a_{m+n}}v_{\l}=q_{m+n}^{\mu}v_{\l}$, has highest weight $$z=(\l_{m+1},\dots, \l_{m+n-1}, \mu).$$  We write this module as $\mathcal L_0(z)^n$.\par

    Recall from Chapter 2 the matrix $\bar A$ for $\g=\mathfrak{gl}(m,n)$.
   It is easy to check that the determinant of the sub-matrix formed by first $n+m-1$ rows of $\bar A$ is nonzero. Therefore,
   the rank of $\bar A$ is $n+m-1$.  For $1\leq i\leq m+n-1$, let $\a (i)$ be the $i$th column vector of $\bar A$.\par
  Define a partial order on $\mathbb Z^{m+n}$ by $z\leq z'$ if $$z'-z=\sum_{i=1}^{m+n-1}k_i\a (i), \quad  k_1,\dots, k_{m+n}\in \mathbb N,$$ which is well-defined since $\a(1), \dots, \a(m+n-1)$ are linearly independent.\par
  Let $M$ be a $U_q(\g_{\0})$-module. For each $z=(z_1,\dots, z_{m+n})\in \mathbb Z^{m+n}$, set $$M_z=\{v\in M| K_{\a_i}v=q_i^{z_i}v,\ i\neq m,\  K_mv=q_m^{z_m}v \}.$$
It is easy to show that $$E_{\a_i}M_z\subseteq M_{z+\a (i)}, \ F_{\a_i} M_z\subseteq M_{z-\a (i)},\quad i\in [1, m+n)\setminus m.$$
  A  vector $v_z\in M_z$ is called {\it maximal} if $E_{\a_i}v_z=0$ for $i\in [1, m+n)\setminus m$. A highest weight $U_q(\g_{\0})$-module is the one generated by a maximal vector. We denote the simple module generated by a maximal vector $v_z$ by $\mathcal M_0(z)$. Then we have $$\mathcal M_0(z)=\sum_{z'\leq z}\mathcal M_0(z)_{z'}.$$\par
   A weight $z=(z_1,\dots, z_{m+n})\in\mathbb Z^{m+n}$ is called {\it dominant integral} if $z_i\geq 0$ for $i\in [1, m+n)\setminus m$. Let \ $\mathbb Z^{m+n}_+$ denote the set of dominant integral weights in $\mathbb Z^{m+n}$.\par
 For $z=(z_1, \dots, z_{m+n})\in \mathbb Z^{m+n}_+$,  set $$z'=(z_1,\dots,z_{m-1}, z_m),\quad  z''=(z_{m+1},\dots, z_{m+n-1}, z_{m+n}).$$ Since $z_1\omega_1+\cdots +z_{m-1}\omega_{m-1}\in C_m$ is dominant integral, the simple $U_q(\frak{gl}_m)$-module $\mathcal L_0(z')^m$ is finite dimensional, and similarly, $\mathcal L_0(z'')^n$ is also finite dimensional. By  Lemma 8.2  $\mathcal L_0( z')^m\otimes \mathcal L_0(z'')^n$ is a simple $U_q(\g_{\0})$-module of highest weight $z$. Therefore we have  $$\mathcal M_0(z)\cong \mathcal L_0( z')^m\otimes \mathcal L_0(z'')^n.$$
  \subsection{Kac modules for $U_q$}
  Assume $z=(z_1,\dots, z_{m+n})\in \mathbb Z_+^{m+n}$. Regarding the $U_q(\g_{\0})$-module $\mathcal M_0(z)$ as a $U_q(\g_{\0})\mathcal N_1$-module annihilated by $U_q(\g_{\0})\mathcal N_1^+ $,   we define the induced $U_q$-module $$K(z)=U_q\otimes _{U_q(\g_{\0})\mathcal N_1}\mathcal M_0(z),$$  which is referred to as a {\it Kac module}. Recall from Section 5.4 that $$U_q=\mathcal N_{-1}U_q(\g_{\0})\mathcal N_1\quad \mathbin{\mathrm{and}}\quad \text{dim} \mathcal N_{-1}=\text{dim}\mathcal N_1<\infty.$$ Then $U_q$ is a right $U_q(\g_{\0})\mathcal N_1$-module of finite rank. It follows that $K(z)$ is finite dimensional. \par
   Recall from Chapter 7 that $U_{\mathcal A}=(\mathcal N_{-1})_{\mathcal A}U_{\mathcal A}(\g_{\0})(\mathcal N_1)_{\mathcal A}$. Let $v_z\in \mathcal M_0(z)$ be a maximal vector.
   Then  $$U_{\mathcal{A}} v_z=(\mathcal N_{-1})_{\mathcal A}U_{\mathcal A}(\g_{\0})v_z\ (\subseteq  K(z)).$$   Regard $\mathbb F$ as an $\mathcal{A}$-module by letting $q$ act as multiplication by 1, and
   set $$ \bar{K}(z)=\mathbb F\otimes_{\mathcal{A}} U_{\mathcal{A}} v_z.$$
   Let  \ $$e_{i,i+1}, \ f_{i,i+1}, \  h_{\a_j}, \ e_{mm},\ \bar{K}_{\a_j},\ \bar{K}_m$$ denote respectively the endomorphisms on $\bar{ K}(z)$ induced   by the elements $$E_{i,i+1}, \ F_{i,i+1}, \ \left[\begin{array}{c}K_{\a_j};0\\1\\ \end{array}\right],\ \left[\begin{array}{c}K_m;0\\1\\ \end{array}\right], \ K_{\a_j}, \ K_m,\quad 1\leq i<m+n,\ j\in [1, m+n]\setminus m.$$
   \begin{lemma}\label{rela}
(1) $\bar{K}_{\a_j}=\bar{K}_m=1$.\par  (2)  The elements $e_{\a_i}=:e_{i,i+1}, \ f_{\a_i}=:f_{i,i+1}, \ h_{\a_j},\ e_{mm}$ satisfy the defining relations of $U(\g)$. \par (3) The element $h_{\a_{j}}$ acts on $\bar{ K}(z)_z$ as multiplication by $z_j$ and $e_{mm}$ acts on $\bar{ K}(z)_z$ as multiplication by $z_m$.\par  \end{lemma}
\begin{proof} (2). Recall the defining relations (a1)-(a10) of $U(\g)$ in Chapter 2. We need to verify that all these relations are satisfied by the endomorphisms $$e_{i,i+1},\ f_{i,i+1},\   h_{\a_j},\  e_{mm}.$$  We verify only the relation which is unique in the super case, namely,  \
 $$[e_{mm},e_{m,m+1}]=e_{m,m+1}.$$ The other relations can be verified similarly.\par  Since $$\frac{K_m-K_m^{-1}}{q_m-q_m^{-1}}E_{m,m+1}-E_{m,m+1}\frac{K_m-K_m^{-1}}{q_m-q_m^{-1}}$$
  $$=[(1-q^{-1})\frac{K_m-K_m^{-1}}{q_m-q_m^{-1}}+K_m^{-1}]E_{m,m+1},$$  we obtain \ $$e_{mm}e_{m, m+1}-e_{m, m+1}e_{mm}=e_{m, m+1},$$ as desired.\par The proofs of (1) and (3) are straightforward computations and therefore omitted.
\end{proof}
 Define the $\mathcal A$-form $U_{\mathcal A}(\mathfrak{sl}_m)$ (resp. $U_{\mathcal A}(\mathfrak{sl}_n)$) to be the $\mathcal A$-subalgebra of $U_{\mathcal A}(\mathfrak{gl}_m)$ (resp. $U_{\mathcal A}(\mathfrak{gl}_n)$) generated by $$E_{\a_i}^{(s)},\ F_{\a_i}^{(s)},\ K_{\a_i}^{\pm 1}, \ 1\leq i<m\ (\text{resp.} \ E_{\a_i}^{(s)},\ F_{\a_i}^{(s)},\ K_{\a_i}^{\pm 1}, \ m+1\leq i<m+n).$$ Therefore, $U_{\mathcal A}(\mathfrak{gl}_m)$ (resp. $U_{\mathcal A}(\mathfrak{gl}_n)$) is generated as an algebra by $U_{\mathcal A}(\mathfrak{sl}_m)$ (resp. $U_{\mathcal A}(\mathfrak{sl}_n)$) and $$K_m^{\pm 1},\ \left[ \begin{array} {c} K_m ;c\\t\\ \end{array} \right] \ (\text{resp.} \ K_{\a_{m+n}}^{\pm 1},\ \left[ \begin{array} {c} K_{\a_{m+n}} ;c\\t\\ \end{array} \right]).$$
  Recall from the last section that
    $$\mathcal M_0(z)\cong \mathcal L_0(z')^m\otimes \mathcal L_0(z'')^n,\quad z', z''\ \mathbin{\mathrm{dominant integral.}}$$
    Let $v_{z'}$ and $v_{z''}$ be respectively the maximal vectors in $\mathcal L_0(z')^m$ and $\mathcal L_0(z'')^n$.
    \par
 Regard $\mathbb F$ as an $\mathcal{A}$-module by letting $q$ act as multiplication by 1, and
   take the tensor products $$V^m=:\mathbb F\otimes_{\mathcal{A}} U_{\mathcal{A}}(\mathfrak{gl}_m) v_{z'}, \quad V^n=:\mathbb F\otimes_{\mathcal{A}} U_{\mathcal{A}}(\mathfrak{gl}_n) v_{z''}.$$
 Let  \ $$e_{i,i+1}, \ f_{i,i+1}, e_{mm},\  h_{\a_i}, \ \bar{K}_j$$ denote respectively the endomorphisms on $V^m$  induced   by the elements $$E_{i,i+1}, \ F_{i,i+1},  \left[\begin{array}{c}K_m;0\\1\\ \end{array}\right], \ \left[\begin{array}{c}K_{\a_i};0\\1\\ \end{array}\right], \ K_j, \quad  1\leq i<m,\ 1\leq j\leq m.$$
 The following lemma can be proved  by a similar proof as that for Lemma 8.5.
 \begin{lemma}
(1) $\bar{K_j}=1$.\par  (2)  The elements $e_{i,i+1}, \ f_{i,i+1}, e_{mm},\  h_{\a_i}$ satisfy the Serre
relations in $U(\mathfrak{gl}_m)$. \par (3) Assume $z'=(z_1,\dots, z_m)$. Then we have $$h_{\a_{i}}v_{z'}=z_iv_{z'}, \ 1\leq i<m,  \ e_{mm} v_{z'}=z_mv_{z'}.$$  \end{lemma}
It follows from the lemma that $V^m=U(\mathfrak{gl}_m)v_{z'}$. We claim that $V^m$ is a simple $U(\mathfrak{gl}_m)$-module. Indeed, we have $$V^m=\mathbb F\otimes _{\mathcal A}U_{\mathcal A}(\mathfrak{sl}_m)v_{z'}=
U(\mathfrak{sl}_m)v_{z'},$$  which is simple as a $U(\mathfrak{sl}_m)$-module by \cite[Theorem 4.12]{lu}, and hence
is simple as a $U(\mathfrak{gl}_m)$-module.\par Similarly we obtain that $V^n$ is a simple $U(\mathfrak{gl}_n)$-module
with highest weight $z''$ (relative the maximal torus $\langle h_{\a_{m+1}}, \dots, h_{\a_{m+n}}\rangle$ of $\mathfrak{gl}_n)$.\par
Recall the notation $\Lambda=\mathbb Z\e_1+\cdots +\mathbb Z\e_{m+n}$.
Identify  $\Lambda$ with $\mathbb Z^{m+n}$ by identifying  each $\l\in \Lambda$ with $$(\l(h_{\a_1}), \dots, \l (h_{\a_{m-1}}),  \l (e_{mm}), \l (h_{\a_{m+1}}), \dots, \l(h_{\a_{m+n}}))\in\mathbb Z^{m+n}.$$
 Then  $\l=\sum_{i=1}^{m+n}\l_i\e_i\in \Lambda$ is identified with $z=(z_1,\dots, z_{m+n})\in \mathbb Z^{m+n}$  such that $$z_i=\begin{cases} \l_i-\l_{i+1}, & i\in [1, m+n)\setminus m \\ \l_i, & i=m, m+n.\end{cases}$$ We say that $z\in\mathbb Z^{m+n}$ is $p$-typical if the corresponding $\l\in \Lambda$ is so.\par

 Recall the Kac module ${\mathscr K}(\l)$ in Chapter 1. Let $\l$ be identified with $z$ in the following theorem.
 \begin{theorem} If  $\mathbb F=\mathbb C$,  then the  $U(\g)$-module $\bar{ K}(z)$ is a homomorphic image of  ${\mathscr K}(\l)$.\end{theorem}
 \begin{proof}
   By Lemma \ref{rela}(2), we have $$\bar{K}(z)=\mathbb F\otimes_{\mathcal A}(\mathcal N_{-1})_{\mathcal{A}}U_q(\g_{\0})_{\mathcal{A}}v_z \cong U(\g_{-1}) U (\g_{\0})  v_z.$$ Next we show that
  $ U (\g_{\0})  v_z\subseteq \bar{K}(z)$ is a simple $U(\g_{\0})$-module.\par
  Since $\mathcal M_0(z)\cong \mathcal L_0(z')^m\otimes \mathcal L_0(z'')^n$, we may write $v_z=v_{z'}\otimes v_{z''}$.
   Since   \ $$U_{\mathcal A}(\g_{\0})=U_{\mathcal A}(\mathfrak{gl}_m)\otimes_{\mathcal A}U_{\mathcal A}(\mathfrak{gl}_n),$$   we have   $$U_{\mathcal A}(\g_{\0})v_z=U_{\mathcal A}(\mathfrak{gl}_m)v_{z'}\otimes _{\mathcal A}U_{\mathcal A}(\mathfrak{gl}_n)v_{z''}\subseteq \mathcal M_0(z),$$ and hence, $$\mathbb F\otimes_{\mathcal A}U_{\mathcal A}(\g_{\0})v_z=\mathbb F\otimes _{\mathcal A}U_{\mathcal A}(\mathfrak{gl}_m)v_{z'}\otimes_{\mathbb F}\mathbb F\otimes_{\mathcal A}U_{\mathcal A}(\mathfrak{gl}_n)v_{z''}, $$  implying that  $$U(\g_{\0}) v_z=U(\mathfrak{gl}_m)v_{z'}\otimes_{\mathbb F} U(\mathfrak{gl}_n) v_{z''}$$ by Lemma \ref{rela}(3).\par By the discussion following Lemma 8.6, $U(\mathfrak{gl}_m) v_{z'}$ and $U(\mathfrak{gl}_n) v_{z''}$ are simple modules for $U(\mathfrak{gl}_m)$ and $U(\mathfrak{gl}_n)$ respectively,  then we have  by  Lemma 8.2 that $U(\g_{\0}) v_z$ is
a simple $ U(\g_{\0})$-module.\par
Recall in the definition of $K(z)$ that $\mathcal M_0(z)$ is annihilated by $U_q(\g_{\0})\mathcal N^+_1$, which implies that $U_{\mathcal A}(\g_{\0})v_z$ is annihilated by $U_{\mathcal A}(\g_{\0})(\mathcal N^+_1)_{\mathcal A}$. By Lemma 8.5 we have $$\mathbb F\otimes _{\mathcal A}(\mathcal N^+_1)_{\mathcal A}\cong U(\g_1)\g_1.$$ Therefore, $U(\g_{\0})v_z\subseteq \bar K(z)$ is annihilated by $\g_1$. It then follows from the definition of ${\mathscr K}(\l)$  that  there is an epimorphism of $U(\g)$-modules from ${\mathscr K}(\l)$  to $\bar {K}(z)$.
 \end{proof}
  In what follows, we denote by $M(z)$ a simple  $U_q$-module of highest weight $z$, which contains a unique maximal vector $v_z$ by Theorem 6.3.
   \begin{lemma} The $U_q(\g_{\0})\mathcal N_1$-submodule  $U_q(\g_{\0})\mathcal N_1v_z\subseteq M(z)$ is simple.
   \end{lemma}\begin{proof} Let $N\subseteq U_q(\g_{\0})\mathcal N_1v_z$ be a simple $U_q(\g_{\0})\mathcal N_1$-submodule. By discussions at the end of Section 5.4,  $N$ is simple as a $U_q(\g_{\0})$-module and  annihilated by $U_q(\g_{\0})\mathcal N^+_1$. Since $M(z)$ is a weighted $U_q(\g_{\0})$-module, $N$ is also a weighted $U_q(\g_{\0})$-module. Since $z'\leq z$ for any $z'$ such that $N_{z'}\neq 0$, there is a maximal element with respect to the partial order $\leq$ in the set $$\{z'|N_{z'}\neq 0\}.$$ Thus $N$ contains a   nonzero weight vector $v^+$ satisfying $$E_{\a_i}v^+=0\quad \text{for all}\quad i\in [1,m+n)\setminus m;$$  since $U_q(\g_{\0})\mathcal N_1^+$ annihilates
  $N$, which implies that $E_{\a_m}v^+=0$,   $v^+$ is also a maximal   vector for the $U_q$-module $M(z)$.
   It follows from Theorem 6.3 that $$v^+=cv_z \ (c\neq 0),$$
     implying that $N=U_q(\g_{\0})\mathcal N_1v_z$.  Therefore,  $U_q(\g_{\0})\mathcal N_1v_z$ is a simple   $U_q(\g_{\0})\mathcal N_1$-module.
   \end{proof}
   \begin{proposition}\label{kacim}    $M(z)$ is a homomorphic image of $ K(z)$. \end{proposition}\begin{proof} Let $ v_z\in M(z)$ be a  maximal vector. By Lemma 8.8,  $U_q(\g_{\0})\mathcal N_1 v_z\subseteq M(z)$ is simple as a $U_q(\g_{\0})$-module and annihilated by $U_q(\g_{\0})\mathcal N_1^+$. Now we may take the simple $U_q(\g_{\0})\mathcal N_1$-module $\mathcal M_0(z)$ in the definition of $K(z)$ to be
$U_q(\g_{\0})\mathcal N_1 v_z$.  This  induces  a homomorphism of  $U_q$-modules from $K(z)$ into $M(z)$,
  which is an epimorphism since  $M(z)$ is simple.
 \end{proof}
\subsection{Proof of the PBW theorem}   Recall from Chapter 5 the notation $E_1^{d}$ for $d\in \mathbb Z_2^{\mathcal I_1}$ and $E_0^{\psi}$ for $\psi\in \mathbb N^{\mathcal I_0}$.
Set $$F_1^{d}=\Omega (E_1^{d}),  \quad F_0^{\psi}=\Omega (E_0^{\psi}).$$ We are now ready to prove the PBW theorem for $U_q$.
\begin{theorem} The set of elements $$\mathscr B=\{F_1^{d}F_0^{\psi}| d\in \mathbb Z_2^{\mathcal I_1}, \ \psi \in \mathbb N^{\mathcal I_0}\}$$ is an $\mathbb F(q)$-basis of  $U_q^-$.
\end{theorem}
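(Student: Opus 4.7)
The plan is to establish the theorem in two steps: first that $\mathscr B$ spans $U_q^-$, and second that $\mathscr B$ is linearly independent over $\mathbb F(q)$.

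For spanning, Corollary~3.11(3) tells us that $U_q^-$ is generated by the $F_{i,i+1}$, and the recursion $F_{ij}=-q_cF_{ic}F_{cj}+F_{cj}F_{ic}$ lets one write every product of simple-root generators as a product of the $F_{ij}$, $(i,j)\in\mathcal I$. Applying the anti-automorphism $\Omega$ to Lemma~4.3 yields the analogous re-ordering rule in $U_q^-$: for $x\prec y$ in $S^-$, one has $yx=\sum c_ix_iy_i$ with $x\preccurlyeq x_i\prec y_i\preccurlyeq y$. Iterating this reordering (ordered, say, by the number of inversions), together with $F_{ij}^2=0$ for $(i,j)\in\mathcal I_1$ --- a consequence of $F_{m,m+1}^2=0$ and the braid-group formula (b2) of Sec.~4.1 --- reduces any product of $F_{ij}$'s to an $\mathbb F(q)$-linear combination of elements of $\mathscr B$.

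For linear independence, suppose $\sum_{(d,\psi)}c_{d,\psi}(q)F_1^dF_0^\psi=0$ in $U_q^-$. The $\mathbb Z\Phi^+$-grading allows us to assume all summands lie in a single weight $-\mu$. Using Note~(1) of Sec.~2.2, pick a typical weight $\l=\sum r_i\e_i\in\L$ with $r_i-r_{i+1}$ greater than any coefficient appearing in $\mu$, and let $M$ be the simple $U_q$-module of highest weight $\bold c=(q_1^{r_1},\dots,q_{m+n}^{r_{m+n}})$ provided by Theorem~5.3, with maximal vector $v^+$. Applying the relation to $v^+$ gives $\sum c_{d,\psi}(q)F_1^dF_0^\psi v^+=0$ in $M$. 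After clearing denominators into $\mathcal A$, the relation lies inside $M_{\mathcal A}=U_{\mathcal A}^-v^+$ (Lemma~7.1); reducing modulo $(q-1)$ produces a relation in $\bar M=\mathbb F\otimes_{\mathcal A}M_{\mathcal A}$. By Lemma~7.2(4), $\bar M$ is a quotient of the Kac module $\mathscr K(\l)$, and since $\l$ is typical $\mathscr K(\l)$ is simple by \cite[Prop.~2.9]{k1}, so $\bar M\cong\mathscr K(\l)$. The classical PBW theorem for $\g$ asserts $\{f_1^df_0^\psi v^+\}$ is a basis of $\mathscr K(\l)$, forcing the leading $(q-1)$-terms of all $c_{d,\psi}$ to vanish; a standard divisibility argument (factoring out successive powers of $(q-1)$) then yields $c_{d,\psi}\equiv 0$.

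The principal obstacle is managing the specialization faithfully. One must verify that $M_{\mathcal A}$ is a free $\mathcal A$-module in the weight space $\bold cq^{-\mu}$ (which Lemma~7.1(c) provides), and that the gap condition on $\l$ prevents standard monomials of weight $-\mu$ from collapsing under reduction to $\bar M$; in other words, the reduction map must carry the proposed quantum basis $\{F_1^dF_0^\psi v^+\}$ of weight $-\mu$ bijectively onto the corresponding classical PBW basis of $\mathscr K(\l)_{-\mu}$. Ensuring this ``no collapse'' is the technical heart of the argument --- everything else reduces to routine reordering computations or to standard quotation from the classical theory of Kac modules.
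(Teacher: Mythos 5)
Your proof takes essentially the same route as the paper's: bound the exponents, pick a typical $\l$ with a sufficiently large gap, specialize at $q=1$ so that the reduction of the highest-weight module is identified with the Kac module $\mathscr K(\l)$ (simple by Kac's typicality criterion), and then invoke the classical PBW basis of $\mathscr K(\l)$ to force the coefficients to vanish via the $(q-1)$-divisibility argument. One small mis-citation: Lemma~7.2(4) is stated for $\bar K(\bold c)$, the reduction of the \emph{induced} module $K(\bold c)$, not for the reduction $\bar M$ of the simple module; the paper works directly with $K(\bold c)$, which avoids the extra step of observing that $\bar M$ is a quotient of $\bar K(\bold c)\cong\mathscr K(\l)$, but your version is easily patched.
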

\begin{proof}  It's no loss of generality to assume $\mathbb F=\mathbb C$. By Lemma 5.12, Lemma 5.14, and Lemma 5.15, $U_q^+$ is spanned by the elements  $$E^{\psi}_0E^d_1,\quad \psi \in \mathbb N^{\mathcal I_0},\ d\in \mathbb Z_2^{\mathcal I_1}.$$ Since $\Omega (U_q^+)=U_q^-$, it follows that $U^-_q$ is spanned by $\mathscr B$.\par To prove the linear independence of $\mathscr B$,   let  $\mathscr B_1$ be a finite subset of $\mathscr B$. Let $\mu $ be a positive integer which is greater than
  all $\psi_{ij}$, $(i,j)\in\mathcal I_0$, for all $\psi=(\psi_{ij})_{(i,j)\in \mathcal I_0}$ with  $F_1^{d}F_0^{\psi}\in \mathscr B_1$.\par
  By Lemma 1.1, there is
   a typical integral weight $$\l=\sum_{i=1}^{m+n} \l_i\e_i\in \Lambda$$  such that $\mu= \l_i-\l_{i+1}\quad \text{for all}\quad i\in [1,m+n)\setminus m$, which  is obviously dominant integral.
   Let $M_0(\l)$ be a simple $U(\g_{\0})$-module of  highest weight $\l$ (see Chapter 1). By the discussion in Section 8.1, $M_0(\l)$  is finite dimensional.\par Let ${\mathscr K}(\l)$
    be the Kac module induced from $M_0(\l)$ viewed as $U(\g^+)$-module.  Since $\l$ is typical, \cite[Proposition 2.9]{k1} says that  ${\mathscr K}(\l)$ is simple.\par Let $z\in \mathbb Z^{m+n}$ be identified with $\l$ as in the last section.  Then $$z_1=\cdots =z_{m-1}=z_{m+1}=\cdots =z_{m+n-1}=\mu.$$ By  Theorem 8.7,  $\bar{ K}(z)$ is a homomorphic image of ${\mathscr K}(\l)$, then it follows from the simplicity of ${\mathscr K}(\l)$ that  ${\mathscr K}(\l)$ is isomorphic to $\bar{ K}(z)$.\par
    Let $v_z\in K(z)$ be a maximal vector as in the proof of Theorem 8.7. Under the isomorphism from $\bar{ K}(z)$ into ${\mathscr K}(\l)$, the image of $F_1^{d}F_0^{\psi} v_z$ with  $F_1^{d}F_0^{\psi}\in  \mathscr B_1$ is
    $$\Pi_{(i,j)\in \mathcal I_1}f_{ij}^{d_{ij}}\Pi_{(i,j)\in\mathcal I_0}f_{ij}^{\psi_{ij}} v_z.$$  By Lemma 8.5(3),
    the weight of the maximal vector $ v_z$ relative to the maximal torus \ $$\langle h_{\a_1},\dots, h_{\a_{m-1}}, h_{\a_{m+1}},\dots, h_{\a_{m+n-1}}\rangle \subseteq \mathfrak{sl}_m\oplus \mathfrak{sl}_n$$ is $$(z_1, \dots,z_{m-1},  z_{m+1},\dots, z_{m+n-1})=(\mu, \dots, \mu)\in\mathbb Z^{m+n-2}.$$
    According to \cite[1.10(c)]{lu1} and Lemma 8.2, the elements $$\Pi_{(i,j)\in\mathcal I_0}f_{ij}^{\psi_{ij}} v_z\in M_0(\l)$$ are basis vectors. Since $\mathscr K(\l)$ is induced from $M_0(\l)$, the elements
  $$\Pi_{(i,j)\in \mathcal I_1}f_{ij}^{d_{ij}}\Pi_{(i,j)\in\mathcal I_0}f_{ij}^{\psi_{ij}} v_z, \quad F_1^dF_0^{\psi}\in \mathscr B_1$$ are linearly independent, so are the elements in $\mathscr B_1$.
  \end{proof}
 From the theorem, together with Corollary 5.1(1), (4), we get \begin{corollary}(PBW theorem) The  elements $$F_1^{d'}F_0^{ \psi'}K_{\mu}E_0^{ \psi}E_1^{d},\quad d,d'\in \mathbb Z_2^{\mathcal I_1}, \ \psi,\psi'\in \mathbb N^{\mathcal I_0}, \ \mu\in\Lambda$$ form a basis of $U_q$.
\end{corollary}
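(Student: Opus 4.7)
The plan is to derive the claimed basis as the ``tensor product'' of three bases: one for $U_q^-$, one for $U_q^0$, and one for $U_q^+$, using the triangular decomposition provided by Corollary 3.11(1).

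First I would invoke Theorem 7.3 directly to obtain that $\mathscr{B} = \{F_1^{d'} F_0^{\psi'} \mid d' \in \{0,1\}^{\mathcal{I}_1},\ \psi' \in \mathbb{N}^{\mathcal{I}_0}\}$ is an $\mathbb{F}(q)$-basis of $U_q^-$. Next I would transport this basis to $U_q^+$ via the anti-automorphism $\Omega$ of Lemma 3.2. Because $\Omega$ is an involution on the generators (and squares to the identity since $\Omega(q) = q^{-1}$ and $\Omega$ exchanges $E_{\alpha_i} \leftrightarrow F_{\alpha_i}$, $K_j \leftrightarrow K_j^{-1}$), it restricts to an $\mathbb{F}$-linear bijection $U_q^- \to U_q^+$. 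Applied term-by-term with the anti-multiplicative property, $\Omega(F_1^{d'} F_0^{\psi'}) = \Omega(F_0^{\psi'})\, \Omega(F_1^{d'}) = E_0^{\psi'} E_1^{d'}$, where I use the definitions $F_0^{\psi} = \Omega(E_0^\psi)$ and $F_1^{d} = \Omega(E_1^d)$ together with $\Omega^2 = \mathrm{id}$. Thus $\{E_0^\psi E_1^d\}$ is a basis of $U_q^+$.

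For the Cartan part, Corollary 3.11(4) already supplies $\{K_\mu \mid \mu \in \Lambda\}$ as a basis of $U_q^0$. Finally, by Corollary 3.11(1) the multiplication map
\[
\bar{\theta}: U_q^- \otimes U_q^0 \otimes U_q^+ \longrightarrow U_q
\]
is an isomorphism of $\mathbb{F}(q)$-vector spaces, so the image of the tensor products of the three bases above is a basis of $U_q$. Writing out this image yields precisely
\[
\{F_1^{d'} F_0^{\psi'} \cdot K_\mu \cdot E_0^\psi E_1^d \mid d,d' \in \{0,1\}^{\mathcal{I}_1},\ \psi, \psi' \in \mathbb{N}^{\mathcal{I}_0},\ \mu \in \Lambda\},
\]
which is the asserted basis.

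There is essentially no substantive obstacle left: the hard analytic content has been absorbed into Theorem 7.3 (proved via specialization to the classical Kac module) and into the triangular decomposition of Corollary 3.11. The only minor point requiring care is verifying that $\Omega$ really sends the ordered monomial $F_1^{d'} F_0^{\psi'}$ to the ordered monomial $E_0^{\psi'} E_1^{d'}$ (with the correct factor order), but this is immediate from the definitions $F_i^{\bullet} = \Omega(E_i^{\bullet})$ together with the anti-multiplicative property of $\Omega$. Hence the corollary follows by a short bookkeeping argument rather than new computation.
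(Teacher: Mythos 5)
Your proof is correct and matches the paper's intended route: Theorem 7.3 gives the $\mathbb F(q)$-basis of $U_q^-$, Corollary 3.11(4) gives the basis $\{K_\mu\}$ of $U_q^0$, the (semi-linear but $\mathbb F$-linear) involution $\Omega$ transports the $U_q^-$-basis to $\{E_0^{\psi}E_1^{d}\}$ in $U_q^+$, and Corollary 3.11(1) assembles the three into a basis of $U_q$. The paper states the corollary as an immediate consequence without spelling out these steps; your write-up fills in exactly the bookkeeping the author left implicit, in the same order.
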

 Consequently, we get an isomorphism of  $\mathbb F(q)$-vector spaces:  $$\begin{aligned} \mathcal N_{-1}\otimes U_q(\g_{\0})\otimes \mathcal N_1 &\longrightarrow &&  U_q\\ u^-\otimes u_0\otimes u^+ &\longrightarrow &&  u^-u_0u^+, \quad  u^{\pm}\in\mathcal N_{\pm 1}, \ u_0\in U_q(\g_{\0}),\end{aligned}$$ which is mentioned earlier as a triangular decomposition of $U_q$ in super case (see Theorem 5.16).\newpage
\section{Generators and relations of  $U_{\mathcal A}$}
In this chapter we  describe the $\mathcal A$-superalgebra $U_{\mathcal A}$ in terms of generators and relations.\par
 We shall consider the set consisting of the following variables: $$ \begin{aligned} &(a)\quad E_{ij}^{(N)},\\ &(b)\quad F_{ij}^{(N)},\\ \quad &(i,j)\in \mathcal I_0\cup\mathcal I_1, \ N\in\begin{cases} \mathbb N, &\text{if $(i,j)\in\mathcal I_0$}\\ \mathbb Z_2, &\text{if $(i,j)\in\mathcal I_1$,}\end{cases}\\ &(c)\quad  K_{\a_i}, K_{\a_i}^{-1}, \left[\begin{matrix}K_{\a_i};c\\t \end{matrix}\right],  \quad  K_m, K_m^{-1}, \left[\begin{matrix}K_m;c\\t \end{matrix}\right], \quad i\in [1, m+n]\setminus m, \ c\in\mathbb Z,\ t\in\mathbb N.\end{aligned}$$ The parity of these variables is defined similarly as that in $U_{\mathcal A}$, then all these variables are homogeneous. \par Convention:  (1) The variables $E_{i,i+1}^{(N)}$ and $ F_{i,i+1}^{(N)}$  are denoted also by $E_{\a_i}^{(N)}$ and $F_{\a_i}^{(N)}$ respectively.\par (2) The natation $E_{ij}^{(1)}$ and  $F_{ij}^{(1)}$  is  abbreviated to $E_{ij}$ and  $ F_{ij}$ respectively.\par (2) If we denote $K_m$ by $K_{\e_m}$, then the set of variables (c) can be put briefly as $$(c)\quad  K_{\l},\  K_{\l}^{-1},\ \left[\begin{matrix}K_{\l};c\\t \end{matrix}\right], \quad \l=\a_i\quad \text{for}\quad i\in [1, m+n]\setminus m\quad \text{or}\quad \l=\e_m.$$
     Let $\mathscr V^+$ be the $\mathcal A$-superalgebra defined by the generators $(a)$ and relations
  $$ \begin{aligned} &(e0) \quad && E_{ij}^{(0)}=1,\  (i,j)\in \mathcal I_0\cup\mathcal I_1,\quad E_{ij}^{(2)}=0,\ (i,j)\in\mathcal I_1,\\
  &(e1)-(e5):\quad &&\text{the relations (e1)-(e5) in Lemma 7.2.}\end{aligned}$$

Let $\mathscr V^-$ be the $\mathcal A$-superalgebra defined by the generators $(b)$ and  relations $$ \begin{aligned} &(f0)\quad &&F_{ij}^{(0)}=1,\ (i,j)\in\mathcal I_0\cup\mathcal I_1,\quad F_{ij}^{(2)}=0,\ (i,j)\in\mathcal I_1,\\
 &(f1)\quad &&F_{ij}^{(N)}F_{ij}^{(M)}=\left[\begin{matrix}M+N\\N \end{matrix}\right] F_{ij}^{(N+M)},\\
 &(f2)\quad &&F_{ij}^{(N)}F_{st}^{(M)}=(-1)^{NM\bar F_{ij}\bar F_{st}}F_{st}^{(M)}F_{ij}^{(N)},\\&\quad  &&\text{  $i<s<t<j$}\ \text{ or \ $s<t<i<j$,}\\
&(f3)\quad &&F_{ta}^{(N)}F_{tb}^{(M)}=[(-1)^{\bar F_{ta}}q_t]^{NM}F_{tb}^{(M)}F_{ta}^{(N)},\quad t<a<b,\\&(f4)\quad &&F_{bt}^{(N)}F_{at}^{(M)}=[(-1)^{\bar F_{bt}}q_t^{-1}]^{NM}F_{at}^{(M)}F_{bt}^{(N)}, \quad a<b<t,\\&(f5)\quad &&F_{cj}^{(M)}F_{ic}^{(N)}=\sum_{0\leq k\leq min\{N,M\}}q_c^{(M-k)(N-k)}F_{ic}^{(N-k)}F_{ij}^{(k)}F_{cj}^{(M-k)},\quad i<c<j.\end{aligned}$$
Let $\mathscr V^0$ be the $\mathcal A$-algebra defined by the generators $(c)$ and  relations (\cite[2.3]{lu1}):
$$\begin{aligned} &(g1) &&\text{the generators (c) commute with each other},\\
&(g2) &&K_{\l}K_{\l}^{-1}=1, \ \left[\begin{matrix}K_{\l};c\\0 \end{matrix}\right]=1,\\
&(g3) &&\left[\begin{matrix}K_{\l};0\\t \end{matrix}\right]\left[\begin{matrix}K_{\l};-t\\t' \end{matrix}\right]
=\left[\begin{matrix}t+t'\\t \end{matrix}\right]\left[\begin{matrix}K_{\l};0\\t+t' \end{matrix}\right]\ (t,t'\geq 0),\\
&(g4) &&\left[\begin{matrix}K_{\l};c\\t \end{matrix}\right]-q_i^{-t}\left[\begin{matrix}K_{\l};c+1\\t \end{matrix}\right]=-q_i^{-(c+1)}K_{\l}^{-1}\left[\begin{matrix}K_{\l};c\\t-1 \end{matrix}\right]\ (t\geq 1),\\
&(g5) &&(q_i-q_i^{-1})\left[\begin{matrix}K_{\l};0\\1 \end{matrix}\right]=K_{\l}-K_{\l}^{-1}.\end{aligned}$$
Set $$K_{\a_m}=K_m(K_{\a_{m+1}}\cdots K_{\a_{m+n-1}}K_{\a_{m+n}})^{-1}.$$ Using Lemma 7.1, it is easy to check that $$\left[\begin{matrix}K_{\a_m};0\\1 \end{matrix}\right]=(K_{\a_m}-K_{\a_m}^{-1})/(q_m-q_m^{-1})\in \mathscr V^0.$$
Recall from Chapter 2 the matrix  $\bar{A}=(a_{ij})$. Let $\mathscr V$ be the $\mathcal A$-superalgebra defined by the homogeneous generators $(a)$, $(b)$ and $(c)$ and all relations  above,  together with
$$\begin{aligned}
&(h1)\quad \left[\begin{array}{c}K_{\a_i};c\\t\\ \end{array}\right]E^{(s)}_{\a_j}=E^{(s)}_{\a_j}\left[\begin{array}{c}K_{\a_i};c+sa_{ij}
\\t\\ \end{array}\right],\quad i\in [1, m+n]\setminus m,\\ &\quad\quad\quad \left[\begin{array}{c}K_{\a_i};c\\t\\ \end{array}\right]F^{(s)}_{\a_j}=F^{(s)}_{\a_j}\left[\begin{array}{c}K_{\a_i};c-sa_{ij}\\t\\ \end{array}\right],\quad i\in [1, m+n]\setminus m, \\
&(h2)\quad \left[\begin{array}{c}K_{m};c\\t\\ \end{array}\right]E^{(s)}_{\a_j}=E^{(s)}_{\a_j}\left[\begin{array}{c}K_{m};c+sa_{mj}
\\t\\ \end{array}\right], \\ &\quad\quad\quad \left[\begin{array}{c}K_m;c\\t\\ \end{array}\right]F^{(s)}_{\a_j}=F^{(s)}_{\a_j}\left[\begin{array}{c}K_m;c-sa_{mj}\\t\\ \end{array}\right],\\
\end{aligned}$$
$$\begin{aligned}
&(h3)\quad  K_{\a_i}^{\e}E_{\a_j}^{(N)}=q_i^{\e Na_{ij}}E_{\a_j}^{(N)}K_{\a_i}^{\e},\
\quad  K_m^{\e}E_{\a_j}^{(N)}=q_m^{\e Na_{mj}}E_{\a_j}^{(N)}K_m^{\e},\\ &\quad\quad i\in [1, m+n]\setminus m,\quad \e=\pm 1,\\
&(h4)\quad K_{\a_i}^{\e}F_{\a_j}^{(N)}=q_i^{-\e Na_{ij}}F_{\a_j}K_{\a_i}^{\e}, \quad  K_m^{\e}F_{\a_j}^{(N)}=q_m^{-\e Na_{mj}}F_{\a_j}K_m^{\e},\\ &\quad\quad i\in [1, m+n]\setminus m,\quad  \e=\pm 1,\\
&(h5)\quad E_{\a_i}^{(M)}F_{\a_j}^{(N)}=F_{\a_j}^{(N)}E_{\a_i}^{(M)}, \quad\quad i\neq j,\\
&(h6)\quad E_{\a_i}^{(N)}F_{\a_i}^{(M)}=\sum_{0\leq t\leq min\{M,N\}} F_{\a_i}^{(M-t)}\left[\begin{matrix}K_{\a_i};2t-N-M\\t \end{matrix}\right]E_{\a_i}^{(N-t)},\quad i\neq m,\\
&\quad\quad\quad E_{\a_m}F_{\a_m}=-F_{\a_m}E_{\a_m}+\left[\begin{matrix}K_{\a_m};0\\1 \end{matrix}\right].\\
\end{aligned}$$

\begin{lemma}\label{equ} For $i<c<j$, the following identities hold in $\mathscr V^+$.$$\begin{aligned} &(1)\quad  &&E_{ij}^{(N)} =\sum_{k=0}^N(-1)^kq_c^{-k}E_{cj}^{(k)}E_{ic}^{(N)}E_{cj}^{(N-k)},\\
&(2)\quad &&E_{ic}^{(M)}E_{cj}^{(M+N)}E_{ic}^{(N)} =E_{cj}^{(N)}E_{ic}^{(M+N)}E_{cj}^{(M)},\\&(3)\quad && E_{ij}^{(N)} =\sum_{k=0}^N(-1)^kq_c^{-k}E_{ic}^{(N-k)}E_{cj}^{(N)}E_{ic}^{(k)},\\
&(4)\quad && E_{cj}^{(N)}E_{ic}^{(M)} =\sum_{0\leq k\leq min\{N,M\}}(-1)^kq_c^{k+(N-k)(M-k)}E_{ic}^{(M-k)}E_{ij}^{(k)}E_{cj}^{(N-k)},\\
&(5)\quad && [E_{ij}, E_{st}] =(q_j-q_j^{-1})E_{it}E_{sj},\quad i<s<j<t. \end{aligned}$$
\end{lemma}
\begin{proof}(1) On the right side,  substitute $E_{ic}^{(N)}E_{cj}^{(N-k)}$ by the expression provided by (e5);  making cancelations using   the formula (\cite[0.2 (4)]{j})
 $$\sum^r_{i=0}(-1)^iq^{-i(r-1)}\left[\begin{matrix}r\\i \end{matrix}\right]=0,
 $$
 we get the left side.\par (2)\quad Substituting $E_{ic}^{(M)}E_{cj}^{(M+N)}$ on the left side and $E_{ic}^{(M+N)}E_{cj}^{(M)}$ on the right side both  by the expression given in (e5), we get equal expressions.\par
(3) follows immediately from (1) and (2).\par (4) If $\bar E_{ic}=\1 \ (\text{resp.}\quad \bar E_{cj}=\1),$ so that $M=1\ (\text{resp.}\quad N=1)$ by our convention,   then substituting $E_{ic}^{(M)}E_{cj}^{(N)}$ on the right side by the expressions given by (e5) and  applying (e4) (resp. (e3)), we get the left side.\par  Suppose $\bar E_{ic}=\bar E_{cj}=\0.$ We first apply (e3) to the right side,  and  then substitute the  product $E_{ic}^{(M-k)}E_{cj}^{(N-k)}$ in each term of the resulted summation    by the expression given in (e5);  making cancelations with  the formula \cite[0.2 (4)]{j} once again, we get the left  side.\par
(5)  is Lemma 5.8(4). By the discussion in Section 5.3, it follows  from the identities $$\begin{aligned}E_{ij}&=&&E_{ic}E_{cj}-q^{-1}_cE_{cj}E_{ic},\\ E_{si}E_{sj}&=&&(-1)^{\bar E_{si}}q_sE_{sj}E_{si},\quad s<i<j,\end{aligned}$$ of which  the first   is given by  (e5) with $N=M=1$, and the second is immediate from  (e3).
\end{proof}
Recall in Section 5.4 the set $S$ in $U_q$ and the order $``\prec"$ on it. Viewed as
a subset of $S$, the set of variables $$\{E_{ij}, F_{ij}| (i,j)\in \mathcal I_0\cup\mathcal I_1\}$$   becomes ordered.  Recall also in Chapter 8 the notation $$\psi=(\psi_{ij})_{(i,j)\in\mathcal I_0}\in \mathbb N^{\mathcal I_0},\quad d=(d_{ij})_{(i,j)\in\mathcal I_1}\in\mathbb Z_2^{\mathcal I_1}.$$ Define
in $\mathscr V$  the products
$$E^{(\psi)}_0E_1^{d}=:\Pi_{(i,j)\in \mathcal I_0} E_{ij}^{(\psi_{ij})}\Pi_{(i,j)\in \mathcal I_1}E_{ij}^{(d_{ij})},$$ $$F_1^{d}F_0^{(\psi)}=:\Pi_{(i,j)\in \mathcal I_1}F_{ij}^{(d_{ij})}\Pi_{(i,j)\in \mathcal I_0}F_{ij}^{(\psi_{ij})}$$
 in the  above order.\par
 For $E_{ij}^{(N)}, E_{st}^{(M)}\in \mathcal V^+$ with $(i,j)\neq (s,t)$, $N, M>0$, we write  $$E_{ij}^{(N)}\prec E_{st}^{(M)}$$ if $E_{ij}\prec E_{st}$. In the following $\xi_i$ always denotes a variable $E_{st}^{(N)}$ for some  $N>0$.  In view of $(e1)$, two adjacent elements $\xi_i$ and $\xi_{i+1}$ in a product
$\xi_1\xi_2\cdots\xi_L$ are always  assumed to  satisfy\ $\xi_i\prec \xi_{i+1}$\ or\ $\xi_{i+1}\prec \xi_i$.

\begin{lemma} Let $\xi_1=E_{st}$ and $\xi_2=E_{ij}^{(N)}$ for some $(s,t)\in \mathcal I_1$, $ (i,j)\in\mathcal I_0$. Then $\xi_1\xi_2$  can be expressed as
 an $\mathcal A$-linear combination of products  $\xi_1'\cdots\xi'_{k-1}\xi_k'$ with $$\bar\xi_1'=\cdots =\bar\xi_{k-1}'=\bar 0,\quad \bar\xi_k'=\bar 1.$$
\end{lemma}
\begin{proof}  Thanks to the relations (e2)-(e4), we need only check the following  cases:\par (1) $t=i$. Using the formula (e5) we have $$\xi_1\xi_2 =E_{si}E_{ij}^{(N)} =q_i^{-N}E_{ij}^{(N)}E_{si}+E_{ij}^{(N-1)}E_{sj},$$ where $\bar E_{si}=\bar 1$ and hence $\bar E_{sj}=\bar 1$.\par
(2) $s=j$. Using Lemma 9.1(4)  we can verify the statement similarly as in Case (1).\par
(3) $s<i<t<j$. Since $(i, j)\in\mathcal I_0$ and $(s, t)\in\mathcal I_1$,  we must have $m<i$. Then  $$ \begin{aligned}\xi_1\xi_2 &=E_{st}E_{ij}^{(N)}\\ (\text{by Lemma 9.1(1)}) &=\sum _{k=0}^N (-1)^kq_t^kE_{st}E_{tj}^{(k)}E_{it}^{(N)}E_{tj}^{(N-k)}\\ (\text{by proof of Case (1)}) &=\sum_{k=0}^N(-1)^kq_t^k(q_t^{-k}E_{tj}^{(k)}E_{st}+E_{tj}^{(k-1)}E_{sj})E_{it}^{(N)}E_{tj}^{(N-k)}\\ (\text{using (e2),\ (e4)}) &=\sum^N_{k=0}f_kE_{tj}^{(k)}E_{it}^{(N)}E_{st}
E_{tj}^{(N-k)}+\sum_{k=0}^Ng_kE_{tj}^{(k-1)}E_{it}^{(N)}E_{tj}^{(N-k)}E_{sj},\end{aligned}$$ for some $f_k, g_k\in\mathcal A$.\par Since $m<i$, we have    $$\bar E_{st}=\bar E_{sj}=\bar 1\quad\text{ and}\quad \bar E_{tj}=\bar E_{it}=\bar 0.$$
Then each summand in the second summation is already in the desired form.  In the first summation,  in view of the proof of Case (1), each summand
   equals to an $\mathcal A$-linear combination of elements $$E_{tj}^{(k)}E_{it}^{(N)}E_{tj}^{(N-k)}E_{st}\quad\text{ and}\quad E_{tj}^{(k)}E_{it}^{(N)}E_{tj}^{(N-k-1)}E_{sj},$$ as desired.\par
(4) $i<s<j<t$.  Since $(i, j)\in\mathcal I_0$ and $(s, t)\in\mathcal I_1$, we must have $j\leq m<t$.  Using Lemma 9.1(4)  we can prove the statement similarly as in Case (3).
\end{proof}
\begin{definition}   A product $\xi_1\xi_2\cdots\xi_L$ in $\mathscr V^+$ is {\it in good order} if   $$\bar \xi_1=\cdots=\bar \xi_s=\bar 0\quad \mathbin{\mathrm{and}}\quad \bar\xi_{s+1}=\cdots=\bar\xi_L=\bar 1\quad \mathbin{\mathrm{for\ some}}\  s.$$\end{definition}
The following conclusion is immediate from Lemma 9.2.
\begin{corollary} A product $\xi_1\cdots \xi_L$ can be expressed as an $\mathcal A$-linear combination of products
in good orders.
\end{corollary}
 Next we discuss the products $\xi_1\cdots \xi_L$  with $\bar \xi_i=\bar 0$ for all $i$.
 Since minimal elements in $\{\xi_1,\dots, \xi_L\}$ need not be  unique,
   we define the {\it specified minimal element}  $\xi_l$ to be the minimal element such that $\xi_i\succ \xi_l$ for all $i>l$. \par Examples: Assume $m=5$ and $n=2$. Then $$\mathcal I_0=\{(i,j)|1\leq i<j\leq 5\}\cup \{(6,7)\}.$$  For the product $$\xi_1\xi_2\xi_3\xi_4\xi_5=E_{24}^{(2)}E_{13}E_{34}^{(2)}E_{13}^{(3)}E_{23}^{(5)}\in \mathcal V^+,$$   there are two minimal elements $\xi_2=E_{13}$ and $\xi_4=E_{13}^{(3)}$ in the set $$\{\xi_1,\ \xi_2,\ \xi_3,\ \xi_4,\ \xi_5\},$$  but $\xi_4$ is the specified one.\par
  For two elements $\xi_i$, $\xi_j$, we write $\xi_i\succeq \xi_j$ if  $\xi_i\succ \xi_j$ or $\xi_i=E_{st}^{(N)}\ \mbox{and}\ \xi_j=E_{st}^{(N')}$  for some $(s,t)\in \mathcal I_0$. In the example above, we have both $\xi_2\succeq \xi_4$ and $\xi_4\succeq \xi_2$.
\begin{lemma}\label{v1} Every product $\xi_1\xi_2\cdots\xi_L$ can be expressed as an $\mathcal A$-linear combination of products $\xi'_1\xi'_2\cdots \xi'_K$ such that \ $ \xi'_j\succ\xi_l \ \text{ for\ all}\quad j\geq 2$ and  $\xi'_1\succeq \xi_l$, where $\xi_l$ is the specified minimal element in $\{\xi_1, \dots, \xi_L\}$.
\end{lemma}
\begin{proof} Assume that $\xi_l=E_{ij}^{(N)}$ for some $(i,j)\in\mathcal I_0$.
We  proceed with induction on $l$. The case $l=1$ is trivial. Assume $l>1$ and  $\xi_{l-1}=E_{st}^{(M)}$.  By convention,  we have $\xi_{l-1}\succ\xi_l$.\par For $  s=i$
 or $t=j$ or $i<s<t<j$ or  $j<s$,  using relations  (e2)-(e4) we have  $$\xi_{l-1}\xi_l=E_{st}^{(M)}E_{ij}^{(N)}=f\xi_l\xi_{l-1}\ \mathbin{\mathrm{for\ some}}\ f\in\mathcal A,$$
  from which  and induction hypothesis the statement follows.\par
     For $j=s$,  we have by  Lemma 9.1(4) that  $$(*)\quad \xi_{l-1}\xi_l =E_{jt}^{(M)}E_{ij}^{(N)}=\sum_{k\leq N,M} (-1)^kq_j^{k+(M-k)(N-k)} E_{ij}^{(N-k)}E_{it}^{(k)}E_{jt}^{(M-k)}.$$
     Note that \ $E_{it}^{(k)}\succ \xi_l$ \ if $k>0$ and  $E_{jt}^{(M-k)}\succ \xi_l$ \ if $M-k>0$. \par
    Substituting  $\xi_{l-1}\xi_l$ within  $\xi_1\cdots \xi_L$ by this expansion,  and combining adjacent terms using (e1) if necessary,   we have that  $\xi_1\cdots\xi_L$
   equals an $\mathcal A$-linear combination of products $\xi'_1 \cdots \xi'_K$ as follows:  \par
    Case 1.  $\xi'_1 \cdots \xi'_K$ is obtained from the summand on the right side of $(*)$ with  $k=N$ and $\xi_l$ is the unique minimal element in $\{\xi_1,\dots,\xi_L\}$.   Then we have $\xi'_i\succ \xi_l$ for all $i$, and hence, the product $\xi'_1 \cdots \xi'_K$ is of the desired form.  \par
    Case 2.  $\xi'_1 \cdots \xi'_K$ is obtained from the summand on the right side of $(*)$ with  $k=N$ but  minimal elements in $\{\xi_1,\dots, \xi_L\}$ are not unique. Then
 the specified minimal element in $\{\xi'_1,  \dots,  \xi'_K\}$ is $\xi'_{l'}=\xi_{l'}=E_{ij}^{(s)}$ for some $l'<l$.   By induction hypothesis, the product $\xi'_1 \cdots \xi'_K$ can be written in the desired form.\par
    Case 3.   $\xi'_1 \cdots \xi'_K$ is obtained from a summand on the right side of $(*)$ with $k<N$.  Then
 the specified minimal element in $\{\xi'_1 \dots \xi'_K\}$ is $\xi'_{l-1}=E_{ij}^{(N-k)}$, and hence,  $\xi'_1 \cdots \xi'_K$ can be written in the desired form  by induction hypothesis.\par

 In conclusion,  the product $\xi_1\cdots \xi_L$ can be expressed in the desired form in the case $j=s$.\par

 We are now left only with the case $i<s<j<t$, in which  we have
$$
 \begin{aligned}
 \xi_{l-1}\xi_l &=E_{st}^{(M)}E_{ij}^{(N)} \\(\text{using Lemma 9.1(3) and (e4)}) &=\sum^N_{k=0}f_kE_{sj}^{(M-k)}E_{jt}^{(M)}E_{ij}^{(N)}E_{sj}^{(k)}\\ (\text{using Lemma 9.1(4)}) &=\sum_{k\leq N,\ k'\leq N,M} f_{k,k'}E_{sj}^{(M-k)}E_{ij}^{(N-k')}E_{it}^{(k')}E_{jt}^{(M-k')}E_{sj}^{(k)}\\(\text{using (e4)}) &=\sum \bar f_{k,k'}E_{ij}^{(N-k')}E_{sj}^{(M-k)}E_{it}^{(k')}E_{jt}^{(M-k')}E_{sj}^{(k)},\end{aligned}$$ for some $f_k, f_{k,k'}, \bar f_{k,k'}\in \mathcal A$.\par  Note that $$E_{ij}\prec E_{sj}, \ E_{ij}\prec E_{it}, \ E_{ij}\prec E_{jt}.$$ Substituting $\xi_{l-1}\xi_l$ within the product $\xi_1\cdots \xi_L$ by its expansion above, and using a similar argument as
 in the last case,  we obtain that $\xi_1\cdots \xi_L$ can be expressed in the desired form. This completes the proof.
\end{proof}
Recall the notation $E_0^{(\psi)}$ for each $\psi=(\psi_{ij})\in \mathbb N^{\mathcal I_0}$. Let $\e_{ij}$ denote the element $(\psi_{st})
\in \mathbb N^{\mathcal I_0}$ such that  $$\psi_{st}=\begin{cases} 1, &\text{if $(s,t)=(i,j)$}\\0,&\text{otherwise.}\end{cases}$$
For every product $E_0^{(\psi)}$ with $\psi\neq 0$,  there is a unique minimal element in the set $$\{E_{ij}^{(\psi_{ij})}|\psi_{ij}\neq 0\},$$ which we denote by $\text{min}(E_0^{(\psi)})$.\par
  Let $\xi_1\cdots\xi_L$  be a product in $\mathcal V^+$ with $$\bar \xi_1=\cdots =\bar \xi_L=\bar 0,$$ and let $\xi_l$ be the specified minimal element in  $\{\xi_1,\dots , \xi_L\}$. Then we have the following lemma.
\begin{lemma}\label{v2}  $\xi_1\cdots\xi_L$ equals an $\mathcal A$-linear combination of products $E_0^{(\psi)}$ with $$\text{min}(E_0^{(\psi)})\succeq \xi_l.$$
\end{lemma}
\begin{proof}
   In view of Chapter 5 we have a totally ordered set  $$S^+_0=\{E_{st}|(s,t)\in\mathcal I_0\}.$$  By assumption each $\xi_i$ is of the form $E_{st}^{(N)}$ with $E_{st}\in S^+_0$ and $N>0$.  We  proceed with downward induction on the order of $\xi_l$ in $S^+_0$.\par If $\xi_l$ has the largest order in $S_0^+$, i.e., $\xi_l=E_{m+n-1,m+n}^{(N)}$ for some $N>0$, then we must have $L=1$, and hence the statement trivially holds.  Fix an element  $E_{ij}\in S^+_0$ with $$E_{ij}\prec E_{m+n-1,m+n},$$ and  assume the statement for all products $\xi_1\cdots \xi_L$ with the specified minimal element larger than $E_{ij}$.\par Let $\xi_1\cdots \xi_L$ be a fixed product with the specified minimal element $\xi_l=E_{ij}^{(N)}$.  By Lemma 9.5 this product
 equals an $\mathcal A$-linear combination of products $\xi'_1\xi'_2\cdots \xi'_K$ with
 $\xi'_i\succ \xi_l$ for all $i\geq 2$  and $\xi'_1\succeq \xi_l$.\par For a product $\xi'_1\xi'_2\cdots \xi'_K$ with  $\xi'_1\succ \xi_l$, so that $\xi'_i\succ \xi_l$ for all $i$, the induction hypotheses says that  it equals an $\mathcal A$-linear combination of elements $E_0^{(\psi)}$ with $$\text{min}(E_0^{(\psi)})\succ \xi'_k\succ\xi_l,$$ where $\xi'_k$ is the specified
  minimal element in  $\{\xi'_1,\dots, \xi'_K\}$;  for a product $\xi'_1\xi'_2\cdots \xi'_K$ with $\xi'_1=E_{ij}^{(N')}$,  by induction hypothesis $\xi_2'\cdots \xi_K'$
  equals an $\mathcal A$-linear combination of elements $E_0^{(\psi)}$ with $$\text{min}(E_0^{(\psi)})\succeq \xi'_k, $$ where $\xi'_k$ is the specified minimal element in $\{\xi'_2,\dots, \xi'_K\}$, then  it is an $\mathcal A$-linear combination of elements
 $$E_{ij}^{(N')}E_0^{(\psi)}=E_0^{(\psi+N'\e_{ij})},$$ for which we have  $$\text{min}(E_0^{(\psi+N'\e_{ij})})=E_{ij}^{(N')}
 \succeq \xi_l.$$ This completes the proof.
 \end{proof}
 Next we discuss the products $\xi_1\cdots\xi_L$  in $\mathcal V^+$ with $$ \bar \xi_1=\cdots =\bar \xi_L=\bar 1.$$

 Recall the notation $E_1^d$ for $d\in \mathbb Z_2^{\mathcal I_1}$.  For $d\neq 0$, we denote by $\text{min}(E_1^d)$ the unique minimal element in the set $$\{E_{ij}^{(d_{ij})}|d_{ij}\neq 0\}.$$
\begin{lemma} For $E_{st}$ and $E_{ij}$ with $\bar E_{st}=\bar E_{ij}=\bar 1$ and $E_{st}\prec E_{ij}$,  there exist $E_{s't'}$ and $E_{i'j'}$ with $\bar E_{s't'}=\bar E_{i'j'}=\bar 1$ and $E_{st}\prec E_{s't'}\prec E_{i'j'}\prec E_{ij}$ such that $$
E_{ij}E_{st}=c_1E_{st}E_{ij}+c_2E_{s't'}E_{i'j'},\quad c_1, c_2\in \mathcal A.$$
\end{lemma}
This lemma is an analogue of Lemma 5.10 in $\mathcal V$, which can be proved by using (e2)-(e4) and Lemma 9.1(5).
\begin{lemma}\label{v2}  A product $\xi_1\cdots\xi_L$ with $\bar \xi_1=\cdots=\bar \xi_L=\bar 1$ equals an $\mathcal A$-linear combination of products $E_1^d$ with $\mathbin{\mathrm{min}}(E_1^d)\succeq \xi_l$, where $\xi_l$ is the specified minimal element in $\{\xi_1, \dots, \xi_L\}$.
\end{lemma}
\begin{proof} For $L=2$, this follows from Lemma 9.7.\par  For $L>2$, similarly as in the proof of Lemma 9.6,  we can prove the statement by using the Lemma 9.7 and applying downward induction on the order of $\xi_l$ in $S^+_1$ (see Chapter 5).   Details are left to the interested reader.
\end{proof}

 \begin{proposition} (a) $\mathscr V^+$ is generated as an $\mathcal A$-superalgebra by the elements $$E_{\a_i}^{(N)}=E^{(N)}_{i,i+1}, \quad i=1,\dots, m+n-1.$$\par (b) $\mathscr V^+$ is generated as an $\mathcal A$-module by the monomials $$E_0^{(\psi)}E^{d}_1, \ \psi\in \mathbb N^{\mathcal I_0}, \ d\in\mathbb Z_2^{\mathcal I_1}.$$
\end{proposition}
\begin{proof} (a) is immediate from Lemma 9.1(1).\par (b) Since $\mathscr V^+$ is generated as an $\mathcal A$-module by  the products
$\xi_1\xi_2\cdots\xi_L$, the statement follows from  Corollary 9.4,  Lemma 9.6 and Lemma $\ref{v2}$.
\end{proof}
 From defining relations of $\mathscr V^-$ and $\mathscr V^+$ we see that there is a unique  isomorphism of vector superspaces\ $$\Omega': \mathscr V^-\longrightarrow \mathscr V^+$$ satisfying $$\Omega' (xy)=(-1)^{\bar x\bar y}\Omega'(y)\Omega'(x), \quad x, y\in h(\mathscr V^-),$$  and sending $F_{ij}^{(N)}$ to $E_{ij}^{(N)}$ and $q$ to $q^{-1}$, then we have the following proposition.
 \begin{proposition} (a) $\mathscr V^-$ is generated as an $\mathcal A$-superalgebra by the elements $F_{\a_i}^{(N)}=F^{(N)}_{i,i+1}, \ i=1,\dots, m+n-1$.\par (b) $\mathscr V^-$ is generated as an $\mathcal A$-module by the monomials $$F_1^{d}F_0^{(\psi)}, \quad \psi\in\mathbb N^{\mathcal I_0}, \ d\in\mathbb Z_2^{\mathcal I_1}.$$
\end{proposition}
   By  \cite[2.14]{lu1}, $\mathcal V^0$ is generated as an $\mathcal A$-module by the elements $$K_{\a_1}^{\d_1}\cdots K_{\a_{m-1}}^{\d_{m-1}}K_m^{\d_m}K_{\a_{m+1}}^{\d_{m+1}}\cdots K_{\a_{m+n}}^{\d_{m+n}}$$$$\cdot \left[\begin{matrix}K_{\a_1};0\\t_1 \end{matrix}\right]\cdots \left[\begin{matrix}K_{\a_{m-1}};0\\t_{m-1} \end{matrix}\right]\left[\begin{matrix}K_{m};0\\t_m \end{matrix}\right]\left[\begin{matrix}K_{\a_{m+1}};0\\t_{m+1} \end{matrix}\right]\cdots \left[\begin{matrix}K_{\a_{m+n}};0\\t_{n+m} \end{matrix}\right], \ \d_i\in\{0,1\}, \ t_i\in \mathbb N.$$\par
    The natural $\mathcal A$-superalgebra homomorphisms from $\mathscr V^-$, $\mathscr V^0$, and $\mathscr V^+$ to $\mathscr V$ induce an $\mathcal A$-linear map $$\pi: \mathscr V^-\otimes_{\mathcal A}\mathscr V^0\otimes _{\mathcal A}\mathscr V^+\rightarrow \mathscr V.$$ It follows from the defining relations $(h1)-(h6)$ of  $\mathscr V$ that $\pi$ is surjective.
  \begin{proposition} (a) $\mathscr V$ is generated as an $\mathcal A$-superalgebra by all the elements $$E_{\a_i}^{(N)}, F_{\a_i}^{(N)}, K_{\l}^{\pm 1}, \left[\begin{matrix}K_{\l};0\\t \end{matrix}\right],\quad 1\leq i< m+n,\ \l=\a_i\ \mathbin{\mathrm{for}}\ i\neq m\ \mathbin{\mathrm{or}}\ \l=\e_m.$$
 (b)  $\mathscr V$ is generated as an $\mathcal A$-module by all the elements $$F_1^{d}F_0^{(\psi)}\Pi_{ i\neq m} ( K_{\a_i}^{\d_{i}}\left[\begin{matrix}K_{\a_i};0\\t_{i} \end{matrix}\right])(K_{m}^{\d_m}\left[\begin{matrix}K_m;0\\t_m \end{matrix}\right])E_0^{(\psi')}E_1^{d'},$$$$\quad d,d'\in \mathbb Z_2^{\mathcal I_1},\ \psi, \psi'\in \mathbb N^{\mathcal I_0},\ \d_i, \d_m\in \{0,1\}.$$
\end{proposition}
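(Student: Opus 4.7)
The plan is to deduce both statements from the already-established triangular decomposition pieces (Propositions 8.3 and 8.4) together with the surjectivity of the multiplication map
$$\pi:\mathscr V^-\otimes_{\mathcal A}\mathscr V^0\otimes_{\mathcal A}\mathscr V^+\longrightarrow \mathscr V,$$
which was observed just before the statement. Part (a) will then be a packaging of the three one-sided generation results, while part (b) will assemble the three one-sided module-spanning sets across the tensor factors.

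\textbf{Step 1: Surjectivity of $\pi$.} First I would verify carefully that the canonical $\mathcal A$-superalgebra maps from $\mathscr V^-$, $\mathscr V^0$, and $\mathscr V^+$ into $\mathscr V$ are well-defined (their defining relations appear among the defining relations of $\mathscr V$). Next, using the cross relations (h1)--(h6), any word in the generators $E_{ij}^{(N)}$, $F_{ij}^{(N)}$, $K_{\a_j}^{\pm 1}$, $\begin{bmatrix}K_{\a_j};c\\t\end{bmatrix}$ can be rewritten as an $\mathcal A$-combination of triangular products $u^-u^0u^+$. Concretely: (h5)--(h6) move the $K_{\a_j}^{\pm 1}$'s past $E$'s and $F$'s; (h1)--(h2) do the same for the bracket symbols; (h3) commutes distinct-root $E$'s past $F$'s; and (h4) rewrites $E_{\a_i}^{(N)}F_{\a_i}^{(M)}$ as a sum of $F_{\a_i}^{(M-t)}[\cdot]E_{\a_i}^{(N-t)}$ for the same root. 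This establishes that $\pi$ is surjective.

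\textbf{Step 2: Proof of (a).} By Proposition 8.3(a), every $E_{ij}^{(N)}$ lies in the $\mathcal A$-subalgebra of $\mathscr V^+$ generated by the $E_{\a_i}^{(N)}$. By Proposition 8.4(a), every $F_{ij}^{(N)}$ lies in the subalgebra of $\mathscr V^-$ generated by the $F_{\a_i}^{(N)}$. By \cite[2.14]{lu1}, $\mathscr V^0$ is generated as an $\mathcal A$-algebra by the $K_{\a_j}^{\pm 1}$ and the $\begin{bmatrix}K_{\a_j};0\\t\end{bmatrix}$. Combining these three facts with the surjectivity of $\pi$, every element of $\mathscr V$ is an $\mathcal A$-polynomial in the listed generators, which is (a).

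\textbf{Step 3: Proof of (b).} By Proposition 8.4(b), $\mathscr V^-$ is spanned as an $\mathcal A$-module by the monomials $F_1^{d}F_0^{(\psi)}$; by Proposition 8.3(b), $\mathscr V^+$ is spanned by $E_0^{(\psi')}E_1^{d'}$; and by \cite[2.14]{lu1}, $\mathscr V^0$ is spanned by the products $\prod_{i=1}^{m+n}K_{\a_i}^{\d_i}\bigl[\begin{smallmatrix}K_{\a_i};0\\t_i\end{smallmatrix}\bigr]$. Tensoring these three spanning sets together gives an $\mathcal A$-spanning set of $\mathscr V^-\otimes_{\mathcal A}\mathscr V^0\otimes_{\mathcal A}\mathscr V^+$, and applying the surjection $\pi$ (with $u^-\otimes u^0\otimes u^+\mapsto u^-u^0u^+$) produces exactly the monomials displayed in (b). Hence they span $\mathscr V$ over $\mathcal A$.

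\textbf{Main obstacle.} The only nontrivial ingredient is Step 1, i.e.\ making sure that relations (h1)--(h6) genuinely suffice to push every $K$-symbol to the middle, every $F$ to the left, and every $E$ to the right. The delicate point is the mixed relation (h4) at a single simple root: it produces products that still contain a $K$-bracket sandwiched between an $F^{(M-t)}$ and an $E^{(N-t)}$, and one must check that applying (h1)--(h2) to move that bracket into the $\mathscr V^0$-factor terminates and does not reintroduce new $EF$ inversions. This is routine given the explicit form of (h1)--(h4), but it is where the work sits; the rest is a straightforward bookkeeping of previously proved statements.
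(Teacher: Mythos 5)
Your argument matches the paper's proof essentially step for step: both deduce (b) from the surjectivity of $\pi$ together with Propositions 8.3(b), 8.4(b) and the \cite[2.14]{lu1} spanning set for $\mathscr V^0$, and both deduce (a) by combining the one-sided algebra-generation statements. The only cosmetic difference is that the paper isolates the reduction of $\left[\begin{smallmatrix}K_{\a_j};c\\t\end{smallmatrix}\right]$ with $c\neq 0$ to $c=0$ as a separate appeal to \cite[2.17]{lu1}, while you fold it into the citation of \cite[2.14]{lu1}; this is harmless since that module-spanning set already uses only $c=0$ brackets.
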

\begin{proof} (a) By Proposition 9.9(a) and  Proposition 9.10(a),   $\mathscr V$ is generated as an $\mathcal A$-superalgebra by all the elements $$E_{\a_i}^{(N)}, F_{\a_i}^{(N)}, K_{\l}^{\pm 1}, \left[\begin{matrix}K_{\l}; c\\t \end{matrix}\right],\ 1\leq i< m+n, \ \l=\a_j\mathbin{\mathrm{for}} \ \ i\in [1, m+n]\setminus m \ \mathbin{\mathrm{or}} \ \l=\e_m.$$ To complete the proof, we need only use \cite[2.17]{lu1} that, for any $c\in \mathbb Z,
t\in\mathbb N$,  $\left[\begin{matrix}K_{\l};c\\t \end{matrix}\right]$ is generated by the elements $$ K_{\l}^{\pm 1}, \left[\begin{matrix}K_{\l};0\\t \end{matrix}\right],\  t\geq 0.$$ (b) follows from the surjective map $\pi$, Proposition 9.9(b), and Proposition 9.10(b).
\end{proof}
  We now form the $\mathcal A'$-superalgebras $$\mathscr V_{\mathcal A'}^+,\quad \mathscr V^-_{\mathcal A'},\quad \mathscr V^0_{\mathcal A'},\quad \text{and}\quad \mathscr V_{\mathcal A'}$$ by applying \ $-\otimes_{\mathcal A}\mathcal A'$ to $\mathscr V^+,\quad \mathscr V^-,\quad \mathscr V^0,\quad \text{and} \quad\mathscr V.$\quad  Write $E_{ij}^{(1)}\otimes 1$ and $F_{ij}^{(1)}\otimes 1$ as $E_{ij}$ and $F_{ij}$ respectively.
 \begin{proposition}$\mathscr V_{\mathcal A'}$ \ is the \ $\mathcal A'$-superalgebra defined by the generators $$E_{ij},\ F_{ij},\ K_{\a_s}^{\pm 1},\ K_m^{\pm 1}, \quad (i,j)\in\mathcal I_0\cup\mathcal I_1, \ s\in [1, m+n]\setminus m$$  and  relations
 $$\begin{aligned} &(a1)\quad E_{ij}^2=0,\quad (i,j)\in\mathcal I_1,\\
 &(a2)\quad E_{ij}E_{st}=(-1)^{\bar E_{ij}\bar E_{st}}E_{st}E_{ij},\quad i<s<t<j\quad\text{or}\quad s<t<i<j,\\
 &(a3)\quad E_{ta}E_{tb}=(-1)^{\bar E_{ta}}q_tE_{tb}E_{ta},\quad t<a<b,\\&(a4)\quad E_{bt}E_{at}=(-1)^{\bar E_{bt}}q^{-1}_tE_{at}E_{bt},\quad a<b<t,\\
 &(a5)\quad E_{ij}=E_{ic}E_{cj}-q_c^{-1}E_{cj}E_{ic},\quad i<c<j,\\
 &(b1)\quad F_{ij}^2=0, \quad (i,j)\in\mathcal I_1,\\
 &(b2)\quad F_{ij}F_{st}=(-1)^{\bar F_{ij}\bar F_{st}}F_{st}F_{ij},\quad
 i<s<t<j \quad\text{or}\quad s<t<i<j,\\&(b3)\quad F_{ta}F_{tb}=(-1)^{\bar F_{ta}}q_tF_{tb}F_{ta},\quad t<a<b,\\&(b4)\quad F_{bt}F_{at}=(-1)^{\bar F_{bt}}q_t^{-1}F_{at}F_{bt},\quad a<b<t,\\&(b5)\quad F_{ij}=-q_cF_{ic}F_{cj}+F_{cj}F_{ic},\quad i<c<j,\\&(c1)\quad K_{\a_i}K_{\a_j}=K_{\a_j}K_{\a_i}, \ K_mK_{\a_i}=K_{\a_i}K_m\\&(c2)\quad K_{\a_i}K_{\a_i}^{-1}=1, \ K_mK_m^{-1}=1\\&(d1)\quad E_{\a_i}F_{\a_j}-(-1)^{\d_{im}}F_{\a_j}E_{\a_i}=\d_{ij}(K_{\a_i}-K_{\a_i}^{-1})/(q_i-q_i^{-1}),\quad 1\leq i,j<m+n,\\ &(d2)\quad K_{\a_i}E_{\a_j}=q_i^{a_{ij}}E_{\a_j}K_{\a_i},\quad \ K_m E_{\a_j}=q_m^{a_{mj}}E_{\a_j}K_m, \\&(d3)\quad K_{\a_i}F_{\a_j}=q_i^{-a_{ij}}F_{\a_j}K_{\a_i},\quad  \ K_m F_{\a_j}=q_m^{-a_{mj}}F_{\a_j}K_m.\end{aligned}$$
 \end{proposition}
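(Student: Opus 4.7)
The plan is to introduce the $\mathcal A'$-superalgebra $\mathscr W$ presented by the generators $E_{ij}, F_{ij}\ ((i,j)\in\mathcal I)$, $K_{\alpha_s}^{\pm 1}\ (s\in[1,m+n])$ subject to the relations (a1)--(d3) of the proposition, and to exhibit mutually inverse homomorphisms between $\mathscr W$ and $\mathscr V_{\mathcal A'}$. The first direction, a map $\varphi:\mathscr W\to\mathscr V_{\mathcal A'}$, is produced by checking that each relation (a1)--(d3) already holds in $\mathscr V_{\mathcal A'}$. This is essentially specialization: (a2)--(a5) and (b2)--(b5) are the $N=M=1$ cases of (e2)--(e5) and (f2)--(f5); (a1),(b1) come from (e0),(f0) via $E_{ij}^2=[2]!\,E_{ij}^{(2)}$; (c1),(c2) are part of the Lusztig presentation of $\mathscr V^0$; (d1) is (h4) at $N=M=1$, using that $[K_{\alpha_i};0;1]=(K_{\alpha_i}-K_{\alpha_i}^{-1})/(q_i-q_i^{-1})$ in $\mathscr V^0_{\mathcal A'}$; and (d2),(d3) are (h5),(h6) at $N=1$. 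Surjectivity of $\varphi$ follows because, $[N]!$ being invertible in $\mathcal A'$, every divided power $E_{ij}^{(N)}$ coincides with $E_{ij}^N/[N]!$ (and similarly for $F$), while every bracket $[K_{\alpha_i};c;t]$ is a Laurent polynomial in $K_{\alpha_i}$.

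For the reverse direction, I will define inside $\mathscr W$ the elements $E_{ij}^{(N)}:=E_{ij}^N/[N]!$ and $F_{ij}^{(N)}:=F_{ij}^N/[N]!$ (taking $N\in\{0,1\}$ when $(i,j)\in\mathcal I_1$, using (a1),(b1)), together with $[K_{\alpha_i};c;t]$ given by its explicit product in $K_{\alpha_i}^{\pm 1}$ from Sec.~6, and verify that all defining relations of $\mathscr V$ hold among these expressions. The relations for $\mathscr V^0$ and the commutation relations (h1),(h2),(h5),(h6) with divided powers reduce by a straightforward induction on $N$ to the base cases (c1),(c2),(d2),(d3). The relations (e1)--(e4) and (f1)--(f4) for general $N,M$ follow from their $N=M=1$ counterparts (a2)--(a4), (b2)--(b4) by multiplying and dividing by appropriate factorials. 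The crucial relation (e5) -- and by $\Omega$-symmetry also (f5) -- is derived by induction on $N+M$ from (a5) exactly as in the proof of Lemma 8.1, whose manipulations depend only on (e1)--(e4) and (a5), all available in $\mathscr W$. Relations (h3),(h4) are the divided-power quantized-$\mathfrak{sl}_2$ commutation formulas, derivable from (d1) by the standard $q$-binomial induction of \cite[4.3]{lu}, with the sign $(-1)^{\delta_{im}NM(t-1)}$ supplied by the parity of the super-bracket at the odd root $\alpha_m$. This produces a homomorphism $\psi:\mathscr V\to\mathscr W$ of $\mathcal A$-superalgebras, and since $[N]!\in\mathcal A'^\times$ it factors through $\mathscr V_{\mathcal A'}$.

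Both $\varphi$ and $\psi$ act as the identity on the common generating set $\{E_{ij},F_{ij},K_{\alpha_s}^{\pm 1}\}$, so $\varphi\circ\psi$ and $\psi\circ\varphi$ are the respective identity maps and $\mathscr W\cong\mathscr V_{\mathcal A'}$. The main obstacle I anticipate is the verification of (h4) at the distinguished odd root $i=m$: there $E_{\alpha_m}^{(N)}$ vanishes for $N\geq 2$, so the right-hand side collapses drastically and the usual inductive step is unavailable. One must check directly, from (a1),(b1),(d1) and the super-commutation (d2),(d3), that the residual terms match with the sign twist $(-1)^{\delta_{im}NM(t-1)}$ dictated by the $\mathbb Z_2$-grading. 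Once this single identity is established, the remainder of the argument is a bookkeeping exercise in $q$-binomial manipulations.
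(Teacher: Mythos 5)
Your proposal is correct and is essentially the same argument the paper gives, only spelled out: the paper's proof consists of the observation that (a1)--(d3) follow immediately from the defining relations of $\mathscr V$ and a one-line assertion that the converse ``can be verified by induction (see Sec.6),'' which is precisely the two-sided homomorphism construction and inductive bookkeeping you describe. The one point you flag as a possible obstacle, (h4) at $i=m$, is in fact harmless: by (a1) and (b1) the divided powers $E_{\a_m}^{(N)}, F_{\a_m}^{(N)}$ vanish for $N\ge 2$, so only $N,M\in\{0,1\}$ contribute, and that case reduces immediately to (d1) together with $\left[\begin{smallmatrix}K_{\a_m};0\\1\end{smallmatrix}\right]=(K_{\a_m}-K_{\a_m}^{-1})/(q_m-q_m^{-1})$.
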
\begin{proof} It is clear that all the formulas above follow immediately from  defining relations of $\mathscr V$. To complete the proof, we must show that, conversely, all  defining relations of $\mathscr V$ follow from the relations above. This can be verified by induction (cf. Chapter 7).\end{proof} Similarly we obtain that $\mathscr V_{\mathcal A'}^+ \quad (\text{resp.} \quad\mathscr V_{\mathcal A'}^-,\quad  \mathscr V_{\mathcal A'}^0)$ is the $\mathcal A'$-superalgebra defined by the generators $$E_{ij},\ (i,j)\in \mathcal I_0\cup\mathcal I_1\quad (\text{resp.}\quad F_{ij},\ (i,j)\in \mathcal I_0\cup\mathcal I_1,\quad K_{\a_s}^{\pm 1}, K_m^{\pm 1},\ s\in [1, m+n]\setminus m$$ and  relations $(a1)-(a5)\quad (\text{resp.}\quad  (b1)-(b5), \quad (c1)-(c2)).$\par
  Denote the quantum superalgebra  defined over $\mathcal A'$ in Section 3.1 (that is, $\mathbb F=Q$) by $U_{\mathcal A'}$, and let $U_{\mathcal A'}^0$ be the subalgebra generated by all $K_i^{\pm}$.  Clearly we have $$U_{\mathcal A'}=U_{\mathcal A}\otimes _{\mathcal A}\mathcal A'.$$\par Recall from Chapter 7 the definition of $\mathcal A$-algebras  $U^+_{\mathcal A}$, $U^-_{\mathcal A}$, and $U^0_{\mathcal A}$.
  Since the relations (c1), (c2) hold in $U^0_{\mathcal A}$, there is an epimorphism of $\mathcal A'$-algebras  $f: \mathscr V^0_{\mathcal A'}\rightarrow U^0_{\mathcal A'}$ such that $$f (K_{\a_i})=K_{\a_i}\quad  \text{for}\quad  i\in [1, m+n]\setminus m\quad \text{and}\quad f(K_m)=K_m.$$ Then Corollary 5.1(4) implies that   $f$ is an isomorphism. According to \cite[2.21]{lu1}, all the elements  $$\Pi_{i\neq m}(K_{\a_i}^{\d_i}\left[\begin{matrix}K_{\a_i};0\\t_{i} \end{matrix}\right])(K_m^{\d_m}\left[\begin{matrix}K_m;0\\t_m \end{matrix}\right]),\quad\d_i, \d_m\in \{0,1\}$$ are a $\mathcal A'$-basis for  $\mathscr V^0_{\mathcal A'}$, which, together with the PBW theorem (Corollary 8.9), implies that $U_{\mathcal A'}$ has a basis $$F_1^{d}F_0^{\psi}\Pi_{i\neq m}(K_{\a_i}^{\d_i}\left[\begin{matrix}K_{\a_i};0\\t_{i} \end{matrix}\right])(K_m^{\d_m}\left[\begin{matrix}K_m;0\\t_m \end{matrix}\right])E_0^{\psi'}E_1^{d'},\quad d,d'\in \mathbb Z_2^{\mathcal I_1},$$$$ \ \d_i, \d_m\in\{0,1\}, \ \psi, \psi'\in \mathbb N^{\mathcal I_0}, \ t_i, t_m\in\mathbb N. $$
Since the relations in Proposition 9.12 also hold in $U_{\mathcal A'}$, there is a unique $\mathcal A'$-superalgebra epimorphism $\rho: \mathscr V_{\mathcal A'}\rightarrow U_{\mathcal A'}$ such that $$ \rho (E_{ij})=E_{ij},\quad \rho (F_{ij})=F_{ij},\quad \rho(K_{\a_i})=K_{\a_i}, \ \rho(K_m)=K_m.$$ By the definition of $U_{\mathcal A}$,  we obtain $\rho (\mathscr V)=U_{\mathcal A}$, and in particular, $$\rho (\mathscr V^+)=U^+_{\mathcal A},\quad \rho (\mathscr V^-)=U^-_{\mathcal A},\quad \rho (\mathscr V^0)=U^0_{\mathcal A}.$$   By Proposition 9.11(b), $\rho$ sends a set of elements generating  $\mathscr V$ as an $\mathcal A$-module into a PBW-type basis of $U_{\mathcal A'}$. It follows that $\rho$ is an isomorphism of $\mathcal A'$-superalgebras.  Moreover,   the elements in Proposition 9.11(b) form an $\mathcal A$-basis of $\mathscr V_{\mathcal A}$ (hence an $\mathcal A'$-basis of $\mathscr V_{\mathcal A'})$.  Therefore, $\rho_{|\mathscr V}: \mathscr V\rightarrow U_{\mathcal A}$ is  an $\mathcal A$-superalgebra isomorphism. \par  By induction, one obtains $$\Delta (E_{\a_i}^{(N)})=\sum^N_{j=0}  q_i^{-j(N-j)}E_{\a_i}^{(j)}\otimes K_{\a_i}^jE_{\a_i}^{(N-j)}, $$$$\Delta (F_{\a_i}^{(N)})=\sum^N_{j=0}q_i^{j(N-j)}F_{\a_i}^{(j)}K_{\a_i}^{-j}\otimes F_{\a_i}^{(N-j)}.$$ Then $U_{\mathcal A}$ admits a unique Hopf superalgebra structure from $U_{\mathcal A'}$.\newpage
\section{ Simple modules for $GL(m,n)$ }
Let $\mathbb F$  be an algebraically closed field of characteristic $p> 2$.  In this chapter we study  simple modules for  the general linear  $\mathbb F$-supergroup $G=GL(m,n)$.\par

 \subsection{Kostant $\mathbb Z$-forms}

  Let $\g$ be the Lie superalgebra $\mathfrak{gl}(m,n)$ over $Q$, and let ${U}(\g)_Q$ be its  universal enveloping superalgebra.   Recall the maximal torus $\mathfrak H$  of $\g$, for which  let $ U( \mathfrak H)_{ Q}\subseteq {U}(\g)_{Q}$ be the universal enveloping algebra.  By the PBW theorem  $U(\mathfrak H)_{Q}$ is the polynomial algebra \ $Q[e_{11}, \dots, e_{m+n,m+n}]$ in\ variables  $ e_{11},\dots, e_{m+n,m+n}.$\par  Write the relations $$h_{\a_i}=e_{ii}-e_{i+1,i+1}, \ i\in [1, m+n]\setminus m,\quad  e_{mm}=e_{mm}$$ into the matrix form
  \ $$(h_{\a_1},\dots,h_{\a_{m-1}}, e_{mm}, h_{\a_{m+1}},\dots, h_{\a_{m+n}})=(e_{11},\dots, e_{m+n,m+n})B.$$   Since $B$ is invertible, we obtain
   an automorphism $\varphi$ of the $Q$-algebra  $U(\mathfrak H)_Q$ such that $$\varphi (e_{ii})=h_{\a_i}, \ i\in [1, m+n]\setminus m, \ \varphi (e_{mm})=e_{mm}.$$
  For each $h\in \mathfrak H$ and each $r\in\mathbb N$, set  $$\binom{h}{r}=\frac{1}{r!}h(h-1)\cdots (h-r+1)\in  U(\mathfrak H)_{ Q}.$$  As defined in \cite{bk},  the Kostant $\mathbb Z$-form ${U}(\g)_{\mathbb Z}$ is a $\mathbb Z$-sub-superalgebra of ${U}(\g)_{Q}$ generated by $$e_{ij}^{(r)}, \ f_{ij}^{(r)}, \  \binom{e_{ss}}{r}, \quad (i,j)\in \mathcal I_0\cup\mathcal I_1,\ 1\leq s\leq m+n,\ r\geq 0.$$
By \cite[3.1]{bk}, ${U}(\g)_{\mathbb Z}$ is a free $\mathbb Z$-module with a basis consisting of all the monomials of the form $$\Pi_{(i,j)\in \mathcal I_1}f_{ij}^{d'_{ij}}\Pi_{(i,j)\in \mathcal I_0}f_{ij}^{(a'_{ij})}\Pi_{s=1}^{m+n}\binom{e_{ss}}{r_s}\Pi_{(i,j)\in \mathcal I_0}e_{ij}^{(a_{ij})}\Pi_{(i,j)\in \mathcal I_1}e_{ij}^{d_{ij}}, $$ $a'_{ij}, a_{ij},r_s\geq 0$, $d'_{ij}, d_{ij}=0,1$, where the product is taken in any fixed order.\par    Let $T$ be the maximal torus of $G$ such that, for each commutative  superalgebra $A$, $T(A)$ is the subgroup of $G(A)$ consisting of all diagonal matrices. By \cite[II, 1.3]{j2}, the set $$Y(T)=\text{Hom}(G_m, T)$$ has a natural structure as an abelian group.\par Choose a basis  $\phi_1,\phi_2,\dots, \phi_{m+n}$ of $Y(T)$ defined by $$\phi_i(t)=\begin{cases} \text{diag}(1\dots 1,\underset{(i)}{t},\underset{(i+1)}{t^{-1}},1,\dots, 1)& \text{if $i\in [1, m+n)\setminus m$}\\ \text{diag} (1,\dots,1,\underset{(i)}{t},1,\dots, 1) &\text{if $i=m$ or $i=m+n$} \end{cases}$$ for any $t\in G_m(A)$.  Under the isomorphism from $U(\g_{\0})_{\mathbb Z}\otimes \mathbb F$ into $\text{Dist}(G)$ provided by \cite[Theorem 3.2]{bk}, we have  $$h_{\a_i}=(d\phi_i )(1), \ i\in [1, m+n]\setminus m, \ e_{mm}=(d\phi_m )(1).$$ By \cite[II, 1.11]{j2}, the set of all the elements $$\binom{e_{mm}}{r_m}\Pi_{i\in [1, m+n]\setminus m}\binom{h_{\a_i}}{r_i}, \ r_i, r_m\geq 0$$ is a basis of $\text{Dist}(T_{\mathbb Z})$. By taking a natural basis of $Y(T)$, one gets another basis of $\text{Dist}(T_{\mathbb Z})$  consisting of elements  $$\Pi^{m+n}_{i=1}\binom{e_{ii}}{r_i}, \quad r_i\geq 0 \ (\mathbin{\mathrm{see}}\ \cite{bk}).$$
  Then we obtain the following lemma.
 \begin{lemma}${U}(\g)_{\mathbb Z}$ has a $\mathbb Z$-basis consisting of all the monomials  $$\Pi_{(i,j)\in \mathcal I_1}f_{ij}^{d'_{ij}}\Pi_{(i,j)\in \mathcal I_0}f_{ij}^{(a'_{ij})}\binom{e_{mm}}{r_m}\Pi_{s\in [1, m+n]\setminus m }\binom{h_{\a_s}}{r_s}\Pi_{(i,j)\in \mathcal I_0}e_{ij}^{(a_{ij})}\Pi_{(i,j)\in \mathcal I_1}e_{ij}^{d_{ij}}, $$ $a'_{ij}, a_{ij}, r_s, r_m\geq 0$, $d'_{ij}, d_{ij}\in\{0,1\}$, where the product is taken in any fixed order.
\end{lemma}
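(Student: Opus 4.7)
The plan is to deduce this from the basis theorem of Brundan--Kujawa \cite[3.1]{bk} by a change-of-basis inside the distribution algebra of the torus. The Brundan--Kujawa basis already has the desired form except that the middle factor $\Pi_{s=1}^{m+n}\binom{h_{\a_s}}{r_s}$ is replaced by $\Pi_{s=1}^{m+n}\binom{e_{ss}}{r_s}$. So the whole content of the lemma is to show that, inside $U(H)_{\mathbb Z}\subseteq U(\g)_{\mathbb Z}$, the monomials $\Pi_{s=1}^{m+n}\binom{h_{\a_s}}{r_s}$ span the same $\mathbb Z$-module and form another $\mathbb Z$-basis.

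First I would record the two bases of $\text{Dist}(T_{\mathbb Z})$ explicitly. The $1$-psg's $\check{\e}_i$ form a $\mathbb Z$-basis of $Y(T)$, and $(d\check{\e}_i)(1)=\pm e_{ii}$ (with sign $-$ for $i>m$); by \cite[II, 1.11]{j2} the monomials $\Pi_i\binom{\pm e_{ii}}{r_i}$ form a $\mathbb Z$-basis of $\text{Dist}(T_{\mathbb Z})$, and since $\binom{-x}{r}=(-1)^r\binom{x+r-1}{r}$ the signs disappear after an invertible $\mathbb Z$-linear substitution, so the $\Pi_i\binom{e_{ii}}{r_i}$ are a $\mathbb Z$-basis too (this is the basis appearing in \cite{bk}). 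On the other hand, the elements $\phi_1,\dots,\phi_{m+n}$ defined in the excerpt form another $\mathbb Z$-basis of $Y(T)$, with $(d\phi_i)(1)=h_{\a_i}$, so \cite[II, 1.11]{j2} applied to this basis yields that $\Pi_i\binom{h_{\a_i}}{r_i}$ is also a $\mathbb Z$-basis of $\text{Dist}(T_{\mathbb Z})$.

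Next I would compare the two bases directly. The change-of-basis matrix from $(e_{11},\dots,e_{m+n,m+n})$ to $(h_{\a_1},\dots,h_{\a_{m+n}})$ is upper triangular with diagonal entries $\pm1$, since by definition $h_{\a_i}=e_{ii}-(-1)^{\d_{im}}e_{i+1,i+1}$ for $i<m+n$ and $h_{\a_{m+n}}=e_{m+n,m+n}$. Hence both $\{\phi_i\}$ and $\{\check{\e}_i\}$ are $\mathbb Z$-bases of $Y(T)$, which gives an independent verification of the previous step and makes explicit that the $\mathbb Z$-subalgebra of $U(H)_{\mathbb Z}$ generated by $\binom{h_{\a_s}}{r_s}$ is exactly the same as the one generated by $\binom{e_{ss}}{r_s}$, namely $\text{Dist}(T_{\mathbb Z})$.

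Finally I would assemble the lemma. The Brundan--Kujawa basis of $U(\g)_{\mathbb Z}$ is the set of ordered products of elements coming from three commuting pieces $U^-_{\mathbb Z}$, $U(H)_{\mathbb Z}=\text{Dist}(T_{\mathbb Z})$, and $U^+_{\mathbb Z}$, combined with the $\mathbb Z$-bases of those pieces. Replacing the $\text{Dist}(T_{\mathbb Z})$-factor basis $\{\Pi_s\binom{e_{ss}}{r_s}\}$ by the alternative $\mathbb Z$-basis $\{\Pi_s\binom{h_{\a_s}}{r_s}\}$ yields another $\mathbb Z$-basis of $U(\g)_{\mathbb Z}$, which is the claim. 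I expect no real obstacle: once the two descriptions of $\text{Dist}(T_{\mathbb Z})$ in Step 1--2 are in place, the rest is a cosmetic substitution. The only point requiring care is the mildly non-uniform definition of $h_{\a_i}$ (in particular the sign at $i=m$ and the exceptional case $i=m+n$), which one must check really does produce an invertible $\mathbb Z$-linear change of basis of $H$.
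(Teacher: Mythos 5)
Your proof is correct and follows essentially the same route as the paper: start from the Brundan--Kujawa basis of $U(\g)_{\mathbb Z}$ with torus factor $\Pi_s\binom{e_{ss}}{r_s}$, observe that the cocharacters $\phi_i$ form a $\mathbb Z$-basis of $Y(T)$ with $(d\phi_i)(1)=h_{\a_i}$, and apply \cite[II, 1.11]{j2} to conclude that $\Pi_s\binom{h_{\a_s}}{r_s}$ is another $\mathbb Z$-basis of $\text{Dist}(T_{\mathbb Z})$, then substitute. The additional verifications you supply (the unipotent upper-triangular change of basis between $(e_{ss})$ and $(h_{\a_s})$, and the sign correction via $\binom{-x}{r}=(-1)^r\binom{x+r-1}{r}$) make explicit what the paper leaves implicit, but do not change the argument.
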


    The closed $\mathbb F$-subgroups $G_{ev}$, $P$  of $G$ are defined in \cite{bk} as follows. For each commutative superalgebra $A$, $P(A)$ (resp. $G_{ev}(A)$) is the subgroup in $G(A)$ consisting of all invertible $(m+n)\times (m+n)$ matrices  such that $Y=0\ (\text{resp.}\quad Y=0,\ X=0).$ Then we have $$\text{Lie}(P)=\g^+,  \quad \mbox{Lie}(G_{ev})=\g_{\0}.$$  Note that $$G_{ev}=GL_m\times GL_n.$$ Then we have by \cite[I 7.9(3)]{j2} that $$U(\g_{\0})_{\mathbb Z}=:\text{Dist} (G_{ev, \mathbb Z})=\text{Dist}(GL_{m, \mathbb Z})\otimes_{\mathbb Z} \text{Dist}(GL_{n, \mathbb Z}).$$
    Recall that $\g_{\0}=\mathfrak{gl}_m\oplus \mathfrak{gl}_n$.
 Let $$U(\mathfrak{gl}_m)_{\mathbb Z}=\text{Dist}(GL_{m, \mathbb Z}), \quad U(\mathfrak{gl}_n)_{\mathbb Z}=\text{Dist}(GL_{n, \mathbb Z}).$$ Then $U(\mathfrak{gl}_m)_{\mathbb Z}$ is a free $\mathbb Z$-module having basis
 $$\Pi_{1\leq i<j\leq m}f_{ij}^{(a_{ij})}\binom{e_{mm}}{r_m}\Pi_{1\leq i< m}\binom{h_{\a_s}}{r_s}\Pi_{1\leq i<j\leq m}e_{ij}^{(a'_{ij})}, \quad
 a_{ij}, a_{ij}', r_m, r_s\in\mathbb N,$$ where the product is taken in any fixed order. Let $U(\mathfrak{sl}_m)_{\mathbb Z}$ be the subalgebra of $U(\mathfrak{gl}_m)_{\mathbb Z}$ spanned by all
 $$\Pi_{1\leq i<j\leq m}f_{ij}^{(a_{ij})}\Pi_{1\leq i< m}\binom{h_{\a_s}}{r_s}\Pi_{1\leq i<j\leq m}e_{ij}^{(a'_{ij})}.$$ Similarly we describe $U(\mathfrak{gl}_n)_{\mathbb Z}$, $U(\mathfrak{sl}_n)_{\mathbb Z}$ and their $\mathbb Z$-bases.\par
 Let $$U(\g_{\0})_{\mathbb F}=: U(\g_{\0})_{\mathbb Z}\otimes _{\mathbb Z}\mathbb F, \ U(\mathfrak{gl}_m)_{\mathbb F}=:U(\mathfrak{gl}_m)_{\mathbb Z}\otimes _{\mathbb Z}\mathbb F, \ U(\mathfrak{gl}_n)_{\mathbb F}=:U(\mathfrak{gl}_n)_{\mathbb Z}\otimes _{\mathbb Z}\mathbb F. $$
 It is clear that $$U(\g_{\0})_{\mathbb F}=\text{Dist}(G_{ev})=U(\mathfrak{gl}_m)_{\mathbb F}\otimes _{\mathbb F}U(\mathfrak{gl}_n)_{\mathbb F}.$$ In addition,  $U(\mathfrak{gl}_m)_{\mathbb F}$ and $U(\mathfrak{gl}_n)_{\mathbb F}$ are subalgebras of  $U(\g_{\0})_{\mathbb F}$ commutating with each other.\par
     The Kostant $\mathbb Z$-form ${U}(\g^+)_{\mathbb Z}$ is a free $\mathbb Z$-submodule of $U(\g)_{\mathbb Z}$ generated by all the basis elements $$\Pi_{(i,j)\in\mathcal I_0}f_{ij}^{(a_{ij})}\binom{e_{mm}}{r_m}\Pi_{i\in [1, m+n]\setminus m}\binom{h_{\a_s}}{r_s}\Pi_{(i,j)\in\mathcal I_0}e_{ij}^{(a'_{ij})}\Pi_{(i,j)\in\mathcal I_{1}}e_{ij}^{d_{ij}}.$$ Then we have from  \cite[Section 3]{bk} that $$U(\g)_{\mathbb F}=: U(\g)_{\mathbb Z}\otimes_{\mathbb Z}\mathbb F\cong\text{Dist}(G),$$$$ U(\g^+)_{\mathbb F}=: U(\g^+)_{\mathbb Z}\otimes _{\mathbb Z}\mathbb F \cong \text{Dist}(P).$$ Hence $\text{Dist}(G)$ and $\text{Dist}(P)$ acquire their bases respectively
     from $U(\g)_{\mathbb Z}$ and $U(\g^+)_{\mathbb Z}$.
     We now introduce two more $\mathbb Z$-subalgebras of $U(\g)_{\mathbb Z}$ to be used later.\par Let $U(\g_{-1})_{\mathbb Z}$ be the $\mathbb Z$-subalgebra of $U(\g)_{\mathbb Z}$ generated by the elements $$f_{ij}, \quad (i,j)\in\mathcal I_1.$$ It is easy to see that $U(\g_{-1})_{\mathbb Z}$ is a free $\mathbb Z$-submodule of $U(\g)_{\mathbb Z}$ having basis $$\Pi_{(i,j)\in \mathcal I_1}f_{ij}^{d_{ij}}, \ d_{ij}\in\{0,1\}.$$
     Let $U(\g^+)_{\mathbb Z}^+$ be the $\mathbb Z$-submodule of $U(\g)_{\mathbb Z}$ spanned by all basis vectors $$\Pi_{(i,j)\in\mathcal I_0}f_{ij}^{(a_{ij})}\binom{e_{mm}}{r_m}\Pi_{i\in [1, m+n]\setminus m}\binom{h_{\a_s}}{r_s}\Pi_{(i,j)\in\mathcal I_0}e_{ij}^{(a'_{ij})}\Pi_{(i,j)\in\mathcal I_{1}}e_{ij}^{d_{ij}}, \ \sum d_{ij}>0.$$ It is easy to verify that $U(\g^+)_{\mathbb Z}^+$ is an ideal of $U(\g^+)_{\mathbb Z}$ and $$U(\g^+)_{\mathbb Z}=U(\g_{\0})_{\mathbb Z}\oplus U(\g^+)_{\mathbb Z}^+.$$
   Set  $$ U(\g_{-1})_{\mathbb F}=U(\g_{-1})_{\mathbb Z}\otimes_{\mathbb Z} \mathbb F, \quad U(\g^+)^+_{\mathbb F}=U(\g^+)_{\mathbb Z}^+\otimes _{\mathbb Z}\mathbb F.$$ Then $U(\g_{-1})_{\mathbb F}$ is the universal enveloping algebra of  $\g_{-1}$, whereas $U(\g^+)^+_{\mathbb F}$ is a nilpotent ideal
     of $ U(\g^+)_{\mathbb F}$, since it is generated by the nilpotent elements $$e_{ij}, \ (i,j)\in\mathcal I_{1}.$$ Since $$\text{Dist}(P)=U(\g^+)_{\mathbb F}=U(\g_{\0})_{\mathbb F}\oplus U(\g^+)_{\mathbb F}^+,$$  it follows that  each simple $\text{Dist}(P)$-module is simple as a $\text{Dist}(G_{ev})$-module and annihilated by $U(\g^+)^+_{\mathbb F}$.\par

 \subsection{Simplicity of induced $G$-modules}
Let $M$ be a  $\text{Dist}(G)$-module. For $z=(z_1,\dots,z_{m+n})\in \mathbb Z^{m+n}$, define the $z$-weight space of $M$ by $$M_z=\{v\in M|\binom{h_{\a_i}}{r}v=\binom{z_i}{r}v,\quad\binom{e_{mm}}{r}v=\binom{z_m}{r}v,\quad i\in [1, m+n]\setminus m, \ r\geq 0\}.$$

For  $\l=\sum^{m+n}_{i=1}\l_i\e_i\in \Lambda$, the $\l$-weight space  of $M$ in \cite{bk} is defined by
 $$M_{\l}=\{v\in M|\binom{e_{ii}}{r}v=\binom{\l_i}{r}v\quad\text{for all}\quad i=1,\dots,m+n, \ r\geq 0\}.$$
   Recall in  Section 8.2 the identification of $\Lambda$ with $\mathbb Z^{m+n}$. We now show that these two definitions
   of weight spaces are compatible.
 \begin{lemma} Let $\l\in \Lambda$ be identified with  $z\in\mathbb Z^{m+n}$.  Then $M_{\l}=M_z$.
 \end{lemma}
 \begin{proof}
   Recall that  $\text{Dist}(T_{\mathbb Z})$ is a  free $\mathbb Z$-submodule of $U(\mathfrak H)_Q$
    having two bases $$\Pi^{m+n}_{i=1}\binom{e_{ii}}{r_i}, \  r_i\geq 0$$ and  $$\binom{e_{mm}}{r_m}\Pi_{i\in [1, m+n]\setminus m}\binom{h_{\a_i}}{r_i}, \ r_i\geq 0.$$
     For any fixed $z=(z_1,\dots,z_{m+n})\in\mathbb Z^{m+n}$, let $f_z$ be the  algebra homomorphism  from $U(\mathfrak H)_Q$ to  $Q$ sending  $h_{\a_i}$ to $z_i$ for $i\in [1, m+n]\setminus m$ and sending $e_{mm}$ to $z_m$.  Then  we have $$f_z(\text{Dist}(T_{\mathbb Z}))\subseteq \mathbb Z,$$ and in particular,  $$f_z(\binom{h_{\a_i}}{r})=\binom{z_i}{r}, \ f_z(\binom{e_{mm}}{r})=\binom{z_m}{r}\quad\mathbin{\mathrm{for}} \quad r\geq 0.$$ By the way how $z$ is identified with $\l$ we obtain
 $f_z(e_{ii})=\l_i$ for all $i$. \par
   For $v\in M_z$,  by definition we have $$\binom{e_{mm}}{r}v=\binom{z_m}{r}v, \ \binom{h_{\a_i}}{r}v=\binom{z_i}{r}v, \quad i\in [1, m+n]\setminus m,\ r\geq 0,$$ implying that $xv=f_z(x)v$ for all $x\in \text{Dist}(T_{\mathbb Z})$.
  It follows that $$\binom{e_{ii}}{r}v=f_z(\binom{e_{ii}}{r})v=\binom{f_z(e_{ii})}{r}v=\binom{\l_i}{r}v\quad \mathbin{\mathrm{for\ all}}\quad i, r.$$  Then we have $v\in M_{\l}$, and hence $M_z\subseteq M_{\l}$. By a similar argument  we obtain $M_{\l}\subseteq M_z$. This completes the proof.
 \end{proof}

  A $\text{Dist}(G)$-module is {\it integrable} if it is weighted and locally finite. According to \cite[3.5]{bk}, the category of $G$-modules is isomorphic to the category of integrable $\text{Dist}(G)$-modules, so we will not distinguish between $G$-modules and integrable $\text{Dist}(G)$-module in the following. Neither will we distinguish between $G_{ev}$-modules and $\text{Dist}(G_{ev})$-modules since there is a similar relation between the category of $G_{ev}$-modules and the category of $\text{Dist}(G_{ev})$-modules (see \cite[II 1.20]{j2}).\par
Following \cite{bk,jk}, set $$X^+(T)=\{\sum^{m+n}_{i=1}\l_i\e_i\in\Lambda|\l_1\geq \cdots\geq \l_m, \ \l_{m+1}\geq \cdots \geq \l_{m+n}\}$$
and let
   $$ X^+_p(T)=:\{\l\in X^+(T)|\l_i-\l_{i+1}<p\quad\text{for all}\quad i\in [1,m+n)\setminus m\}.$$
 For each $\l\in X^+(T)$, let $L(\l)$ (resp. $L_0( \l)$) be a simple $G$-module (resp. $G_{ev}$-module) with  highest weight $ \l$. One can view  $L_0( \l)$ as a $\text{Dist}(P)$-module by letting all $e_{ij}, (i, j)\in\mathcal I_1$, act trivially on it.
 Define the induced $G$-module (see \cite[p. 11]{bk})  $$\text{Ind}^G_{P} \l=:\text{Dist}(G)\otimes_{\text{Dist}(P)}L_0( \l).$$
A Lie superalgebra $L=L_{\0}\oplus L_{\1}$ is  {\it restricted}  if $L_{\0}$ is a restricted Lie algebra and $L_{\1}$ is a restricted $\g_{\0}$-module under the adjoint action. Let $$[p]: x\longrightarrow x^{[p]}$$ be the $p$-mapping in $L_{\0}$.  The quotient superalgebra of $U(L)$ by its ($\mathbb Z_2$-graded) ideal generated by the elements \ $x^p-x^{[p]}, \ x\in L_{\0},$ \ is called the {\it restricted universal enveloping superalgebra} of $L$, and denoted by $u(L)$ (see \cite[p.87]{bmp}).\par
   Example: The Lie superalgebra $\g=\mathfrak{gl}(m,n)$ over $\mathbb F$ is a restricted Lie superalgebra with $p$-mapping the $p$th power in $\g$; the subalgebras $\g^+$ and $\g_{\0}$ are  both restricted.\par
 Let us note that, by a proof similar to that for \cite[7.10(1)]{j2}, the subalgebra of $\text{Dist}(G)$ generated by the elements $$e_{ij}, \ f_{ij}, \ h_{\a_s},\ e_{mm},\quad (i, j)\in\mathcal I_0\cup\mathcal I_1, \ s\in [1, m+n]\setminus m$$ is  isomorphic to $u(\g)$, and the subalgebra  generated by the elements  $$e_{ij}, \ f_{st}, \ h_{\a_s},\ e_{mm}, \quad (i,j)\in\mathcal I_0\cup\mathcal I_1, \ (s,t)\in\mathcal I_0, \ s\in [1, m+n]\setminus m$$ is isomorphic to $u(\g^+)$. \par
Let $${U}(\g)_{\mathbb F_p}=U(\g)_{\mathbb Z}\otimes_{\mathbb Z} \mathbb F_p$$ and let $\overline{\mathfrak u}$ be the subring of ${U}(\g)_{\mathbb F_p}$  generated by the elements $$e_{ij}, \ f_{ij}, \ e_{m+n,m+n}, \quad (i,j)\in\mathcal I_0\cup\mathcal I_1.$$ Then  $\overline{\mathfrak u}$ is  the restricted universal enveloping algebra of the Lie superalgebra $\g=\mathfrak{gl}(m,n)$  over $\mathbb F_p$.\par
Let us note that $\mathfrak H^*=\Lambda\otimes_{\mathbb Z}\mathbb F$.  For each $\l\in\Lambda$, we denote $\l\otimes 1\in \mathfrak H^*$ by $\bar \l$.  But we write $\a\otimes 1 \ (\a\in\Phi^+)$ and $\rho\otimes 1$ also as $\a$ and $\rho$ respectively.\par

       Since $u(\g^+)$ is a subalgebra of $\text{Dist}(P)$,  the simple $\text{Dist}(P)$-module $L_0(\l)$  is a  $u(\g^+)$-module, and also a $U(\g^+)$-module via the canonical epimorphism from $U(\g^+)$ onto $u(\g^+)$.\par
      Assume $\l=\sum^{m+n}_{i=1}\l_i\e_i$ and let $v^+\in L_0(\l)$ be a maximal vector.  By definition  we have $$e_{ii}v^+=\l_iv^+\quad\mathbin{\mathrm{for}}\quad i=1,\dots, m+n,$$ which implies that $hv^+=\bar\l(h)v^+$ for any $h\in \mathfrak H$.\par
      Since $u(\g)$ is a subalgebra of $\text{Dist}(G)$,  $ \text{Ind}^G_P \l$ is also
      a $u(\g)$-module, and hence  a $U(\g)$-module by the canonical epimorphism from $U(\g)$ onto $u(\g)$;
     by the same canonical epimorphism $u(\g)\otimes_{u(\g^+)} L_0(\l)$ admits a $U(\g)$-module structure.
     \begin{lemma} (1) There is a $u(\g)$-module (and  $U(\g)$-module) isomorphism $$ u(\g)\otimes_{u(\g^+)} L_0(\l)\cong  \text{Ind}^G_P \l.$$
     (2)  There is a  $U(\g)$-module isomorphism $$U(\g)\otimes_{U(\g^+)} L_0(\l)\cong u(\g)\otimes_{u(\g^+)} L_0(\l).$$
     \end{lemma}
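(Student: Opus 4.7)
The plan is to take $\phi_1$ to be the map induced by the inclusion $u(\g)\hookrightarrow\Dist(G)$ (which sends $u(\g^+)$ into $\Dist(P)$, as noted in the paragraph preceding the lemma), and to take $\phi_2(u\otimes v)=\bar u\otimes v$ where $\bar u$ is the image of $u$ under the canonical epimorphism $U(\g)\twoheadrightarrow u(\g)$. Both maps will then be shown bijective by verifying that the two sides of each have the same PBW-type $\mathbb F$-basis, indexed by $\{0,1\}^{\mathcal I_1}$ times any $\mathbb F$-basis of $L_0(\l)$.

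The preparatory step is to establish the required freeness. From the $\mathbb Z$-basis of $U(\g)_{\mathbb Z}$ in Lemma 9.1 and the $\mathbb Z$-basis of $U(\g^+)_{\mathbb Z}$ recorded just before, $U(\g)_{\mathbb F}$ is free as a right $U(\g^+)_{\mathbb F}$-module on the set $\{\prod_{(i,j)\in\mathcal I_1}f_{ij}^{d_{ij}}\colon d\in\{0,1\}^{\mathcal I_1}\}$. The analogous PBW theorem for the restricted enveloping superalgebra $u(\g)$ makes it free as a right $u(\g^+)$-module on the same set. A small but essential observation is that each $f_{ij}$ with $(i,j)\in\mathcal I_1$ is odd, so the Lie-superbracket identity $[f_{ij},f_{ij}]=0$ (valid because $f_{ij}^2$ vanishes as a matrix) yields $2f_{ij}^2=0$ in $U(\g)_{\mathbb F}$; since $p>2$, one obtains $f_{ij}^2=0$ already in $U(\g)_{\mathbb F}$, so $U(\g_{-1})_{\mathbb F}$ and $u(\g_{-1})$ coincide with common $\mathbb F$-basis $\{\prod f_{ij}^{d_{ij}}\colon d\in\{0,1\}^{\mathcal I_1}\}$.

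Granted these facts, $\phi_1$ is well defined (because $L_0(\l)$ is a $\Dist(P)$-module and $u(\g^+)$ embeds into $\Dist(P)$) and is $u(\g)$-linear by construction. On both sides the set $\{\prod f_{ij}^{d_{ij}}\otimes v_k\}$, with $\{v_k\}$ an $\mathbb F$-basis of $L_0(\l)$, is an $\mathbb F$-basis, and $\phi_1$ matches them, so it is bijective. For $\phi_2$, well-definedness uses that the $U(\g^+)$-action on $L_0(\l)$ factors through $u(\g^+)$, so $\overline{uu'}\otimes v=\bar u\otimes u'v$ for $u'\in U(\g^+)$; $U(\g)$-linearity is immediate because the $U(\g)$-action on the target factors through $u(\g)$. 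Bijectivity again follows from the identical basis descriptions: $\phi_2$ sends $\prod f_{ij}^{d_{ij}}\otimes v_k$ on the left to the element with the same expression on the right, under the identification $U(\g_{-1})_{\mathbb F}=u(\g_{-1})$. The main obstacle is really only the bookkeeping of the preparatory step — keeping careful track of how the PBW bases of the four algebras $U(\g)$, $U(\g^+)$, $u(\g)$, $u(\g^+)$ line up and how $f_{ij}^2=0$ is used to equate $U(\g_{-1})_{\mathbb F}$ with $u(\g_{-1})$; once that is settled, both maps are defined trivially and bijectivity is read off from a basis comparison.
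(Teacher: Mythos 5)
Your proposal is correct and follows essentially the same strategy as the paper: define the two obvious maps (inclusion-induced for $\phi_1$, quotient-induced for $\phi_2$), check well-definedness and module-linearity, then deduce bijectivity from the fact that both sides have the $\{0,1\}^{\mathcal I_1}$-indexed PBW basis over $L_0(\l)$. The paper states the final step for $\phi_1$ as surjectivity plus a dimension count and handles $\phi_2$ "by a similar argument," whereas you spell out the basis correspondence explicitly and note $f_{ij}^2=0$ via $2f_{ij}^2=[f_{ij},f_{ij}]=0$; this is a cosmetic, not substantive, difference.
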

\begin{proof}(1) It is easy to verify that the linear mapping \ $$\begin{aligned} \phi_1: \ u(\g)\otimes_{u(\g^+)} L_0(\l)  &\mapsto \ \text{Ind}^G_P\l\\ x\otimes v&\mapsto x\otimes v, \quad x\in u(\g),\quad v\in L_0(\l)\end{aligned}$$ is well-defined and is also a $u(\g)$-module homomorphism.\par  In view of the bases for $\text{Dist}(G)$ and $\text{Dist}(P)$ given in Section 10.1,  we have  a $u(\g_{-1})$-module isomorphism $$\text{Ind}^G_P\l= u(\g_{-1})\otimes_{\text{Dist}(P)}L_0(\l)\cong u(\g_{-1})\otimes_{\mathbb F}L_0(\l).$$  Then   $\text{Ind}^G_P\l$ is spanned by the elements  $$\Pi_{(i,j)\in \mathcal I_1}f_{ij}^{d_{ij}'}\otimes v, \quad d_{ij}'\in \{0,1\}, \  v\in L_0(\l),$$ and hence $\phi_1$ is an epimorphism.  Since $L_0(\l)$ is finite dimensional by \cite[2.14(1)]{j2}
 and since $$\text{dim}_{\mathbb F}u(\g)\otimes_{u(\g^+)} L_0(\l)=\text{dim}_{\mathbb F} \text{Ind}^G_P \l,$$
 $\phi_1$ is  an isomorphism. \par
(2) Let $\pi$ be the canonical epimorphism from $U(\g)$ into $u(\g)$. Then we have $\pi(U(\g^+))=u(\g^+)$.   Define the
 $\mathbb F$-linear mapping  $$\begin{aligned} \phi_2: U(\g)\otimes_{U(\g^+)} L_0(\l)&\mapsto  u(\g)\otimes_{u(\g^+)} L_0(\l)\\ x\otimes v&\mapsto \pi ( x)\otimes v, \quad x\in U(\g), \ v\in L_0(\l).\end{aligned} $$  By a similar argument as in (1) we obtain that $\phi_2$ is a $U(\g)$-module isomorphism.
\end{proof}
  Let $\l\in X^+_p(T)$. Viewed as
 a $u(\g_{\0})$-module, $L_0(\l)$ is simple of highest weight $\bar\l\in \mathfrak H^*$. Therefore,  $L_0(\l)$ is also a simple $U(\g_{\0})$-module by the canonical
  epimorphism from $U(\g_{\0})$ into $u(\g_{\0})$. Recall from Chapter 1 the definition of $\mathscr K(\bar\l)$. Regard  $L_0(\l)$  as a simple $U(\g^+)$-module  annihilated by $\g_1$.
     Then  $$U(\g)\otimes_{U(\g^+)}L_0(\l)=\mathscr K(\bar\l).$$ By Lemma 10.3, there is a $U(\g)$-module isomorphism $$ \text{Ind}^G_P \l\cong U(\g)\otimes_{U(\g^+)}L_0(\l)=\mathscr K(\bar\l).$$
  Recall from Chapter 1 the symmetric bilinear form on  $\mathfrak H^*$ and the definition of $P(\l)$ for $\l\in\Lambda$. Set  $$\tilde P(\mu)=\Pi_{\a\in\Phi^+_1}(\mu+\rho, \a)\quad\mathbin{\mathrm{for}}\quad  \mu\in \mathfrak H^*.$$
   Then  $\tilde P(\bar\l)=P(\l)1_{\mathbb F}$ \ for $\l\in\Lambda$. \par Let $v^+\in L_0(\l)$ be a maximal vector. By Theorem 13.3, we have in $\mathscr K(\bar\l)$ that $$\begin{matrix}\Pi_{(i,j)\in\mathcal I_1}e_{ij}\Pi_{(i,j)\in\mathcal I_1}f_{ij}\otimes v^+&=\tilde P(\bar\l)v^+\\&=P(\l)v^+.\end{matrix}$$
By Theorem 13.1, we have the following lemma.
\begin{lemma}Let $\l\in \Lambda$. Then $\mathscr K(\bar\l)$ is simple if and only if $\l$ is $p$-typical.\end{lemma}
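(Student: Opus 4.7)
My plan is to derive Lemma 9.3 essentially as a direct consequence of the displayed identity $\Pi_{(i,j)\in\mathcal I_1}e_{ij}\Pi_{(i,j)\in\mathcal I_1}f_{ij}\otimes v^+=P(\l)v^+$ in $\mathscr K(\bar\l)$ (quoted from \cite[Th.~4.2]{z}) together with the characterization given by \cite[Prop.~3.1]{z}. First I would translate the $p$-typicality hypothesis into the field-theoretic statement $\tilde P(\bar\l)\neq 0$ in $\mathbb F$: since $\tilde P(\bar\l)=P(\l)\cdot 1_{\mathbb F}$ and $\mathrm{char}\,\mathbb F=p$, the condition $P(\l)\notin p\mathbb Z$ is exactly $P(\l)v^+\neq 0$. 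Lemma~9.2 identifies $\mathscr K(\bar\l)$ with $u(\g)\otimes_{u(\g^+)}L_0(\l)$, which is the object treated in \cite{z}.

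For the ``if'' direction, I would show that every nonzero $\mathbb Z_2$-graded $u(\g)$-submodule $N$ of $\mathscr K(\bar\l)$ must contain the highest weight vector $v^+=1\otimes v^+_0$. Regarded as a $u(\g_{\0})$-module, $\mathscr K(\bar\l)\cong U(\g_{-1})_{\mathbb F}\otimes L_0(\l)$ has a distinguished lowest piece $\Pi_{(i,j)\in\mathcal I_1}f_{ij}\otimes L_0(\l)$, which is a simple $u(\g_{\0})$-module (a twist of $L_0(\l)$). Any nonzero graded submodule $N$, being stable under $u(\g_{\0})$ and under lowering by the $f_{ij}$'s, intersects this bottom layer nontrivially; since that layer is simple as a $u(\g_{\0})$-module, $N$ contains all of $\Pi f_{ij}\otimes v^+_0$. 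Applying $\Pi_{(i,j)\in\mathcal I_1}e_{ij}$ and using the displayed identity produces $P(\l)v^+\in N$, which is a nonzero multiple of $v^+$ under the $p$-typicality assumption, and therefore $N=\mathscr K(\bar\l)$.

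For the ``only if'' direction, suppose $\l$ is not $p$-typical, so $P(\l)=0$ in $\mathbb F$. Then the cyclic submodule $N=u(\g)\cdot(\Pi_{(i,j)\in\mathcal I_1}f_{ij}\otimes v^+_0)$ is nonzero, but applying any monomial in the $e_{ij}$'s kills the odd factor before it can reach $v^+$ — more precisely, the displayed identity together with the PBW decomposition shows that the weight $\bar\l$ component of $N$ vanishes. Hence $v^+\notin N$, so $N$ is a proper nonzero graded submodule, witnessing that $\mathscr K(\bar\l)$ is not simple.

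The only delicate step is the first part of the ``if'' direction, namely verifying that a nonzero graded submodule must meet the bottom layer $\Pi f_{ij}\otimes L_0(\l)$ and that this layer is $u(\g_{\0})$-simple. For the former I would invoke that iterated action of the $f_{ij}$ ($(i,j)\in\mathcal I_1$) on any homogeneous vector of $N$ eventually lands in the top exterior degree of $U(\g_{-1})_{\mathbb F}$; for the latter I use that $\l\in X_p^+(T)$ is guaranteed by the setup so that $L_0(\l)$ is $u(\g_{\0})$-simple (alternatively one reduces to \cite[Prop.~3.1]{z} directly, bypassing the internal verification).
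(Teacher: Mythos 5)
The paper gives no internal argument for this lemma: it simply cites \cite[Prop.~3.1]{z}, with the displayed identity $\Pi e_{ij}\Pi f_{ij}\otimes v^{+}=P(\lambda)v^{+}$ quoted from \cite[Th.~4.2]{z} immediately beforehand. Your proposal, by contrast, supplies a direct self-contained argument built on that identity, which is presumably what \cite{z} does as well, so the substance is compatible — you are simply expanding the black box the paper leaves closed.

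Your argument is essentially correct, but two phrasings are imprecise and deserve tightening. For the ``if'' direction, the statement that ``iterated action of the $f_{ij}$ on any homogeneous vector of $N$ eventually lands in the top exterior degree'' is not literally true, since repeated application of $f_{ij}$'s can annihilate a vector before reaching the top wedge. The correct and standard step is: $\g_{-1}$ acts by nilpotent operators on the finite-dimensional $\mathscr K(\bar\l)$, so a nonzero $\g_{-1}$-stable subspace $N$ contains a nonzero vector annihilated by all of $\g_{-1}$; the socle of $U(\g_{-1})_{\mathbb F}$ (as a left module over itself) is the top wedge, so every such vector lies in $\Pi_{(i,j)\in\mathcal I_1}f_{ij}\otimes L_0(\l)$. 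From there your argument proceeds as written. For the ``only if'' direction, the claim that ``the weight $\bar\l$ component of $N$ vanishes'' is stated in terms of the $H^{*}$-weight $\bar\l\in\Lambda\otimes_{\mathbb Z}\mathbb F$, but in characteristic $p$ distinct $\Lambda$-weights can collapse to the same $\bar\l$, so the $\bar\l$-weight space of $\mathscr K(\bar\l)$ need not be one-dimensional. What you actually want (and what your PBW argument proves) is that the $\Lambda$-weight-$\l$ component of $N$ — equivalently, the degree-zero component in the $\mathbb Z$-grading of $\mathscr K(\bar\l)=\oplus_{k}\Lambda^{k}\g_{-1}\otimes L_0(\l)$ by $\g_{-1}$-degree — vanishes when $P(\l)\equiv 0\bmod p$. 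Indeed, writing $N=u(\g_{-1})u(\g_{\0})u(\g_{1})\cdot(\Pi f_{ij}\otimes v^{+}_{0})$ and grading, only $e_{\mathcal I_1}=\Pi e_{ij}$ from $u(\g_{1})$ can reach degree zero, and it produces $P(\l)v^{+}=0$. That suffices to conclude $v^{+}\notin N$, hence $N$ is a proper nonzero graded submodule. With these two corrections the proposal is a valid proof and a useful expansion of the citation the paper relies on.
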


 \begin{theorem}  If $\l\in X^+(T)$ is $p$-typical, there is a $\text{Dist}(G)$-module isomorphism  $$\text{Ind}^G_{P} \l\cong L( \l).$$
 \end{theorem}
 \begin{proof} We split the proof into two cases according to whether $\l\in X^+_p(T)$ or not.\par Case 1.
  $\l\in X_p^+(T)$.   By the discussion above,
   there is a $U(\g)$-module isomorphism $\text{Ind}^G_{P} \l \cong \mathscr K(\bar\l)$, which shows that $\text{Ind}^G_{P}\l$ is simple as a $U(\g)$-module by  Lemma 10.4. Moreover, the definition of its $U(\g)$-module structure implies that $\text{Ind}^G_{P}\l$  is also simple  as a $u(\g)$-module. Since $u(\g)$ is a subalgebra of  $\text{Dist}(G)$, $\text{Ind}^G_{P}\l$ is a simple $\text{Dist}(G)$-module. By \cite[Lemma 2.3]{jk}, we have $$\text{Ind}^G_P \l \cong L(\l).$$
       Case 2. $\l\notin X^+_p(T)$.  Note that $\l$ can be written uniquely as $$ \l=\l' +p \l''$$ with $\l'\in X^+_p(T)$ (being $p$-typical since $\l$ is so) and $\l''\in X^+(T)$. By  Steinberg's  tensor product theorem (\cite[II Corollary 3.17]{j2}), there is an isomorphism of $\mbox{Dist}(G_{ev})$-modules (hence $\text{Dist}(P)$-modules) $$L_0( \l)\cong L_{0}( \l')\otimes L_{0}( \l'')^{[1]}.$$ By \cite[Theorem 4.4]{jk}, The simple $\mbox{Dist}(G)$-module $L(\l)$ is isomorphic to $L(\l')\otimes L_{0}(\l'')^{[1]}$. Since  $\l'$ is $p$-typical,  we get  $$L(\l')\cong \text{Ind}^G_{P}\l'$$ from Case 1. Then  $L_0(\l')$ can be viewed as  a $\text{Dist}(P)$-submodule  of $L(\l')$, and hence, $L_{0}(\l')\otimes L_{0}(\l'')^{[1]}$ can be viewed  as a $\text{Dist}(P)$-submodule of $L(\l')\otimes L_{0}(\l'')^{[1]}$. This induces a nontrivial $\text{Dist}(G)$-module homomorphism  $$f: \ \text{Ind}^G_{P} \l \mapsto  L( \l')\otimes L_{0}( \l'')^{[1]}\cong L(\l),$$ which is is surjective
       since  $L( \l)$ is simple.  \par  From Section 10.1 we see that the codimension of $\text{Dist}(P)$ in $\text{Dist}(G)$ is $2^{|\mathcal I_1|}=2^{nm}$. Thus, $f$ is isomorphic since $$\begin{aligned}\text{dim}\text{Ind}^G_{P} \l&=2^{nm}\text{dim}L_0(\l)\\ &=2^{nm}\text{dim}L_0(\l')\text{dim}L_0(\l'')\\ &=\dim\text{Ind}^G_P\l'\text{dim}L_0(\l'')\\ &=\text{dim}L( \l')\otimes L_{0}( \l'')^{[1]}\\ &=\text{dim}L(\l).\end{aligned}$$
 \end{proof}\newpage
   \section{Reprsentations of $U_q$ at roots of unity}
In this chapter we study simple $U_q$-modules  with $q$  a root of unity, and we also study the connection
between the representation theory of $U_q$ and that of $GL(m,n)$.
 \subsection{Lusztig's finite dimensional Hopf superalgebras}
 Fix an integer $l'\geq 1$. Let $\eta\in \mathbb C$ be a primitive $l'th$ root of unity, and let  $\mathscr{B}=\mathbb Z[\eta]$, the $\mathbb Z$-subalgebra of $\mathbb C$ generated by $\eta$. Then $\mathscr B$ is isomorphic to the quotient ring of $\mathcal{A}$ by the ideal generated by the $l'th$ cyclotomic polynomial $\phi_{l'}\in \mathbb Z[q]$. Let $l\geq 1$ be defined by $$l=\begin{cases} l'& \text{if $l'$ is odd}\\ \frac{l'}{2}& \text{if $l'$ is even.}\end{cases}$$ Define the $\mathscr{B}$-superalgebras\ $U^+_{\mathscr B},\ U^-_{\mathscr B}, \ U^0_{\mathscr B},\quad \text{and}\quad U_{\mathscr B}$\ by applying $-\otimes _{\mathcal A} \mathscr B$ to $\mathcal A$-superalgebras \ $U_{\mathcal A}^+, \ U_{\mathcal A}^-, \ U_{\mathcal A}^0,\quad \text{and}\quad U_{\mathcal A}$. \  Let \ $\mathfrak u^+,\ \mathfrak u^-, \ \mathfrak u^0,\quad \text{and}\quad \mathfrak u$ \ be respectively the $\mathscr B$-sub-superalgebras of \ $U^+_{\mathscr B},\ U^-_{\mathscr B},\ U^0_{\mathscr B},\quad \text{and}\quad U_{\mathscr B}$ \ generated by the elements $$E_{ij}^{(N)},\  E_{st}^{(\sigma)},\quad (i,j)\in\mathcal I_0,\ (s,t)\in\mathcal I_1, \ 0\leq N<l,\   \sigma\in \{0,1\},$$$$ F_{ij}^{(N)}, \ F_{st}^{(\sigma)},\quad (i,j)\in \mathcal I_0, \ (s,t)\in\mathcal I_1,\ 0\leq N<l,\  \sigma\in\{0,1\},$$ $$K_{\l}^{\pm 1},\ \left[\begin{matrix}K_{\l};0\\t \end{matrix}\right],\quad  \l=\a_i \ \mathbin{\mathrm{for}}\ i\neq m\ \mathbin{\mathrm{or}}\ \l=\e_m,\ 0\leq t<l,$$ and all $$E_{ij}^{(N)},\ E_{st}^{(\sigma)},\ F_{ij}^{(N')},\ F_{st}^{(\sigma')},\ \left[\begin{matrix}K_{\l};0\\t \end{matrix}\right],\ K_{\l}^{\pm 1},$$$$ (i,j)\in\mathcal I_0,\ (s,t)\in\mathcal I_1, \ 0\leq  N, N', t <l,\ \sigma, \sigma'\in \{0,1\},$$
 $$\l=\a_i \ \mathbin{\mathrm{for}}\ i\neq m\ \mathbin{\mathrm{or}}\ \l=\e_m.$$\par Let $\mathscr B'=Q(\eta)$, the quotient field of $\mathscr B$. We  form the $\mathscr B'$-superalgebras $$'\mathfrak u^+,\ '\mathfrak u^-,\ '\mathfrak u^0,\ '\mathfrak u\quad \text{and}\quad U_{\mathscr B'}$$ by applying $-\otimes _{\mathscr B}\mathscr B'$  respectively to the $\mathscr B$-superalgebras $$\mathfrak u^+,\ \mathfrak u^-,\ \mathfrak u^0,\  \mathfrak u,\quad \text{and}\quad U_{\mathscr B}.$$   Recall the notation $E_0^{\psi}$ in Chapter 5 and $E^{(\psi)}_0$ in Chapter 9.
 \begin{proposition}
(a) $\mathfrak u^+$ is generated as a $\mathscr B$-superalgebra by the elements $$E_{\a_i}^{(N)}, \ E_{\a_m}^{(\sigma)}, \quad i\in [1,m+n)\setminus m, \ 0\leq N<l, \ \sigma\in \{0,1\},$$ and as a free $\mathscr B$-module  by the basis \ $E_0^{(\psi)}E_1^{d}, \quad \psi\in \mathbb N_l^{\mathcal I_0}, \ d\in\mathbb Z_2^{\mathcal I_1}.$\par (b) $\mathfrak u^-$ is generated as a $\mathscr B$-superalgebra by the elements $$F_{\a_i}^{(N)}, \ F_{\a_m}^{(\sigma)},\quad i\in [1,m+n)\setminus m, \ 0\leq N<l, \ \sigma\in \{0,1\},$$ and as a free $\mathscr B$-module by the   basis \ $F_1^{d}F_0^{(\psi)}, \quad \psi\in \mathbb N_l^{\mathcal I_0}, \ d\in \mathbb Z_2^{\mathcal I_1}.$\par
(c) $\mathfrak u^0$ is a free $\mathscr B$-module having the basis $$K_m^{\d_m}\left[\begin{matrix}K_m;0\\t_m \end{matrix}\right]\Pi_{i\in [1, m+n]\setminus m }(K_{\a_i}^{\d_{i}}\left[\begin{matrix}K_{\a_i};0\\t_i \end{matrix}\right]), \quad  0\leq t_i, t_m<l, \ \d_i, \d_m\in \{0,1\}.$$\par (d) $\mathfrak u$ is generated as a $\mathscr B$-superalgebra by the elements $$E_{\a_i}^{(N)}, \ E_{\a_m}^{(\sigma)}, \ F_{\a_i}^{(N')}, \ F_{\a_m}^{(\sigma')}, \ K^{\pm 1}_{\l}, \ \left[\begin{matrix}K_{\l};0\\t \end{matrix}\right],$$$$i\in [1,m+n)\setminus m,\ \l=\a_i\ \mathbin{\mathrm{for}}\ i\neq m\ \mathbin{\mathrm{or}}\ \l=\e_m, \ 0\leq t, N, N'<l, \ \sigma, \sigma'\in \{0,1\},$$ and as a  free $\mathscr B$-module by the  basis $$F_1^{d}F_0^{(\psi)}K_m^{\d_m}\left[\begin{matrix}K_m;0\\t_m \end{matrix}\right]\Pi_{i\in [1, m+n]\setminus m }(K_{\a_i}^{\d_{i}}\left[\begin{matrix}K_{\a_i};0\\t_i \end{matrix}\right])E_0^{(\psi')}E_1^{d'},$$$$\psi, \psi'\in \mathbb N_l^{\mathcal I_0}, \ d, d'\in\mathbb Z_2^{\mathcal I_1}, \ \d_i, \d_m\in\{0,1\}, \ 0\leq t_i, t_m<l.$$
(e)\quad $'\mathfrak u^+$, $'\mathfrak u^-$, $'\mathfrak u^0$, and $'\mathfrak u$ may be viewed as $\mathscr B'$-subalgebras of $U_{\mathscr B'}$ having respectively the following bases: $$'\mathfrak u^+: \ E_0^{\psi}E_1^d, \quad  \psi\in \mathbb N_l^{\mathcal I_0}, \ d\in \mathbb Z_2^{\mathcal I_1},$$ $$'\mathfrak u^-: \ F_1^dF_0^{\psi}, \quad  \psi\in \mathbb N_l^{\mathcal I_0}, \ d\in \mathbb Z_2^{\mathcal I_1},$$$$'\mathfrak u^0: \ K_m^{N_m}\Pi_{i\in [1, m+n]\setminus m}K_{\a_i}^{N_i}, \quad  0\leq N_i, N_m<2l,$$$$'\mathfrak u: \ F_1^dF_0^{\psi}K_m^{N_m}\Pi_{i\in [1, m+n]\setminus m}K_{\a_i}^{N_i}E_0^{\psi'}E_1^{d'},\quad  \psi,\psi'\in \mathbb N_l^{\mathcal I_0}, \ 0\leq N_i, N_m<2l, \ d, d'\in \mathbb Z_2^{\mathcal I_1}.$$
 \end{proposition}
 \begin{proof}(a) By Lemma 9.1(1),    $\mathfrak u^+$ is generated as a $\mathscr B$-superalgebra by the elements $$E_{\a_i}^{(N)}, \ E_{\a_m}^{(\sigma)}, \quad i\in [1,m+n)\setminus m, \ 0\leq N<l, \ \sigma\in\{0,1\}.
 $$ For $(i,j)\in\mathcal I_0$, we have  $$E_{ij}^{(M)}E_{ij}^{(N)}=0 \quad \text{if} \quad 1\leq M<l, \ 1\leq N<l, \ \text{and} \ M+N\geq l.$$ Using this fact and applying similar arguments as those leading to Proposition 9.9(b), we obtain that $\mathfrak u^+$ is spanned as a $\mathscr B$-module by the elements $$E_0^{(\psi)}E_1^{d},\quad \psi\in \mathbb N_l^{\mathcal I_0},\ d\in\mathbb Z_2^{\mathcal I_1}.$$    By the discussion at the end of Chapter 9,  $U^+_{\mathcal A}$ is isomorphic to $\mathscr V^+$, then by Proposition 9.9(b)  $U^+_{\mathcal A}$ is  generated  as an $\mathcal A$-module by the elements $$E_0^{(\psi)}E_1^{d}, \quad \psi\in \mathbb N^{\mathcal I_0}, \ d\in\mathbb Z_2^{\mathcal I_1},$$ which are linearly independent
 by the PBW theorem (Corollary 8.11). Hence, $U^+_{\mathcal A}$ is an free $\mathcal A$-module having these elements as  a basis, it follows that $U^+_{\mathscr B}$ is a free $\mathscr B$-module having the same basis.   Since $\mathfrak u^+$ is  a $\mathscr B$-submodule of $U^+_{\mathscr B}$ generated by basis elements, it is also free. \par
 (b), (c), and (d) can be proved similarly, whereas (e) follows from (a)-(d).
 \end{proof}
 In the following we assume $l=l'$ is odd.  Let \ $\tilde{U}_{\mathscr B}, \ \tilde{\mathfrak u}$ \ (\text{resp.}\ $\tilde{U}_{\mathscr B'}, \ '\tilde{\mathfrak u}$) be respectively the quotient of $\mathscr B$-superalgebras (resp. $\mathscr B'$-superalgebras)  $U_{\mathscr B}, \ \mathfrak u\quad (\text{resp.}\quad U_{\mathscr B'}, \ '\mathfrak u)$ by the two-sided ideal generated by the central elements $$K_{\a_i}^l-1,  \ i\in [1, m+n]\setminus m, \ K_m^l-1.$$

  For \ $t\geq 0$, \ set  $$K_{i,t}=K_{\a_i}^{-t}\left[\begin{matrix}K_{\a_i};0\\t \end{matrix}\right],\quad  \ i\in [1, m+n]\setminus m, \quad \ K_{m,t}= K_m^{-t}\left[\begin{matrix}K_m;0\\t \end{matrix}\right].$$ By \cite[Lemma 6.4]{lu1}, the elements $$(\Pi_{i\in [1, m+n]\setminus m}K_{\a_i}^{l\d_i})K_m^{l\d_m}\Pi^{m+n}_{i=1}K_{i,t_i},\quad t_i\geq 0,\ \d_i, \d_m\in\{ 0,1\}$$ form a $\mathscr B$-basis of $U^0_{\mathscr B}$. Then we obtain the following lemma.
    \begin{lemma}\label{bas} (a) The elements $$ F_1^{d}F_0^{(\psi)}\Pi_{i=1}^{m+n}K_{i,t_i}E_0^{(\psi')}E_1^{d'}, \quad \psi, \psi'\in \mathbb N^{\mathcal I_0},\ d, d'\in \mathbb Z_2^{\mathcal I_1},\ t_i\in \mathbb N$$ form a  $\mathscr B$-basis of $\tilde{U}_{\mathscr B}$, and hence a $\mathscr B'$-basis of \ $\tilde{U}_{\mathscr B'}$.\par (b) The elements $$F_1^{d}F_0^{(\psi)}\Pi_{i=1}^{m+n}K_{i,t_i}E_0^{(\psi')}E_1^{d'},\quad \psi, \psi'\in \mathbb N_l^{\mathcal I_0},\ d, d'\in \mathbb Z_2^{\mathcal I_1},\ 0\leq t_i<l$$  form a $\mathscr B$-basis of $\tilde{\mathfrak u}$, and hence a $\mathscr B'$-basis of \ $'\tilde{\mathfrak u}$.\end{lemma}

    Let $k$ be a commutative ring, and let $\beta\in k$ be an invertible element.   Set $$\begin{aligned} U_{\beta,k}&=U_{\mathcal A}\otimes _{\mathcal A}k,\\U(\g_{\0})_{\beta,k}&=U_{\mathcal A}(\g_{\0})\otimes _{\mathcal A}k,\end{aligned}$$ where $k$ is regarded as an $\mathcal A$-algebra with $q$ acting as multiplication by $\beta$. Similar notation is  defined also for the $\mathcal A$-subalgebras \ $(\mathcal N_{\pm 1})_{\mathcal A}, \ U_{\mathcal A}(\g_{\0})(\mathcal N_1)_{\mathcal A}$ of $U_{\mathcal A}$. If $\beta$ equals $\eta$, a primitive $l$th root of unity, we denote by $$\tilde U_{\eta,k}\ (\text{resp.}\quad \tilde U(\g_{\0})_{\eta,k})$$ the quotient superring of $$U_{\eta,k}\ \text{(resp.}\quad U(\g_{\0})_{\eta,k})$$ by the two-sided ideal generated by the central elements $$K_{\a_i}^l-1, \ K_m^l-1, \quad i\in [1, m+n]\setminus m.$$ Note that if $l=1$, so that $\eta=1$,  then  $$\tilde U_{\mathscr B'}=\tilde U_{1,Q}.$$ We will omit the subscript $k$ in the notation above if $k$ equals $\mathbb C$.\par
Recall from Chapter 10 the notation  $U(\g)_Q$,  $U(\g)_{\mathbb Z}$, and $U(\g_{\0})_{\mathbb Z}$.  It is clear that $$U(\g)_Q=U(\g)_{\mathbb Z}\otimes _{\mathbb Z}Q.$$

\begin{proposition} There is an  isomorphism of $Q$-superalgebras $\varphi: {U}(\g)_{Q}\longrightarrow \tilde{U}_{1, Q}$  such that, for $(i,j)\in\mathcal I_0\cup\mathcal I_1, \ s\in [1, m+n]\setminus m$, $$\varphi(e_{ij})=E_{ij},\quad \varphi(f_{ij})=F_{ij}, \quad\varphi (h_{\a_s})=\left[\begin{matrix}K_{\a_s};0\\1 \end{matrix}\right],\quad \varphi (e_{mm})=\left[\begin{matrix}K_m;0\\1 \end{matrix}\right];$$ in particular, \quad  $\varphi({U}(\g)_{\mathbb Z})=\tilde{U}_{1,\mathbb Z}$,   $\varphi({U}(\g_{\0})_{\mathbb Z})=\tilde{U}(\g_{\0})_{1,\mathbb Z}.$
\end{proposition}
\begin{proof}  First we prove that $\varphi$ defines a homomorphism, for which we need show that $\varphi$ preserves  the relations (a1)-(a10) in Chapter 2.\par
 By Remark 3.1(3), we have in $\tilde{U}_{1, Q}$ that $$\begin{matrix}E_{\a_{m-1}}E_{\a_m}E_{\a_{m+1}}E_{\a_m}+E_{\a_m}E_{\a_{m-1}}E_{\a_m}E_{\a_{m+1}}+E_{\a_{m+1}}E_{\a_m}E_{\a_{m-1}}E_{\a_m}
 \\+E_{\a_m}E_{\a_{m+1}}E_{\a_m}E_{\a_{m-1}}-2E_{\a_m}E_{\a_{m-1}}E_{\a_{m+1}}E_{\a_m}=0,\end{matrix}$$
 $$\begin{matrix}F_{\a_{m-1}}F_{\a_m}F_{\a_{m+1}}F_{\a_m}+F_{\a_m}F_{\a_{m-1}}F_{\a_m}F_{\a_{m+1}}+F_{\a_{m+1}}F_{\a_m}F_{\a_{m-1}}F_{\a_m}
 \\+F_{\a_m}F_{\a_{m+1}}F_{\a_m}F_{\a_{m-1}}-2F_{\a_m}F_{\a_{m-1}}F_{\a_{m+1}}F_{\a_m}=0,\end{matrix}$$
 so that $\varphi$ preserves relations (a9) and (a10).\par  That $\varphi$ also preserves relations (a1)-(a8) can be established by similar proofs as in non-super cases (cf. \cite[6.7(a)]{lu1}).   To illustrate this,
 we show that $\varphi$ preserves the relations (a2) and (a3) involving $e_{mm}$ with $j=m-1$, and also the relation (a4) in the case $i=j=m$,  which are unique for the super case. \par
  Since  $$ \begin{aligned} &\left[\begin{matrix}K_m;0\\1 \end{matrix}\right]E_{m-1,m}-E_{m-1,m}\left[\begin{matrix}K_m;0\\1 \end{matrix}\right]\\
&=\frac{K_m-K_m^{-1}}{q_m-q_m^{-1}}E_{m-1,m}-E_{m-1,m}\frac{K_m-K_m^{-1}}{q_m-q_m^{-1}}\\
&=(\frac{K_m-K_m^{-1}}{q_m-q_m^{-1}}
-\frac{q_mK_m-q^{-1}_{m}K_m^{-1}}{q_m-q_m^{-1}})E_{m-1,m}\\
&=((1-q_m)\left[\begin{matrix}K_m;0\\1 \end{matrix}\right]-K_m^{-1})E_{m-1,m},\end{aligned}$$
 we obtain in $\tilde{U}_{1, Q}$ that $$\left[\begin{matrix}K_m;0\\1 \end{matrix}\right]E_{m-1,m}-E_{m-1,m}\left[\begin{matrix}K_m;0\\1 \end{matrix}\right]=-E_{m-1,m},$$
 implying that $\varphi$ preserves
   $$e_{mm}e_{\a_{m-1}}-e_{\a_{m-1}}e_{mm}=-e_{\a_{m-1}}.$$
 Recall the anti-automorphism $\Omega$ on $U_q$, for which we have $$\Omega (\left[\begin{matrix}K_m;0\\1 \end{matrix}\right])=\left[\begin{matrix}K_m;0\\1 \end{matrix}\right],\quad \Omega (E_{m-1,m})=F_{m-1, m},$$
  and hence $$\left[\begin{matrix}K_m;0\\1 \end{matrix}\right]F_{m-1,m}-F_{m-1,m}\left[\begin{matrix}K_m;0\\1 \end{matrix}\right]=F_{m-1,m},$$
  implying that $\varphi$ preserves also the  relation $$e_{mm}f_{\a_{m-1}}-f_{\a_{m-1}}e_{mm}=f_{\a_{m-1}}.$$
 In $U_q$ we have $$E_{m,m+1}F_{m,m+1}+F_{m,m+1}E_{m,m+1}=\left[\begin{matrix}K_{\a_m};0\\1 \end{matrix}\right].$$
 It then follows from Lemma 7.1 that in $\tilde{U}_{1, Q}$
 $$E_{m,m+1}F_{m,m+1}+F_{m,m+1}E_{m,m+1}=\left[\begin{matrix}K_m;0\\1 \end{matrix}\right]+\sum ^{m+n}_{i=m+1}\left[\begin{matrix}K_{\a_i};0\\1 \end{matrix}\right],
 $$ implying that $\varphi$ preserves the relation (a4) in the case $i=j=m$.\par
  In conclusion, $\varphi$ is a $Q$-superalgebra homomorphism.\par Since  $$\varphi (e_{ij}^{(n)})=E_{ij}^{(n)}\in \tilde{U}_{1, \mathbb Z},\quad \varphi (f_{ij}^{(n)})=F_{ij}^{(n)}\in \tilde{U}_{1, \mathbb Z}\quad \mathbin{\mathrm{for}}\quad (i,j)\in\mathcal I_0\cup\mathcal I_1,\ n\in\mathbb N$$
  and, by \cite[6.7]{lu1},   $$\varphi (\binom{h_{\a_i}}{r})=K_{i,r}\in \tilde{U}_{1, \mathbb Z},\ i\in [1, m+n]\setminus m, \ \varphi (\binom{e_{mm}}{r})=K_{m,r}\in \tilde{U}_{1, \mathbb Z},$$  it follows that $$\varphi
 (U(\g)_{\mathbb Z})\subseteq \tilde{U}_{1, \mathbb Z},\quad \varphi({U}(\g_{\0})_{\mathbb Z})\subseteq \tilde{U}(\g_{\0})_{1,\mathbb Z}.$$ Next we show that these equalities hold. Recall from Section 10.1 that ${U}(\g)_{\mathbb Z}$ has a basis consisting of elements $$\Pi_{(i,j)\in \mathcal I_1}f_{ij}^{d'_{ij}}\Pi_{(i,j)\in \mathcal I_0}f_{ij}^{(a'_{ij})}\binom{e_{mm}}{r_m}\Pi_{s\in [1, m+n]\setminus m}\binom{h_{\a_s}}{r_s}\Pi_{(i,j)\in \mathcal I_0}e_{ij}^{(a_{ij})}\Pi_{(i,j)\in \mathcal I_1}e_{ij}^{d_{ij}}, $$ $a'_{ij}, a_{ij}, r_m, r_s\geq 0$, $d'_{ij}, d_{ij}\in\{0,1\}$, where the order of the product can be taken in accordance with that in $U_q$ (see Section 5.3), namely, we put $e_{ij}\prec e_{st}$ if $E_{ij}\prec E_{st}$ in $U_q$.   By Lemma \ref{bas}(a), the images  under $\varphi$ of these basis elements are a $\mathbb Z$-basis of $\tilde{U}_{1, \mathbb Z}$, implying that  $\varphi$ induces a $\mathbb Z$-algebra isomorphism from  $U(\g)_{\mathbb Z}$ to $ \tilde{U}_{1, \mathbb Z}$, and hence  $$\varphi (U(\g)_{\mathbb Z})=\tilde U_{1, \mathbb Z},$$ it follows that $\varphi$ is a $Q$-superalgebra isomorphism.\par
 Since $U(\g_{\0})_{\mathbb Z}$ has a basis $$\Pi_{(i,j)\in \mathcal I_0}f_{ij}^{(a'_{ij})}\binom{e_{mm}}{r_m}\underset{s\in [1, m+n]\setminus m}{\Pi}\binom{h_{\a_s}}{r_s}\Pi_{(i,j)\in \mathcal I_0}e_{ij}^{(a_{ij})},\quad a'_{ij},  a_{ij}, r_m, r_s\geq 0,$$ of which the image  under $\varphi$ is by Lemma \ref{bas}(a) a basis of $\tilde{U}(\g_{\0})_{1,\mathbb Z}$, we get $$\varphi({U}(\g_{\0})_{\mathbb Z})= \tilde{U}(\g_{\0})_{1,\mathbb Z}.$$
\end{proof}
\subsection{Simple $U_{\eta}$-modules}

   Assume $l=l'$ is an odd integer $\geq 3$. Let $\eta$ be a primitive $l$th root of unity. Recall from the last section the definition of  $U_{\eta}$.  Denote $q_i\otimes 1$ in $U_{\eta}$ by $\eta_i$, and denote for $c\in\mathbb Z$ the element $$\left [\begin{matrix} c\\l\end{matrix}\right]\otimes 1\in U_{\eta}\quad \text{simply \ by}\quad  \left [\begin{matrix} c\\l\end{matrix}\right]_{\eta}.$$ In this section we  study simple $U_{\eta}$-modules.\par Let $V=V_{\0}\oplus V_{\1}$ be a $U_{\eta}$-module. For  $z=(z_1,\dots, z_{m+n})\in\mathbb Z^{m+n}$,   define the $z$-weight space of $V$ by $$V_z=\{x\in V|K_{\a_i}x=\eta_i^{z_i}x, \left [\begin{matrix} K_{\a_i};0\\l\end{matrix}\right]x=\left [\begin{matrix} z_i\\l\end{matrix}\right]_{\eta}x, \ i\in [1, m+n]\setminus m, $$$$K_mx=\eta_m^{z_m}x, \left [\begin{matrix} K_m;0\\l\end{matrix}\right]x=\left [\begin{matrix} z_m\\l\end{matrix}\right]_{\eta}x \}.$$
   Since  $$\bar K_{\a_i} =\bar K_m=\overline{\left[\begin{matrix}K_{\a_i};0\\l\end{matrix}\right]}
   =\overline{\left[\begin{matrix}K_m;0\\l\end{matrix}\right]}=\0,$$ $V_z$ is $\mathbb Z_2$-graded.\par
   Recall the notation $\a (i)$ from Section 8.1. Using the formulas $(h1)-(h4)$ in Chapter 9, we obtain
    $$\begin{aligned} & \quad E_{\a_i}V_z\subseteq V_{z+\a (i)}, && \quad F_{\a_i}V_z\subseteq V_{z-\a(i)},\\ & \quad E_{\a_i}^{(l)}V_z\subseteq V_{z+l\a (i)}, && \quad F_{\a_i}^{(l)}V_{z}\subseteq V_{z-l\a (i)}.\end{aligned}$$ By similar arguments as  in the non-super case (cf. \cite[5.2]{lu2}), we have that $\sum_zV_z$ \ is a $U_{\eta}$-submodule of $V$ and the sum is direct.
  \begin{definition} A  $U_{\eta}$-module $V=V_{\0}\oplus V_{\1}$  is {\sl integral } if \ $V=\sum_z V_z$.
 \end{definition}
  \begin{definition} Let $V=V_{\0}\oplus V_{\1}$ be a $U_{\eta}$-module, and let $z\in\mathbb Z^{m+n}$. A nonzero element $v\in h(V_z)$ is a {\it maximal vector} if $$ E_{\a_j}v=0\quad \mbox{and}\quad E_{\a_i}^{(l)}v=0, \quad j=1,\dots, m+n-1, \ i\in [1, m+n)\setminus m;$$ $V$ is a {\it highest weight module} if it is generated by a maximal vector. \end{definition}Clearly a highest weight $U_{\eta}$-module is integral.\par
  Recall the definition of the partial order $\leq $ in Section 8.1.
   For a highest weight $U_{\eta}$-module $V$ generated by a maximal vector $v\in V_z$, we have $$V=\oplus _{z'\leq z}V_{z'}, \quad V_z=\mathbb Cv.$$ For any proper submodule $N$  of $V$,  it is easy to show that $$N=\sum _{z'\leq z}N\cap V_{z'},$$ implying that $N\subseteq \sum_{z'\leq z, z'\neq z } V_{z'}$. Therefore, $V$ has a unique simple quotient.
  \par Recall from Section 8.1 the notation  $\mathbb Z^{m+n}_+$.\par
     Next we construct simple integral modules for $U_{\eta}$.  For $z\in \mathbb Z^{m+n}_+$,  let $M(z)$ be a simple $U_q$-module of highest weight $z$, which is finite dimensional by Proposition \ref{kacim}.  Let $v_z\in M(z)$ be a maximal vector.  Set $$M_{\eta}(z)=U_{\mathcal A}v_z\otimes _{\mathcal A}\mathbb C,$$ where $\mathbb C$ is regarded as an $\mathcal A$-algebra by letting $q$ act as multiplication by $\eta$. Then $M_{\eta}(z)$ is    a highest weight $U_{\eta}$-module,  for which we denote by $L_{\eta}(z)$ the unique simple quotient, .\par

   In view of the proof for \cite[6.4]{lu2}, we have the following proposition. \begin{proposition} The map $z\longrightarrow L_{\eta}(z)$ defines a bijection between $\mathbb Z^{m+n}_+$ and the set of isomorphism classes of integral  simple $U_{\eta}$-modules (of type $\mathbf{1}$)  of finite dimension over $\mathbb C$.
\end{proposition}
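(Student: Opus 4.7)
The plan is to verify bijectivity in two steps: well-definedness with injectivity, then surjectivity. First, for each $z\in \mathbb Z^{m+n}_+$, the module $L_{\eta,\mathbb C}(z)$ is by its construction an integral simple module of type $\underline 1$, and Lemma 9.9 guarantees finite-dimensionality. Its $z$-weight space is the one-dimensional line spanned by the image of the maximal vector $v^+$, so $z$ is recovered from $L_{\eta,\mathbb C}(z)$ as an isomorphism invariant; this shows the map is well-defined and injective.

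For surjectivity, let $V=V_{\0}\oplus V_{\1}$ be a finite-dimensional integral simple $U_{\eta,\mathbb C}$-module of type $\underline 1$. Since $V=\bigoplus_{z\in\mathbb Z^{m+n}} V_z$ and is finite-dimensional, only finitely many $V_z$ are nonzero; I would choose $z$ maximal in the standard partial order and a homogeneous $0\neq v^+\in V_z$. Each $E_{ij}$ with $(i,j)\in\mathcal I$ and each $E_{ij}^{(l)}$ with $(i,j)\in\mathcal I_0$ strictly raises the weight, so maximality of $z$ forces $v^+$ to be a maximal vector in the sense of Definition 9.10. Simplicity then gives $V=U_{\eta,\mathbb C}v^+$.

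The crucial step is to verify that $z\in \mathbb Z^{m+n}_+$, i.e., $z_i\geq 0$ for all $i\in[1,m+n)\setminus m$. For each such $i$, the root $\a_i$ is even, and the subalgebra generated by $E_{\a_i}^{(N)}$, $F_{\a_i}^{(N)}$, $K_{\a_i}^{\pm 1}$ and $\left[\begin{matrix}K_{\a_i};0\\ t\end{matrix}\right]$ is a copy of the Lusztig form of $U_\eta(sl_2)$ acting integrally on $V$. Restricting $V$ to this subalgebra and applying Lusztig's classification \cite[6.4]{lu2} of finite-dimensional integral simple $U_\eta(sl_2)$-modules of type $\underline 1$, the $K_{\a_i}$-eigenvalue on $v^+$ must be $\eta_i^{z_i}$ with a nonnegative integer $z_i$, and compatibility with the eigenvalue of $\left[\begin{matrix}K_{\a_i};0\\ l\end{matrix}\right]$ is automatic from integrality. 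The indices $i=m$ (odd root, with $E_{\a_m}^2=0$ providing no positivity constraint) and $i=m+n$ (the extra toral direction) carry no such constraint, matching the definition of $\mathbb Z^{m+n}_+$.

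Having $z\in \mathbb Z^{m+n}_+$, the universal property of the highest-weight specialization $M_{\eta,\mathbb C}(z)$, built from the $U_{\mathcal A}$-lattice $U_{\mathcal A}v^+\subseteq M(z)$ by base change at $q=\eta$, produces a surjection $M_{\eta,\mathbb C}(z)\twoheadrightarrow V$ sending the standard generator to $v^+$. Since $V$ is simple and $L_{\eta,\mathbb C}(z)$ is the unique simple quotient of $M_{\eta,\mathbb C}(z)$, we conclude $V\cong L_{\eta,\mathbb C}(z)$. The principal obstacle in this plan is the positivity step: one must confirm that restriction of $V$ to each even-root $U_\eta(sl_2)$-subalgebra preserves both integrality and the type $\underline 1$ condition, so that Lusztig's $sl_2$ classification transfers without modification to the $\mathbb Z_2$-graded setting.
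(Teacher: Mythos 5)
Your plan reproduces the outline of the argument that the paper refers to (it simply invokes the proof of \cite[6.4]{lu2}): injectivity and well-definedness are routine, positivity of the highest weight is deduced by restriction to the copies of $U_\eta(sl_2)$ attached to the even simple roots $\a_i$ with $i\neq m,m+n$, and surjectivity then follows from the uniqueness of the simple highest-weight $U_{\eta,\mathbb C}$-module with a given highest weight. So the overall strategy is correct and matches the source.

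There is, however, one imprecise step worth correcting. In your final paragraph you invoke ``the universal property of the highest-weight specialization $M_{\eta,\mathbb C}(z)$'' to produce a surjection $M_{\eta,\mathbb C}(z)\twoheadrightarrow V$. But $M_{\eta,\mathbb C}(z)$, being the base change at $q=\eta$ of the $\mathcal A$-lattice inside the \emph{simple} $U_q$-module $M(z)$, is not universal among highest-weight $U_{\eta,\mathbb C}$-modules with highest weight $z$; the generator $v^+$ of $M_{\eta,\mathbb C}(z)$ may satisfy more relations than an arbitrary maximal vector of weight $z$ does, so there is no a priori map $M_{\eta,\mathbb C}(z)\to V$. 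The correct object is a Verma-type module $\Delta(z)=U_{\eta,\mathbb C}/J(z)$, where $J(z)$ is the left ideal generated by $E_{ij}$ ($(i,j)\in\mathcal I$), $E_{ij}^{(l)}$ ($(i,j)\in\mathcal I_0$), $K_{\a_i}-\eta_i^{z_i}$ and $\left[\begin{matrix}K_{\a_i};0\\l\end{matrix}\right]-\left[\begin{matrix}z_i\\l\end{matrix}\right]_\eta$ ($i\in[1,m+n]$). By the triangular decomposition, $\Delta(z)\neq 0$ and is a highest weight module, so it has a unique simple quotient; both $V$ and $M_{\eta,\mathbb C}(z)$ (hence $L_{\eta,\mathbb C}(z)$) are highest-weight quotients of $\Delta(z)$, which forces $V\cong L_{\eta,\mathbb C}(z)$. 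With this replacement the argument goes through and is what Lusztig's proof of \cite[6.4]{lu2} does.
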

\subsection{A $U(\g_{\0})_{\eta}$-submodule of $L_{\eta}(z)$}
 Recall from Section 8.2 that $U_{\mathcal A}=U_{\mathcal A}(\mathfrak{gl}_m)\otimes _{\mathcal A} U_{\mathcal A}(\mathfrak{gl}_n)$, which gives $$U(\g_{\0})_{\eta}=U(\mathfrak{gl}_m)_{\eta}\otimes _{\mathbb C}U(\mathfrak{gl}_n)_{\eta},$$ where $U(\mathfrak{gl}_t)_{\eta}=U_{\mathcal A}(\mathfrak{gl}_t)\otimes_{\mathcal A} \mathbb C$ ($t=m,n$) with $\mathbb C$  an $\mathcal A$-algebra having $q$ acting as multiplication by $\eta$. Similarly we can define the subalgebra $U(\mathfrak{sl}_t)_{\eta}$ of $U(\mathfrak{gl}_t)_{\eta}$ ($t=m,n$).\par  Let $M$ be a $U(\mathfrak{gl}_m)_{\eta}$-module. For $z=(z_1,\dots, z_m)\in \mathbb Z^m$,\ set $$M_z=\{x\in M||K_{\a_i}x=\eta_i^{z_i}x, \left [\begin{matrix} K_{\a_i};0\\l\end{matrix}\right]x=\left [\begin{matrix} z_i\\l\end{matrix}\right]_{\eta}x, \quad 1\leq i\leq m-1, $$$$
K_mx=\eta_m^{z_m}x, \ \left [\begin{matrix} K_{m};0\\l\end{matrix}\right]x=\left [\begin{matrix} z_m\\l\end{matrix}\right]_{\eta}x  \}.$$Let $M$ be a $U(\mathfrak{sl}_m)_{\eta}$-module. For $z=(z_1,\dots, z_{m-1})\in \mathbb Z^{m-1}$,\ set $$M_z=\{x\in M||K_{\a_i}x=\eta_i^{z_i}x, \left [\begin{matrix} K_{\a_i};0\\l\end{matrix}\right]x=\left [\begin{matrix} z_i\\l\end{matrix}\right]_{\eta}x, \quad 1\leq i\leq m-1\}. $$
A nonzero vector $v\in M_z$ for a $U(\mathfrak{gl}_m)_{\eta}$-module or a $U(\mathfrak{sl}_m)_{\eta}$-module $M$ is called maximal if $$E_{\a_i}v=0, \ E_{\a_i}^{(l)}v=0, \ i=1,\dots,m-1.$$\par
Let $M$ be a $U(\mathfrak{gl}_n)_{\eta}$-module. For $z=(z_1,\dots, z_n)\in \mathbb Z^n$, set $$M_z=\{x\in M||K_{\a_i}x=\eta_i^{z_i}x, \left [\begin{matrix} K_{\a_i};0\\l\end{matrix}\right]x=\left [\begin{matrix} z_i\\l\end{matrix}\right]_{\eta}x, \quad m+1\leq i\leq m+n-1, $$$$
K_{m+n}x=\eta_{m+n}^{z_{m+n}}x,\ \left [\begin{matrix} K_{m+n};0\\l\end{matrix}\right]x=\left [\begin{matrix} z_{m+n}\\l\end{matrix}\right]_{\eta}x  \}.$$Let $M$ be a $U(\mathfrak{sl}_n)_{\eta}$-module. For $z=(z_1,\dots, z_{n-1})\in \mathbb Z^{n-1}$, set $$M_z=\{x\in M||K_{\a_i}x=\eta_i^{z_i}x, \left [\begin{matrix} K_{\a_i};0\\l\end{matrix}\right]x=\left [\begin{matrix} z_i\\l\end{matrix}\right]_{\eta}x, \quad m+1\leq i\leq m+n-1\}. $$ A nonzero vector $v\in M_z$ for a $U(\mathfrak{gl}_n)_{\eta}$-module or a $U(\mathfrak{sl}_n)_{\eta}$-module $M$ is called maximal if $$E_{\a_i}v=0, \ E_{\a_i}^{(l)}v=0, \quad i=m+1,\dots, m+n-1.$$\par
Let $ z\in \mathbb Z^{m+n}_+$ and let $M(z)$ be
a simple  $U_q$-submodule of highest weight $z$. Recall from Section 8.2 that $$K(z)=U_q\otimes _{U_q(\g_{\0})\mathcal N_1}\mathcal M_0(z).$$  By Proposition \ref{kacim}, there is a $U_q$-module epimorphism  $$ K(z)\longrightarrow M(z).$$ By proofs of Theorem 8.7 and Lemma 8.8 we see that this epimorphism sends $\mathcal M_0(z)$ isomorphically to its image and sends the unique maximal vector $v_z$ in $\mathcal M_0(z)$ to the unique maximal vector in $M(z)$.  We denote the image of $\mathcal M_0(z)$ under this epimorphism by the same notation. Then $\mathcal M_0(z)\subseteq M(z)$ a simple  $U_q(\g_{\0})$-submodule and annihilated by $U_q(\g_{\0})\mathcal N^+_1$.
 Hence we have $$M(z)=\mathcal N_{-1}U_q(\g_{\0})\mathcal N_1 v_z=\mathcal N_{-1}\mathcal M_0(z),$$ implying that  $$U_{\mathcal A}v_z= (\mathcal N_{-1})_{\mathcal A}U_{\mathcal A}(\g_{\0})v_z.$$ Recall the definition of $M_{\eta}(z)$ and  $L_{\eta}(z)$ at the end of the last section.  Then we have that  $U(\g_{\0})_{\eta}v_z\subseteq M_{\eta}(z)$ is annihilated by $U(\g_{\0})_{\eta}(\mathcal N_1^+)_{\eta}$, and hence,  $U(\g_{\0})_{\eta}v_z\subseteq L_{\eta}(z)$ is a $U(\g_{\0})_{\eta}$-submodule annihilated by $U(\g_{\0})_{\eta}(\mathcal N_1^+)_{\eta}$, where $ v_z$ denotes also its image in $L_{\eta}(z)$. This gives us  $$L_{\eta}(z)=(\mathcal N_{-1})_{\eta}U(\g_{\0})_{\eta} v_z.$$ \par
\begin{lemma} The $U(\g_{\0})_{\eta}(\mathcal N_1)_{\eta}$-submodule $U(\g_{\0})_{\eta} v_z\subseteq L_{\eta}(z)$  is simple.
\end{lemma}
\begin{proof} Let $N$ be a simple $U(\g_{\0})_{\eta}$-submodule of $U(\g_{\0})_{\eta} v_z$. By Lemma 8.2, we have $$N=N_m\otimes N_n,$$ where $N_m$ is a simple $U(\mathfrak{gl}_m)_{\eta}$-module and $N_n$ is a simple $U(\mathfrak{gl}_n)_{\eta}$-module. It is easy to see that $N_t$ ($t=m,n$) is also a simple $U(\mathfrak{sl}_t)_{\eta}$-module.\par For $t=m,n$,  by \cite[Proposition 6.4]{lu2} $N_t$ contains a unique maximal  vector $v_t$. Let $v^+=v_m\otimes v_n\in N$. Then by definition (see \cite[Section 6.1]{lu2}) we have  $$E_{\a_i}v^+=0\ \mbox{and}\ E_{\a_i}^{(l)}v^+=0\quad\mathbin{\mathrm{for\ all}}\quad i\in [1,m+n)\setminus m.$$
  Since $v^+\in U(\g_{\0})_{\eta} v_z$, by the discussion before the lemma $v^+$ is annihilated  by the elements $$E_{\a_m},\ E_{\a_m}^{(l)}\in U(\g_{\0})_{\eta}(\mathcal N^+_1)_{\eta}.$$  Therefore, $v^+$ must be a weight vector since $L_{\eta}(z)$ is a weighted module having unique maximal vector. In other words,  $v^+$ is also a maximal vector in $L_{\eta}(z)$. So we have by Proposition 11.6 that $v^+=c v_z$ for some $c\neq 0$, implying that $N=U(\g_{\0})_{\eta} v_z$. Thus,  $U(\g_{\0})_{\eta} v_z\subseteq L_{\eta}(z)$ is a simple $U(\g_{\0})_{\eta}(\mathcal N_1)_{\eta}$-module.
\end{proof}
\subsection{Simple  $'\tilde{\mathfrak u}_{\mathbb C}$-modules}
Set $$\mathbb Z_l^{m+n}=\{z\in \mathbb Z^{m+n}|0\leq z_i\leq l-1,\quad i=1,\dots, m+n\}.$$
    Set $'\tilde{\mathfrak u}_{\mathbb C}='\tilde{\mathfrak u}\otimes_{\mathscr B'}{\mathbb C}$.  We now  study  finite dimensional $'\tilde{\mathfrak u}_{\mathbb C}$-modules.\par   For any fixed finite dimensional $'\tilde{\mathfrak u}_{\mathbb C}$-module $M=M_{\0}\oplus M_{\1}$,  since  $$K_{\a_i}^l=1,\ i\in [1, m+n]\setminus m, \ K_m^l=1$$ in  $'\tilde{\mathfrak u}_{\mathbb C}$, we have $$M=\oplus_{z\in \mathbb Z_l^{m+n}}M_z,$$ where  $$M_z=\{x\in M|K_{\a_i}x=\eta_i^{z_i}x, \ i\in [1, m+n]\setminus m,\ K_mx=\eta_m^{z_m}x\}.$$ Let $$M^0=\{x\in h(M)|E_{\a_i}x=0, \quad i=1,\dots, m+n-1 \}.$$ A nonzero vector in $M^0\cap M_z$ is called a maximal vector of weight $z$. Then applying verbatim \cite[5.10, 5.11]{lu1} we get
  \begin{proposition} Each simple $'\tilde{\mathfrak u}_{\mathbb C}$-module $M=M_{\0}\oplus M_{\1}$ contains a unique maximal vector of weight $z\in \mathbb Z_l^{m+n}$. The correspondence $M\mapsto z$ defines a bijection between the set of isomorphism classes of simple $'\tilde{\mathfrak u}_{\mathbb C}$-modules and  $\mathbb Z_l^{m+n}$.
  \end{proposition}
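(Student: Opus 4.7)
The approach is to transfer Lusztig's argument for \cite[5.10, 5.11]{lu1} to the super setting, using the triangular decomposition of $'\tilde{\mathfrak u}$ together with the nilpotency of its positive part. Prop.~9.7(e), passed through the quotient by $K_{\a_i}^l = 1$, gives a $k$-vector-space factorization
\[
'\tilde{\mathfrak u}_k \;\cong\; \,'\tilde{\mathfrak u}^-_k \otimes \,'\tilde{\mathfrak u}^0_k \otimes \,'\tilde{\mathfrak u}^+_k,
\]
with all three factors finite-dimensional; in particular every simple $'\tilde{\mathfrak u}_k$-module is finite-dimensional. The augmentation ideal $('\tilde{\mathfrak u}^+_k)_{>0}$ is nilpotent, because each $E_{ij}^{(N)}$ is nilpotent in $'\tilde{\mathfrak u}^+_k$ and Prop.~8.3(b) (combined with Lemma~4.3) bounds the lengths of standard monomials. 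Since $k \supseteq \mathscr B' = Q(\eta)$ and $K_{\a_i}^l = 1$, the subalgebra $'\tilde{\mathfrak u}^0_k$ is commutative and semisimple, split by the characters $\chi_z : K_{\a_i} \mapsto \eta_i^{z_i}$ indexed by $z \in [0,l)^{m+n}$.

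First, I would produce a homogeneous maximal vector in a given simple module $M = M_{\0} \oplus M_{\1}$. The subspace $M^0 = \{x \in M \mid E_{ij} x = 0 \text{ for all } (i,j) \in \mathcal I\}$ is nonzero by nilpotency of $('\tilde{\mathfrak u}^+_k)_{>0}$ and is $\mathbb Z_2$-graded because every $E_{ij}$ is homogeneous. Relation (h5) of Sec.~8 shows that $'\tilde{\mathfrak u}^0_k$ normalizes $'\tilde{\mathfrak u}^+_k$, so $M^0$ is stable under $'\tilde{\mathfrak u}^0_k$; semisimplicity of $'\tilde{\mathfrak u}^0_k$ then furnishes a homogeneous joint eigenvector $v^+ \in M^0$ of some weight $z \in [0,l)^{m+n}$. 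Simplicity of $M$ forces $M = \,'\tilde{\mathfrak u}^-_k v^+$, and inspection of weights through the triangular decomposition gives $M_z = k v^+$, which is the uniqueness assertion.

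Next, I would carry out the classification by constructing induced modules. For each $z \in [0,l)^{m+n}$, let $k_z$ be the one-dimensional graded $'\tilde{\mathfrak u}^{\geq 0}_k$-module on which $K_{\a_i}$ acts by $\eta_i^{z_i}$ and every $E_{ij}$ by zero, where $'\tilde{\mathfrak u}^{\geq 0}_k = \,'\tilde{\mathfrak u}^0_k \cdot \,'\tilde{\mathfrak u}^+_k$. The induced module $V(z) = \,'\tilde{\mathfrak u}_k \otimes_{'\tilde{\mathfrak u}^{\geq 0}_k} k_z$ has $V(z)_z = k \cdot (1 \otimes 1)$, so any proper graded submodule has trivial weight-$z$ component; the sum of all proper submodules is therefore still proper, giving a unique maximal graded submodule and hence a simple quotient $L(z)$. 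The assignment $M \mapsto z$ is well-defined by the previous step, injective since $L(z) \not\cong L(z')$ when $z \neq z'$ (distinct highest weights), and surjective because every simple $M$ arises this way.

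The principal obstacle specific to the super case is the $\mathbb Z_2$-grading bookkeeping: one must ensure that $M^0$ is graded (automatic, since the $E_{ij}$ are homogeneous) and that $V(z)$ carries a well-defined grading (which follows because every defining relation of $'\tilde{\mathfrak u}$ is homogeneous, once the parity of the cyclic generator of $k_z$ is fixed). The two possible parity choices yield simple modules that differ only by parity shift, consistent with the paper's convention of classifying simples up to graded isomorphism. Apart from this bookkeeping, Lusztig's argument transfers without change.
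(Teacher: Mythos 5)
Your argument is correct and follows essentially the same route as the paper, whose proof consists of the single remark that the result is obtained by applying Lusztig's \cite[5.10, 5.11]{lu1} verbatim to $'\tilde{\mathfrak u}_k$. You have simply spelled out the details of that transfer---nilpotency of the augmentation ideal of $'\tilde{\mathfrak u}^+_k$, semisimplicity of $'\tilde{\mathfrak u}^0_k$, existence and weight-uniqueness of the maximal vector, classification through induced modules---together with the $\mathbb Z_2$-grading bookkeeping that the paper leaves tacit.
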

   By Lemma 11.2,
 $'\tilde{\mathfrak u}_{\mathbb C}$ may be viewed as a subalgebra of $\tilde U_{\eta}$.
    The following lemma can be proved by a similar argument as that for \cite[Proposition 7.1]{lu2}.
 \begin{lemma} Assume  $z\in \mathbb Z_l^{m+n}$ and let $v_z$ be a maximal  vector of $L_{\eta}(z)$. Then \par (a) $F_{\a_i}^{(l)} v_z=0$,  $i\in [1, m+n)\setminus m$.\par (b) $\{y\in h(L_{\eta}(z))|E_{\a_i}y=0\quad\mbox{for}\quad i\neq m+n \}=\mathbb C  v_z$.\par (c)
 $L_{\eta}(z)$ is simple as a $'\tilde{\mathfrak u}_{\mathbb C}$-module. \end{lemma}

\subsection{The extended Lusztig conjecture}

In this section we  assume that $l=l'$ is an odd prime $p$.    Consider the ring homomorphism $\mathscr B\longrightarrow \mathbb F_p$ which sends each $z\in \mathbb Z$ to $z$ mod $p$ in $\mathbb F_p$ and $\eta$ to $1$, and let $\mathbf m$ be its kernel.\par  Recall the
definition of  ${U}(\g)_{\mathbb F_p}$  in Section 10.1 and $\overline{\mathfrak u}$ in Section 10.2.
  The following lemma can be proved similarly as that  for \cite[Theorem 6.8]{lu1}.
\begin{lemma} There are isomorphisms of Hopf superalgebras:
$$\tilde{U}_{\mathscr B}/\mathbf m\tilde{U}_{\mathscr B}\cong {U}(\g)_{\mathbb F_p},\quad \tilde{\mathfrak u}/\mathbf m\tilde{\mathfrak u}\cong \overline{\mathfrak u}.$$
\end{lemma}
By the lemma, each simple $\overline{\mathfrak u}$-module corresponds to a simple $'\tilde{\mathfrak u}$-module $M$ and has dimension $\leq dim M$. We now extend  Lusztig's conjecture \cite[0.3]{lu1} to the super case as follows.\par
Conjecture: If $p$ is sufficiently large and $z\in \mathbb Z^{m+n}_p$, then the inequality above is an equality and $\bar {\mathfrak u}$ and $'\tilde{\mathfrak u}$ have identical representation theories.\par
According to \cite[Chapter H]{j2},  $\mathscr B=\mathbb Z[\eta]$ is the ring of all algebraic integers in $\mathscr B'$ and $1-\eta$ generates the unique maximal ideal $(1-\eta)$ in $\mathscr B$. Let $\mathcal R$ denote the localization of $\mathscr B$ at $(1-\eta)$. Then $\mathcal R$ is a discrete valuation ring with residue field $\mathbb F_p$.  Regard $\mathbb F$  as an $\mathcal R$-algebra via the embedding of the residue field of $\mathcal R$ into $\mathbb F$. We can identify $U_{\eta,\mathcal R}\otimes_{\mathcal R}\mathbb F$ with $U_{1,\mathbb F}$. \par
 Recall from Section 10.1 the notation $U(\g_{-1})_{\mathbb F},\ U(\g_{\0})_{\mathbb F}\ \text{and}\ U(\g^+)^+_{\mathbb F}.$ Note that   $$\begin{aligned}(\mathcal N_{-1})_{\eta,\mathcal R}\otimes _{\mathcal R}\mathbb F&=U(\g_{-1})_{\mathbb F},\\   U(\g_{\0})_{\eta,\mathcal R}\otimes _{\mathcal R}\mathbb F&=U(\g_{\0})_{\mathbb F},\\U(\g_{\0})_{\eta, \mathcal R}(\mathcal N_1^+)_{\eta, \mathcal R}\otimes _{\mathcal R}\mathbb F&=U(\g^+)^+_{\mathbb F}.\end{aligned}$$

 Let $v_z$ be a maximal vector of the simple $U_{\eta}$-module $L_{\eta}(z)$.   Then $U_{\eta,\mathcal R} v_z$ is a $U_{\eta,\mathcal R}$-invariant $\mathcal R$-lattice in $L_{\eta}(z)$. Now $$L_{\eta}(z)_{\mathbb F}=:U_{\eta,\mathcal R} v_{z}\otimes _{\mathcal R}\mathbb F$$ has a natural structure as a $U_{1,\mathbb F}$-module. We denote the image of $v_z$ in $L_{\eta}(z)_{\mathbb F}$ also by $v_z$.\par Since $K_{\a_i}$, $i\in [1, m+n]\setminus m$, and $K_m$  act on $L_{\eta}(z)_{\mathbb F}$ as  identity, $L_{\eta}(z)_{\mathbb F}$ is a $\tilde{U}_{1,\mathbb F}$-module. Recall from Section 10.1 the definition of $U(\g)_{\mathbb F}$  and the fact that $U(\g)_{\mathbb F}\cong \text{Dist}(G)$. By Proposition 11.3, we have  $$ \tilde{U}_{1,\mathbb F}=\tilde{U}_{1,\mathbb Z}\otimes_{\mathbb Z}\mathbb F\cong  U(\g)_{\mathbb Z}\otimes_{\mathbb Z}\mathbb F= U(\g)_{\mathbb F}\cong \text{Dist}(G),$$
  so that $L_{\eta}(z)_{\mathbb F}$ is  a $\text{Dist}(\text{G})$-module. \par
   Note that $$U_{\eta,\mathcal R} v_z=(\mathcal N_{-1})_{\eta,\mathcal R}U(\g_{\0})_{\eta,\mathcal R}v_z,$$
  in which  $U(\g_{\0})_{\eta,\mathcal R}v_z$ is a $U(\g_{\0})_{\eta,\mathcal R}$-invariant lattice of
  $U(\g_{\0})_{\eta}v_z$,  it follows that \ $$L_{\eta}(z)_{\mathbb F}=U (\g_{-1})_{\mathbb F}{U}(\g_{\0})_{\mathbb F}v_z.$$
  By Lemma 11.7 and the discussion before it, $U(\g_{\0})_{\eta}v_z\subseteq L_{\eta}(z)$
  is a simple $U(\g_{\0})_{\eta}$-module and annihilated by $U(\g_{\0})_{\eta}(\mathcal N^+_1)_{\eta}$,
    so that ${U}(\g_{\0})_{\mathbb F}v_z\subseteq L_{\eta}(z)_{\mathbb F}$ is annihilated by $U(\g^+)^+_{\mathbb F}$. Therefore, by the discussion at the end of Section 10.1 we can regard ${U}(\g_{\0})_{\mathbb F}v_z$ as a $\text{Dist}(P)$-module.\par
   \begin{lemma}Assume Lusztig's conjecture \cite[0.3]{lu1}. Let $z=(z_1,\dots, z_{m+n})\in\mathbb Z^{m+n}_p$. Then ${U}(\g_{\0})_{\mathbb F}v_z\subseteq L_{\eta}(z)_{\mathbb F}$ is simple as a ${U}(\g_{\0})_{\mathbb F}$-module.
   \end{lemma} \begin{proof}  Recall that $$U(\g_{\0})_{\eta}=U(\mathfrak{gl}_m)_{\eta}\otimes _{\mathbb C} U(\mathfrak{gl}_n)_{\eta}.$$ Since $U(\g_{\0})_{\eta}v_z\subseteq L_{\eta}(z)$ is a simple $U(\g_{\0})_{\eta}$-module,  by Lemma 8.2 we have $${U}(\g_{\0})_{\eta}v_z=U(\mathfrak{gl}_m)_{\eta}v_{z'}\otimes _{\mathbb C}U(\mathfrak{gl}_n)_{\eta}v_{z''},$$ where $U(\mathfrak{gl}_m)_{\eta}v_{z'}$ and $U(\mathfrak{gl}_n)_{\eta}v_{z''}$ are respectively simple modules for $U(\mathfrak{gl}_m)_{\eta}$ and $U(\mathfrak{gl}_n)_{\eta}$,
   and where $v_z=v_{z'}\otimes v_{z''}$ with $$z'=(z_1, \dots, z_{m-1}, z_m), \ z^{''}=(z_{m+1}, \dots, z_{m+n}).$$ \par It is easy to see that the restriction of $U(\mathfrak{gl}_m)_{\eta}v_{z'}$ to $U(\mathfrak{sl}_m)_{\eta}$ is simple with  highest weight $(z_1,\dots, z_{m-1})$, and similarly the restriction of $U(\mathfrak{gl}_z)_{\eta}v_{z''}$ to $U(\mathfrak{sl}_n)_{\eta}$ is simple with  highest weight $(z_{m+1}, \dots, z_{m+n-1})$.
    Then we have  $$\begin{aligned} {U}(\g_{\0})_{\eta, \mathcal R}v_z&=U(\mathfrak{gl}_m)_{\eta, \mathcal R}v_{z'}\otimes _{\mathcal R}U(\mathfrak{gl}_n)_{\eta,\mathcal R}v_{z''}\\&=U(\mathfrak{sl}_m)_{\eta, \mathcal R}v_{z'}\otimes _{\mathcal R}U(\mathfrak{sl}_n)_{\eta,\mathcal R}v_{z''},\end{aligned}$$ and hence, $$\begin{aligned} {U}(\g_{\0})_{\mathbb F}v_z&=U(\mathfrak{gl}_m)_{\mathbb F}v_{z'}\otimes _{\mathbb F}U(\mathfrak{gl}_n)_{\mathbb F}v_{z''}\\&=U(\mathfrak{sl}_m)_{\mathbb F}v_{z'}\otimes _{\mathbb F}U(\mathfrak{sl}_n)_{\mathbb F}v_{z''}\subseteq  L_{\eta}(z)_{\mathbb F}. \end{aligned}$$  By Lusztig's conjecture, $U(\mathfrak{sl}_m)_{\mathbb F}v_{z'}$ and $U(\mathfrak{sl}_n)_{\mathbb F}v_{z''}$ are respectively simple modules for $U(\mathfrak{sl}_m)_{\mathbb F}$ and $U(\mathfrak{sl}_n)_{\mathbb F}$, and hence, simple modules for $U(\mathfrak{gl}_m)_{\mathbb F}$ and $U(\mathfrak{gl}_n)_{\mathbb F}$.  Applying Lemma 8.2 we have  that $U(\g_{\0})_{\mathbb F}v_z$ is a simple
    $U(\g_{\0})_{\mathbb F}$-module.
   \end{proof}

\begin{theorem} Let $G$ be the $\mathbb F$-supergroup $GL(m,n)$.  Assume Lusztig's conjecture \cite[0.3]{lu1}.  If $z\in\mathbb Z^{m+n}_p$ is $p$-typical, then $L_{\eta}(z)_{\mathbb F}$ is simple as a $\text{Dist} (G)$-module.
\end{theorem} \begin{proof}
By Lemma 11.11,  $U(\g_{\0})_{\mathbb F}v_z\subseteq L_{\eta}(z)_{\mathbb F}$ is a simple $U(\g_{\0})_{\mathbb F}$-module, and hence, is a simple $\text{Dist}(P)$-module by the discussion before Lemma 11.11.\par Let $\l$ be the image of $z$ under the identification of $\mathbb Z^{m+n}$ with $\Lambda$  in Section 10.1.   Then  $U(\g_{\0})_{\mathbb F}v_z$ is a simple $U(\g_{\0})_{\mathbb F}$-module of highest weight $\l\in X_p^+(T)$.\par  Recall the definition of the $\text{Dist}(G)$-module $\text{Ind}^G_P \l$ in Section 10.1. Since $$L_{\eta}(z)_{\mathbb F}=U (\g_{-1})_{\mathbb F}{U}(\g_{\0})_{\mathbb F}v_z=\text{Dist}(G)v_z,$$
it follows that $L_{\eta}(z)_{\mathbb F}$ is a homomorphic image of the  $\text{Ind}^G_P \l$. Since $z$ is $p$-typical, i.e., $\l$ is $p$-typical, we have  by Theorem 10.5 that $\text{Ind}^G_P\l$ is  simple,  implying that  $$\text{Ind}^G_P\l\cong L_{\eta}(z)_{\mathbb F},$$ as desired.
\end{proof}\newpage
\section{Lusztig's tensor product theorem}
Assume $l$ is an odd number $\geq 3$ and $\eta$ is a primitive $l$th root of unity. In this chapter we establish the tensor product theorem for the quantum supergroup $U_{\eta}$. \par By a similar argument as that for \cite[Lemma 7.2]{lu2}, we obtain
 \begin{lemma}\label{ls} Let $U''_{\eta}$ be the subalgebra of $U_{\eta}$ generated by $E_{\a_i}^{(l)}, F_{\a_i}^{(l)},\ i\in [1, m+n)\setminus m$.  Assume $z\in \mathbb Z^{m+n}$ with $z=l z'$ for some $z'\in \mathbb Z^{m+n}$. Let $v_z$ be a maximal vector of $L_{\eta}(z)$.   Then  $$(a) \ E_{\a_i},\ F_{\a_i},\ K_{\a_j}-1,\ K_m-1, \quad 1\leq i<m+n,\ j\in [1, m+n]\setminus m$$ act as $0$ on $L_{\eta}(z)$.\par
 (b) $\{y\in h(L_{\eta}(z))|E_{\a_i}^{(l)}y=0 \ \text{for} \ i\in [1, m+n)\setminus m\}=\mathbb C v_z$.\par
 (c) $L_{\eta}(z)=U''_{\eta}v_z$.
\end{lemma}
 Note that each $z\in \mathbb Z_+^{m+n}$ can be written uniquely as $ z=z'+l z''$ with $z'\in\mathbb Z^{m+n}_l$ and $z''\in\mathbb Z_+^{m+n}$.  Using Lemma 11.9,  Lemma 12.1  and applying similar  arguments as those for \cite[7.4]{lu2}, we get \begin{theorem} The $U_{\eta}$-modules $L_{\eta}(z)$ and $L_{\eta}(z')\otimes L_{\eta}(l z'')$ are isomorphic.
\end{theorem}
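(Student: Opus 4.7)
The plan is to adapt the proof of Lusztig's tensor product theorem \cite[7.4]{lu2} to the super setting, exploiting the splitting of $U_{\eta,\mathbb C}$ into the ``small'' algebra $'\tilde{\mathfrak u}_{\mathbb C}$ and the ``Frobenius'' subalgebra $\mathfrak A_\eta$ set up in Lemmas 9.13 and 10.1. The three steps are: exhibit a maximal vector of weight $z$ in the tensor product; show the tensor product is $U_{\eta,\mathbb C}$-cyclic on this vector; and conclude via Proposition 9.11 together with simplicity.

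Let $v_1$ and $v_2$ be maximal vectors of $L_{\eta,\mathbb C}(z')$ and $L_{\eta,\mathbb C}(lz'')$ respectively, and set $v=v_1\otimes v_2$. Since $\eta^l=1$, one has $K_{\a_i}v=\eta_i^{z'_i+lz''_i}v=\eta_i^{z_i}v$; the $\left[\begin{matrix}K_{\a_i};0\\l\end{matrix}\right]$-eigenvalue matches via a direct comultiplication computation using $K_{\a_i}v_2=v_2$ (Lemma 10.1(a)). From $\Delta(E_{\a_i})=E_{\a_i}\otimes K_{\a_i}+1\otimes E_{\a_i}$ together with $E_{\a_i}v_1=0$ (maximal vector) and $E_{\a_i}v_2=0$ (Lemma 10.1(a)), one obtains $E_{\a_i}v=0$; non-simple $E_{ij}$ with $(i,j)\in\mathcal I$ follow by induction on $j-i$ using the defining formula from Remark 2.2(1). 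For the conditions $E_{\a_i}^{(l)}v=0$ with $i\ne m$, in $\Delta(E_{\a_i}^{(l)})=\sum_{j=0}^l q_i^{-j(l-j)}E_{\a_i}^{(j)}\otimes K_{\a_i}^j E_{\a_i}^{(l-j)}$ the terms with $0<j<l$ vanish (since $[j]!$ is invertible and $E_{\a_i}v_1=0$), the $j=l$ term vanishes by maximality of $v_1$, and the $j=0$ term vanishes by maximality of $v_2$.

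For generation, I would prove two companion identities. \emph{Fact A:} for $u\in {}'\tilde{\mathfrak u}_{\mathbb C}$ and any $w\in L_{\eta,\mathbb C}(lz'')$, $\Delta(u)(v_1\otimes w)=(uv_1)\otimes w$; this is because Lemma 10.1(a) makes simple $E_{\a_i},F_{\a_i}$ kill $w$ and $K_{\a_i}^{\pm 1}$ fix $w$, while each $\left[\begin{matrix}K_{\a_i};0\\t\end{matrix}\right]$ with $1\le t<l$ also kills $w$ (the $s=1$ factor in its defining product vanishes when $K_{\a_i}=1$). \emph{Fact B:} letting $\mathfrak A_\eta^{EF}\subset \mathfrak A_\eta$ be the subalgebra generated by $E_{\a_i}^{(l)}, F_{\a_i}^{(l)}$ ($i\ne m$), for $a\in\mathfrak A_\eta^{EF}$ and any $w\in L_{\eta,\mathbb C}(lz'')$, $\Delta(a)(v_1\otimes w)=v_1\otimes (aw)$; indeed, intermediate divided powers $E_{\a_i}^{(j)}, F_{\a_i}^{(j)}$ with $0<j<l$ kill $v_1$, $E_{\a_i}^{(l)}v_1=0$ by maximality, and $F_{\a_i}^{(l)}v_1=0$ by Lemma 9.13(a). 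Combining, $\Delta(u)\Delta(a)(v)=(uv_1)\otimes(av_2)$. Now $L_{\eta,\mathbb C}(z')={}'\tilde{\mathfrak u}_{\mathbb C}v_1$ (Lemma 9.13(d)) and $L_{\eta,\mathbb C}(lz'')=\mathfrak A_\eta^{EF} v_2$ (the remaining generators $\left[\begin{matrix}K_{\a_m};c\\l\end{matrix}\right]$ and $\left[\begin{matrix}K_{\a_{m+n}};c\\l\end{matrix}\right]$ of $\mathfrak A_\eta$ merely multiply elements of $L_{\eta,\mathbb C}(lz'')$ by scalars since $K_{\a_i}$ acts as identity there), so the tensor product is $U_{\eta,\mathbb C}$-generated by $v$.

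The main obstacle is step (iii): proving simplicity. Having constructed a highest weight module of weight $z$, Proposition 9.11 identifies $L_{\eta,\mathbb C}(z)$ as its unique simple quotient; one must rule out proper submodules. I would show $v$ is (up to scalar) the unique maximal vector: combining Facts A and B with the uniqueness clauses of Lemmas 9.13(b) and 10.1(b), any other maximal vector would have to have its ``small-algebra'' component along $\mathbb C v_1$ and its ``Frobenius'' component along $\mathbb C v_2$, hence lies in $\mathbb C v$. Then any nonzero submodule contains a maximal vector, hence a nonzero multiple of $v$, hence the whole tensor product, yielding simplicity and the isomorphism $L_{\eta,\mathbb C}(z)\cong L_{\eta,\mathbb C}(z')\otimes L_{\eta,\mathbb C}(lz'')$. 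The super signs introduced by the comultiplication are minor: the crucial generators $K_{\a_i}$, $\left[\begin{matrix}K_{\a_i};0\\l\end{matrix}\right]$, $E_{\a_i}^{(l)}$ and $F_{\a_i}^{(l)}$ (for $i\ne m$) are all even, so sign factors appear only in the ``small'' part where the argument of \cite[7.4]{lu2} transfers with routine modifications.
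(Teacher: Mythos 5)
Your write-up follows exactly the route the paper intends: the paper's proof of this theorem is just the sentence ``applying a similar proof as that of \cite[7.4]{lu2}'', and your argument is a faithful adaptation of Lusztig's proof, using Lemma 9.13 to control the $'\tilde{\mathfrak u}$-action on $L_{\eta,\mathbb C}(z')$ and Lemma 10.1 to control the $\mathfrak A_\eta$-action on $L_{\eta,\mathbb C}(lz'')$, then showing $v_1\otimes v_2$ generates and is the unique maximal vector.

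There is one concrete slip in your justification of Fact B. You assert that the intermediate divided powers $F_{\a_i}^{(j)}$ with $0<j<l$ kill $v_1$; this is false in general, since $v_1$ is a \emph{highest} weight vector, not a lowest one, so $F_{\a_i}v_1$ (and hence $F_{\a_i}^{(j)}v_1$) is usually nonzero. The intermediate terms in $\Delta(F_{\a_i}^{(l)})(v_1\otimes w)$ actually vanish for a different reason: the companion factor on the second tensor leg is $F_{\a_i}^{(l-j)}$ with $0<l-j<l$, which kills $w$ because $F_{\a_i}w=0$ by Lemma 10.1(a) and $[l-j]!$ is invertible. The $j=l$ term vanishes by Lemma 9.13(a), as you note. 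With that correction Fact B still holds and the rest of your argument is sound. A minor omission: the paper's definition of maximal vector requires $E_{ij}^{(l)}v=0$ for all $(i,j)\in\mathcal I_0$, not only the simple-root divided powers $E_{\a_i}^{(l)}$; this reduces to the simple-root case by induction on $j-i$ via Lemma 8.1(1), analogously to how you treat the $E_{ij}$.
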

Let $U(\g_{\0})$ be the universal enveloping algebra of the Lie algebra $\g_{\0}$ over $\mathbb C$.
\begin{proposition}  For $z=(z_1,\dots, z_{m+n})\in\mathbb Z^{m+n}_+$, $L_{\eta}(l z)$ is a simple $U(\g_{\0})$-module of  highest weight $z$.
\end{proposition}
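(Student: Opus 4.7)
The plan is to leverage Lemma 10.1 and a Frobenius-type homomorphism to reduce the assertion to the classical (non-super) setting. By Lemma 10.1(a), the elements $E_{\a_i}$, $F_{\a_i}$ and $K_{\a_j}-1$ act as zero on $L_{\eta,\mathbb{C}}(lz)$, while by Lemma 10.1(c) the whole module equals $\mathfrak{A}_{\eta}\cdot x$ for the maximal vector $x$. Hence the $U_{\eta,\mathbb{C}}$-module structure is entirely controlled by the $\mathfrak{A}_{\eta}$-action, and every $\mathfrak{A}_{\eta}$-submodule is automatically a $U_{\eta,\mathbb{C}}$-submodule. This reduces the problem to understanding $\mathfrak{A}_{\eta}$.

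Next, I would construct a quantum Frobenius-type algebra homomorphism $\mathrm{Fr}\colon \mathfrak{A}_{\eta}\to U(\g_{\0})$ sending $E_{\a_i}^{(l)}\mapsto e_{\a_i}$, $F_{\a_i}^{(l)}\mapsto f_{\a_i}$ for $i\in [1,m+n)\setminus m$, and the $K$-binomials $\left[\begin{matrix}K_{\a_s},0\\ l\end{matrix}\right]$ for $s=m,m+n$ to the Cartan generators $h_{\a_s}$ of $U(\g_{\0})$. A decisive observation is that none of the generators of $\mathfrak{A}_{\eta}$ involves the odd simple root $\a_m$, so the verification stays within the even subalgebra and reduces to the divided-power computation at $q=\eta$ carried out by Lusztig in \cite{lu1,lu2}; the key inputs are the identities (e1)--(e7) and (h1)--(h6) from Section~6 together with $\eta^l=1$.

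I would then check that the action of $\mathfrak{A}_{\eta}$ on $L_{\eta,\mathbb{C}}(lz)$ factors through $\mathrm{Fr}$, thereby endowing $L_{\eta,\mathbb{C}}(lz)$ with a $U(\g_{\0})$-module structure. The vector $x$ is a highest weight vector since $E_{\a_i}^{(l)}x=0$ by the definition of a maximal vector (Sec.~9.5). The weight of $x$ is computed via $\left[\begin{matrix}K_{\a_s},0\\ l\end{matrix}\right] x = \left[\begin{matrix}lz_s\\ l\end{matrix}\right]_{\eta} x = z_s\cdot x$, the standard root-of-unity evaluation of the quantum binomial; the analogous identity for the $h_{\a_i}$ with $i\neq m,m+n$ follows from (e7) applied to $E_{\a_i}^{(l)}F_{\a_i}^{(l)}$ acting on $x$. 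Thus $x$ has $U(\g_{\0})$-weight $z$, as required.

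Finally, for simplicity, any $U(\g_{\0})$-submodule of $L_{\eta,\mathbb{C}}(lz)$ is automatically stable under $\mathfrak{A}_{\eta}$ (via $\mathrm{Fr}$) and under the zero-acting $E_{\a_i}, F_{\a_i}$ and the identity-acting $K_{\a_j}$; hence it is a $U_{\eta,\mathbb{C}}$-submodule. Since $L_{\eta,\mathbb{C}}(lz)$ is simple as a $U_{\eta,\mathbb{C}}$-module by definition, it is simple as a $U(\g_{\0})$-module. The principal technical hurdle lies in the explicit construction of $\mathrm{Fr}$ and the verification of its relations at $q=\eta$; but this runs entirely parallel to Lusztig's argument in the classical case and introduces no genuinely new super-phenomenon, so it should proceed without surprises.
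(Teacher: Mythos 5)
Your plan reverses the direction of the key homomorphism relative to the paper, and this introduces genuine gaps. You propose to build a map $\mathrm{Fr}\colon \mathfrak A_\eta \to U(\g_{\0})$ and then argue that the $\mathfrak A_\eta$-action on $L_{\eta,\mathbb C}(lz)$ factors through $\mathrm{Fr}$. But $\mathfrak A_\eta$ is introduced only as the subalgebra of $U_{\eta,\mathbb C}$ generated by $E_{\a_i}^{(l)}, F_{\a_i}^{(l)}$ ($i\ne m$) and the two $K$-binomials, with no presentation by generators and relations; so prescribing images of the generators does not produce a well-defined algebra homomorphism without first establishing all relations they satisfy inside $U_{\eta,\mathbb C}$. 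Neither the paper nor Lusztig's \cite{lu1,lu2} supplies such a presentation in this super setting, so the existence of $\mathrm{Fr}$ is not a routine divided-power computation. Beyond that, the claim that the $\mathfrak A_\eta$-action factors through $\mathrm{Fr}$ amounts to $\ker\mathrm{Fr}$ annihilating $L_{\eta,\mathbb C}(lz)$, and you give no argument for this; Lemma~10.1(a) only controls the ideal $I$ generated by $E_{\a_i}, F_{\a_i}, K_{\a_j}-1$, and matching $\ker\mathrm{Fr}$ with $I\cap\mathfrak A_\eta$ would in effect require $\phi$ below to be injective, which the paper does not prove and does not need.

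The paper avoids both problems by going the other way. Let $I$ be the two-sided ideal of $U_{\eta,\mathbb C}$ generated by $E_{\a_i}, F_{\a_i}$ ($i\in[1,m+n)$) and $K_{\a_j}-1$ ($j\in[1,m+n]$). By Lemma~10.1(a), $I$ annihilates $L_{\eta,\mathbb C}(lz)$, so $L_{\eta,\mathbb C}(lz)$ is a simple $U_{\eta,\mathbb C}/I$-module with no further work. One then constructs $\phi\colon U(\g_{\0}) \to U_{\eta,\mathbb C}/I$ by sending $e_{\a_i}\mapsto \overline{E_{\a_i}^{(l)}}$, $f_{\a_i}\mapsto\overline{F_{\a_i}^{(l)}}$ ($i\ne m$) and $h_{\a_i}\mapsto \overline{\left[\begin{smallmatrix}K_{\a_i};0\\l\end{smallmatrix}\right]}$; this is straightforward because $U(\g_{\0})$ has a known presentation, so one only needs to verify that the images satisfy the relations (2.2)(a1)--(a8) in $U_{\eta,\mathbb C}/I$, which is exactly the argument of \cite[7.5]{lu2}. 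Since these images generate $U_{\eta,\mathbb C}/I$, $\phi$ is surjective, and pulling back a simple $U_{\eta,\mathbb C}/I$-module along a surjection yields a simple $U(\g_{\0})$-module. Your highest-weight computation $\left[\begin{smallmatrix}lz_s\\l\end{smallmatrix}\right]_\eta = z_s$ is correct and agrees with the paper's citation of \cite[3.2(a)]{lu2}, but the simplicity argument needs to be reorganized along the lines above to close the gap.
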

\begin{proof} Let $I$ be the two-sided ideal of $U_{\eta}$ generated by the elements $$E_{\a_i},\ F_{\a_i},\ K_{\a_j}-1, \ K_m-1, \quad 1\leq i<m+n, \ j\in [1, m+n]\setminus m.$$   Then $L_{\eta}(l z)$ is annihilated by $I$ by Lemma \ref{ls}, and hence a $U_{\eta}/I$-module.\par
In view of the proof of \cite[7.5]{lu2}, we have a unique homomorphism of algebras $$\phi: U(\g_{\0})\longrightarrow U_{\eta}/I$$ such that $$\phi (e_{\a_i}) =\overline{E_{\a_i}^{(l)}}, \quad \phi (f_{\a_i})=\overline{F_{\a_i}^{(l)}},\quad  i\in [1,m+n)\setminus m$$ and $$\phi (h_{\a_j})= \overline{\left[\begin{matrix}K_{\a_j};0\\l\end{matrix}\right]}, \ j\in [1, m+n]\setminus m, \quad \phi (e_{mm})= \overline{\left[\begin{matrix}K_m;0\\l\end{matrix}\right]}.$$
 Since the elements $$\overline{E_{\a_i}^{(l)}},\quad \overline{F_{\a_i}^{(l)}},\quad \overline{\left[\begin{matrix}K_m;0\\l\end{matrix}\right]}, \quad \overline{\left[\begin{matrix}K_{\a_{m+n}};0\\l\end{matrix}\right]},\quad i\notin \{m,m+n\}$$ generate $U_{\eta}/I$ as an algebra, $\phi$ is an epimorphism. Since $L_{\eta}(l z)$ is a simple $U_{\eta}$-module, it is a simple $U_{\eta}/I$-module, and hence the pull back along $\phi$ is a simple $U(\g_{\0})$-module.\par Let $v_z$ be a maximal vector of $L_{\eta}(l z)$.
  Then we have $$h_{\a_i}\cdot v_z=\phi (h_{\a_i})v_z=\left[\begin{matrix}K_{\a_i};0\\l\end{matrix}\right]v_z
  =\left[\begin{matrix}lz_i\\l\end{matrix}\right]_{\eta}v_z={z_i}v_z, \quad i\in [1, m+n]\setminus m,$$ where the last equality follows from the formula
  $$ \left[\begin{matrix}lz_i\\l\end{matrix}\right]_{\eta}=z_i \ (\text{see} \ \cite[3.2(a)]{lu2}).$$ Similarly we have $$e_{mm}\cdot v_z=z_mv_z.$$ Since $$e_{\a_i}\cdot v_z=\phi(e_{\a_i})v_z=\overline{E_{\a_i}^{(l)}}v_z=E_{\a_i}^{(l)}v_z=0\quad\mathbin{\mathrm{for\ all}}\quad i\notin\{m,m+n\},$$ it follows that the pull back of $L_{\eta}(l z)$
  along $\phi$ is a simple $U(\g_{\0})$-module of highest weight $z$. This completes the proof.
\end{proof}
\newpage
\section{Appendix}
In this chapter let $\g=\mathfrak{gl}(m,n)$ be the general linear Lie superalgebra  over a field $\mathbb F$ of arbitrary characteristics. Recall from Chapter 1  the set of positive roots of
   $\g$ is $\Phi^+_{0}\cup\Phi^+_{1}$. Let  $$\rho_0(m,n)=1/2\sum_{\a\in\Phi^+_0}\a, \quad \rho_1 (m,n)=1/2\sum_{\a\in\Phi^+_{1}}\a,$$  and set $\rho(m,n)=:\rho_0(m,n)-\rho_1(m,n)\in\Lambda$.\par
Recall in Chapter 1 the bilinear form defined on $\Lambda$. By identifying $\mathfrak H^*$ with $\Lambda \otimes _{\mathbb Z}\mathbb F$,  the bilinear form  is extended naturally to $\mathfrak H^*$.
   Define the polynomial $\tilde P_{m,n}(\mu)$ on $\mathfrak H^*$ by $$\tilde P_{m,n}(\mu)=\Pi_{\a\in\Phi^+_1}(\mu+\rho(m,n), \a),\quad \mu\in \mathfrak H^*.$$\par
   Remark: The weight $\rho(m,n)$ (resp. $\rho_0(m,n)$, $\rho_1(m,n)$) is denoted by $\rho$ (resp. $\rho_0$, $\rho_1$) in Chapter 1 and the polynomial $\tilde P_{m,n}(\mu)$ is denoted by $\tilde P(\mu)$ in Chapter 10. Since we will prove Theorem 13.3 by induction on $n$,  we use this new notation in this final chapter.\par
Recall the definition of $\mathscr K(\mu)$ in Chapter 1, from which we have
 $$\mathscr K(\mu)\cong U(\g_{-1})\otimes_{\mathbb F}M_0(\mu)$$ as $U(\g_{-1})$-modules.\par
  Recall from Chapter 1 that $\g_{-1}$ (resp. $\g_1$) has a basis $f_{ij}\ (\text{resp.}\ e_{ij}), \ (i, j)\in\mathcal I_1$. In order to prove Theorem 13.3 by induction, we define an order $<$ on the set of basis vectors of $\g_{-1}$  as
  $$f_{1, m+n}<f_{2, m+n}<\cdots <f_{m,m+n}<f_{1, m+n-1}<f_{2, m+n-1}<\cdots $$$$\cdots <f_{1, m+1}< f_{2, m+1}<\cdots <f_{m,m+1}.$$ On the set of basis vectors for $\g_1$, we define $e_{ij}<e_{st}$ if and only if $f_{st}<f_{ij}$.
   We write $f_{ij}\leq f_{st}$ (resp. $e_{ij}\leq e_{st}$) if $f_{ij}<f_{st}$ or $f_{ij}=f_{st}$ (resp. $e_{ij}<e_{st}$ or $e_{ij}=e_{st}$).\par  For each subset $I\subseteq \mathcal I_1$, let $f_I$ (resp. $e_I$) denote the product $\Pi_{(i,j)\in I} f_{ij}$ (resp. $\Pi_{(i,j)\in I} e_{ij}$) in the above order. In particular, $f_{\phi}=e_{\phi}=1$ and $f_{\mathcal I_1}$ (resp. $e_{\mathcal I_1}$) is the product of all $f_{ij}\ (\text{resp.} \ e_{ij}),\ (i, j)\in\mathcal I_1$.  Then it is clear that $\mathscr K(\mu)$ has a basis  $$f_I \otimes v_i,\quad I\subseteq \mathcal I_1, \ i=1,\dots s,$$  where $v_1,\dots, v_s$ is a basis of $M_0(\mu)$.\par  Since $$f_{ij}f_{st}=-f_{st}f_{ij}\in U(\g_{-1}), \quad (i,j),\ (s,t)\in\mathcal I_1,$$   it follows that,  for a nonzero element $x\in \mathscr K(\mu)$, by multiplying appropriate $f_{ij}$ ( $(i,j)\in \mathcal I_1$)  to $x$,   one obtains  $f_{\mathcal I_1}\otimes v$ with $0\neq v\in M_0(\mu)$.\par
 It is easy to check that, for any $(i,j)\in \mathcal I_0$,  $$ (*)\quad f_{ij}f_{\mathcal I_1}=f_{\mathcal I_1}f_{ij}, \quad e_{ij}f_{\mathcal I_1}=f_{\mathcal I_1}e_{ij}.$$
 For each $(i,j)\in\mathcal I_1$, let $$f^>_{ij}\ (\text{resp.}\quad  f^{\geq}_{ij};\  f^<_{ij};\   f^{\leq}_{ij})$$ denote the product of all $f_{st}$ such that $$f_{st}>f_{ij}\ (\text{resp.}\quad f_{st}\geq f_{ij};\  f_{st}<f_{ij}; \ f_{st}\leq f_{ij}).$$ For $(i,j)<(s,t)$,  let $f^{ij}_{st}$ denote the product of all $f_{i',j'}$ with $f_{ij}<f_{i',j'}<f_{st}$.  Similarly we define the notation $e^>_{ij}, \ e^{\geq}_{ij}, \  e^<_{ij},$ and $e^{\leq}_{ij}$.\par
Let $$\mathfrak n^+=\sum_{1\leq i<j\leq m+n} \mathbb Fe_{ij},\quad  \mathfrak n^-=\sum_{1\leq i<j\leq m+n}\mathbb Ff_{ij}.$$ Then we have $\g=\mathfrak n^-\oplus \mathfrak H\oplus \mathfrak n^+$, and also, $$U(\g)=U(\mathfrak n^-)\otimes U(\mathfrak H)\otimes U(\mathfrak n^+).$$
 Write the product $e_{\mathcal I_1}f_{\mathcal I_1}\in U(\g)$ in terms of this triangular decomposition: $$e_{\mathcal I_1}f_{\mathcal I_1}=f(h)+\sum u^-_iu^0_iu^+_i,\quad u_i^{\pm}\in U(\mathfrak n^{\pm}),\ f(h), u^0_i\in U(\mathfrak H).$$ Note that $U(\g)$ is a $T$-module under the adjoint action. Denote by $\text{wt} (u)$ the weight of a weight vector $u\in U(\g)$. Then since $\text{wt}(e_{\mathcal I_1}f_{\mathcal I_1})=0$,
 so that $$\text{wt}(u_i^+)=-\text{wt}(u^-_i),$$
 we get $u_i^+\in\mathbb F$ if and only if $u^-_i\in\mathbb F$. \par Let $v_{\mu}$ be a maximal vector in $M_0(\mu)\subseteq \mathscr K(\mu)$. Then we get $$e_{\mathcal I_1}f_{\mathcal I_1}v_{\mu}=f(h)v_{\mu}=f(h)(\mu)v_{\mu}.$$

 \begin{theorem} $\mathscr  K(\mu)$ is simple if and only if $ f(h)(\mu)\neq 0$.
\end{theorem}\begin{proof} By the formula $(*)$ above, the subspace $f_{\mathcal I_1}
\otimes M_0(\mu)\subseteq \mathscr K(\mu)$ is also a simple $U(\g_{\0})$-module, and is  annihilated by $\g_{-1}$, since $\g_{-1}f_{\mathcal I_1}=0$. It follows that $$U(\g_1)f_{\mathcal I_1}\otimes M_0(\mu)=U(\g)f_{\mathcal I_1}
\otimes M_0(\mu),$$  is a $U(\g)$-submodule of $ \mathscr K(\mu)$. \par If $\mathscr K(\mu)$ is simple, then  $U(\g_1)f_{\mathcal I_1}
\otimes M_0(\mu)= \mathscr K(\mu).$ Since  $$\text{dim}U(\g_1)=\text{dim}U(\g_{-1})$$ and $U(\g_1)$ has a basis $e_I, \ I
\subseteq \mathcal I_1,$  $\mathscr K(\mu)$ has  a basis consisting of the elements $$e_If_{\mathcal I_1}\otimes v_i, \quad I\subseteq \mathcal I_1, \ i=1,\dots, s,$$  where $v_1,\dots,v_s$ is a basis of $M_0(\mu)$.\par Choose $v_1= v_{\mu}$, a maximal vector of weight $\mu$.  Then we have $$0\neq e_{\mathcal I_1}f_{\mathcal I_1}\otimes v_{\l}=1\otimes f(h)(\mu)v_{\mu},$$ and hence $ f(h)(\mu)\neq 0$.\par
 Suppose  $ f(h)(\mu)\neq 0$.  Let $N$ be a nonzero submodule of $\mathscr K(\mu)$, and let $x\in N$ be a nonzero vector.  Applying appropriate $f_{ij}$ to $x$ we  obtain $f_{\mathcal I_1}\otimes m\in N$
 with $0\neq m\in M_0(\mu)$. From the formula $(*)$ above it follows that $f_{\mathcal I_1}\otimes v_{\mu}\in N$,  since $M_0(\mu)$ is a simple $U(\g_{\0})$-module. Then we have $$e_{\mathcal I_1}f_{\mathcal I_1}\otimes v_{\mu}=1\otimes f(h)(\mu)v_{\mu}\in N,$$ and hence $v_{\mu}\in N$, implying that $N=\mathscr K(\mu)$. Thus,  $\mathscr K(\mu)$ is simple.
\end{proof}

Next we determine the polynomial $f(h)(\mu)$.

\begin{lemma} Let $1\leq i\leq m$. Then $e_{i,m+n}f^>_{i,m+n} v_{\mu}=0.$
\end{lemma}
\begin{proof} For each $f_{st}>f_{i, m+n}$, we have $$[e_{i,m+n}, f_{st}]=\begin{cases}e_{t,m+n},&\text{if $i=s, t<m+n$}\\e_{is},&\text{if $i<s, t=m+n$}\\0,&\text{otherwise.}\end{cases}$$Then we have  $$\begin{aligned}
 e_{i,m+n}f^>_{i, m+n}v_{\mu}&=[e_{i,m+n}, f^>_{i,m+n}]v_{\mu}\\&=\sum_{f_{st}> f_{i,m+n}}(-1)^{\a_{st}}f^{i,m+n}_{st} [e_{i,m+n}, f_{st}]f^>_{st}v_{\mu}\\&=\sum_{s>i, t=m+n}(-1)^{\a_{st}} f^{i,m+n}_{st} e_{is}f^>_{s, m+n}v_{\mu}\\&+\sum_{s=i,t<m+n}(-1)^{\a_{st}} f^{i,m+n}_{st} e_{t,m+n}f^>_{st}v_{\mu},\end{aligned}$$ with $\a_{st}\in \mathbb Z_2$. Note that the second summation equals zero, since $e_{t,m+n}$ commutes with all $f_{ij}$ with $f_{ij}> f_{st}$. Indeed, for $f_{ij}>f_{st}$, we have either $$j<t<m+n$$ or $$j=t<m+n\quad \mathbin{\mathrm{and}}\quad s<i<j=t,$$ it then follows that
 $$[e_{t,m+n}, f_{ij}]=e_{t,m+n}e_{ji}+e_{ji}e_{t, m+n}=0.$$ \par
  We claim that the first summation also equals zero. In fact, we have, in the case where $s>i$ and $t=m+n$,   $$\begin{aligned}
 e_{is}f^>_{s,m+n}v_{\mu}& =[e_{is}, f^>_{s,m+n}]v_{\mu}\\&=\sum ^{m+1}_{j=m+n-1}f^{s,m+n}_{ij}[e_{is}, f_{ij}]f^>_{ij}v_{\mu}\\&=\sum ^{m+1}_{j=m+n-1}f^{s,m+n}_{ij}f_{sj}f^>_{ij}v_{\mu}=0,\end{aligned}$$ where the last equality is given by the fact that $f_{sj}> f_{ij}$ and $f_{sj}^2=0$.
\end{proof}

  \begin{theorem} $f(h)(\mu)=\tilde P_{m,n}(\mu).$
 \end{theorem}\begin{proof}We have $$\begin{aligned}
e_{\mathcal I_1}f_{\mathcal I_1}v_{\mu}&=e^<_{1,m+n}(e_{1,m+n}f_{1,m+n})f^>_{1,m+n}v_{\mu}\\&=e^<_{1,m+n}
(e_{11}+e_{m+n,m+n})f^>_{1,m+n}v_{\mu}\\&-e^<_{1,m+n}f_{1,m+n}e_{1,m+n}f^>_{1,m+n}v_{\mu}\\
 (\text{Using Lemma 13.2})&=e^<_{1,m+n}(e_{11}+e_{m+n,m+n})f^>_{1,m+n}v_{\mu}\\&=(\mu+\a_1)(e_{11}+e_{m+n,m+n})e^<_{1,m+n}f^>_{1,m+n}v_{\mu}\\
 &=(\mu+\a_1,\ \e_1-\e_{m+n})e^<_{1,m+n}f^>_{1,m+n}v_{\mu},\end{aligned}$$ where $\mu+\a_1$ denotes the weight of $f^>_{1,m+n}v_{\mu}$.\par  Using Lemma 13.2, we compute $e^<_{1,m+n}f^>_{1,m+n}v_{\mu}$ in a similar way.  Continue the process,   we  get $$\begin{aligned} e_{\mathcal I_1}f_{\mathcal I_1}v_{\mu}&=\Pi^k_{i=1}(\mu+\a_i)(e_{ii}+e_{m+n,m+n}) e^<_{k,m+n}f^>_{k,m+n}v_{\mu}\\& =\cdots\\&=\Pi^m_{i=1}(\mu+\a_i)(e_{ii}+e_{m+n,m+n}) v_{\mu}\\&=\Pi^m_{i=1}(\mu+\a_i,\ \e_i-\e_{m+n})e^{\leq}_{1,m+n-1}f^{\geq}_{1,m+n-1}v_{\mu}.\end{aligned}$$  For each $1\leq i\leq m$,  by a straightforward computation we see that the weight of $f^>_{i,m+n}v_{\mu}$ is  $$\mu+\a_i=\mu-(2\rho_1(m,n)-\sum^i_{k=1}(\e_k-\e_{m+n})).$$
 Since  $$ (-\rho_0(m,n)-\rho_1(m,n)+\sum_{k=1}^i(\e_k-\e_{m+n}),\ \e_i-\e_{m+n})=0,$$
    it follows that $$(\mu+\a_i,\ \e_i-\e_{m+n})=(\mu+\rho(m,n),\ \e_i-\e_{m+n}),$$ and hence $$f(h)(\mu)=\Pi^m_{k=1}(\mu+\rho(m,n),\ \e_k-\e_{m+n})e^{\leq}_{1,m+n-1}f^{\geq}_{1,m+n-1}v_{\mu}.$$
 We now complete the proof by  induction on $n$. The case $n=1$ follows immediately from the above equation. Assume the theorem for the case $n-1$.\par
 Note that  $$\rho(m,n)=\rho (m,n-1)+\frac{1}{2}[\sum_{k>m}(\e_k-\e_{m+n})-\sum_{k\leq m}(\e_k-\e_{m+n})].$$ By a short computation we have $$(\sum_{k>m}(\e_k-\e_{m+n})-\sum_{k\leq m}(\e_k-\e_{m+n}),\ \e_i-\e_j)=0$$ for all $i\leq m<j<m+n$, so that $$(\mu+\rho (m,n-1),\ \e_i-\e_j)=(\mu+\rho(m,n),\ \e_i-\e_j).$$ Then we have by the induction hypothesis that  $$\begin{aligned} f(h)(\mu)&=\Pi^m_{k=1}(\mu+\rho(m,n),\ \e_k-\e_{m+n})\tilde P_{m,n-1}(\mu)\\ &= \Pi^m_{k=1}(\mu+\rho(m,n),\ \e_k-\e_{m+n}) \Pi_{i\leq m<j\leq m+n-1}(\mu+\rho (m,n-1),\ \e_i-\e_j)\\&=\Pi_{(i,j)\in\mathcal I_1}(\mu+\rho(m,n),\ \e_i-\e_j)\\&=\tilde P_{m,n}(\mu).\end{aligned}$$
This completes the proof.
\end{proof}

\def\refname{\centerline{\bf REFERENCES}}

\end{document}